\theoremstyle{plain}
\newtheorem{thm}{Theorem}[section]
\newtheorem{lem}[thm]{Lemma}
\newtheorem*{conj}{Conjecture}
\newtheorem{cor}[thm]{Corollary}
\newtheorem{prop}[thm]{Proposition}
\theoremstyle{definition}
\newtheorem{defn}[thm]{Definition}
\newtheorem{exmp}[thm]{Example}
\theoremstyle{definition}
\newtheorem{rk}[thm]{Remark}
\newtheorem*{notation}{Notation}
\newcommand{\C}{\mathbb{C}}
\newcommand{\Z}{\mathbb{Z}}
\newcommand{\Om}{\mathcal{O} }
\newcommand{\A}{\mathcal{A}}
\newcommand{\T}{\EuScript{T}}
\newcommand{\scrG}{\EuScript{G}}
\newcommand{\cG}{\mathcal{G}}
\newcommand{\scrP}{\EuScript{P}}
\newcommand{\tr}{\mathfrak{tr}}
\newcommand{\R}{\mathcal{R}}
\newcommand{\Tt}{\tilde{\EuScript{T}}}
\newcommand{\B}{\mathcal{B}}
\newcommand{\G}{\mathbb{G}_m}
\newcommand{\fM}{\mathfrak{M}}
\newcommand{\baU}{\bar{U}}
\newcommand{\tU}{\tilde{U}}
\newcommand{\tV}{\tilde{V}}
\newcommand{\scrC}{\EuScript{C}}
\newcommand{\scrF}{\EuScript{F}}
\newcommand{\precon}{D\mkern-12mu/\mkern2mu}
\newcommand{\quabla}{\nabla\mkern-12mu/\mkern2mu}
\newcommand{\cW}{\mathcal{W}}
\newcommand{\sheafhom}{\mathcal{H} \kern -.5pt \mathit{om}}
\numberwithin{equation}{section}
\numberwithin{figure}{section}
\title{Dynamical invariants of mapping torus categories}
\author{Yusuf Bar{\i}\c{s} Kartal}
\date{}
\begin{document}
\begin{abstract}
	This paper describes constructions in homological
	algebra that are part of a strategy whose goal is to understand and classify symplectic mapping tori. More precisely, given a dg category and an auto-equivalence, satisfying certain assumptions, we introduce a category $M_\phi$-called the mapping torus category- that describes the wrapped Fukaya category of an open symplectic mapping torus. Then we define a family of bimodules on a natural deformation of $M_\phi$, uniquely characterize it and using this, we distinguish $M_\phi$ from the mapping torus category of the identity. The proof of the equivalence of $M_\phi$ with wrapped Fukaya category is proven in a different paper (\cite{nextpaper}).
\end{abstract}
	\keywords{categorical dynamics, mapping torus, flux group, homological mirror symmetry, noncommutative geometry}
	\maketitle

\tableofcontents
\parskip1em
\parindent0em	
\section{Introduction} \label{sec:intro}
\subsection{Motivation from symplectic geometry}\label{subsec:motiv}
Let $M^{2n}$ be a Weinstein manifold and let $\phi$ be a symplectomorphism. For simplicity, assume $\phi$ acts as the identity on the boundary and it is exact with respect to boundary. Associated to this data one can construct the open symplectic mapping torus as \begin{equation}T_\phi^{2n+2}:=(M\times \mathbb{R}\times S^1/(x,t,s)\sim(\phi(x),t+1,s ))\setminus\{[(x,t,s)]:t=0,s=1 \} \end{equation}
This is a symplectic fibration over punctured torus $T_0=T^2\setminus\{* \}$ with monodromy as shown in Figure \ref{figure:sympmt}. It can be shown to carry a Liouville structure and its contact boundary at infinity is isomorphic to that of $T_0\times M$, in other words the boundary of the mapping torus of identity.

One would like to distinguish the fillings $T_\phi$ and $T_0\times M$, when $\phi$ is not Hamiltonian isotopic to identity. An attempt can be made as follows: Assume the fillings are the same. Consider the partial compactification 
\begin{equation}\overline{T}_\phi:=M\times \mathbb{R}\times S^1/(x,t,s)\sim(\phi(x),t+1,s ) \end{equation}
Assume we are able to identify $\overline T_\phi$ with $\overline T_{1_M}=T^2\times M$. Every circle action on $T^2$ lifts to a circle action on $T^2\times M$; however, this is not the case with $\overline T_\phi$. Indeed, the flow of the obvious lift of $\partial_t$ at time $t=1$ gives us the symplectomorphism \begin{equation}\label{fiberwisesymp}[(x,t,s)]\mapsto[(\phi^{-1}(x),t,s)] \end{equation}which is different from the identity. In other words, it seems there are ``more circle actions'' on $T^2\times M$ and its flux group is bigger. The first and major limitation of this approach is our inability to identify the partial compactifications. Second limitation is even if one successfully runs the above program and rigorously computes the flux groups, they would only be able to conclude fiberwise $\phi$, the inverse of the symplectomorphism (\ref{fiberwisesymp}),
is Hamiltonian isotopic to identity. We do not know how to conclude the same for $\phi$ acting on $M$.
\begin{figure}\centering
	\includegraphics[height=4 cm]{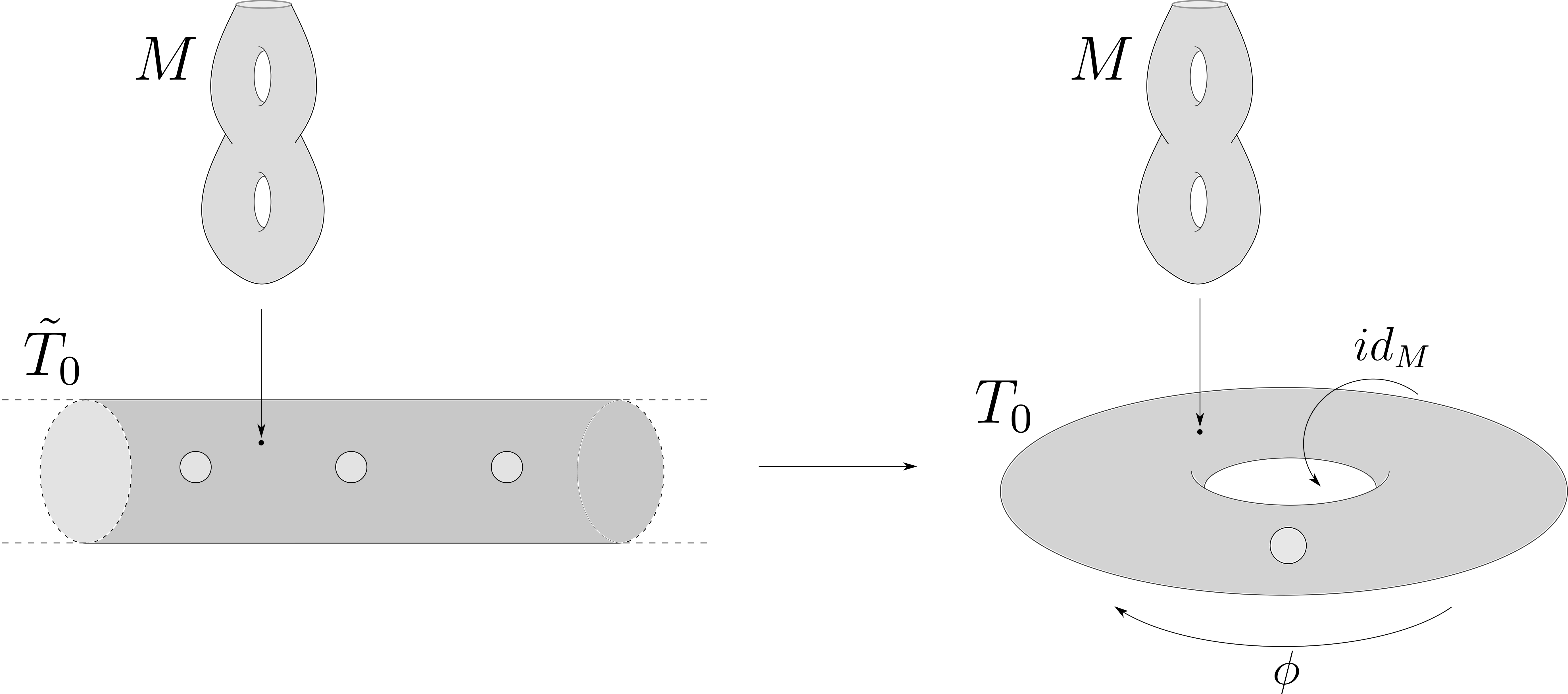}
	\caption{$T_\phi$ and its $\Z$-fold cover $\tilde T_0\times M$}
	\label{figure:sympmt}
\end{figure}

One can instead try to follow an algebraic counterpart of the same idea, namely by executing this procedure at the level of Fukaya categories. By \cite{fukdef}, a compactification of a symplectic manifold by adding a divisor gives rise to a deformation of its Fukaya category, called the relative Fukaya category. For instance, by \cite{lekper}, the (compact) Fukaya category of the punctured torus $T_0$ is derived equivalent to the perfect complexes over the nodal elliptic curve $\T_0$, and the relative Fukaya category corresponding to partial compactification $T_0\subset T^2$ is equivalent to perfect complexes over the Tate family, which deforms $\T_0$. ``The generic fiber'' of this deformation is equivalent to Fukaya category of $T^2$. Therefore, instead of considering partial compactifications of $T_\phi$, one can consider deformations of its Fukaya category, which admit a classification in terms of invariants such as Hochschild cohomology. In the example above, the category corresponding to $T_0\subset T^2$ is essentially the unique deformation of the Fukaya category of $T_0$. One then runs a categorical incarnation of the dynamical idea above that goes back to \cite{flux}. Namely, instead of counting circle actions on the manifold, one counts periodic ``flow lines'' on its Fukaya category. A periodic flow line is a family of derived equivalences of the category. Since derived equivalences are realized as convolution with an invertible bimodule (its ``Fourier-Mukai kernel''), one can instead consider periodic families of invertible bimodules. Seidel \cite{flux}, makes this notion precise and considers such families parametrized by elliptic curves as a categorical replacement for the circle actions. 

We run a similar idea on the Fukaya category of $T_\phi$. Fukaya category associated to a symplectic manifold is a category whose objects consist of Lagrangian manifolds. It is hard to construct interesting compact Lagrangians in $T_\phi$; for instance, if there are no $\phi$-invariant Lagrangians in $M$, then one does not have a Lagrangian in $T_\phi$ that is fibered over a longitude of the torus in Figure \ref{figure:introinclusion}. On the other hand, for any Lagrangian $L\subset M$ and any open arc $\ell\subset T_0$, one can construct a Lagrangian in $T_\phi$ fibered over $\ell$ and has fiber $L$, but this Lagrangian is non-compact. As a result, one needs to consider the version of Fukaya categories that contain non-compact Lagrangians as well, called the wrapped Fukaya category $\cW(T_\phi)$ of $T_\phi$. Another advantage of working with wrapped Fukaya categories is the strong properties they satisfy, such as homological smoothness. Note that one does not have an analogue of the relative Fukaya category for wrapped Fukaya categories. However, we will take this as heuristic and work with deformations of the category directly.

Computations with Fukaya categories can be challenging. For instance, algebro-geometric machinery for deformation theory computations is not available. Taking homological mirror symmetry as the guiding principle, we give a semi-algebro geometric description of the wrapped Fukaya category of $T_\phi$, which simplifies various computations, construction of explicit deformations, as well as families of bimodules over the deformations, which we see as the categorical flow lines. More precisely, we describe the wrapped Fukaya category of $T_\phi$ in terms of the wrapped Fukaya category of $M$ and the action of $\phi$ on the category. The construction is very general: to every $A_\infty$-category $\A$ and auto-equivalence-- still denoted by $\phi$-- we associate a category $M_\phi$, called \textbf{the mapping torus category of $\phi$}. The goal of this paper is to run the program above in purely algebraic terms to distinguish $M_\phi$ from $M_{1_\A}$, the mapping torus of the identity, under various assumptions on $\A$. Note that $M_\phi$ will be a category ``over $D^b(Coh(\T_0))$'', rather than $Perf(\T_0)$, mirroring the use of non-compact Lagrangians. %For instance, when $\A=Spec(\C)$ and $\phi=1_\A$, $M_\phi$ is derived equivalent to 

An heuristic picture of the mapping torus category from algebraic geometry is as follows: given variety $X$ and automorphism $\phi_X$, one can construct an algebraic space by taking the product $\mathbb{P}^1\times X$ and gluing $\{0\}\times X$ to $\{\infty\}\times X$ after twisting by $\phi_X$ (i.e. by identifying $(0,x)$ with $(\infty,\phi_X(x))$ for all $x\in X$). This is a fibration over the nodal elliptic curve $\T_0$, and deforms compatibly with Tate family deforming $\T_0$. This deformation can be thought as the mirror analogue of the partial compactification $T_\phi\subset \overline{T}_\phi$. The generic fiber of this deformation is a rigid analytic mapping torus fibered over the generic fiber of the Tate family. For instance, when $\phi_X=1_X$, this is the product of an elliptic curve with $X$ (after appropriate base change). One has an analytic $\mathbb{G}_m$-action on the rigid analytic mapping torus, and it is periodic for $1_X$, whereas it is either non-periodic or has a larger period for other $\phi_X$. In other words, trivial mapping torus has more periodic actions, and this lets one to distinguish the non-trivial and trivial mapping tori. It is clear that before the deformation, there is no periodic action, as $\T_0$ is singular. Hence, one has to pass to this deformation, analogous to one's need to pass to partial compactification of $T_\phi$ to see any circle actions. This is the mirror analogue of the above program. We categorify this idea to distinguish $M_\phi$ from $M_{1_\A}$. In other words, we construct deformation of these categories over the formal disc. Algebraic computations allow us to classify this deformation as the unique one that is non-trivial in the first order. 
%Instead of passing to ``generic fibers'', we work with families of bimodules, but up to changes in the special fiber. 

%In \cite{ownpaperalg}, we show that $M_\phi$ is derived equivalent to the wrapped Fukaya category of $T_\phi$, when $\A=\cW(M)$, the wrapped Fukaya category of $M$, and the auto-equivalence is induced by the symplectomorphism $\phi$.
%Before closing the section, we would like to compare our work with \cite{flux}. 
As mentioned, the idea to use ``periodic flow lines'' to distinguish Fukaya categories of symplectic manifolds is introduced in \cite{flux}. There the author uses the ``categorical flux'' to distinguish Fukaya categories of compact mapping tori (similar to $\overline{T}_\phi$), as well as some related manifolds that could not be distinguished by other means. There are some differences between \cite{flux} and our work: for instance, in our case, the periodic dynamics is broken for $M_\phi$ itself, but is only recovered once we pass to deformations. Another difference is that in \cite{flux}, the author does not construct ``flow lines'' and check if they are periodic. Instead, he realizes ''periodic flow lines'' as families parametrized by elliptic curves, where the elliptic curve is allowed to vary. In our case, we construct families parametrized by a formal model for an annulus in the analytic multiplicative group $\G^{an}$, i.e. by a formal scheme whose generic fiber is the annulus in $\G^{an}$. Therefore, as we characterize families up to changes in the special fiber, they can actually be thought as families parametrized by (an annulus in) the analytic multiplicative group. Another difference between \cite{flux} and the present work, at a more technical level, is the non-properness of the categories we consider. The tools introduced in \cite{flux} are for proper categories; more precisely, this is needed to show the uniqueness of ``flow lines''. We go around this problem by assuming the input category $\A$, despite being non-proper, is proper in each degree. This implies the same for $M_\phi$. 
\subsection{Categorical construction and the statement of the main theorem}\label{subsec:introcatcon}
%Start with an $A_\infty$-category $\A$ and an auto-equivalence, which we still denote by $\phi$. We construct a category $M_\phi$, called the mapping torus category, associated to $\phi$. As mentioned, the definition of $M_\phi$ is inspired by mirror symmetry and it is constructed to be a model for the wrapped Fukaya category $\cW(T_\phi)$, when $\A\simeq\cW(M)$ and the auto-equivalence corresponds to an auto-equivalence of $\cW(M)$ induced by the symplectomorphism.
Let $\A$ be an $A_\infty$-category over $\C$ and $\phi$ be an auto-equivalence, i.e. an $A_\infty$-functor $\phi:\A\rightarrow \A $ such that $H^*(\phi):H^*(\A)\rightarrow H^*(\A)$ is an equivalence. For simplicity assume $\A$ is a dg category and $\phi$ is a dg functor acting bijectively on objects and hom-complexes of $\A$. Based on this, we construct an $A_\infty$ category $M_\phi$ over $\C$, and we call it \textbf{the mapping torus category of $\phi$}. 

Briefly, the construction goes as follows. Consider the universal cover of the Tate curve $\Tt_0$ whose definition will be recalled in Section \ref{subsec:Tate} (also see Figure \ref{figure:projtate}). It is a nodal infinite chain of projective lines parametrized by $i\in\Z$, and it admits a translation automorphism $\tr$ which moves one projective line to the next. Consider the bounded derived category of coherent sheaves supported on finitely many projective lines, denoted by $D^b(Coh_p(\Tt_0))$. We will construct a dg category $\Om(\Tt_0)_{dg}$ whose triangulated envelope is a dg enhancement of $D^b(Coh_p(\Tt_0))$. Moreover, it admits a strict dg autoequivalence, still denoted by $\tr$, which lifts $\tr_*$. Then, $\tr\otimes \phi$ endows $\Om(\Tt_0)_{dg}\otimes\A$ with a $\Z$-action, and we define the mapping torus category as \begin{equation}\label{intromtdef}
M_\phi:=(\Om(\Tt_0)_{dg}\otimes\A)\#\Z \end{equation}
The smash product with $\Z$, whose definition will be recalled in Section \ref{sec:construction}, corresponds geometrically  to taking the quotient by the $\Z$-action. 

The following example justifies the terminology ``mapping torus category'' from an algebro-geometric perspective:
\begin{exmp}\label{exmpalggeo}
Let $\A$ be a dg model for $D^b(Coh(X))$, where $X$ is a variety over $\C$ and $\phi=(\phi_X)_*$ for an automorphism $\phi_X\curvearrowright X$. Consider the algebraic space \begin{equation}\label{eq:algtori}\Tt_0\times X/(t,x)\sim (\tr(t),\phi_X(x)) \end{equation}We expect $tw^\pi(M_\phi)$- idempotent completed twisted (triangulated) envelope- to be a dg enhancement of bounded derived category of coherent sheaves on this algebraic space (see also \cite[Remark 3.21]{nextpaper}). %(indeed, we believe the techniques used in \cite[Section 3.2]{nextpaper} combined with a K\"unneth theorem for coherent sheaves as in \cite[Proposition 4.6.2]{indcoh} suffice to show this). 
We showed this in the case $X=Spec(\C)$, when the construction gives the nodal elliptic curve $\T_0$ (see Figure \ref{figure:projtate}). See Lemma \ref{categorycomparisonlemma} for this result. This algebro-geometric version of the mapping torus that we have introduced informally earlier provides another motivation for the categorical construction.
\end{exmp}
\begin{rk}
The informal mirror symmetry motivation for the construction of $M_\phi$ is as follows: one knows by \cite{lekpol} and \cite{lekper} that the nodal elliptic curve $\T_0$ is  mirror dual to $T_0$. $T_\phi$ is obtained as a quotient of $\tilde T_0\times M$, where $\tilde T_0$ is an infinitely punctured cylinder that is covering $T_0$ (see Figure \ref{figure:sympmt}). Heuristically, one can think of $\tilde T_0$ as a mirror to $\Tt_0$. Assume $X$ is mirror to Weinstein manifold $M^{2n}$, and an automorphism of $X$, denoted by $\phi_X$, corresponds to $\phi$. A natural proposed mirror for $T_\phi$ is the algebraic space (\ref{eq:algtori}). $M_\phi$ is a straightforward categorification of the construction in Example \ref{exmpalggeo}.
\end{rk}
\begin{exmp}If $\phi=1_\A$, $M_\phi$ is Morita equivalent to $\scrC oh(\T_0)\otimes\A$, where $\scrC oh(\T_0)$ is a dg model for $D^b(Coh(\T_0))$. Thus, the category of perfect modules over $M_\phi$ is equivalent to the category of perfect modules over $\scrC oh(\T_0)\otimes\A$.
\end{exmp}
\begin{figure}\centering
	\includegraphics[height=2 cm]{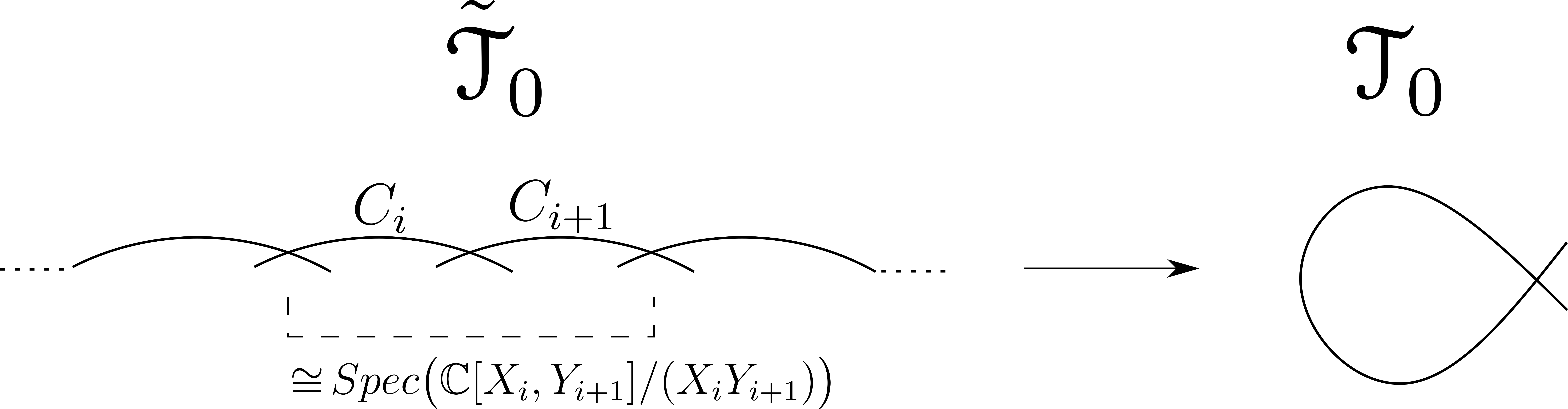}
	\caption{The nodal elliptic curve $\T_0$, and its $\Z$-fold covering $\Tt_0$ }
	\label{figure:projtate}
\end{figure}
We will assume the following conditions hold throughout the paper:
\begin{enumerate}[label=\textbf{C.\arabic*}]
\item\label{C1} $\A$ is (homologically) smooth, see \cite{koso} for a definition
\item\label{C2} $\A$ is proper in each degree and bounded below, i.e. $H^*(hom_\A(x,y))=0$ is finite dimensional in each degree and vanishes for $*\ll 0$ for any $x,y\in Ob(\A)$
\item\label{C3} $HH^i(\A)$, the $i^{th}$ Hochschild cohomology group of $\A$, is $0$ for $i<0$ and is isomorphic to $\C$ for $i=0$
\end{enumerate}
Based on this $M_\phi$ will be shown to satisfy \ref{C1}-\ref{C3} as well. 

Now we can state our main theorem:
\begin{thm}\label{mainthm}
Let $\A$ be as above and assume further that $HH^1(\A)=HH^2(\A)=0$. Assume $M_\phi$ is Morita equivalent to $M_{1_\A}$. Then, $\phi\simeq 1_\A$.
\end{thm}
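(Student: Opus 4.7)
I would approach this by constructing a one-parameter formal deformation of the mapping torus category together with a canonical family of bimodules on it, pinning that family down axiomatically using $HH^1(\A) = HH^2(\A) = 0$, and then showing that under the hypothesized Morita equivalence $M_\phi \simeq M_{1_\A}$ the characterized families must correspond in a way that forces $\phi \simeq 1_\A$. The deformation $\fM_\phi$ would be modelled over $\C[[q]]$ on the smoothing of the nodal Tate curve $\T_0$ to the smooth Tate curve $\T_q$: the primary input is a deformation of $\Om(\Tt_0)_{dg}$ to which the translation $\tr$ extends, after which tensoring with $\A$, pairing $\tr$ with $\phi$, and smashing with $\Z$ produces $\fM_\phi$. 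The same recipe for $\phi = 1_\A$ yields $\fM_{1_\A}$, which generically in $q$ should be Morita equivalent to $\scrC oh(\T_q)_{dg} \otimes \A$.

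On $\fM_\phi$ I would construct a family of bimodules $\{\B_n\}_{n \in \Z}$ whose mod-$q$ reduction recovers the diagonal bimodule together with its twists by the intrinsic $\Z$-grading of the smash product; morally, $\B_n$ deforms the bimodule attached to $\Om_{\T_0}(n \cdot \infty)$, and when $\phi$ is nontrivial its $\A$-component remembers the graph bimodule $\A_{\phi^n}$. The technical heart is a uniqueness statement: $\B_n$ is determined up to quasi-isomorphism by its reduction mod $q$ together with natural compatibility with the deformation. This should follow from an order-by-order obstruction calculation in which the relevant obstruction-to-existence and automorphism-ambiguity groups decompose, via the tensor structure of $\Om(\Tt_0)_{dg} \otimes \A$ and the smash product, into pieces of $HH^*(\A)$ paired with cohomology of the Tate family; the vanishing $HH^1(\A) = HH^2(\A) = 0$ is exactly what kills those pieces.

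Granted the uniqueness, a Morita equivalence $M_\phi \simeq M_{1_\A}$ should lift to an equivalence $\fM_\phi \simeq \fM_{1_\A}$ of deformations by the same obstruction-theoretic input, and under this lift the characterized families must correspond, giving $\B_1^\phi \simeq \B_1^{1_\A}$. On the $1_\A$ side the $\A$-component of $\B_1^{1_\A}$ is the diagonal bimodule $\A_{1_\A}$, while on the $\phi$ side the $\A$-component of $\B_1^\phi$ is the graph bimodule $\A_\phi$; matching forces $\A_\phi \simeq \A_{1_\A}$ as $\A$-bimodules, which for smooth $\A$ implies $\phi \simeq 1_\A$ as auto-equivalences.

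The main obstacle, occupying the bulk of the work, is the uniqueness of $\{\B_n\}$: it demands an explicit identification of the bimodule Hochschild cohomology of $M_\phi$ and of its deformation in terms of $HH^*(\A)$ and cohomology of the Tate family, rather than a naive vanishing. Once that identification is in hand the conditions $HH^1(\A) = HH^2(\A) = 0$ annihilate all relevant obstructions order by order, and the remaining steps --- constructing the deformation, producing $\B_n$, and extracting $\phi \simeq 1_\A$ from the matching of bimodules --- are comparatively mechanical.
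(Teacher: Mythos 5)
Your proposal captures the broad architecture of the paper's strategy---deform $M_\phi$ over $\C[[q]]$, build a canonical family of bimodules, characterize it, and match under the hypothesized Morita equivalence---but it contains genuine gaps in the two places that carry most of the weight.

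First, your intended role for $HH^1(\A)=HH^2(\A)=0$ is not what the paper uses, and the use you propose does not obviously work. You would like these vanishings to kill an order-by-order obstruction/ambiguity computation proving uniqueness of your family $\{\B_n\}_{n\in\Z}$. The paper's uniqueness theorem (Theorem \ref{uniquenesstheorem}) does \emph{not} rely on these hypotheses at all: it is a statement about families parametrized by $Spf(A_R)$ with $A_R=\C[u,t][[q]]/(ut-q)$, proven via pre-connections and the structure theory of finitely generated $A_R$-modules carrying connections along $t\partial_t-u\partial_u$ (Propositions \ref{followsame}, \ref{freeuptoqtors}, \ref{qtorsmod}). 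The key point is that two families following the same class in $HH^1$, coherent mod $q$ and agreeing at $t=1$, have hom-complexes whose cohomology is a rank-one $A_R$-module with connection, hence free up to $q$-torsion; this forces the families to agree up to $q$-torsion. Nothing here is an obstruction calculus over $\A$. Where the conditions $HH^1(\A)=HH^2(\A)=0$ actually enter is earlier: they give $HH^1(M_\phi)\cong\C^2$ and $HH^2(M_\phi)\cong\C$ (Corollary \ref{HHtoruscor2}), which is needed so that the deformation $M_\phi^R$ is versal (hence the Morita equivalence can be deformed, Corollary \ref{defocor}) and so that $HH^1$ has the right rank-two lattice structure. If $HH^1(\A)\neq 0$, the cocharacter lattice would sit in a larger space and the normalization in the next paragraph would fail.

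Second, and more seriously, you implicitly assume that the Morita equivalence ``lifts'' to an equivalence of deformations in a way that automatically matches the distinguished families $\B_1^\phi$ and $\B_1^{1_\A}$. A Morita equivalence carries no such information by itself: the induced isomorphism $HH^1(M_\phi)\cong HH^1(M_{1_\A})$ sends $\gamma_\phi$ to \emph{some} primitive lattice vector in $L(M_{1_\A})\cong\Z^2$, but not necessarily to $\gamma_{1_\A}$, and the two families only correspond if the classes they follow correspond. This is why the paper spends Sections \ref{sec:rank2} and \ref{sec:symmetries} proving that the group of self-Morita equivalences of $M_{1_\A}$ acts on $L(M_{1_\A})$ through $SL_2(\Z)$ and is transitive on primitive vectors (Corollary \ref{sl2cor}), so that one can \emph{compose} the given Morita equivalence with a self-equivalence to force $\gamma_\phi\mapsto\gamma_{1_\A}$. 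Without this normalization step your argument has no way to ensure $\B_1^\phi$ matches $\B_1^{1_\A}$ rather than some other group-like bimodule. Finally, a smaller point: even once ``fiberwise $\phi$'' is shown trivial up to $q$-torsion, extracting $\phi\simeq 1_\A$ is not immediate; the paper restricts to the subcategory $\{\Om_p\}\otimes\A$, inverts $q$, and invokes Lemma \ref{lemmarestr}, which your proposal elides. You would need to spell this out to complete the argument.
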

\subsection{Sketch of the proof}
The proof goes as follows. Assume $M_\phi$ is Morita equivalent to $M_{1_\A}$. The notion of Morita equivalence will be recalled later in Definition \ref{moritadefn}, but we remark that this is equivalent to equivalence of derived categories for $A_\infty$-categories over $\C$. To any categorical mapping torus one can associate a natural formal deformation (with curvature) over the topological local ring $R=\C[[q]]$. We denote this deformation by $M_\phi^R$ (resp. $M_{1_\A}^R$). Its explicit construction is as follows. There exists a natural smoothing of $\Tt_0$, denoted by $\Tt_R$ (see Figure \ref{figure:incltate}). To this we associate a curved dg category, denoted by $\Om(\Tt_R)_{cdg}$, and then apply the same construction as (\ref{intromtdef}) replacing $\Om(\Tt_0)_{dg}$ by $\Om(\Tt_R)_{cdg}$. The deformations $M_\phi^R$ and $M_{1_\A}^R$ have no a priori relation to the Morita equivalence; however, $HH^2(M_\phi)\cong HH^2(M_{1_\A})\cong \C$, under the assumptions of the theorem and the construction. Hence, there is only one formal deformation that is non-trivial at first order (up to reparametrization). Thus, we may assume without loss of generality that the Morita equivalence deforms to a Morita equivalence between $M_\phi^R$ and $M_{1_\A}^R$. 

$M_\phi^R$ (resp. $M_\phi$) carries a canonical $\G(R)$ (resp. $\G$)-action for which the infinitesimal action makes sense (i.e. one can differentiate the action, see Definition \ref{defn:prorat}). Infinitesimal action gives a class $\gamma_\phi^R\in HH^1(M_\phi^R)$ (resp. $\gamma_\phi\in HH^1(M_\phi)$). The action can be considered as a family of $M_\phi^R$-bimodules which is parametrized by the formal spectrum of $\C[t,t^{-1}][[q]]$, and which ``follows'' the class $1\otimes \gamma_\phi^R$ along $t\partial_t$ direction. This family can be considered as a ``short flow line'' for $1\otimes \gamma_\phi^R$, and we extend it to a ``longer flow line'', i.e. to a family over the formal spectrum of $\C[u,t][[q]]/(ut-q)$. This is the formal completion of $\{ut=0\}\subset \mathbb A^2_\C$ and contains the formal spectrum $Spf(\C[t,t^{-1}][[q]])$ as a formal open subscheme, where the inclusion is induced by $t\mapsto t,u\mapsto qt^{-1}$. See Figure \ref{figure:introinclusion}.
\begin{figure}
\centering
\includegraphics[height=4cm]{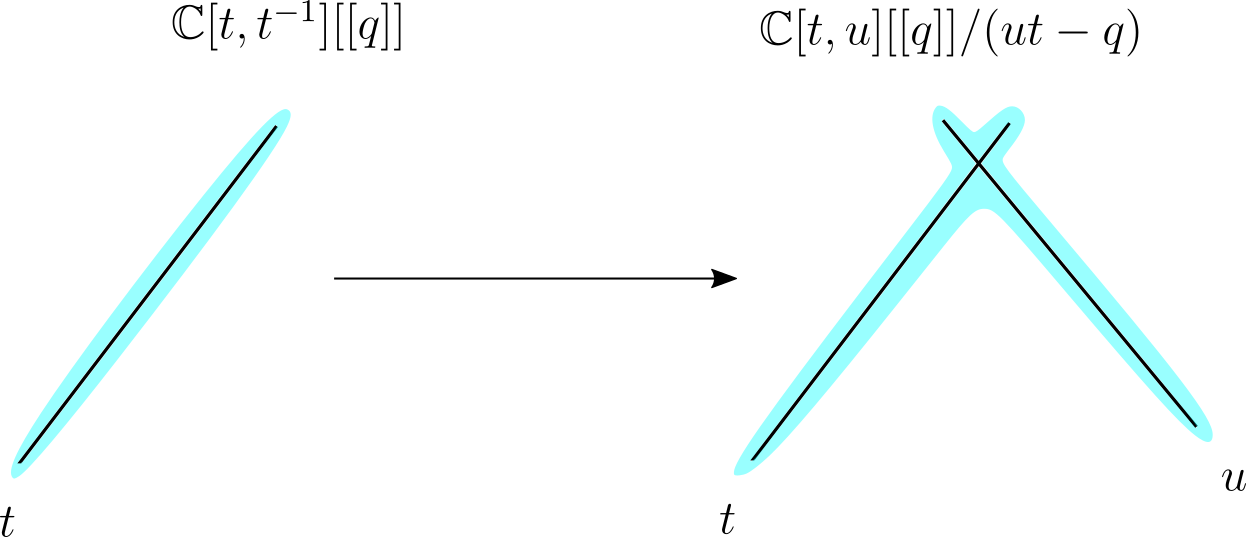}
\caption{Inclusion of $\C[t,t^{-1}][[q]]$ into $\C[u,t][[q]]/(ut-q)$}
\label{figure:introinclusion}
\end{figure}
To construct the extended family we consider a formal subscheme $\cG_R\subset \Tt_R\times\Tt_R\times Spf(\C[u,t][[q]]/(ut-q) )$ with the following properties:
\begin{enumerate}
\item it is flat over $Spf(\C[u,t][[q]]/(ut-q) )$
\item it restricts to the graph of $\widehat{\mathbb{G}_{m,R}}$-action (see Remark \ref{actionremark}) over the formal spectrum $Spf(\C[t,t^{-1}][[q]])$
\item it restricts to graph of composition of the inverse action with backwards translation $\tr^{-1}$ over $Spf(\C[u,u^{-1}][[q]])$
\end{enumerate}
In particular, we obtain the diagonal over the $R$-point $t=1$ and the graph of backwards translation over $u=1$. We turn $\cG_R$ into a family of bimodules over $M_\phi^R$ by defining an $\Om(\Tt_R)_{cdg}$-$\Om(\Tt_R)_{cdg}$-bimodule \begin{equation}``(\scrF,\scrF')\mapsto hom_{\Tt_R\times\Tt_R}(q^*\scrF,p^*\scrF'\otimes\cG_R )" \end{equation} and showing it naturally descends to $M_\phi^R=(\Om(\Tt_R)_{cdg}\otimes\A )\#\Z$. After some other technical replacements, we obtain a family $\scrG_R^{sf}$ of bimodules over $M_\phi^R$ parametrized by $\C[u,t][[q]]/(ut-q)$ satisfying properties \ref{G1}-\ref{G3} below for $\gamma=\gamma_\phi^R$ and which restricts to ``fiberwise $\phi$'' at $u=1$, i.e. to the bimodule corresponding to descent of auto-equivalence $1\otimes\phi$ on $\Om(\Tt_R)_{cdg}\otimes\A$ to $M_\phi^R$. Hence, if we can show families constructed in this way correspond to each other under the Morita equivalence between $M_\phi^R$ and $M_{1_\A}^R$, this would imply the triviality of ``fiberwise $\phi$'' and therefore triviality of $\phi$, finishing the proof of the theorem.

For this, we would first need to show the classes $\gamma_\phi^R$ and  $\gamma_{1_\A}^R$ correspond to each other under the isomorphism $HH^1(M_\phi^R)\cong HH^1(M_{1_\A}^R)$ induced by the Morita equivalence. To achieve this, we prove in Section \ref{sec:rank2} that these classes fall into natural rank $2$ lattices inside $HH^1(M_\phi^R)\cong \C^2$ resp. $HH^1(M_{1_\A}^R)\cong \C^2$ that are matched by the Morita equivalence, and show in Section \ref{sec:symmetries} that the symmetries of $M_{1_\A}^R$ induce $SL_2(\Z)$ symmetry on the lattice. Hence, we can use these categorical symmetries to fix the initial Morita equivalence so that the classes $\gamma_\phi^R$ and  $\gamma_{1_\A}^R$ match. 

Given this result, one would only need to prove a general theorem that states once the class $\gamma$ is fixed, the family is uniquely characterized by the following axioms:
\begin{enumerate}[label=\textbf{G.\arabic*}]
\item\label{G1} The restriction $\fM|_{q=0}$ is coherent. This is equivalent to its representability by an object of $tw^\pi(\B_0\otimes\B_0^{op}\otimes ``\mathcal{C}oh(A)")$. See Definition \ref{compactfamdef}.
\item The restriction $\fM|_{t=1}$ is isomorphic to the diagonal bimodule over $\B$.
\item The family follows the class $1\otimes \gamma\in HH^1(\B^e)$.
\end{enumerate}
This is achieved in Theorem \ref{uniquenesstheorem}, namely we show that two families satisfying \ref{G1}-\ref{G3} are quasi-isomorphic up to $q$-torsion. The proof of Theorem \ref{uniquenesstheorem} relies on two things: the ideas in \cite{flux}, which we recall in Section \ref{subsec:familyreview}, and the algebra/geometry of modules over $\C[u,t][[q]]/(ut-q)$ which carry connections along the derivation $t\partial_t-u\partial_u$. More explicitly, given two such family $\scrG_1$ and $\scrG_2$, we show the hom-complexes in the category of families involving them are chain complexes of $\C[u,t][[q]]/(ut-q)$-modules carrying such connections in each degree that commute with the differentials. Hence, degree $0$ homomorphisms in the cohomological category give rank $1$ modules with connection, and we show in Appendix \ref{sec:modules} that such modules are free up to $q$-torsion. Following this line of ideas we prove the isomorphism $\scrG_1|_{t=1}\simeq \scrG_2|_{t=1}$ extends over $\C[u,t][[q]]/(ut-q)$ to an isomorphism up to $q$-torsion. This completes the proof. 

Now, let us detail the moral idea given in Section \ref{subsec:introcatcon} for the algebro-geometric minded reader. Consider the algebro-geometric torus given in Example \ref{exmpalggeo}. It has a natural deformation \begin{equation}\mathfrak Y=\Tt_R\times X/(t,x)\sim (\tr(t),\phi_X(x)) \end{equation}which is a fibration over the formal smoothing $\T_R=\Tt_R/t\sim\tr(t)$. Its generic fiber $\mathfrak Y_{\C((q))}$ (in the sense of Raynaud, see \cite[Section 5]{Temkin2015}) gives \begin{equation}``\mathbb{G}_{m,\C((q))}^{an}\times X/(t,x)\sim (qt,\phi_X(x))"\end{equation}a rigid analytic version of $\C^*\times X/(t,x)\sim (q_0t,\phi_X(x))$, where $|q_0|<1$. There is an action of $\mathbb{G}_{m,\C((q))}^{an}$ on this rigid analytic space; however, it descends to an action of the elliptic curve $\mathbb{G}_{m,\C((q))}^{an}/q$ if and only if $\phi_X=1_X$. In other words, the trivial mapping torus will be distinguished from the others in that the restriction of the graph of the action to $z=q\in \mathbb{G}_{m,\C((q))}^{an}$ is the diagonal of $\mathfrak Y_{\C((q))}$ while in general it is the graph of fiberwise $\phi_X$. This action can be thought as analogous to the flow of a vector field. The uniqueness of the family is an analogue of the uniqueness of the flow of a vector field. This is more explicit if we consider the philosophy of Raynaud and realize rigid analytic spaces as formal schemes over $R=\C[[q]]$ up to admissable blow-ups in the special fiber $q=0$. In particular, the family $\scrG_R^{sf}$ obtained from the graph \begin{equation}\cG_R\subset \Tt_R\times \Tt_R\times Spf(\C[u,t][[q]]/(ut-q) )\end{equation} morally corresponds to such a degeneration of the graph of action, restricted to a smaller annulus in $\mathbb{G}_{m,\C((q))}^{an}$ afterwards. 
\begin{rk}
As we have explained briefly, the proof can also be thought as an algebraic version of the argument in Section \ref{subsec:motiv}. The deformation $M_\phi^R$ is analogous to partial compactification $\overline{T}_\phi$ (see also \cite{fukdef}). The Hochschild cohomology class $\gamma_\phi^R$ is an algebraic analogue of the (lift of) vector field $\partial_t$, and the family $\scrG_R^{sf}$ is the analogue of its flow. Hence, the restriction of this family to $u=1$ is analogous to time $1$-flow of $\partial_t$ (time $(-1)$-flow to be precise), giving us ``fiberwise $\phi$'' in both cases. The problem of concluding the triviality of $\phi$ from the triviality of fiberwise $\phi$ has an easy solution in categorical version. 
\end{rk}
\subsection{Outline}
In Sections \ref{sec:tate} and \ref{sec:dgmodel} we review the construction of $\Tt_0$, $\Tt_R$, and present the dg model $\Om(\Tt_0)_{dg}$ and its deformation $\Om(\Tt_R)_{cdg}$. In Section \ref{sec:construction} we review the smash products and define $M_\phi$ and $M_\phi^R$. Section \ref{sec:hoch} is dedicated to computation of Hochschild cohomology and its results will be referred in other computations later. In Section \ref{sec:family}, we construct the family $\scrG_R^{sf}$ and prove it satisfies desired properties. This section also contains a brief review of families. Sections \ref{sec:rank2} and \ref{sec:symmetries} provide us the statements we need to fix the image of $\gamma_\phi\in HH^1(M_\phi)\cong\C^2$ under the Morita equivalence. In Section \ref{sec:rank2}, we show that the classes that are obtained as the infinitesimal action of a $\G$ (resp. $\G(R)$)-action on $M_\phi$ (resp. $M_\phi^R$) form a copy of $\Z^2$ inside $HH^1(M_\phi)$ (resp. $HH^1(M_\phi^R)\cong R^2$) generated by basis elements. This ``cocharacter lattice'' is obviously preserved under Morita equivalences, and Section \ref{sec:symmetries} provides us symmetries of the categories acting transitively on primitive elements of the lattice. In Section \ref{sec:unique}, we finally conclude the proof of uniqueness(up to $q$-torsion) of families satisfying \ref{G1}-\ref{G3} and the proof of Theorem \ref{mainthm}. In the final section, Section \ref{sec:another}, we relate the growth rates of $rk(HH^*(M_\phi^R,\Phi^k_f))$, where $\Phi^k_f$ is the bimodule kernel of fiberwise $\phi^k$, to growth rates for $\phi$. In Appendix \ref{sec:modules}, we prove some results (such as freeness up to $q$-torsion) for finitely generated modules over $A_R=\C[u,t][[q]]/(ut-q)$ with connections along $t\partial_t-u\partial_u$. %It can be omitted if one accepts the statements there. 
%we provide another useful dynamical invariant of the pair $(M_\phi,\gamma_\phi)$ extracted from $HH^*(M_\phi^R,\Phi^k_f)$, where $\Phi^k_f$ is the bimodule kernel of fiberwise $\phi^k$. 
\subsection{Applications and generalizations}
In \cite{nextpaper}, we show that $M_\phi$ is actually derived equivalent to $\cW(T_\phi)$ with a canonical grading. Using this we find pairs of Liouville manifolds that can be distinguished by Theorem \ref{mainthm}, but that have the same topology, contact boundary, symplectic cohomology groups, etc. Indeed, these examples are obtained by attaching subcritical handles to $T_\phi$ and $T_{1_M}$. One can kill the first cohomology of $T_\phi$, resp. $T_{1_M}$ by this process without changing wrapped Fukaya category. Theorem \ref{mainthm} implies $\cW(T_\phi)$ and $\cW(T_0\times M)$ -endowed with canonical gradings- are not derived equivalent (one can grade these categories in different ways, but after killing the first cohomology the grading is unique). 
Note vanishing of first cohomology implies that arguments involving flux cannot be used either. 
%As mentioned, if the symplectomorphism $\phi$ restricts to identity on the boundary, then $\partial T_\phi=\partial T_{1_M}$ as contact manifolds. If $\pi_1(M)=1$, one can attach subcritical handles to $T_\phi$ to make the first cohomology vanish. This process does not change the wrapped Fukaya category due to results of \cite{iriehandle}, \cite{cieliebakhandle}, \cite{BEE}; hence, the filings are still different as a corollary of Theorem \ref{mainthm}. However, no argument involving symplectic flux can be used to distinguish the fillings since they have vanishing first cohomology. 
%
%The proof actually works when $M_{1_{\A}}$ is replaced by the trivial mapping torus of a category other than $\A$ satisfying the same conditions. As mentioned, the reason to write Sections \ref{sec:rank2} and \ref{sec:symmetries} was fixing the Morita equivalence so that it carries $\gamma_\phi$ to $\gamma_{1_\A}$. This is also the reason to confine ourselves to the case when one of the mapping tori is trivial. However, if we can show the existence of analogous families following $1\otimes \gamma$ for other classes $\gamma$ in the rank $2$ lattice in $HH^1(M_\phi)$ and the uniqueness of such families following the same line of ideas, we can possibly show the following:

We believe Theorem \ref{mainthm} can be generalized as follows:
\begin{conj}\label{betterconjecture}
Assume $\A$ is as in Theorem \ref{mainthm}. Let $\phi$ and $\phi'$ be two auto-equivalences satisfying the stated conditions and assume $M_\phi$ and $M_{\phi'}$ are Morita equivalent. Then $\phi$ and $\phi'$ have the same order.
\end{conj}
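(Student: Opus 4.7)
The plan is to follow the strategy of Theorem \ref{mainthm} essentially verbatim, replacing its symmetry-based reduction (``match $F_\ast(\gamma_\phi^R)$ with $\gamma_{1_\A}^R$ via $SL_2(\Z)$-symmetries of $M_{1_\A}$'') by a direct bookkeeping of where the full rank-two cocharacter lattice lands under $F_\ast$. First, the deformation step from Theorem \ref{mainthm} goes through unchanged: one still has $HH^2(M_\phi)\cong HH^2(M_{\phi'})\cong\C$, so any Morita equivalence $M_\phi\simeq M_{\phi'}$ lifts, after a reparametrization of $R=\C[[q]]$, to an $R$-linear Morita equivalence $F\colon M_\phi^R\xrightarrow{\sim} M_{\phi'}^R$.

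Next, I would equip each $M_\psi^R$ with its two commuting canonical $\G(R)$-actions: the one already treated in the excerpt (infinitesimal class $\gamma_\psi^R$, geometrically the $\partial_t$-flow along the mapping torus direction) and the rotation of the $S^1$-factor of the base $T^2$ (class $\delta_\psi^R$). An extension of the arguments of Section \ref{sec:rank2} --- characterizing lattice classes intrinsically as those integrating to $\G(R)$-actions --- shows that $F_\ast$ carries the rank-two sublattice $L_\phi\subset HH^1(M_\phi^R)$ isomorphically onto $L_{\phi'}\subset HH^1(M_{\phi'}^R)$ via some matrix $\bigl(\begin{smallmatrix}a&c\\ b&d\end{smallmatrix}\bigr)\in GL_2(\Z)$. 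For each primitive $(a,b)\in L_\psi$ I would then generalize the graph construction of Section \ref{sec:family} to a formal subscheme $\cG_R^{(a,b)}\subset\Tt_R\times\Tt_R\times Spf(\C[u,t][[q]]/(ut-q))$ which over $Spf(\C[t,t^{-1}][[q]])$ is the graph of the $(a,b)$-cocharacter of $\G(R)^2$ and at $u=1$ computes $\tr^{-a}$ together with a $b$-fold $s$-rotation; the associated family $\scrG_R^{sf,(a,b)}$ satisfies \ref{G1}--\ref{G3} with $\gamma=a\gamma_\psi^R+b\delta_\psi^R$, so Theorem \ref{uniquenesstheorem} identifies it, up to $q$-torsion, with the image under $F$ of $\scrG_R^{sf,(1,0)}$ on the $\phi$-side.

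Evaluating both sides at $u=1$, and using that the $s$-rotation acts fiberwise trivially as a bimodule, yields isomorphisms of invertible $R$-linear bimodules
\[
F_\ast(\text{fiberwise }\phi)\simeq\text{fiberwise }(\phi')^a,\qquad F_\ast(\text{fiberwise }1_\A)\simeq\text{fiberwise }(\phi')^c.
\]
Since the order of the invertible bimodule ``fiberwise $\psi$'' on $M_\psi^R$ equals $\mathrm{ord}(\psi)$ (recovered by restriction to any fiber), and Morita equivalences preserve orders of invertible bimodules, we get $\mathrm{ord}(\phi)=\mathrm{ord}((\phi')^a)$, and by running the same argument for $F^{-1}$, $\mathrm{ord}(\phi')=\mathrm{ord}(\phi^d)$. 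If both orders are finite, call them $n,m$: these translate to $n\cdot\gcd(a,m)=m$ and $m\cdot\gcd(d,n)=n$; multiplying and using $ad-bc=\pm1$ forces $\gcd(a,m)=\gcd(d,n)=1$, hence $n=m$. If $\mathrm{ord}(\phi)$ is finite while $\mathrm{ord}(\phi')$ is infinite, then $\mathrm{ord}(\phi)=\mathrm{ord}((\phi')^a)$ forces $a=0$, hence $\phi\simeq 1_\A$; applying Theorem \ref{mainthm} to the Morita equivalence $M_{1_\A}\simeq M_{\phi'}$ then gives $\phi'\simeq 1_\A$, contradicting $\mathrm{ord}(\phi')=\infty$. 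So the two orders are always equal.

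The main obstacle I expect is extending the graph construction of Section \ref{sec:family} from the cocharacter $(1,0)$ to arbitrary primitive $(a,b)$: this presupposes the second $\G(R)$-action $\delta_\psi^R$ on $M_\psi^R$ for arbitrary $\psi$, and requires verifying that the modified graphs $\cG_R^{(a,b)}$ are flat with the correct restrictions so that the resulting families satisfy \ref{G1}--\ref{G3} with the predicted Hochschild class. A related but I expect easier point, addressed by lattice-preservation extensions of Section \ref{sec:rank2}, is showing that $\delta_\psi^R$ and $\gamma_\psi^R$ span a rank-two lattice intrinsically characterized by integrability to $\G(R)$-actions, so that $F_\ast$ respects the $\Z^2$-structure and not merely the underlying $R$-module.
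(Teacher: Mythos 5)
This is stated in the paper as a \emph{conjecture} (Conjecture \ref{betterconjecture}), not a theorem: the paper itself offers only a sketch of a possible strategy and explicitly acknowledges that the crucial technical inputs are not established. You should therefore be aware that you are not competing against an actual proof but against a programme, and your proposal is itself a programme with related but non-identical gaps.

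Your route differs from the paper's sketch in one structural respect. The paper proposes defining an \emph{intrinsic} invariant of $M_\psi$: the subgroup of the cocharacter lattice $L(M_\psi^R)\cong\Z^2$ consisting of classes whose ``flow line from $t=1$ to $u=1$'' ends at the diagonal, constructed by iterating families that follow a primitive class; the claim is that this ``flux subgroup'' has index $\mathrm{ord}(\psi)$, and since its definition is Morita-invariant the conjecture follows. You instead track a single Morita equivalence $F$ directly, extract a matrix in $GL_2(\Z)$ from $F_\ast$ on the lattice, and then run a divisibility argument relating $\mathrm{ord}(\phi)$, $\mathrm{ord}(\phi')$ and the matrix entries. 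Your arithmetic at the end is correct given the premises, and your handling of the infinite-order case (reducing to Theorem \ref{mainthm}) is a nice touch the paper does not spell out. The trade-off is that your version must also argue that $F_\ast$ of the diagonal restricted at $u=1$ gives fiberwise $(\phi')^c$ (hence $\mathrm{ord}(\phi')\mid c$), which the flux-group framing makes automatic; conversely, the paper's iterative construction of families following a non-primitive $k\gamma_0$ is something you sidestep.

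Both approaches share the same decisive unresolved gap, which you flag but understate: existence of families $\scrG_R^{sf,(a,b)}$ satisfying \ref{G1}--\ref{G3} for primitive $(a,b)\neq(1,0)$. Your proposed construction ``$\cG_R^{(a,b)}\subset\Tt_R\times\Tt_R\times Spf(A_R)$ as a formal subscheme equal, over $Spf(\C[t,t^{-1}][[q]])$, to the graph of the $(a,b)$-cocharacter of $\G(R)^2$'' is not well-posed as stated, because the second $\G(R)$-factor (your $\delta_\psi^R$; the paper's $\gamma_2^R$) acts on $M_\psi^R$ via the extra grading, i.e.\ geometrically via $Pic^0(\T_R)$ twisting by line bundles (Remark \ref{nodalactionremark}), and \emph{not} as automorphisms of $\Tt_R$. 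Its ``graph'' is therefore not a subscheme of $\Tt_R\times\Tt_R$; what one needs instead is a family of Fourier--Mukai kernels combining the geometric graph of the $\mathrm{Aut}^0$-flow with a degenerating family of line bundles, and one must check flatness, coherence at $q=0$, and the correct Hochschild class for this hybrid object. Relatedly, the step ``the $s$-rotation acts fiberwise trivially as a bimodule at $u=1$'' is exactly the statement that the $Pic^0$-component of the monodromy-at-$u=1$ is trivial. This is plausible from the rigid-analytic heuristic (at $u=1$ one lands on $z=q\in\mathbb G_m^{an}$, which is the identity of $\mathbb G_m^{an}/q^{\Z}\cong Pic^0$), but you give no actual argument, and it is precisely where the content of the conjecture lies. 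In short: your argument is correct modulo the same open step the paper leaves open, plus the additional unjustified claim about triviality of the $Pic^0$-contribution at $u=1$, which the paper's ``index of the flux subgroup'' formulation requires as well but does not claim to have proven.
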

This produces an infinite family of pairwise non-equivalent categories; therefore, by the results of \cite{nextpaper}, an infinite family of pairwise non-symplectomorphic Liouville manifolds. To prove Conjecture \ref{betterconjecture}, one only needs to prove existence of families like $\scrG_\phi$ that follow different (primitive) classes in the natural rank $2$ lattice in $HH^1(M_\phi^R)$. Then, one obtains ``flow lines along any class in the rank two lattice''. More precisely, these are families parametrized by formal schemes possibly different from $Spf(A_R)$ (it is $Spf(A_R)$ when the class is primitive in the lattice, note we do not define families with more general bases). The order of $\phi$ would be the index of the subgroup of the lattice given by the elements for which the restriction of the corresponding flow line to ``other end'' is the diagonal. More concretely, given $\gamma=k\gamma_0$, where $\gamma_0$ is a primitive class, construct a family parametrized by $Spf(A_R)$ that follow $\gamma_0$ and that restrict to diagonal at $t=1$ and to $\Psi_1$ at $u=1$. Inductively construct a family parametrized by $A_R$ that follow $\gamma_0$ and that restricts to $\Psi_1$ at $t=1$ and to $\Psi_2$ at $u=1$, \dots, a family that restricts to $\Psi_{k-1}$ at $t=1$ and to $\Psi_k$ at $u=1$. Then, the elements $k\gamma_0$ such that $\Psi_k$ is quasi-isomorphic to diagonal bimodule form a subgroup of the lattice whose index is given by the order of $\phi$. This subgroup is intrinsic to $M_\phi$ and should be thought as an analogue of the flux group.

A generalization in an orthogonal direction is the following: one can construct a version of open symplectic mapping torus for two commuting symplectomorphisms: namely, given $\phi$ and $\psi$ acting on $M$, construct $T_{\phi,\psi}$ as the quotient of $(M\times \mathbb{R}\times \mathbb{R})\setminus (M\times \Z\times \Z)$ by the relations 
\begin{equation}
(x,t,s)\sim (\phi(x),t+1,s)\sim (\psi(x),t,s+1)
\end{equation}
This generalizes $T_\phi$ and it still carries the structure of a symplectic fibration over $T_0=T^2\setminus\{*\}$ with non-trivial monodromy in both $\partial_t$ and $\partial_s$ directions. One can easily produce an analogous algebraic model for this construction. First build a model for the doubly infinite cover $(\mathbb{R}\times\mathbb{R})\setminus(\Z\times\Z)\to T_0$ as follows: $\Om(\Tt_0)_{dg}$ contains a non-full quasi-equivalent subcategory $\Om(\Tt_0)_{dg}^{eval}$, on which the $\G$-action is rational. Consider the category $\tilde{\tilde{\Om}}$ whose objects are given as the pairs $(\scrF,i)$, where $\scrF\in ob(\Om(\Tt_0)_{dg})$ and $i\in \Z$. Let $\tilde{\tilde{\Om}}((\scrF,i),(\scrF',i') )$ be defined as the weight $i'-i$ part of $\Om(\Tt_0)_{dg}^{eval}(\scrF,\scrF')$ with respect to the rational $\G$-action. This category carries a strict action of $\Z\times\Z$, where the first action is induced by $\tr$ and the second by $(L,i)\mapsto(L,i+1)$. Clearly, $\tilde{\tilde{\Om}}\#\Z$ (for the second $\Z$-action) is equivalent to $\Om(\Tt_0)_{dg}^{eval}$ (hence to $\Om(\Tt_0)_{dg}$). Then, given $\A$ as before with strictly commuting strict dg auto-equivalences $\phi$ and $\psi$, define $M_{\phi,\psi}$ to be $(\tilde{\tilde{\Om}}\otimes \A)\# (\Z\times\Z )$. %This can be made more conceptual from the perspective presented in \cite[Section 3]{nextpaper}, namely one can define $M_\phi$ to be the twisted tensor product of $\Om(\T_0)=\Om(\Tt_0)_{dg}\#\Z$ and $A$. This notion is defined using the extra $\Z$-grading on the former induced by taking smash products and $\Z$-action on the latter. On the other hand, there are models for $D^b(Coh(\T_0) )$ that carries an extra $\Z\times \Z$ grading (for instance, $\tilde{\tilde{\Om}}\#(\Z\times\Z)$). One can define $M_{\phi,\psi}$ define to be the twisted tensor product of this model with $\A$ with respect to $\Z\times\Z$-action of the latter.
We expect this category to be equivalent to $\cW(T_{\phi,\psi})$ with a canonical grading. We believe the following generalization of Theorem \ref{mainthm} and Conjecture \ref{betterconjecture} holds:
\begin{conj}\label{conjecture2}
$M_{\phi,\psi}$ is Morita equivalent to $M_{\phi',\psi'}$ if and only if the abelian subgroups $\langle\phi,\psi\rangle$ and $\langle\phi',\psi'\rangle$ of $Auteq(D^\pi(\A))$ are the same.
\end{conj}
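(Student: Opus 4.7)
The plan is to extend the strategy of Theorem~\ref{mainthm} to the two-parameter setting, and the proof will have one relatively straightforward direction and one substantially harder one. For the ``if'' direction, suppose $\langle\phi,\psi\rangle = \langle\phi',\psi'\rangle$; then (at least when the subgroup has rank $2$) there exists $A \in GL_2(\Z)$ relating the two sets of generators. A change of basis by $A$ should simultaneously relabel the $\Z \times \Z$-action on $\tilde{\tilde{\Om}}$ (which acts via $\tr$ in one direction and via weight shifts with respect to the rational $\G$-action in the other) and on $\A$ (which acts via $\phi$ and $\psi$), so the smash product $(\tilde{\tilde{\Om}} \otimes \A)\#(\Z \times \Z)$ should depend only on the image in auto-equivalences, yielding the desired Morita equivalence. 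When the subgroup is not of rank $2$, I would handle the kernel of $\Z \times \Z \to \langle\phi,\psi\rangle$ separately, using that its contribution only involves the $\tilde{\tilde{\Om}}$-factor, where smash products with free $\Z$-subactions are already well understood.

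For the ``only if'' direction, my first step is to construct a formal deformation $M^R_{\phi,\psi}$ over a base ring $R$ that smooths the ambient nodal geometry in both coordinate directions; heuristically $R = \C[[q_1, q_2]]$, corresponding to smoothings of each of the two Tate-curve-like directions built into $\tilde{\tilde{\Om}}$. I would then compute $HH^*(M^R_{\phi,\psi})$ along the lines of Section~\ref{sec:hoch}, with particular attention to $HH^1$. The expected output, generalizing Section~\ref{sec:rank2}, is a natural rank $4$ sublattice of $HH^1(M^R_{\phi,\psi})$: two classes arising as infinitesimal $\G \times \G$-actions inherited from the two factors of the $\Z \times \Z$-smash product, and two more arising from the deformation parameters $q_1 \partial_{q_1}$ and $q_2 \partial_{q_2}$.

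Next, I would use a two-variable analogue of the symmetries of Section~\ref{sec:symmetries} (I expect an action of an arithmetic group commensurable with $SL_2(\Z)\times SL_2(\Z)$) to normalize the induced lattice isomorphism under the Morita equivalence, so that the two infinitesimal $\G \times \G$ classes are identified on both sides. For each primitive class $\gamma = (a,b)$ in the resulting cocharacter sublattice I would then construct a flow-line family of bimodules following $\gamma$, whose restriction at the ``far end'' is the bimodule kernel of $\phi^a\psi^b$ (for $M_{\phi,\psi}$) or $(\phi')^a(\psi')^b$ (for $M_{\phi',\psi'}$). A two-parameter generalization of Theorem~\ref{uniquenesstheorem} would then force these families, and hence their endpoint bimodules, to match under the Morita equivalence. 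Comparing the sublattices on which these endpoint bimodules become quasi-isomorphic to the diagonal would recover $\langle\phi,\psi\rangle = \langle\phi',\psi'\rangle$ intrinsically, in the spirit of the flux-type description outlined after Conjecture~\ref{betterconjecture}.

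The main obstacle will be the uniqueness theorem for families in this enriched setting. Already the one-parameter case (Theorem~\ref{uniquenesstheorem}) required a delicate analysis of finitely generated modules over $A_R = \C[u,t][[q]]/(ut-q)$ carrying a connection along $t\partial_t - u\partial_u$ (Appendix~\ref{sec:modules}); the two-parameter analogue will involve modules over a higher-dimensional formal ring quotient carrying two commuting connections, and the requisite freeness-up-to-torsion results will need to be reproved in that broader setting. A secondary obstacle, flagged in the paper even for Conjecture~\ref{betterconjecture}, is that families as currently developed admit only $Spf(A_R)$-type bases; constructing flow lines for non-primitive lattice classes, which is essential for detecting the full relation structure of $\langle\phi,\psi\rangle$ and not merely its generators, will require extending the framework to more general formal base schemes before the endpoint-matching argument can even be phrased.
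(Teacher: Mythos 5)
This is stated as a \emph{conjecture} in the paper, not a theorem: the paper offers no proof, saying only that it ``can be proven using similar steps, although we have not checked this.'' Your submission is a proof plan rather than a proof, and you honestly acknowledge this by flagging the principal obstacles yourself. So the useful thing is to assess whether the plan is internally coherent and consistent with the paper's hints. Broadly it is, and you correctly put your finger on the hardest open step --- constructing flow-line families along non-primitive lattice classes over bases more general than $Spf(A_R)$ --- which the paper already identifies as the unresolved point even for the simpler Conjecture~\ref{betterconjecture}.

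However, the rank-$4$ lattice claim is almost certainly wrong, and it distorts the shape of the ``only if'' argument. You propose a rank-$4$ sublattice of $HH^1(M^R_{\phi,\psi})$, two classes from $\G\times\G$-actions and two more ``arising from the deformation parameters $q_1\partial_{q_1}$ and $q_2\partial_{q_2}$.'' Two problems. First, tangent directions to the deformation base pair with $HH^2$ (first-order deformations), not with $HH^1$ (infinitesimal autoequivalences), so they should not enter the cocharacter lattice at all. Second, $M_{\phi,\psi}$ still models a fibration over the once-punctured torus $T_0$ --- the doubly-infinite cover $(\mathbb{R}\times\mathbb{R})\setminus(\Z\times\Z)$ is quotiented by \emph{both} $\Z$-factors --- so under $HH^1(\A)=HH^2(\A)=0$ the computation of Section~\ref{sec:hoch} should again give $HH^1(M_{\phi,\psi})\cong\C^2$, not $\C^4$; and, by the same reasoning, $HH^2(M_{\phi,\psi})\cong\C$, leaving room for only one smoothing parameter, not two. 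The invariant you want is not a larger lattice but the same rank-$2$ lattice together with the map it carries to $Auteq(D^\pi(\A))$ via flow-line endpoints, $(a,b)\mapsto\phi^{\pm a}\psi^{\pm b}$: the data to extract is the kernel of that map (the flux subgroup) and the image, up to the $SL_2(\Z)$-ambiguity.

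The ``if'' direction is also stated too quickly. A $GL_2(\Z)$ change of basis relating $(\phi,\psi)$ to $(\phi',\psi')$ does not automatically induce a change of basis of the $\Z\times\Z$-action on $\tilde{\tilde{\Om}}$, whose two factors ($\tr$ versus the weight-shift) are genuinely asymmetric as constructed. Making this work is precisely the content of the $SL_2(\Z)$-symmetry of Section~\ref{sec:symmetries} (which was constructed by hand and occupies a whole section), now needed for $\tilde{\tilde{\Om}}\#(\Z\times\Z)$ rather than for $M_{1_\A}$; and in the non-rank-$2$ cases you would further need to split off the kernel of $\Z\times\Z\to\langle\phi,\psi\rangle$ compatibly with the Morita structure. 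Neither of these is formal.
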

The intuition for this conjecture is similar. We believe this conjecture can be proven using similar steps, although we have not checked this. 

One can also consider the category $M_{\phi[m],[n]}$, where $[m]$ and $[n]$ are the shift functors. An easy version of  Conjecture \ref{conjecture2} would state that $M_{\phi[m],[n]}$ is not Morita equivalent to $M_{[m'],[n']}$, unless $\phi$ is quasi-isomorphic to a shift functor itself. Indeed, such a result can presumably be proven applying the results of this paper. First, observe $M_{\phi[m],[n]}$  can be obtained from $M_\phi$ by modifying the grading. Therefore, the same holds for Hochschild cohomology groups and in particular, one can show that $HH^*(M_{\phi[m],[n]})$ remains the same.
%Indeed, the grading is modified by the weights of $\G\times\G$-action. This action fixes the unit, therefore $HH^0$ is fixed. Similarly, $HH^1$ can be thought as the Lie algebra $\G\times G$ and this action as the adjoint action; hence, it is trivial by commutativity of $\G\times G$. Now consider, $HH^*$ for $*\geq 2$, for simplicity for $M_{1_\A}$. These are all $1$-dimensional. These are also representations of $SL_2(\Z)$; hence the weight spaces of $\G\times\G$ are $SL_2(\Z)$ invariant. Therefore, it must be the $0$-weight in the weight lattice as it is only rank $1$ (in other words, if the weight is non-zero, than other non-zero weights in $SL_2(\Z)$ orbit should appear in the weight space).
One can attempt to construct a family following the class corresponding to $\gamma_\phi$ via change of grading. $\G\times\G$ still acts on $M_{\phi[m],[n]}$, and  there exists a rank $2$ lattice consisting of classes followed by cocharacters of this action, as in Section \ref{sec:rank2}. The action of $SL_2(\Z)$-elements on $M_{1_\A}$ (see the action constructed in Section \ref{sec:symmetries}) turn into isomorphisms of $M_{[m'],[n']}$ and $M_{[m''],[n'']}$, for when $gcd(m,n)=gcd(m',n')$ (i.e. when they are in the same $SL_2(\Z)$-orbit). Hence, starting with a Morita equivalence $M_{\phi[m],[n]}\simeq M_{[m'],[n']}$, one can switch to a Morita equivalence with $M_{[m''],[n'']}$ that preserves the canonical first Hochschild cohomology class. Their deformations match as before, and the families following degree $1$ classes also match again. The same argument works, but this time only to conclude that $\phi$ is equivalent to a shift functor. The importance of this generalization is that one obtains all possible gradings of $\cW(T_\phi)$ by changing $[m]$ and $[n]$; therefore, proving that $T_0\times M$ and $T_\phi$ are not symplectomorphic (as opposed to graded symplectomorphic, note grading becomes canonical after some subcritical handle attachment, i.e. one concludes that handle attached manifolds are not symplectomorphic without any need to use this generalization).
\subsection*{Notational remarks}
$R$ will always denote $\C[[q]]$ with the $q$-adic topology. Similarly, $A_R=\C[u,t][[q]]/(ut-q)$ with the $q$-adic topology and $A=\C[u,t]/(ut)=A_R/(q)$. $Spf(B)$ denotes the formal spectrum of a complete topological ring $B$ equipped with $I$-adic topology for an ideal $I\subset B$. This is a ringed space whose underlying topological space is $Spec(B/I)$(which is homeomorphic to $Spec(B/I^m)$ for any $m>0$) and whose ring of global functions is the topological ring $\lim\limits_{\longleftarrow} B/I^n$. For more details see \cite{bosch}. Note, in our paper most formal affine schemes are completions of varieties along a closed subvariety. 

Constructions/concepts over $R=\C[[q]]$ are implicitly assumed to be $q$-adically completed and continuous. This applies to categories over $R$, Hochschild cochains $CC^*(\B)$ of such categories, and to tensor products of topological complete modules over $R$. For instance if $M$ and $N$ are such modules, $M\otimes N$ refers to $M\hat\otimes_R N$, which is the $q$-adic completion of $M\otimes N$. If $M$ is over $R$ and $N$ is over $\C$, $M\otimes N$ refers to $q$-adic completion of $M\otimes_\C N$. We also mostly drop the subscripts of tensor products from the notation. Similarly, the base of products of schemes or formal schemes are written only when it is unclear (for instance $\Tt_R\times \Tt_R$ refers to fiber product over $Spf(R)$). 

We have elaborated on the definition of $\Tt_0$ in Section \ref{sec:tate} (see also Figure \ref{figure:incltate}). Indeed one can take \begin{equation}\T_0:=\Tt_0/t\sim \tr(t)
\end{equation}
as the definition. For an explicit equation defining $\T_0$, see \cite{lekper}.

Given an ordinary algebra $B$, $\scrC_{dg}(B)$ denotes the dg category of chain complexes over $B$.

Given dg categories $\B$ and $\B'$, we can their tensor product category as a category with objects $Ob(\B)\times Ob(\B')$. Let $b\times b'$ denote the corresponding object of $\B\otimes \B'$ for given $b\in Ob(\B),b'\in Ob(\B')$. Morphisms satisfy 
\begin{equation}
(\B\otimes \B')(b_1\times b'_1,b_2\times b_2')=\B(b_1,b_2)\otimes \B'(b_1',b_2')
\end{equation}
See \cite{kellerdg} for more details.

For a given $A_\infty$-category $\B$, $tw(\B)$ stands for the category of twisted complexes over $\B$ and $tw^\pi(\B)$ stands for the split-closure (a.k.a. idempotent completion) of $tw(\B)$. For a definition see \cite[Chapter I.3,I.4]{seidelbook}. $D^\pi(\B)$ stands for the triangulated category $H^0(tw^\pi(\B))$. 
A dg/$A_\infty$ enhancement of a triangulated category $D$ is a dg/$A_\infty$ category $\B$ such that $D$ is equivalent to $H^0(\B)$ as a triangulated category. 

By generation, we mean split generation unless specified otherwise. See \cite[Chapter I.4]{seidelbook}. We used the notations $CC^*(\B)$ and $CC^*(\B,\B)$ interchangeably. They both stand for the Hochschild complex of an $A_\infty$-category $\B$. See \cite{seidelK3}, \cite{categoricaldynamics}. The notation $Bimod(\B,\B')$ is used to mean the dg category of $A_\infty$-bimodules over $\B$-$\B'$. There is a functor \begin{equation}
Bimod(\B,\B)\rightarrow \scrC_{dg}(\C)\atop \fM\mapsto CC^*(\B,\fM)
\end{equation} which is naturally quasi-isomorphic to Yoneda functor of the diagonal bimodule. In the case of an $A_\infty$-algebra over $\C$, $CC^*(\B,\fM)$ has underlying graded vector space $\bigoplus_{i\geq 0} hom_\C(\B^{\otimes i},\fM)[-i]=hom_\C(T\B[1],\fM)$, where $T\B[1]=\bigoplus_{i\geq 0}\B^{\otimes i}[i]$(which is also defined in Section \ref{sec:family}). We note that this direct sum means each degree of each summand is summed separately. Also, as remarked before the constructions take place in the category of completed $R$-modules in the case $\B$ is a curved category over $R$. For instance, $hom(\B^{\otimes i},\fM)$ only involves convergent sums of continuous homomorphisms and direct sums are assumed to be $q$-adically completed. For the differential of $CC^*(\B,\fM)$, which involves $\mu_\B$ and $\mu_\fM$, see \cite[Remark 9.2]{categoricaldynamics}.

For more homological algebra preliminaries see \cite{kellerdg},  \cite{seidelK3},\cite{seidelbook}, \cite{categoricaldynamics}.
\subsection*{Acknowledgements} 
I would like to first thank to my advisor Paul Seidel, who introduced me to this problem and without his guidance this work would not be possible. I would also like to thank Padmavathi Srinivasan for illuminating conversations on the Tate curve and Neron models, to Yank$\i$ Lekili for conversations on their work \cite{lekper} and \cite{lekpol} and to Dhruv Ranganathan for conversations on the Tate curve as a rigid model for $\mathbb G_{m,K}^{an}$. Finally, I would like to thank the referee for reading the paper, and useful suggestions. This work was partially supported by NSF grant DMS-1500954 and by the Simons Foundation (through a Simons Investigator award).
\section{The universal cover of the Tate curve}\label{sec:tate}
\subsection{Reminder on the construction of $\Tt_R$}\label{subsec:Tate}
We first review the construction of $\Tt_R$ following \cite{lekper}. We slightly change the notation. Recall $R=\C[[q]]$ endowed with $q$-adic topology.  

Given $i\in\Z$, let $\baU_{i+1/2}$ denote $Spec\big(\C[q][X_i,Y_{i+1}]/(X_i Y_{i+1}-q) \big)$. It is a scheme over $Spec(\C[q])$, and it is isomorphic to $\mathbb{A}^2_\C$ as a scheme over $\C$. Moreover, \begin{equation} \baU_{i+1/2}[X_i^{-1}]\cong Spec\big(\C[q][X_i,X_i^{-1}]\big) \end{equation} is isomorphic to \begin{equation} \baU_{i-1/2}[Y_i^{-1}]\cong Spec\big(\C[q][Y_i,Y_i^{-1}] \big) \end{equation} as a scheme over $\C[q]$. Denote this scheme by $\bar{V}_i.$ The isomorphism is given by the coordinate change $X_i\leftrightarrow Y_i^{-1}$. In other words, the coordinates $X_i$ and $Y_i$ satisfy $X_iY_i=1$ on $\bar{V}_i$. 

By using the identifications $\baU_{i+1/2}[X_i^{-1}]\cong  \baU_{i-1/2}[Y_i^{-1}]$, we can glue $\baU_{i+1/2}$, $i\in\Z$. Hence, we obtain a scheme over $Spec(\C[q])$, which we denote by $\Tt_{\C[q]}$. It is not Noetherian and it is covered by charts $\baU_{i+1/2}$, $i\in\Z$. 

Note, there is a $\mathbb{G}_{m,\C[q]}$-action over $\C[q]$ on this scheme. Locally, the action is given by \begin{equation}\label{eq:firstaction}
Y_{i+1}\mapsto tY_{i+1} \text{ and } X_i\mapsto t^{-1}X_i\end{equation} where $t$ is the coordinate of $\mathbb{G}_{m,\C[q]}$.

We will mainly be interested in \begin{equation}\Tt_0:=\Tt_{\C[q]}|_{q=0}=\Tt_{\C[q]}\times_{Spec(\C[q])} Spec(\C[q]/(q))\end{equation} and its formal completion inside $\Tt_{\C[q]}$. We denote this formal completion by \begin{equation}\Tt_R:= \Tt_{\C[q]}\times_{Spec(\C[q])} Spf(R)\end{equation} where the fiber product is taken with respect to the obvious morphism \begin{equation}Spf(R)\rightarrow Spec(\C[q])\end{equation}
(recall that $Spf(R)$ denotes the formal spectrum of the topological ring $R=\C[[q]]$). 

Let $U_{i+1/2}:=\baU_{i+1/2}|_{q=0}$ and $\tU_{i+1/2}:=\baU_{i+1/2}\times_{Spec(\C[q])} Spf(\C[[q]])$. In the coordinates above,  \begin{equation}U_{i+1/2}=Spec(\C[X_i,Y_{i+1}]/(X_iY_{i+1}))\end{equation} and \begin{equation}\tU_{i+1/2}=Spf(\C[X_i,Y_{i+1}][[q]]/(X_iY_{i+1}-q))\end{equation} respectively. In the latter, the formal spectrum is taken with respect to $q$-adic topology. Let \begin{equation}\label{vitilde}
\tV_i:=\tU_{i-1/2}\cap\tU_{i+1/2}=\bar V_i\times_{Spec(\C[q])} Spf(\C[[q]])
\end{equation}
\begin{notation}
Let $j_{U_{i+1/2}}$, resp. $j_{V_i}$ denote the inclusion of the open set $U_{i+1/2}$ resp. $V_i$. Similarly, let $j_{\tU_{i+1/2}}$ and $j_{\tV_i}$ denote the open inclusions into $\Tt_R$.
\end{notation}
\begin{figure}
	\centering
	\includegraphics[height=2cm]{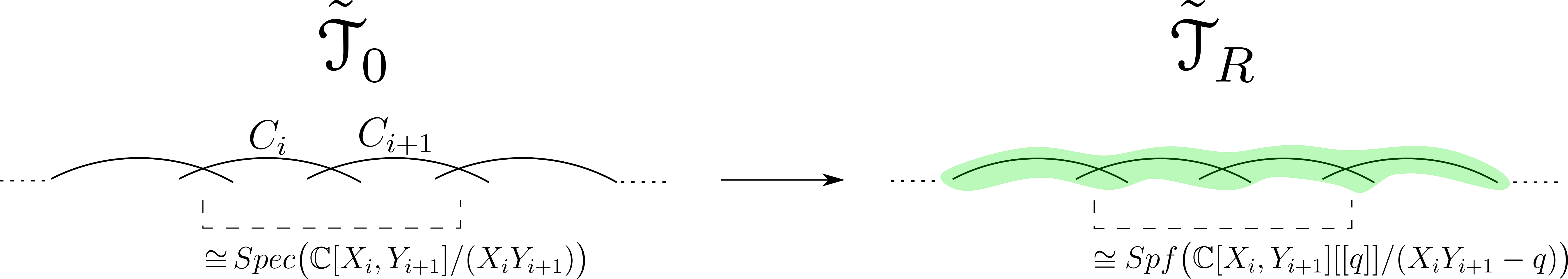}
	\caption{The inclusion of $\Tt_0$ into $\Tt_R$}
	\label{figure:incltate}
\end{figure}
\begin{rk}
It is easy to see that $\Tt_0$ is an infinite chain of projective lines. Let $C_i$ denote the projective line given as the union of $\{(X_{i-1},Y_i)\in U_{i-1/2}: X_{i-1}=0  \}$ and $\{(X_i,Y_{i+1})\in U_{i+1/2}: Y_{i+1}=0  \}$. Its affine charts have coordinates $X_i$ and $Y_i$ satisfying $X_iY_i=1$ on the overlap $V_i:=\bar{V}_i|_{q=0}\subset C_i$. See Figure \ref{figure:incltate}.
\end{rk}
\begin{defn}Define the \textbf{translation automorphism} on $\Tt_R$, resp. $\Tt_0$ to be the automorphism given by the local transformations $\tU_{i-1/2}\rightarrow \tU_{i+1/2}$, resp. $U_{i-1/2}\rightarrow U_{i+1/2}$ given by
	\begin{equation}X_i\mapsto X_{i-1}, Y_{i+1}\mapsto Y_i \end{equation} on the coordinate rings. Denote both of them by $\tr$.
\end{defn}
\begin{rk}\label{actionremark}
By restricting the $\mathbb{G}_{m,\C[q]}$-action in (\ref{eq:firstaction}) along $Spf(R)\rightarrow Spec(\C[q])$, we obtain an action of $\widehat{\G}:=Spf(\C[t,t^{-1}][[q]])$ on $\Tt_R$ in the category of formal schemes over $R$. Similarly, by restricting the $\mathbb{G}_{m,\C[q]}$-action along $0:Spec(\C)\rightarrow Spec(\C[q])$, we obtain an action of $\G:=\mathbb{G}_{m,\C}$ on $\Tt_0$ in the category of schemes over $\C$.
\end{rk}
\subsection{Multiplication graph of $\Tt_R$}
Raynaud's insight provided a picture of (some) rigid analytic spaces over $\C((q))$ as generic fibers of formal schemes over $\C[[q]]$. In this point of view, the analytification of $\mathbb{G}_{m,\C((q))}$ can be obtained as the generic fiber of $\Tt_R$. But, the analytification $\mathbb{G}_{m,\C((q))}^{an}$ is a group and this suggests finding a morphism of formal schemes \begin{equation}\Tt_R\times\Tt_R\rightarrow \Tt_R \end{equation} giving the group multiplication \begin{equation}\mathbb{G}_{m,\C((q))}^{an}\times\mathbb{G}_{m,\C((q))}^{an}\rightarrow\mathbb{G}_{m,\C((q))}^{an}\end{equation} in the generic fiber. This could be possible after admissible blow-ups on the special fiber of $\Tt_R\times\Tt_R$, but instead, we will write an explicit formal subscheme of $\Tt_R\times\Tt_R\times\Tt_R$ over $Spf(R)$, which presumably gives the graph of multiplication when the generic fiber functor is applied. We emphasize that we will not show this and there will be no formal references to Raynaud's view or to rigid analytic spaces, as it is not needed for our purposes. Interested reader may see \cite{bosch} or \cite{Temkin2015}) for more details. 

\begin{defn}\label{largegraph}
Let $\mathcal{G}_{l,R}$ be the formal subscheme of $\Tt_R\times\Tt_R\times\Tt_R$ locally given by the following equations
\begin{equation}\label{tabular1}
\begin{tabular}{lr}
	$Y_i(1)Y_j(2)=Y_{i+j}(3),$ & $Y_j(2)X_{i+j}(3)=X_i(1)$ \\
	$Y_i(1)X_{i+j}(3)=X_j(2),$  & $Y_j(2)=X_i(1)Y_{i+j}(3)$ \\
	$Y_i(1)=X_j(2)Y_{i+j}(3),$  & $X_{i+j}(3)=X_i(1)X_j(2)$
\end{tabular} 
\end{equation}
and by the equations\begin{equation}\label{tabular2}
\begin{tabular}{lr}
$Y_i(1)Y_j(2)X_{i+j}(3)=1$&$X_i(1)X_j(2)Y_{i+j}(3)=1$ 
\end{tabular}
\end{equation}
Here, $X_i(1), Y_i(1)$ are the local coordinates of the first component, $X_i(2), Y_i(2)$ are of the second and $X_i(3), Y_i(3)$ are of the third. For fixed $i$ and $j$, each of these equations make sense only on one chart of type $\tU_{k+1/2}\times\tU_{l+1/2}\times\tU_{m+1/2}$. Hence, $\mathcal{G}_{l,R}$ is the formal subscheme given on the chart $\tU_{k+1/2}\times\tU_{l+1/2}\times\tU_{m+1/2}$ by all equations listed in (\ref{tabular1}) and (\ref{tabular2})  (as $i$ and $j$ varies) that make sense on this chart. If none of these makes sense (i.e. for all equations as above there is at least one local coordinate involved in the equation and that is not defined on the chart), we take the subscheme to be empty on that chart.
\end{defn}
\begin{exmp}
For instance $Y_i(1)Y_j(2)=Y_{i+j}(3)$ makes sense on $\tU_{i-1/2}\times\tU_{j-1/2}\times\tU_{i+j-1/2}$ and $Y_j(2)X_{i+j}(3)=X_i(1)$ makes sense on $\tU_{i+1/2}\times\tU_{j-1/2}\times\tU_{i+j+1/2}$. The other equations that make sense on $\tU_{i-1/2}\times\tU_{j-1/2}\times\tU_{i+j-1/2}$ are $Y_j(2)X_{i+j-1}(3)=X_{i-1}(1)$ and $Y_i(1)X_{i+j-1}(3)=X_{j-1}(2)$.
\end{exmp}
\begin{rk}
There is an $S_3$-symmetry of the coordinates preserving equations, which would become more obvious after the coordinate change \begin{equation}X_i(3)\leftrightarrow Y_{-i}(3),X_{-i}(3)\leftrightarrow Y_{i}(3) \end{equation} After the coordinate change, the symmetry is given by permuting the components of $\Tt_R\times \Tt_R\times \Tt_R$.
%permutations of $\{1,2,3 \}$ enumerating the (pairs of) local coordinates.
\end{rk}
We still need to check:
\begin{lem}
Equations (\ref{tabular1}) and (\ref{tabular2}) give a well-defined formal subscheme of $\Tt_R\times\Tt_R\times\Tt_R$.
\end{lem}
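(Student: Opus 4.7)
The claim is that the local prescriptions on the charts $\tU_{k+1/2}\times\tU_{l+1/2}\times\tU_{m+1/2}$ glue to a well-defined formal subscheme of $\Tt_R\times\Tt_R\times\Tt_R$. My plan is the standard one: cover $\Tt_R\times\Tt_R\times\Tt_R$ by such charts, classify which of the equations of (\ref{tabular1})-(\ref{tabular2}) apply on each chart, and then verify on every overlap of two adjacent charts that the ideals cut out on each side generate the same formal subscheme.

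First I would classify charts by the type $\tau:=m-k-l$. Since the coordinates on the first factor of a chart $\tU_{k+1/2}\times\tU_{l+1/2}\times\tU_{m+1/2}$ are only $X_k$ and $Y_{k+1}$ (and similarly for the other factors), matching indices in the equations of (\ref{tabular1})-(\ref{tabular2}) shows that the applicable equations correspond precisely to $\tau\in\{-1,0,1,2\}$: the values $\tau=1$ and $\tau=0$ each give three equations from (\ref{tabular1}), while $\tau=2$ and $\tau=-1$ each give a single equation from (\ref{tabular2}). Outside this range no equation is applicable and the chart is declared empty. Next I would invoke the $S_3$-symmetry of the equations after the coordinate change $X_i(3)\leftrightarrow Y_{-i}(3)$ noted in the remark following Definition \ref{largegraph}; this reduces the gluing check to overlaps that change only one index, say the first. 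Such an overlap has the form $\tV_{k+1}\times\tU_{l+1/2}\times\tU_{m+1/2}$, on which one has the additional relation $X_{k+1}Y_{k+1}=1$ together with the transition formulas $X_k=qX_{k+1}$ and $Y_{k+2}=qY_{k+1}$, and the two adjacent charts have types differing by $1$.

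The verification then splits into finitely many cases indexed by the pair of types. For the inner pairs $(\tau,\tau-1)\in\{(1,0),(2,1),(0,-1)\}$ both charts contribute genuine equations, and one checks directly that each equation on one side lies in the ideal generated by the equations on the other, using the overlap relations together with the ambient relations $X_aY_{a+1}=q$ in the unchanged factors. For instance, in the $(1,0)$ case, the three type-$1$ equations $Y_{k+1}Y_{l+1}=Y_{m+1}$, $Y_{k+1}X_m=X_l$, $Y_{l+1}X_m=qX_{k+1}$ can be transformed by multiplication with $X_{k+1}$ or $Y_{k+1}$ and application of $X_{k+1}Y_{k+1}=1$ into the three type-$0$ equations $Y_{l+1}=X_{k+1}Y_{m+1}$, $qY_{k+1}=X_lY_{m+1}$, $X_m=X_{k+1}X_l$, and conversely. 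For the boundary pairs $(3,2)$ and $(-1,-2)$, one chart is declared empty while the other contributes a single equation of the form $Y_{k+1}Y_{l+1}X_{k+l+2}=1$ or $X_kX_lY_{m+1}=1$; on the overlap this becomes $qF=1$, which cuts out the empty formal subscheme (its special fiber forces $0=1$), matching the empty declaration on the adjacent chart. All remaining adjacent pairs have both charts empty and there is nothing to verify.

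The main obstacle is organizational rather than conceptual: one must carefully enumerate which equations apply to which chart and keep track of the transition formulas on $\tV_{k+1}$ (taking special care that localization by $X_{k+1}$ is valid on $\tV_{k+1}$ while the relations $X_aY_{a+1}=q$ in the other factors cannot be inverted). Once this bookkeeping is arranged, each individual check reduces to a short monomial identity whose essential input is the multiplicative inversion $X_{k+1}Y_{k+1}=1$.
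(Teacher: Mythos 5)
Your proof is correct, and your classification of charts by the type $\tau=m-k-l$ (with equations present exactly for $\tau\in\{-1,0,1,2\}$), the reduction via the $S_3$-symmetry to overlaps changing a single index, and the five adjacency checks $(\tau,\tau-1)$ for $\tau\in\{-1,0,1,2,3\}$ all hold up. I spot-checked the $(1,0)$, $(2,1)$, and $(0,-1)$ cases and the monomial manipulations using $X_{k+1}Y_{k+1}=1$, the transition identities $X_k=qX_{k+1}$, $Y_{k+2}=qY_{k+1}$, and the ambient relations $X_aY_{a+1}=q$ in the untouched factors all come out as you describe; and on the boundary pairs the equation indeed becomes $qF=1$, which defines the empty formal subscheme since $q$ is topologically nilpotent.

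The route you take differs in organization from the paper's, however. The paper fixes an overlap, say lying in $\Tt_R\times \tV_0\times\Tt_R$, identifies $\tV_0\cong Spf(\C[t,t^{-1}][[q]])=\widehat{\G}$ via $t=Y_0$, and then observes in a single conceptual stroke that the restriction of the subscheme from \emph{any} chart to this overlap equals the graph of the $\widehat{\G}$-action on $\Tt_R$ intersected with that chart; since the graph is manifestly a well-defined formal subscheme independent of the chart, agreement is automatic. This buys conceptual transparency and foreshadows the later role of the $\widehat{\G}$-action (Sections \ref{sec:family}--\ref{sec:rank2}), but the observation itself is stated rather than fully verified. Your approach forgoes the reinterpretation and instead carries out the implicit casework directly: it is longer and less illuminating about \emph{why} the equations cohere, but it is fully self-contained and makes explicit the roles of the invertibility of $X_{k+1}$ on $\tV_{k+1}$, the transition relations, and the topological nilpotence of $q$ at the boundary types.
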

\begin{proof}We need to check the formal subschemes match in the intersections of charts $\tU_{k'+1/2}\times\tU_{l'+1/2}\times\tU_{m'+1/2}$ and $\tU_{k''+1/2}\times\tU_{l''+1/2}\times\tU_{m''+1/2}$. Assuming the intersection is non-empty and charts are different, we see that $k'\neq k''$, $l'\neq l''$ or $m'\neq m''$. Without loss of generality assume $l'\neq l''$, $l'=-1$ and $l''=0$. Hence, their intersection lives inside 
\begin{equation}
\Tt_R\times \tV_0\times \Tt_R=\Tt_R\times (\tU_{-1/2}\cap\tU_{1/2})\times \Tt_R
\end{equation}
Notice that the intersection of the subscheme defined on a specific chart $\tU_{k+1/2}\times\tU_{l+1/2}\times\tU_{m+1/2}$ with $\Tt_R\times \tV_0\times \Tt_R$ is the same as the graph of the action \begin{equation}\Tt_R\times Spf(\C[t,t^{-1}][[q]])\rightarrow \Tt_R \end{equation} intersected with that chart. The action is still locally given by \begin{equation}Y_{i+1}\mapsto tY_{i+1} \text{ and } X_i\mapsto t^{-1}X_i\end{equation} and we identify $\tV_0$ with $Spf(\C[t,t^{-1}][[q]])$ by putting $t=Y_0$.

Hence, the restriction of the graphs defined on $\tU_{k'+1/2}\times\tU_{l'+1/2}\times\tU_{m'+1/2}$ or $\tU_{k''+1/2}\times\tU_{l''+1/2}\times\tU_{m''+1/2}$ can be obtained by restricting the graph of the action above to $(\tU_{k'+1/2}\times\tU_{l'+1/2}\times\tU_{m'+1/2}) \cap(\tU_{k''+1/2}\times\tU_{l''+1/2}\times\tU_{m''+1/2})$. This implies they are the same.
\end{proof}
We will confine ourselves to $\cG_{l,R}\cap \Tt_R\times \tU_{-1/2}\times\Tt_R$. Put $u=X_{-1},t=Y_0$ and put $X_i=X_i(1),X_i'=X_i(3),Y_{i+1}=Y_{i+1}(1),Y_{i+1}'=Y_{i+1}(3)$. Moreover, we interchange the second and third coordinates to obtain a formal subscheme  $\cG_R\subset\Tt_R\times\Tt_R\times Spf(\C[u,t][[q]]/(ut-q))$, where the formal spectrum is taken with respect to $q$-adic topology. The topological algebra $\C[u,t][[q]]/(ut-q)$ will appear recurrently, so let us name it:
\begin{notation}$A_R:=\C[u,t][[q]]/(ut-q)$ with its $q$-adic topology and $A:=\C[u,t]/(ut)$.
\end{notation}
$\cG_R$ is given by the equations 
\begin{equation}\label{eq:graph1}
tY_{i+1}=Y_{i+1}',tX_i'=X_i,Y_{i+1}X_i'=u\text{ on }\tU_{i+1/2}\times\tU_{i+1/2}\times Spf(A_R)
\end{equation}
\begin{equation}\label{eq:graph2}
Y_{i+1}=uY_{i}',X_{i-1}'=uX_i,Y_{i}'X_i=t\text{ on }\tU_{i+1/2}\times\tU_{i-1/2}\times Spf(A_R) 
\end{equation}
\begin{rk}
Equations (\ref{eq:graph1}) and (\ref{eq:graph2}) are merely translations of the equations (\ref{tabular1}) into new variables, and the equations (\ref{tabular2}) are not needed for the definition. $\cG_R$ is covered by its open subschemes defined in (\ref{eq:graph1}) and (\ref{eq:graph2}).
\end{rk}
\begin{lem}\label{flatsub}
$\cG_R$ is flat over $A_R=\C[u,t][[q]]/(ut-q)$.\end{lem}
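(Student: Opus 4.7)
The plan is to verify flatness locally on the two families of charts (\ref{eq:graph1}) and (\ref{eq:graph2}) covering $\cG_R$, since flatness is a local property. On each such chart I will eliminate redundant variables using the defining equations of $\cG_R$ and identify the local coordinate ring with an explicit complete intersection over $A_R$ of the form $A_R[X,Y]/(XY-u)$ or $A_R[X,Y]/(XY-t)$, which are visibly flat.

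More concretely, on the chart $\tU_{i+1/2}\times\tU_{i+1/2}\times Spf(A_R)$ the ambient coordinate ring is
\begin{equation}
\C[X_i,Y_{i+1},X_i',Y_{i+1}',u,t][[q]]/(X_iY_{i+1}-q,\,X_i'Y_{i+1}'-q,\,ut-q).
\end{equation}
The equations (\ref{eq:graph1}) allow me to eliminate $Y_{i+1}'=tY_{i+1}$ and $X_i=tX_i'$, at which point $X_iY_{i+1}-q$ and $X_i'Y_{i+1}'-q$ both collapse to the relation $t(X_i'Y_{i+1})=q$, while $Y_{i+1}X_i'=u$ lets me replace $u$. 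The upshot is that the local ring of $\cG_R$ on this chart is exactly $A_R[X_i',Y_{i+1}]/(X_i'Y_{i+1}-u)$, viewing $A_R=\C[u,t][[q]]/(ut-q)$ via $u\mapsto X_i'Y_{i+1}$, $t\mapsto t$. The analogous computation on the chart $\tU_{i+1/2}\times\tU_{i-1/2}\times Spf(A_R)$, using the equations (\ref{eq:graph2}) to eliminate $Y_{i+1}=uY_i'$ and $X_{i-1}'=uX_i$, identifies the local ring with $A_R[X_i,Y_i']/(X_iY_i'-t)$.

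It then remains to check that $A_R[X,Y]/(XY-u)$ and $A_R[X,Y]/(XY-t)$ are flat over $A_R$. Since $A_R[X,Y]$ is free over $A_R$, flatness of the quotient follows from the local criterion: it suffices to show that $XY-u$ (resp. $XY-t$) is a nonzerodivisor in $\kappa(\mathfrak p)[X,Y]$ for every $\mathfrak p\in\mathrm{Spec}(A_R)$. But for any field $k$ and any $a\in k$, the polynomial $XY-a$ is a nonzerodivisor in $k[X,Y]$ (it is either an irreducible quadric or the monomial $XY$, and $k[X,Y]$ is a domain). Applying this with $a$ equal to the image of $u$ or $t$ in $\kappa(\mathfrak p)$ finishes the argument.

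The only potentially delicate point is the bookkeeping of the $q$-adic completion: one has to be sure that eliminating variables commutes with $q$-adic completion and that the isomorphism with $A_R[X,Y]/(XY-u)$ is an isomorphism of $q$-adically complete topological rings. This is automatic because, after elimination, $q$ equals $X_i'Y_{i+1}t$ (resp.\ $X_iY_i't$) and the $q$-adic topology agrees with the topology induced from $A_R$, so no additional completion issue arises.
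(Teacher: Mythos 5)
Your proposal is correct and follows essentially the same strategy as the paper: work chart by chart, eliminate $Y'_{i+1},X_i$ (resp.\ $Y_{i+1},X'_{i-1}$) using the defining equations of $\cG_R$, and reduce to the flatness of an explicit hypersurface over the base. The only difference is in that last step. The paper factors the uncompleted scheme as $\mathrm{Spec}(\C[Y_{i+1},X'_i,u]/(Y_{i+1}X'_i-u))\times\mathrm{Spec}(\C[t])$, deduces flatness over $\C[u,t]$ from the flatness of the conic degeneration over $\C[u]$, and then passes to the formal completion; you instead keep $A_R$ as the base and invoke the fiberwise (slicing) criterion, checking that $XY-u$ (resp.\ $XY-t$) is a nonzerodivisor in $\kappa(\mathfrak p)[X,Y]$ for every $\mathfrak p\in\mathrm{Spec}(A_R)$. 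Both are fine. One small remark: in your final paragraph, the completion issue is not quite ``automatic'' in the sense you describe, but it is harmless for a standard reason that the paper also uses tacitly: the local ring of $\cG_R$ on that chart is the $q$-adic completion of the Noetherian ring $A_R[X,Y]/(XY-u)$, which is flat over the uncompleted ring (Noetherian completion), hence flat over $A_R$ by composition. Phrasing it that way would make the last step airtight.
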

\begin{proof}
We show this only for the formal subscheme of $\tU_{i+1/2}\times\tU_{i+1/2}\times Spf(A_R)$ defined by (\ref{eq:graph1}). The part defined by (\ref{eq:graph2}) is similar. 

Notice the equations $tY_{i+1}=Y_{i+1}',tX_i'=X_i,Y_{i+1}X_i'=u$ define a subscheme of \begin{equation}\C[X_i,Y_{i+1}][q]/(X_iY_{i+1}-q)\times_{\C[q]}\C[X'_i,Y'_{i+1}][q]/(X'_iY'_{i+1}-q)\atop\times_{\C[q]}\C[u,t][q]/(ut-q) \end{equation} whose formal completion along $q=0$ gives (part of) $\cG_R$. Indeed, it is isomorphic to the subscheme of $Spec(\C[X_i,Y_{i+1},X_i',Y_{i+1}',u,t])$ given by the same equations(equations (\ref{eq:graph1}) imply $X_iY_{i+1}=X_i'Y_{i+1}'=ut$). As $tY_{i+1}=Y_{i+1}'$ and $tX_i'=X_i$, we can see it as the subscheme of $Spec(\C[Y_{i+1},X_i',u])\times Spec(\C[t])$ given by the equation $Y_{i+1}X_i'=u$. $Spec(\C[Y_{i+1},X_i',u]/(Y_{i+1}X_i'-u))$ is flat over $\C[u]$; hence, $Spec(\C[Y_{i+1},X_i',u,t]/(Y_{i+1}X_i'-u))$ is flat over $\C[u,t]\cong\C[u,t][q]/(ut-q)$ and so is its formal completion along $q=0$.
\end{proof}
\begin{rk}
In the same way, we can show $\cG_{l,R}$ is flat with respect to all three projections to $\Tt_R$. 
\end{rk}
\begin{notation}
Let $\cG:=\cG_R|_{q=0}\subset \Tt_0\times\Tt_0\times Spec(A)$. It follows from Lemma \ref{flatsub} that $\cG$ is flat over $A$.
\end{notation}
\section{A dg model for the universal cover of the Tate curve}\label{sec:dgmodel}
\subsection{The dg model $\Om(\Tt_0)_{dg}$}\label{subsec:dgmint}
In this section we construct a dg category $\Om(\Tt_0)_{dg}$ such that \begin{equation}D^\pi(\Om(\Tt_0)_{dg})\simeq D^b(Coh_p(\Tt_0))\end{equation} where $Coh_p(\Tt_0)$ is the abelian category of properly supported coherent sheaves $\Tt_0$. We will take $Ob(\Om(\Tt_0)_{dg}):=\{\Om_{C_i}(-1),\Om_{C_i}:i\in \Z \}$, where $\Om_{\C_i}$ denotes the structure sheaf of the closed subvariety $C_i$ and $\Om_{C_i}(-1)$ denotes the structure sheaf twisted by a smooth point on $C_i$(it does not matter which). First we show
\begin{lem}
$\{\Om_{C_i}(-1),\Om_{C_i}:i\in \Z \}$ generates $D^b(Coh_p(\Tt_0))$ as a triangulated category. 
\end{lem}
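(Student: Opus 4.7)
The plan is to reduce the problem to Beilinson's generation result on a single $\mathbb{P}^1$ and then propagate it across the infinite chain by an induction on the number of components in the support.

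First I would recall Beilinson's theorem: on $\mathbb{P}^1$ the pair $\{\Om_{\mathbb{P}^1}(-1),\Om_{\mathbb{P}^1}\}$ triangulated-generates $D^b(Coh(\mathbb{P}^1))$. Since each closed embedding $j_i:C_i\hookrightarrow \Tt_0$ has exact pushforward (it is a closed immersion), and since $\Om_{C_i}=j_{i*}\Om_{\mathbb{P}^1}$ and $\Om_{C_i}(-1)=j_{i*}\Om_{\mathbb{P}^1}(-p)$ for any smooth point $p\in C_i$, the triangulated subcategory $\T_i\subset D^b(Coh_p(\Tt_0))$ generated by $\{\Om_{C_i}(-1),\Om_{C_i}\}$ contains the essential image of $j_{i*}:D^b(Coh(C_i))\to D^b(Coh_p(\Tt_0))$.

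Next, for an arbitrary $\scrF\in Coh_p(\Tt_0)$, proper support means $\mathrm{supp}(\scrF)\subset X_I:=\bigcup_{i\in I}C_i$ for a finite index set $I\subset\Z$. I would prove by induction on $|I|$ that $\scrF$ lies in the triangulated subcategory generated by $\{\Om_{C_i}(-1),\Om_{C_i}:i\in I\}$. The case $|I|=1$ is covered by the previous paragraph. For the inductive step, pick any $j\in I$, let $\mathcal{I}\subset\Om_{\Tt_0}$ be the ideal sheaf of the closed subscheme $C_j$, and consider the short exact sequence
\begin{equation*}
0\to \mathcal{I}\cdot\scrF \to \scrF \to \scrF/(\mathcal{I}\cdot\scrF)\to 0.
\end{equation*}
The cokernel equals $j_{j*}j_j^{*}\scrF$, which lies in $\T_j$ by the first step. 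For the kernel, the local computation at a node $p_{j,m}=C_j\cap C_m$, modelled on $\Om_{\Tt_0,p_{j,m}}\cong \C[x,y]_{(x,y)}/(xy)$ with $C_j=\{x=0\}$ and $C_m=\{y=0\}$, gives $\mathcal{I}_{p_{j,m}}=(x)$, and hence $(x)\cdot\scrF_{p_{j,m}}$ is annihilated by $x$, so it is supported on $C_m$. At an interior point of $C_j$ the ideal $\mathcal{I}$ vanishes locally, so $\mathcal{I}\cdot\scrF$ is zero there. Thus $\mathrm{supp}(\mathcal{I}\cdot\scrF)\subset \bigcup_{i\in I\setminus\{j\}}C_i$, and the inductive hypothesis applies to the kernel. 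Combining the two via the triangle finishes the induction.

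The main obstacle, such as it is, is the local verification at the nodes showing that multiplication by $\mathcal{I}$ strictly reduces the set of components meeting the support; once this is in hand, the rest is a straightforward devissage. Everything else -- the exactness of closed pushforward and Beilinson's generation of $D^b(Coh(\mathbb{P}^1))$ -- is standard and can be cited.
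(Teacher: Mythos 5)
Your devissage plan is in the same family as the paper's (the paper also reduces to pushforwards from the $C_n$ and to skyscrapers at the nodes), but as written the induction does not close. The key claim that $\mathrm{supp}(\mathcal{I}\cdot\scrF)\subset \bigcup_{i\in I\setminus\{j\}}C_i$ is false in general: a node $p_{j,m}$ of $C_j$ lies in $X_I$ whenever $j\in I$, regardless of whether $m\in I$, so $\scrF$ can have a stalk at $p_{j,m}$ with a nilpotent direction along $C_m$, and then $\mathcal{I}\cdot\scrF$ is nonzero at $p_{j,m}$ even though $p_{j,m}\notin\bigcup_{i\in I\setminus\{j\}}C_i$. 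A concrete counterexample: take $\scrF$ to be the length-$3$ skyscraper $\Om_{\Tt_0,p_{0,1}}/(x^2,xy,y^2)$ and $I=\{0\}$; then $\mathcal{I}_{C_0}\cdot\scrF\cong\C\neq 0$ but $\bigcup_{i\in I\setminus\{0\}}C_i=\emptyset$, so $|I|$ does not decrease. Related to this, the base case $|I|=1$ is not ``covered by the previous paragraph'': a coherent $\scrF$ set-theoretically supported on $C_j$ need not be a pushforward from $C_j$ (it is only annihilated by some power $\mathcal{I}_{C_j}^n$), so one still needs a finite filtration $\scrF\supset\mathcal{I}\scrF\supset\mathcal{I}^2\scrF\supset\cdots$ with pushforward subquotients. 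Also, in the local computation, $(x)\cdot\scrF_{p_{j,m}}$ is annihilated by $y$ (because $xy=0$), not by $x$; being killed by $x$ would put it on $C_j$, the opposite of what you want.

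All of this is fixable: first peel off the zero-dimensional part of the support (skyscrapers at nodes are handled by the cone of $\Om_{C_n}(-1)\to\Om_{C_n}$, and skyscrapers at smooth points by Beilinson on the relevant $\mathbb{P}^1$), then induct on the number of components on which $\scrF$ has positive-dimensional support, for which your $\mathcal{I}\scrF$ step does strictly decrease the count. That reorganized argument is essentially the one the paper gives via the map $\scrF\to\bigoplus_n i_{n*}i_n^*\scrF$, whose kernel and cokernel are supported at nodes.
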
 
\begin{proof}
It is enough to show that every $\scrF \in Coh_p(\Tt_0)$ is in the full subcategory generated by $\{\Om_{C_i}(-1),\Om_{C_i}:i\in \Z \}$. Let $i_n: C_n\rightarrow \Tt_0$ denote the inclusion for a given $n\in \Z$. Consider $\scrF\rightarrow i_{n*} i_n^* \scrF$, where $i_n^*$ refers to ordinary (not derived) pull-back. Note, $i_{n*}$ does not need to be derived as $i_n$ is affine. The sheaf $i_{n*} i_n^* \scrF$ is in the image $i_{n*}(D^b(Coh(C_n)))$, which is generated by $\Om_{C_n},\Om_{C_n}(-1)$ as $C_n\cong \mathbb{P}^1$, $\Om_{C_n}= i_{n*}\Om_{\mathbb{P}^1}$ and $\Om_{C_n}(-1)= i_{n*}\Om_{\mathbb{P}^1}(-1)$. Hence, to finish the proof, we only need to show the kernel and the cokernel of the map $\scrF\rightarrow \bigoplus_{n\in\Z} i_{n*} i_n^* \scrF$ are in this category. But, both the kernel and the cokernel are finite direct sums of coherent sheaves supported on the nodes. Any such coherent sheaf can be filtered so that the subquotients are isomorphic to the structure sheaves of the nodes. Hence, they can be seen as iterated extensions of the structure sheaves of the nodal points, and the structure sheaf of the node is in the essential image $i_{n*}(D^b(Coh(C_n)))$ (as the cokernel of a map $\Om_{C_n}(-1)\rightarrow \Om_{C_n}$). Hence, they are all in the triangulated subcategory generated by $\{\Om_{C_i}(-1),\Om_{C_i}:i\in \Z \}$.
\end{proof} 
To find an enhancement of $D^b(Coh_p(\Tt_0))$, we will closely follow \cite{lunts}. First some generalities:

Let X be a separated scheme over $\C$, which is locally of finite type. Let $\{U_\alpha\}$ be an open cover, where the index set is ordered. Assume, every quasi-compact subset intersects only finitely many $U_\alpha$. Let $\scrF$ be a sheaf on $X$; and for a given open subset $j:V\hookrightarrow X$ define \begin{equation}^V \scrF:=j_!j^*(\scrF)\end{equation}
Also define
\begin{equation}\mathcal{C}_!(\scrF):= \bigg\{\dots \rightarrow\prod_{\alpha_1<\alpha_2} \big(^{U_{\alpha_1}\cap{U_{\alpha_2}}  } \scrF\big)\rightarrow \prod_{\alpha} \big(^{U_\alpha}\scrF\big) \bigg\}\xrightarrow{\simeq} \scrF \end{equation}
For the differential of this complex and exactness see \cite{lunts} and references there-in. In our situation we will choose a cover so that triple intersections will be empty. The differential is given by maps  \begin{equation}\big(^{U_{\alpha_1}\cap{U_{\alpha_2}}  } \scrF\big)\rightarrow  \big(^{U_{\alpha_1}}\scrF\big)\times \big(^{U_{\alpha_2}}\scrF\big) \end{equation} on the factors, which are the differences of the natural maps $\big(^{U_{\alpha_1}\cap{U_{\alpha_2}}  } \scrF\big)\rightarrow  \big(^{U_{\alpha_i}}\scrF\big)$, $i=1,2$.
%removed a lemma, neat version is in mtprecise

Now, assume the $U_\alpha$ are affine and their triple intersections are empty. We will modify the resolutions as follows: for each finite subset $I\subset\{\alpha \}$, fix a free resolution of $j_{U_I}^*\scrF$, where $j_{U_I}$ is the inclusion of $U_I=\bigcap_{\alpha\in I}U_\alpha $. This extends to a double resolution over $\mathcal{C}_!(\scrF)$, where $\mathcal{C}_!(\scrF)$ is assumed to lie in the horizontal direction. Take its total complex to obtain a resolution of $\scrF$ by sums of sheaves of the form $j_!(E)$, where $j: V\rightarrow X$ is an open embedding and $E$ is a vector bundle on $V$. We denote this bounded above complex of $\Om_X$-modules by $R(\scrF)$, suppressing the data of resolutions and maps between them in the notation.

From now on let $X=\Tt_0$ and the covering be $\{U_{i+1/2} \}_{i\in\Z}$. Consider $\Om_{C_i}(a)$, where $i,a\in \Z$. The complex $\mathcal{C}_!(\Om_{C_i}(a))$, as a graded sheaf, is a shifted sum of $j_{V_i,!}\Om_{V_i}$, $j_{U_{i-1/2},!}\Om_{C_i\cap U_{i-1/2}}$ and $j_{U_{i+1/2},!}\Om_{C_i\cap U_{i+1/2}}$. Note that to write it this way, we need to choose trivializations of $\Om_{C_i}(a)|_{V_{i}}$, $\Om_{C_i}(a)|_{U_{i-1/2}}$ and $\Om_{C_i}(a)|_{U_{i+1/2}}$. Choose them together so that $\tr$ moves the trivializations for $\Om_{C_i}(a)$ to these for $\Om_{C_i+1}(a)$. Under the natural isomorphism \begin{equation}U_{i+1/2}\cong Spec(\C[X_i,Y_{i+1}]/(X_iY_{i+1})) \end{equation} $\Om_{C_i\cap U_{i+1/2}}$ corresponds to the module $\C[X_i,Y_{i+1}]/(X_iY_{i+1},Y_{i+1})$. Similarly, $\Om_{C_i\cap U_{i-1/2}}$ corresponds to $\C[X_{i-1},Y_{i}]/(X_{i-1}Y_{i},X_{i-1})$. Let the free resolution of $\Om_{V_i}$ be the trivial one. Also, let the other resolutions be \begin{equation}\dots\xrightarrow{Y_{i}}\Om(U_{i-1/2})\xrightarrow{X_{i-1}}\Om(U_{i-1/2})\rightarrow\Om(U_{i-1/2})/(X_{i-1})\end{equation} \begin{equation}\dots\xrightarrow{X_i}\Om(U_{i+1/2})\xrightarrow{Y_{i+1}}\Om(U_{i+1/2})\rightarrow\Om(U_{i+1/2})/(Y_{i+1})\end{equation} The only non-zero horizontal arrow in the double resolution is \begin{equation}j_{V_i,!}\Om_{V_i}\rightarrow j_{U_{i-1/2},!}\Om_{U_{i-1/2}}\times j_{U_{i+1/2},!}\Om_{U_{i+1/2}} \end{equation} lifting \begin{equation}j_{V_i,!}\Om_{C_i\cap V_i}\rightarrow j_{U_{i-1/2},!}\Om_{C_i\cap U_{i-1/2}}\times j_{ U_{i+1/2},!}\Om_{C_i\cap U_{i+1/2}} \end{equation} It is determined by an element in $\C[X_i,X_i^{-1}]\times \C[X_i,X_i^{-1}]$. Choose the horizontal arrows simultaneously for all $i$ so that they are compatible with $\tr$, in the sense above (i.e. the chosen arrows for $C_i$ will move to $C_{i+1}$ under $\tr$).

In summary, applying the above procedure of finding double resolutions and totalizations, we find complexes of sheaves $R(\scrF)$ supported in non-positive degree and quasi-isomorphisms \begin{equation}R(\scrF)\xrightarrow{\simeq} \scrF \end{equation} 
\begin{defn} Let $\Om(\Tt_0)_{dg}$ be the full dg subcategory of complexes of $\Om_{\Tt_0}$-modules that is spanned by objects $R(\Om_{C_i}(-1))$ and $R(\Om_{C_i})$. We will denote these objects by $\Om_{C_i}(-1)$ and $\Om_{C_i}$ as well. 
\end{defn}
\begin{prop}
$tw^\pi(\Om(\Tt_0)_{dg})$ is a dg-enhancement of $D^b(Coh_p(\Tt_0))$.
\end{prop}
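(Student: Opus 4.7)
The plan is to construct a triangulated equivalence
\begin{equation}
H^0(tw^\pi(\Om(\Tt_0)_{dg})) \xrightarrow{\sim} D^b(Coh_p(\Tt_0))
\end{equation}
via the natural realization functor. Since $\Om(\Tt_0)_{dg}$ sits inside the dg category of complexes of $\Om_{\Tt_0}$-modules and each $R(\scrF)$ is quasi-isomorphic to $\scrF$, post-composing with the localization to $D(\Om_{\Tt_0})$ defines a triangulated functor $H^0(\Om(\Tt_0)_{dg}) \to D^b(Coh_p(\Tt_0))$ sending $R(\scrF)\mapsto \scrF$. This extends canonically to $tw^\pi$ (cones go to cones, idempotent splittings to idempotent splittings), using that $D^b(Coh_p(\Tt_0))$ is Karoubian, as is standard for bounded derived categories of coherent sheaves on Noetherian schemes. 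Essential surjectivity is then immediate from the previous lemma, since the generators $\Om_{C_i}$ and $\Om_{C_i}(-1)$ are in the image.

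The substantive content is fully faithfulness, which I would reduce to the assertion that for $\scrF,\scrG\in\{\Om_{C_i}, \Om_{C_i}(-1) : i\in \Z\}$,
\begin{equation}
H^{*}\bigl(hom^\bullet_{\Om(\Tt_0)_{dg}}(R(\scrF), R(\scrG))\bigr) \cong \mathrm{Ext}^{*}_{\Om_{\Tt_0}}(\scrF,\scrG).
\end{equation}
Each term of $R(\scrF)$ has the form $j_!E$, where $j\colon U\hookrightarrow \Tt_0$ is the inclusion of an affine open chart and $E$ is a finite-rank free sheaf on $U$. The adjunction $(j_!, j^*)$ together with the exactness of $j^*$ for open immersions give
\begin{equation}
\mathrm{RHom}_{\Om_{\Tt_0}}(j_!E,\mathcal{H}) \simeq \mathrm{RHom}_{\Om_U}(E,j^*\mathcal{H}) \simeq \Gamma(U,E^\vee\otimes j^*\mathcal{H})
\end{equation}
for any quasi-coherent sheaf $\mathcal{H}$, the last identification using affineness of $U$; hence $j_!E$ is $\mathrm{Hom}(-,\mathrm{QCoh})$-acyclic. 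A standard truncation/spectral sequence argument then promotes this degreewise acyclicity to the statement that $hom^\bullet(R(\scrF),R(\scrG)) \to hom^\bullet(R(\scrF),\scrG)$ is a quasi-isomorphism, and the latter computes $\mathrm{RHom}(\scrF,\scrG)$. Fully faithfulness on the generators spreads to all of $tw^\pi(\Om(\Tt_0)_{dg})$ by induction on twisted complexes and by Karoubianness of the target.

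The main obstacle I expect is convergence in the comparison $hom^\bullet(R(\scrF),R(\scrG)) \to hom^\bullet(R(\scrF),\scrG)$: the resolutions $R(\scrF)$ are unbounded to the left, so the hom double complex is an infinite product in each total degree. This is controlled by the eventual periodicity of the resolutions (far enough to the left each is a $2$-periodic complex of a single free module with a multiplication-by-coordinate differential) together with the finite projective dimension of the generators on each affine chart $U_{i+1/2}$, which forces the relevant Ext groups to be finite-dimensional and supported in a bounded range. Once convergence is secured, everything else assembles through standard dg-enhancement techniques.
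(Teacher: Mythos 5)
Your outline matches the paper's strategy, but there is a genuine gap in the central step, and the convergence discussion contains a factual error.

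The gap: you correctly show that $j_!E$ is $\mathrm{RHom}(-,\cdot)$-acyclic \emph{for quasi-coherent targets}, via $\mathrm{RHom}(j_!E,\mathcal{H})\simeq\Gamma(U,E^\vee\otimes j^*\mathcal{H})$ on the affine $U$. This suffices to identify $hom^\bullet(R(\scrF),\scrG)$ with $\mathrm{RHom}(\scrF,\scrG)$, since $\scrG$ is coherent. But the asserted quasi-isomorphism $hom^\bullet(R(\scrF),R(\scrG))\to hom^\bullet(R(\scrF),\scrG)$ requires acyclicity of $hom^\bullet(R(\scrF),\mathrm{cone}(R(\scrG)\to\scrG))$, and the terms of that cone are themselves extensions by zero $j'_!E'$, which are \emph{not} quasi-coherent. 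Your acyclicity claim therefore does not apply to them, and this is exactly where the hard content lies. After reducing (as you do, and as the paper does via its Lemma \ref{homal}) to showing $\Gamma\bigl(U,j^*(R(\scrG)\to\scrG)\bigr)$ is acyclic for each chart $U$, one still has non-quasi-coherent terms in the complex when $U=U_{i+1/2}$: for instance $j_{U_{i+1/2}}^*\bigl(j_{V_i,!}\Om_{V_i}\bigr)$, the extension by zero of a free sheaf from the smaller open $V_i$. The paper resolves this with a separate, explicit computation showing $\Gamma\bigl(U_{i+1/2},j_{V_i,!}(\text{free sheaf on }V_i)\bigr)=0$, using that a locally free sheaf on the irreducible $V_i$ has no nonzero sections whose support is closed in the larger connected $U_{i+1/2}$. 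Without that input, your comparison map is not shown to be a quasi-isomorphism.

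A smaller but real issue: the convergence fix you propose invokes ``finite projective dimension of the generators on each affine chart'' and an Ext bound ``supported in a bounded range.'' Both are false here. The local rings $\C[X_i,Y_{i+1}]/(X_iY_{i+1})$ have infinite global dimension (this is precisely why the resolutions $R(\scrF)$ are unbounded), and $\mathrm{Ext}^*_{\Tt_0}(\Om_{C_0},\Om_{C_0})$ is nonzero in all positive even degrees (the paper computes this later, in the proof of Lemma \ref{categorycomparisonlemma}). The actual source of convergence is simpler: $R(\scrF)$ is bounded above and the target $\scrG$ (or a K-injective resolution of it) is bounded below, so the hom double complex lives in a translate of the first quadrant.
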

\begin{proof}
First, start by noting that $D^b(Coh_p(\Tt_0))$ is equivalent to $D^b_{coh,p}(\Om_{\Tt_0})$, the full subcategory of $D^b(\Om_{\Tt_0})$ spanned by objects whose hypercohomology sheaves are in $Coh_p(\Tt_0)$. This can be shown using \cite[Cor 3.4,Prop 3.5]{huybFM} and the fact that $D^b(Coh_p(\Tt_0))$ is a union of subcategories equivalent to derived categories of properly supported coherent sheaves on open Noetherian subschemes of $\Tt_0$.
%There are few delicacies involved, which we prefer not to include here.
Hence, we will actually work with the latter category.

We need to show the natural map \begin{equation}Hom_{K(\Om_{\Tt_0})}(R(\scrF),R(\scrF'))\rightarrow Hom_{D(\Om_{\Tt_0})}(R(\scrF),R(\scrF')) \end{equation} is an isomorphism. Here, $\scrF$ and $\scrF'$ are among $\{\Om_{C_i},\Om_{C_i}(-1):i\in\Z \}$ and $K(\Om_{\Tt_0})$ denotes the homotopy category of complexes of $\Om_{\Tt_0}$-modules. 

First note \begin{equation}\label{eq:dib}
Hom_{K(\Om_{\Tt_0})}(R(\scrF),\scrF')\simeq Hom_{D(\Om_{\Tt_0})}(R(\scrF),\scrF') 
\end{equation}
To see this choose a resolution $\scrF'\rightarrow I^\cdot$ by quasi-coherent sheaves that are injective as $\Om_{\Tt_0}$-modules. Then we know (see \cite[Tag 070G]{stacks-project}) \begin{equation}Hom_{D(\Om_{\Tt_0})}(R(\scrF),\scrF')\simeq Hom_{K(\Om_{\Tt_0})}(R(\scrF),I^\cdot) \end{equation} 
To show (\ref{eq:dib}), we only need the hom complex \begin{equation}hom^\cdot(R(\scrF),\scrF'\rightarrow I^\cdot ) \end{equation} to be acyclic. But this is the totalization of a double complex supported on bidegrees that is in a fixed translate of the first quadrant. 
%?that is bounded below in each degree. %$hom^\cdot(bdd above,bdd below)$ is always so
Moreover, the rows of this double complex are shifted direct sums of complexes of type $hom^\cdot(j_!(E),\scrF'\rightarrow I^\cdot)\simeq hom^\cdot(E,j^*(\scrF'\rightarrow I^\cdot) )$, where $j$ is the open embedding of either $U_{i+1/2}$ or $V_i$ for some $i$, and $E$ is a vector bundle on it. %E is free on an affine,$\scrF'\rightarrow I^\cdot$ is an acyclic complex of quasi-coherents etc.
Hence, the rows are acyclic and (\ref{eq:dib}) follows from the spectral sequence for the double complex. %the rows acyclic, so the first page is 0, it converges to the cohomology of the total complex

%Since $Hom_{D(\Om_{\Tt_0})}(R(\scrF),\scrF')\simeq Hom_{D(\Om_{\Tt_0})}(R(\scrF),R(\scrF'))$
Hence, we only need to show \begin{equation}Hom_{K(\Om_{\Tt_0})}(R(\scrF),\scrF')\simeq Hom_{K(\Om_{\Tt_0})}(R(\scrF),R(\scrF') )\end{equation} or equivalently $hom^\cdot(R(\scrF),R(\scrF')\rightarrow\scrF')$ is acyclic. By Lemma \ref{homal} below the acyclicity of $hom^\cdot(j_!(E),R(\scrF')\rightarrow\scrF')$ is enough, where $j$ and $E$ are as in the above paragraph. Let $U$ denote the domain of $j$.

Without loss of generality, assume $E$ is the trivial line bundle on $U$. By the adjunction $j_!\vdash j^*$ \begin{equation}hom^\cdot(j_!(E),R(\scrF')\rightarrow\scrF')\simeq hom^\cdot(E,j^*(R(\scrF')\rightarrow\scrF'))\simeq\Gamma(j^*(R(\scrF')\rightarrow\scrF')) \end{equation} When $U=V_i$, $j^*(R(\scrF')\rightarrow\scrF')$ is an acyclic complex of coherent sheaves on $U$ and $\Gamma$, the global sections functor, preserves its acyclicity. When $U=U_{i+1/2}$, $\Gamma(j^*R(\scrF'))$ can be obtained as the totalization of a double complex resolving the complex $\Gamma (j^*\mathcal{C}_!(\scrF')  )$, whose explicit form is \begin{align*}\{\Gamma(j_{V_{i},!}j_{V_{i}}^*\scrF' )\times \Gamma(j_{V_{i+1},!}j_{V_{i+1}}^*\scrF' )\rightarrow\\  \Gamma(j_{V_{i},!}j_{V_{i}}^*\scrF' )\times \Gamma(j_{V_{i+1},!}j_{V_{i+1}}^*\scrF' )\times \Gamma(j_{U_{i+1/2},!}j_{U_{i+1/2}}^*\scrF' )\}=\\ \{\Gamma(j_{U_{i+1/2},!}j_{U_{i+1/2}}^*\scrF' )\}=\{\Gamma(j_{U_{i+1/2}}^*\scrF' )\} \end{align*}
The equation holds as $\Gamma(j_{V_{i},!}j_{V_{i}}^*\scrF' )=0$ for all $i$(which is true since $j_{V_i}^*\scrF'$ is locally free and $U_{i+1/2}$ is connected). 
This is still a resolution of $\Gamma(j_{U_{i+1/2}}^*\scrF' )$. Hence, being the totalization of a double complex resolving $\Gamma (j^*\mathcal{C}_!(\scrF')  )$, $\Gamma(j^*R(\scrF') )$ is another resolution of $\scrF'$ and \begin{equation}\Gamma(j^*(R(\scrF')\rightarrow \scrF' ) ) \end{equation} is an acyclic complex. This finishes the proof.
\end{proof}
\begin{lem}\label{homal}Let $C^\cdot$, $D^\cdot$ be bounded above complexes of objects of an abelian category. Assume for each $i$, $hom^\cdot(C^i,D^\cdot)$ is acyclic. Then the total hom complex $hom^\cdot (C^\cdot,D^\cdot)$ is also acyclic.\end{lem}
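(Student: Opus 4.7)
The plan is to reduce to the case where $C^\cdot$ has only finitely many nonzero terms via the stupid truncation, and then pass to an inverse limit. Let $C^{\geq k}$ denote the subcomplex with $(C^{\geq k})^i = C^i$ for $i \geq k$ and $0$ otherwise. Since $C^\cdot$ is bounded above, say by $N_C$, each $C^{\geq k}$ has only finitely many nonzero terms, and $C^\cdot = \bigcup_k C^{\geq k}$. Correspondingly, $hom^\cdot(C^\cdot, D^\cdot) = \varprojlim_{k \to -\infty} hom^\cdot(C^{\geq k}, D^\cdot)$.

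First I would treat each $hom^\cdot(C^{\geq k}, D^\cdot)$ by induction on the length of $C^{\geq k}$. The base case $C^{\geq N_C+1} = 0$ is trivial. For the inductive step, the short exact sequence of complexes
\begin{equation}
0 \to C^{\geq k+1} \to C^{\geq k} \to C^k[-k] \to 0
\end{equation}
is degreewise split (in each degree one of the terms vanishes or a map is the identity), so applying $hom^\cdot(-, D^\cdot)$ yields a short exact sequence of complexes of abelian groups. By hypothesis $hom^\cdot(C^k, D^\cdot)$, and hence its shift $hom^\cdot(C^k[-k], D^\cdot)$, is acyclic; combined with the inductive hypothesis that $hom^\cdot(C^{\geq k+1}, D^\cdot)$ is acyclic, the associated long exact sequence forces $hom^\cdot(C^{\geq k}, D^\cdot)$ to be acyclic as well.

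To conclude, I would pass to the inverse limit. In each fixed cohomological degree $n$, $hom^n(C^{\geq k}, D^\cdot)$ is a finite product of groups $hom(C^i, D^{i+n})$ (since both $C^\cdot$ and $D^\cdot$ are bounded above, the indices $i$ with $C^i \neq 0$ and $D^{i+n} \neq 0$ are bounded above, while the truncation bounds them below). The transition maps $hom^n(C^{\geq k-1}, D^\cdot) \to hom^n(C^{\geq k}, D^\cdot)$ are the obvious projections dropping the $i=k-1$ factor, which are surjective. Hence the Mittag--Leffler condition holds, $\varprojlim{}^1$ vanishes, and the inverse limit of acyclic complexes is acyclic.

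The only place where the argument could plausibly stumble is the interaction of cohomology with inverse limits, but surjectivity of the transition maps---itself an immediate consequence of the stupid filtration being degreewise split---bypasses this issue cleanly. Equivalently, one could phrase the argument via the column spectral sequence of the double complex $hom(C^{-p}, D^q)$, whose $E_1$ page vanishes by hypothesis and which converges because the double complex is concentrated in a half-plane; but the direct inductive argument above is more elementary.
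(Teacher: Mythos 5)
The paper states this lemma without proof, so there is no argument of the author's to compare against. Your proof is correct and is the standard argument one would expect: the stupid truncations $C^{\geq k}$ exhaust $C^\cdot$, the short exact sequence $0\to C^{\geq k+1}\to C^{\geq k}\to C^k[-k]\to 0$ is degreewise split and so yields a short exact sequence of total $hom$-complexes after applying $hom^\cdot(-,D^\cdot)$, induction on $k$ (starting from the vanishing $C^{\geq N_C+1}=0$) shows each $hom^\cdot(C^{\geq k},D^\cdot)$ is acyclic, and because the transition maps of the tower $hom^n(C^{\geq k},D^\cdot)$ are surjective in every degree (being projections from a finite product that drop one factor), the Milnor $\varprojlim^1$ sequence forces $hom^\cdot(C^\cdot,D^\cdot)=\varprojlim_k hom^\cdot(C^{\geq k},D^\cdot)$ to be acyclic. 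This is precisely a form of the acyclic assembly lemma.

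One small remark on your parenthetical alternative: saying the column spectral sequence ``converges because the double complex is concentrated in a half-plane'' elides the real point. What matters is that, because $C^\cdot$ is bounded above, the column filtration on the \emph{product} total complex $Tot^{\prod}$ is bounded below and hence exhaustive and complete; completeness is essential since it is what lets one sum the successive primitives $w_0+w_1+\cdots$ obtained from $E_1=0$. A half-plane position alone does not guarantee this (indeed the \emph{row} spectral sequence of this same double complex need not converge). Your main inductive argument avoids this subtlety entirely, which is why it is preferable, and the Mittag--Leffler step packages the same completeness phenomenon in a cleaner way.
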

%proof is in mappingtorusprecise, does not use spectral sequence.
\begin{rk} $\tr_*$ gives an explicit dg quasi-equivalence of $\Om(\Tt_0)_{dg}$ that acts bijectively on the objects and hom-sets. We denote this dg functor by $\tr$ as well.  
\end{rk}
\begin{rk}
The complex $hom_{\Om(\Tt_0)_{dg}}^\cdot(\Om_{C_i}(a),\Om_{C_{i'}}(b))=0$, for $|i-i'|\geq 2$ and $a,b\in\{0,1\}$. Indeed, if $j_!(E)\neq 0$ and $j'_!(E')\neq 0$ appear in $R(\Om_{C_i}(a))$ and $R(\Om_{C_{i'}}(b))$ respectively, there is no way the domain of $j$ or $j'$ can contain the domain of the other; hence $Hom_{\Om_{\Tt_0}}(j_!(E),j'_!(E'))=0.$
\end{rk}
\subsection{$\G$-action on $\Om(\Tt_0)_{dg}$}\label{subsec:action}
Let $\scrF\in\{\Om_{C_i},\Om_{C_i}(-1):i\in\Z \}$. Put a $\G$-equivariant structure on $\scrF$. This makes every graded piece of $\mathcal{C}_!(\scrF)$ naturally a $\G$-equivariant sheaf, and the differential is $\G$-equivariant. Moreover, the double complex resolving it can be made $\G$-equivariant as well in each bidegree, so that both differentials are $\G$-equivariant.
%The point -locally- is: let $A=\C[x,y]/(xy)$, consider the resolution \begin{equation}\dots\xrightarrow{x}A\xrightarrow{y}A\xrightarrow{x}A\rightarrow A/x \end{equation}  Put an equivariant structure on $A/x$, then we cannot endow every $A$ in the resolution with the same equivariant structure, otherwise the differential would not be equivariant. Instead let them differ by a character, so that the differential becomes equivariant. The different equivariant structures are as equivariant structures of equivariant modules, not algebras.
Hence, $R(\scrF)\rightarrow\scrF$ is an equivariant resolution. Fix choices for each $i$ so that $\tr_*$ moves $\Om_{C_i}$ to $\Om_{C_{i+1}}$ as an equivariant sheaf and similarly for $\Om_{C_i}(-1)$ as well as the resolutions. Hence, we obtain an action of $\G$ on hom-sets of $\Om(\Tt_0)_{dg}$, so that the differential and the multiplication are equivariant. In other words, there exists a $\G$-action at the chain level on this category. 

Note, however the hom-sets $hom^\cdot_{\Om_{\Tt_0}}(R(\scrF),R(\scrF'))$ are not rational as representations of $\G$. Instead, they are products of countably many rational representations at each degree. Inspired by this define:
\begin{defn}
Let $\Om(\Tt_0)_{dg}^{eval}$ be the dg-subcategory of $\Om(\Tt_0)_{dg}$ with the same set of objects and with the morphisms given by the subspace of those in $\Om(\Tt_0)_{dg}$ that decompose into a finite sum of eigenvalues of $\G$-action. 
\end{defn}
\begin{prop}\label{eval}The inclusion $\Om(\Tt_0)_{dg}^{eval}\rightarrow \Om(\Tt_0)_{dg}$ is a quasi-equivalence.
\end{prop}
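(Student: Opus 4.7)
Since the two categories share the same set of objects, it suffices to show that for each pair of generators $\scrF, \scrF'$ the inclusion of hom-complexes is a quasi-isomorphism. The plan is to exploit the chain-level $\G$-action constructed in the paragraph preceding the proposition. Because this action commutes with the differential and $\G$ is a torus, each degree of the hom-complex decomposes into $\G$-weight spaces $V_n^i$. By the observation recorded just before the definition of $\Om(\Tt_0)_{dg}^{eval}$, the full hom-complex equals the direct product $\prod_{n\in\Z} V_n^\cdot$ of these weight subcomplexes, while the eval hom-complex is by definition the direct sum $\bigoplus_{n\in\Z} V_n^\cdot$. The differential preserves each $V_n^\cdot$ since it commutes with the $\G$-action, so we are comparing a direct sum of subcomplexes to their direct product.

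Cohomology commutes with arbitrary direct sums and direct products (cycles and coboundaries split accordingly in each degree), so the inclusion on cohomology is the natural map
\[ \bigoplus_n H^*(V_n^\cdot) \hookrightarrow \prod_n H^*(V_n^\cdot), \]
which is an isomorphism precisely when $H^*(V_n^\cdot) = 0$ for all but finitely many $n$. It therefore suffices to show that the total cohomology $\prod_n H^*(V_n^\cdot) = H^*(hom^\cdot_{\Om(\Tt_0)_{dg}}(R(\scrF),R(\scrF')))$ is finite dimensional over $\C$, since a product of vector spaces is finite dimensional if and only if only finitely many factors are nonzero and each nonzero factor is finite dimensional.

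By the previous proposition the total cohomology computes $Ext^*_{\Om_{\Tt_0}}(\scrF,\scrF')$. By the remark immediately preceding this subsection, $hom^\cdot(\Om_{C_i}(a),\Om_{C_{i'}}(b)) = 0$ whenever $|i-i'|\geq 2$, and in the remaining cases $|i-i'|\leq 1$ the Ext groups reduce to Ext groups of line bundles on a single $\mathbb{P}^1$, or to extensions localized at the single intervening node, both of which are finite dimensional. The principal obstacle is the conceptual bookkeeping of verifying that the chain-level decomposition into weight subcomplexes is genuinely compatible with the differential on the explicit resolutions $R(\scrF)$; this is ensured by the simultaneous $\G$-equivariant choices of resolutions made in the construction of the action, after which the rest of the argument is a formal comparison of sums and products of complexes with finite-dimensional total cohomology.
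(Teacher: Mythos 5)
Your argument has a genuine gap at the second step. You claim that "each degree of the hom-complex decomposes into $\G$-weight spaces" and, consequently, that the full hom-complex equals $\prod_{n\in\Z} V_n^\cdot$. Neither claim is correct. An action of a torus on an infinite-dimensional vector space gives a weight decomposition only when the representation is rational (or at least locally finite), and the paper states explicitly, in the very paragraph you cite, that these hom-sets are \emph{not} rational. What that paragraph actually records is that each $hom^i_{\Om(\Tt_0)_{dg}}(R(\scrF),R(\scrF'))$ is a product $\prod_k R_k$ of countably many rational representations $R_k$ (the individual factors $Hom_{\Om_{\Tt_0}}(R(\scrF)^a,R(\scrF')^{a+i})$, sections of vector bundles over affine pieces of $\Tt_0$). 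Each $R_k$ decomposes as $R_k=\bigoplus_n R_{k,n}$, so the weight-$n$ subspace of the whole hom-space is $V_n=\prod_k R_{k,n}$; but $\prod_k\bigoplus_n R_{k,n}$ is a proper subspace of $\prod_n\prod_k R_{k,n}=\prod_n V_n$ in general: the former consists of arrays $(v_{k,n})$ with $v_{k,\cdot}$ finitely supported for each fixed $k$, while the latter imposes no such condition. In the present case the factors $R_k$ have infinite weight support (for instance $\Gamma(U_{i+1/2},\Om)\cong\C[X_i,Y_{i+1}]/(X_iY_{i+1})$ has a nonzero weight space in every integer weight), so the inclusion is genuinely strict. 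Once the identification $C^\cdot=\prod_n V_n^\cdot$ fails, the rest of your argument -- reading off $H^*(C^\cdot)$ as $\prod_n H^*(V_n^\cdot)$ and then invoking finite-dimensionality -- does not go through.

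Your closing observation, that the total cohomology is finite-dimensional and hence a rational $\G$-representation, is correct and is in fact one of the hypotheses in the lemma the paper invokes. But that lemma also requires the product decomposition $C^i=\prod_k C^i\{k\}$ into rational pieces together with the locality condition that $d$ sends $C^i\{k\}$ into $C^{i+1}\{k\}\times C^{i+1}\{k+1\}$, i.e. shifts the auxiliary index $k$ by at most one. This gives a decreasing filtration by subcomplexes $F^m=\prod_{k\geq m}C^\cdot\{k\}$ whose graded pieces are rational, and it is that structure (not a naive weight decomposition of the whole hom-space) which makes the comparison between $C^\cdot_{eval}$ and $C^\cdot$ tractable. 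Without replacing your incorrect identification by an argument that exploits this filtration, the proof does not close.
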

This follows from a simple lemma whose proof we skip:
\begin{lem}Let $(C^\cdot,d)$ be a chain complex satisfying
\begin{itemize}
	\item There is a $\G$-action on each $C^i$ and $d$ is equivariant 
	%\item Each $C^i$ is pro-rational, i.e. a limit of rational representations
	\item The induced action on $H^\cdot(C^\cdot)$ is rational, i.e. $H^\cdot(C^\cdot)$ admits a direct sum decomposition into eigenvalues of $\G$-action
	\item For each $i$, $C^i$ has a product decomposition \begin{equation}C^i=\prod_{k\in\Z} C^i\{k\}\end{equation} into rational representations, such that $d:C^i\rightarrow C^{i+1}$ is a product of equivariant maps \begin{equation}d_k:C^i\{k\}\rightarrow C^{i+1}\{k\}\times C^{i+1}\{k+1\}\end{equation}
\end{itemize}
Let $C^\cdot_{eval}$ be the subcomplex of $C^\cdot$ spanned by eigenvalues of $\G$-action. Then the inclusion $C^\cdot_{eval}\rightarrow C^\cdot$ is a quasi-isomorphism. 
\end{lem}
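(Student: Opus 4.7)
My plan is to decompose the differential as $d = d_0 + d_1$, where $d_0$ preserves the weight-grading on each $C^i$ and $d_1$ raises it by one. The relation $d^2 = 0$ splits into $d_0^2 = 0$, $d_1^2 = 0$, and $d_0 d_1 + d_1 d_0 = 0$, giving a bicomplex structure on both $C^\cdot$ and $C^\cdot_{eval}$. I would then filter by weight, using $F^p C^\cdot = \prod_{k\geq p}C^\cdot\{k\}$ on one side and $F^p C^\cdot_{eval} = \bigoplus_{k\geq p}C^\cdot\{k\}$ on the other. The two filtrations have identical associated graded pieces --- $C^\cdot\{p\}$ with differential $d_0$ --- so the induced spectral sequences have the same $E_1$-page $H^{p+q}(C^\cdot\{p\}, d_0)$ and the same higher differentials (all induced by $d_1$), with the inclusion $C^\cdot_{eval}\hookrightarrow C^\cdot$ giving the identity on $E_1$. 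The filtration on $C^\cdot_{eval}$ is strongly convergent since each of its elements has finite weight-support.

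The crux of the argument is to show $H^*(C^\cdot / C^\cdot_{eval}) = 0$. A cocycle class in this quotient is represented by $c\in C^i$ with $dc \in C^{i+1}_{eval}$; for such a $c$, the equation $d_0 c_k + d_1 c_{k-1} = 0$ holds for all $|k|$ sufficiently large. Using the rationality of $H^*(C^\cdot)$ --- equivalently, the finiteness of the weight-support of each cohomology class --- I would modify $c$ by a coboundary and by an element of $C^i_{eval}$ so as to reach a representative supported at only finitely many weights. Such a representative lies in $C^i_{eval}$, proving the class vanishes in the quotient. The bicomplex structure, in which $d_1$ shifts weight by exactly one, should allow one to cancel the tails of $c$ inductively using the relations above.

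The main obstacle will be making the tail cancellation rigorous, because naive truncation of $c$ to finite support is obstructed by the coupling of neighbouring weights via $d_1$. I expect to handle this either by an explicit inductive construction leveraging $d_0 d_1 + d_1 d_0 = 0$ to propagate corrections between adjacent weights, or by arguing that the rationality of $H^*(C^\cdot)$ forces enough of the $E_\infty$-terms of the weight spectral sequence on the $C^\cdot$ side to vanish that the two spectral sequences have isomorphic strong limits. Either way, the decisive use of the hypotheses is: (i) the bicomplex structure that follows from $d$ shifting weight by at most one, and (ii) the rationality constraint on cohomology, which provides the finiteness without which the inclusion $\bigoplus_k \to \prod_k$ cannot be a quasi-isomorphism.
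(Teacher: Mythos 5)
The paper skips the proof of this lemma (``This follows from a simple lemma whose proof we skip''), so there is no written argument to compare against; but your proposal rests on an identification that does not follow from the hypotheses. You treat the index $k$ of the product decomposition $C^i=\prod_k C^i\{k\}$ as if it were the weight of the $\G$-action, writing that $d_1$ ``raises [the weight] by one'' and working with $F^pC^\cdot_{eval}=\bigoplus_{k\ge p}C^\cdot\{k\}$ as though $C^\cdot_{eval}=\bigoplus_k C^\cdot\{k\}$. Neither is correct. The differential is $\G$-equivariant and therefore \emph{preserves} weights; $d_1$ shifts the index $k$ by one, not the weight. Each $C^i\{k\}$ is only required to be a rational representation and can carry arbitrary weights (in the geometric application a single $C^i\{k\}$ already has weights ranging over all of $\Z$, since the coordinate rings of the charts $U_{i+1/2}$ do). In particular a weight-$w$ vector in $\prod_k C^i\{k\}$ can have nonzero $k$-components for infinitely many $k$ --- take a nonzero weight-$w$ vector in each factor --- so $C^\cdot_{eval}$ is in general strictly larger than $\bigoplus_k C^\cdot\{k\}$. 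Your central step, that $dc\in C^{i+1}_{eval}$ forces $d_0c_k+d_1c_{k-1}=0$ for $|k|$ sufficiently large, depends on this false identification, and so does the claim that the filtration on $C^\cdot_{eval}$ is strongly convergent ``since each of its elements has finite weight-support''.

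Two further points. First, what is true is $C^\cdot_{eval}=\bigoplus_w C^\cdot_{(w)}$, the direct sum over weights $w$ of the weight-$w$ subcomplexes $C^\cdot_{(w)}\subset C^\cdot$; since $H^\cdot(C^\cdot)$ is assumed rational, it suffices to prove $H^\cdot(C^\cdot_{(w)})\to H^\cdot(C^\cdot)_{(w)}$ is an isomorphism for each $w$. Injectivity is then easy: if $c\in Z^i(C^\cdot_{(w)})$ and $c=db$ with $b\in C^{i-1}$, then the factorwise weight-$w$ component $b^{(w)}=(b_k^{(w)})_k$ lies in $C^{i-1}_{(w)}$ and $d(b^{(w)})=(db)^{(w)}=c^{(w)}=c$, because $d_0,d_1$ are equivariant maps of rational representations. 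So the real content is surjectivity, and that is where the constraint that $d$ couples only adjacent factors must enter. Second, even setting aside the $C^\cdot_{eval}$ issue, the filtration $F^p=\prod_{k\ge p}C^\cdot\{k\}$ on the full complex is \emph{not exhaustive} when $k$ ranges over all of $\Z$, since an element may have nonzero components for arbitrarily negative $k$; so the convergence of the spectral sequence on the $C^\cdot$ side also needs a separate argument, presumably by restricting the $k$-filtration to a fixed weight subcomplex where the product is forced to be finite.
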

\subsection{A deformation of $\Om(\Tt_0)_{dg}$}\label{subsec:defoofom}
We have constructed a deformation of $\Tt_0$ in Section \ref{subsec:Tate}. In this subsection, we will use it to obtain a deformation of $\Om(\Tt_0)_{dg}$ to a curved $A_\infty$-category, which we denote by $\Om(\Tt_R)_{cdg}$. We will manage this by deforming the double complexes in Section \ref{subsec:dgmint} whose totalizations give $R(\Om_{C_i}(-1))$ and $R(\Om_{C_i})$. We will deform them to bigraded sheaves of $\Om_{\Tt_R}$-modules with two endomorphisms, one of degree $(1,0)$ and other of degree $(0,1)$ (in other words, we deform them to objects that look like double complexes except the differentials do not square to $0$).

First a local model: Consider the resolutions  \begin{equation}\{\dots\xrightarrow{Y_{i}}\Om(U_{i-1/2})\xrightarrow{X_{i-1}}\Om(U_{i-1/2})\}\rightarrow\Om(U_{i-1/2})/(X_{i-1})\end{equation} \begin{equation}\{\dots\xrightarrow{X_i}\Om(U_{i+1/2})\xrightarrow{Y_{i+1}}\Om(U_{i+1/2})\}\rightarrow\Om(U_{i+1/2})/(Y_{i+1})\end{equation} and deform them to ``complexes" of $\Om_{\Tt_R}$-modules given as \begin{equation}\{\dots\xrightarrow{X_{i-1}}\Om(\tU_{i-1/2})\xrightarrow{Y_{i}}\Om(\tU_{i-1/2})\xrightarrow{X_{i-1}}\Om(\tU_{i-1/2})\}\end{equation} \begin{equation}\{\dots\xrightarrow{Y_{i+1}}\Om(\tU_{i+1/2})\xrightarrow{X_{i}}\Om(\tU_{i+1/2})\xrightarrow{Y_{i+1}}\Om(\tU_{i+1/2})\}\end{equation}The ``differentials" do not square to $0$ as $X_{i-1}Y_i=q$ and $X_iY_{i+1}=q$ in the corresponding rings. 

This gives the data to deform the vertical differentials of the double complexes resolving $\mathcal{C}_!(\Om_{C_i}(-1))$ and $\mathcal{C}_!(\Om_{C_i})$. Deform the horizontal differential to  \begin{equation}j_{\tV_i,!}\Om_{\tV_i}\rightarrow j_{\tU_{i-1/2},!}\Om_{\tU_{i-1/2}}\times j_{\tU_{i+1/2},!}\Om_{\tU_{i+1/2}} \end{equation} trivially. Let $R(\Om_{C_i}(-1))_R$ and $R(\Om_{C_i})_R$ denote the totalizations of these bigraded sheaves with degree $(0,1)$ and $(1,0)$ endomorphisms. They are graded sheaves with degree $1$ endomorphisms, which squares to a degree $2$ endomorphism that is a multiple of $q\in R=\C[[q]]$.

\begin{defn} Let $\Om(\Tt_R)_{cdg}$ be the curved dg category given by \begin{itemize}
\item $Ob(\Om(\Tt_R)_{cdg})=\{\Om_{C_i}(-1),\Om_{C_i}:i\in\Z \}$
\item $hom_{\Om(\Tt_R)_{cdg}}(\scrF,\scrF'):=hom^\cdot_{\Om_{\Tt_R}}(R(\scrF)_R,R(\scrF')_R)$ for $\scrF,\scrF'\in Ob(\Om(\Tt_R)_{cdg})$. The hom-``complex" is defined in the standard way similar to actual complexes, only note its differential does not square to $0$
\item The composition is composition of homomorphisms of ``complexes''
\item The curvature term is the degree $2$ endomorphism obtained by squaring the differential of $R(\Om_{C_i}(-1))_R$ and $R(\Om_{C_i})_R$
\end{itemize}
\end{defn}
It is easy to see that this is a curved dg category over $R=\C[[q]]$. For instance, the square of the differential of $hom_{\Om(\Tt_R)_{cdg}}(\scrF,\scrF')$ is simply the difference of composition with the differentials of $R(\scrF)_R$ and $R(\scrF')_R$. It is also obvious that the specialization to $q=0$ gives $\Om(\Tt_0)_{dg}$.

We now want to elaborate on the compatibility of this formal deformation with the geometric deformation above. We show ``local" compatibility.

In general, if $\B$ is an algebra and $\B_R$ is a deformation of $\B$ over $R$, then we obtain a curved deformation of the category $\B^{mod}$ (the category of $A_\infty$-modules over $\B$). The deformation is the category of curved modules, where a curved module is defined by the same data as a semi-free $A_\infty$-module over $\B_R$ but the $A_\infty$-module equation is satisfied only up to $O(q)$. Hence, we obtain a deformation of the category of finitely generated modules as a subcategory of the deformation of $\B^{mod}$.

Assume $\B_R$ is commutative and apply this to $U=Spec(\B)$ and to $Spf(\B_R)$. This way we obtain a recipe to produce formal deformations of (generating $A_\infty$-models of) $D^b(Coh(U))$ such that $\Om_U$ deforms to $\Om_{U_R}$ Thus, the inclusion functor from the full subcategory spanned by $\Om_U$ deforms to an $A_\infty$-functor from the algebra $\Om_{U_R}$. Call such a deformation a good deformation.

Now our compatibility result is:
\begin{prop}\label{limdef} For each $i\in\Z$, there exists 
\begin{itemize}
\item A dg enhancement $\scrC oh(U_{i+1/2})$ of $D^b(Coh(U_{i+1/2}))$
\item A good deformation $\scrC oh(U_{i+1/2})_R$ of $\scrC oh(U_{i+1/2})$
\item A dg enhancement $\scrC oh(V_{i})$ of $D^b(Coh(V_{i}))$
\item $A_\infty$-functors $j_{U_{i+1/2}}^*:\Om(\Tt_R)_{cdg}\rightarrow \scrC oh(U_{i+1/2})_R $
\item $A_\infty$-functors $j_{U_{i+1/2},V_i}^*:\scrC oh(U_{i+1/2})_R\rightarrow \scrC oh(V_{i})_R$, $j_{U_{i-1/2},V_i}^*:\scrC oh(U_{i-1/2})_R\rightarrow \scrC oh(V_{i})_R$ where $\scrC oh(V_{i})_R$ is the trivial deformation of $\scrC oh(V_{i})$
\end{itemize} such that at $q=0$, $j_{U_{i+1/2}}^*$ specializes to a lift of the natural functor \begin{equation}j_{U_{i+1/2}}^*:D^b(Coh_p(\Tt_0))\rightarrow D^b(Coh(U_{i+1/2}))\end{equation}  and similarly $j_{U_{i+1/2},V_i}^*$ and $j_{U_{i-1/2},V_i}^*$. Moreover, everything can be chosen in a $\tr$-equivariant way.
\end{prop}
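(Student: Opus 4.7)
The plan is to mimic the construction of $\Om(\Tt_R)_{cdg}$ locally on each chart. Fix $i$. Define $\scrC oh(U_{i+1/2})_R$ to be the curved dg category over $R$ whose objects are $\Om_{\tU_{i+1/2}}$ (as a single-term complex, with zero differential and zero curvature) together with the restrictions $j_{\tU_{i+1/2}}^* R(\scrF)_R$ for $\scrF\in\{\Om_{C_j},\Om_{C_j}(-1):j=i,i+1\}$, viewed as curved $\Om_{\tU_{i+1/2}}$-modules; morphisms are the hom-complexes of such curved modules, with curvature inherited as $q$ times the identity exactly as in $\Om(\Tt_R)_{cdg}$. Specialization to $q=0$ defines $\scrC oh(U_{i+1/2})$. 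For $V_i$, one runs the same construction over $\tV_i\cong Spf(\C[Y_i,Y_i^{-1}][[q]])$, but since $Y_i^{\pm 1}$ is invertible there, an explicit change of basis makes each restricted curved complex curved-quasi-isomorphic to a zero-curvature complex; this yields $\scrC oh(V_i)_R\simeq \scrC oh(V_i)\otimes R$ as the trivial deformation.

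To check $\scrC oh(U_{i+1/2})$ is a dg enhancement of $D^b(Coh(U_{i+1/2}))$, I would rerun the acyclicity argument used for $\Om(\Tt_0)_{dg}$, replacing $\Tt_0$ by $U_{i+1/2}$ and the cover $\{U_{j+1/2}\}$ by $\{V_i,V_{i+1}\}$; the key input is again Lemma \ref{homal} together with the observation that $j_!(E)$-type summands restrict cleanly. Generation holds because $\Om_{U_{i+1/2}}$ and the two branch structure sheaves generate $D^b(Coh(U_{i+1/2}))$: the skyscraper at the node fits in a short exact sequence, and arbitrary properly supported coherent sheaves are then iterated extensions as in the global lemma. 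The deformation is good in the sense of the excerpt since $\Om_{\tU_{i+1/2}}$ is tautologically an object of $\scrC oh(U_{i+1/2})_R$.

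The functors $j_{U_{i+1/2}}^*$ and $j_{U_{i\pm1/2},V_i}^*$ are then defined strictly by pullback of graded sheaves together with their differentials: on objects, $R(\scrF)_R\mapsto j^*R(\scrF)_R$; on hom-complexes, by restriction of sheaf homomorphisms, which is manifestly multiplicative, is a chain map, and commutes with curvature because both sides inherit the same squared differential. Objects $R(\scrF)_R$ whose support is disjoint from the chart are sent to the zero object. Specialization at $q=0$ recovers the ordinary sheaf-theoretic restriction, and equivariance under $\tr$ is automatic since every choice used to build $\Om(\Tt_R)_{cdg}$ (trivializations, vertical resolutions, horizontal arrows) was already made $\tr$-compatibly.

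The main obstacle I anticipate is the definition of $j_{U_{i\pm1/2},V_i}^*$ landing in the trivially deformed $\scrC oh(V_i)_R$: the naive restriction $j_{\tV_i}^* R(\scrF)_R$ carries curvature $q$, and one must exhibit it as curved-quasi-isomorphic to a zero-curvature object, coherently across all $\scrF$ and $\tr$-equivariantly. The requisite gauge transformation exists by invertibility of $Y_i$ on $\tV_i$, but making it strict may be impossible, and one likely picks up nontrivial higher $A_\infty$-components in the functor; controlling these components, and verifying their $\tr$-equivariance, is the only genuinely technical step. Everything else is formal and parallel to Section \ref{subsec:dgmint}.
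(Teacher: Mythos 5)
The paper's own proof of Proposition~\ref{limdef} is essentially a one-line deferral to the constructions in Section~\ref{subsec:dgmint}, so your elaboration is for the most part exactly the intended route: build the local categories from (curved, matrix-factorization-style) resolutions on each chart, define the restriction functors on objects by literal pullback, and absorb corrections into higher $A_\infty$-terms. Your final paragraph correctly isolates the one genuinely technical step: over $\tV_i$ the coordinate $Y_i$ is invertible, so the restricted resolution can be gauged to one with vanishing curvature, but the gauge cannot be made strict and one accumulates higher $A_\infty$-components in $j_{U_{i\pm 1/2},V_i}^*$. That is indeed where the real content lies.

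There is one concrete slip in your proposed verification that $\scrC oh(U_{i+1/2})$ is an enhancement. You suggest rerunning the acyclicity argument with the cover $\{V_i,V_{i+1}\}$ of $U_{i+1/2}$, but $V_i\cup V_{i+1}=\{X_i\neq 0\}\cup\{Y_{i+1}\neq 0\}=U_{i+1/2}\setminus\{x_{i+1/2}\}$ omits the node, so this is not a cover and the $\mathcal{C}_!$-machinery does not apply as stated. Two fixes are available. First, observe that the restriction $j_{\tU_{i+1/2}}^*R(\scrF)_R$ is still built from finitely many $j_!$-extensions whose domains are $V_i$, $V_{i+1}$, or $U_{i+1/2}$ itself (the last a trivial extension); the adjunction between $j_!$ and $j^*$ together with Lemma~\ref{homal} then apply on the single affine chart without any new cover. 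Second, and more simply, since $U_{i+1/2}$ is affine one may instead take $\scrC oh(U_{i+1/2})$ to be a dg category of bounded-above complexes of finitely generated projective $\Om(U_{i+1/2})$-modules (the ``vertical'' resolutions alone), for which the enhancement property is classical; the functor $j_{U_{i+1/2}}^*$ then has to contract the restricted horizontal Čech pieces, which is a second source of higher $A_\infty$-components alongside the gauge on $\tV_i$, but no obstruction. Either fix leaves the rest of your argument intact.
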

This proposition can be proven using constructions similar to these in Section \ref{subsec:dgmint} and it will be useful in order to write localization maps for Hochschild cohomology. These maps will be written as deformations of maps induced by restriction functors in Section \ref{sec:hoch}.
\begin{rk}
The deformations of $j_{U_{i\pm 1/2}}^*$ and $j_{U_{i\pm 1/2},V_i}^*$ in Prop \ref{limdef} can be chosen so that \begin{equation}j_{U_{i+ 1/2},V_i}^*\circ j_{U_{i+ 1/2}}^*\simeq j_{U_{i- 1/2},V_i}^*\circ j_{U_{i- 1/2}}^* \end{equation}
\end{rk}
\begin{rk}
This deformation is compatible with $\tr$ and there is an obvious strict auto-equivalence acting on $\Om(\Tt_R)_{cdg}$. This auto-equivalence deforms the translation auto-equivalance of $\Om(\Tt_0)_{dg}$. We denote it by $\tr$ as well. 
\end{rk}
\begin{rk}\label{actionrkondefoOm}The hom-sets of $\Om(\Tt_R)_{cdg}$ are graded complete vector spaces over $R$ %i.e. shifted sum of complete vector spaces
and there is an action of $\G(R)=R^*$ on hom-sets deforming the action in Section \ref{subsec:action}. Moreover, the completed base change of $\Om(\Tt_0)^{eval}_{dg}$ to $R/\C$ is a non-full curved dg subcategory, inheriting the curved dg category structure from $\Om(\Tt_R)_{cdg}$. We denote it by $\Om(\Tt_R)_{cdg}^{eval}$. Its inclusion into $\Om(\Tt_R)_{cdg}$ clearly deforms the inclusion  $\Om(\Tt_0)_{dg}^{eval}\rightarrow \Om(\Tt_0)_{dg}$, which is a quasi-equivalence by Prop \ref{eval}. It is clear that for all $\scrF$, $\scrF'$ and $j$, $hom^j_{\Om(\Tt_R)_{cdg}^{eval}}(\scrF,\scrF')$ is a completed rational representation of $\G(R)=R^*$, i.e. it is the $q$-adic completion of a representation of $R^*$ with an eigenvalue decomposition.
\end{rk}
\section{The construction of the mapping torus}\label{sec:construction}
\subsection{Smash products and the construction}
In this section, we define the mapping torus category and its canonical deformation associated to a pair $(\A,\phi)$. Let us first remind the reader of smash products:
\begin{defn}\label{smash}
Let $\B$ be a dg category and $G$ be a discrete group. Assume $G$ acts on $\B$ by auto-equivalences that are bijective on $Ob(\B)$ and on hom-sets. Moreover, assume composition of the auto-equivalences associated to $g_1,g_2\in G$ 
%in the right order 
is equal to the auto-equivalence associated to $g_1g_2$.
Define $\B\# G$ to be the dg-category such that
\begin{itemize}
\item $Ob(\B\# G):=Ob(\B)$
\item $hom_{\B\# G}(b_1,b_2):=\bigoplus_{g\in G}hom_{\B}(g(b_1),b_2)$ as a chain complex. We will denote $f\in hom_{\B}(g(b_1),b_2)$ by $f\otimes g$ when it is considered as an element of $hom_{\B\# G}(b_1,b_2)$.
\item $(f'\otimes g')\circ (f\otimes g):=(f'\circ g'(f))\otimes(g'g)$
\end{itemize}
\end{defn}
\begin{rk}
When $\B$ is taken to be an ordinary algebra, Definition \ref{smash} gives the well-known semi-direct product construction. Indeed, it is possible to recover Definition \ref{smash} by applying this construction to the total algebra of $\B$. When $G=\Z$, this construction is known as the orbit category, see \cite{kellerorbitcats}, \cite[Section 4.9]{kellerdg}. For a more general group $G$, if one considers the $G$-action as a diagram of categories, then $\B\# G$ is equivalent to the Grothendieck construction as in \cite{thomasongroth}. 
\end{rk}
\begin{rk}
Under similar assumptions, Definition \ref{smash} generalizes verbatim to curved dg algebras. 
\end{rk}
Let $(\A,\phi)$	be as in Section \ref{sec:intro}, i.e. $\A$ is a dg category satisfying \ref{C1}-\ref{C3} and $\phi$ is a strict  auto-equivalence. Note the conditions \ref{C1}-\ref{C3} are not yet necessary. The auto-equivalence $\tr\otimes \phi$ generates a $\Z$-action on $(\Om(\Tt_0)_{dg}\otimes\A)$ satisfying the assumptions of Definition \ref{smash}. 
\begin{defn}
Define $M_\phi$ to be the dg category $(\Om(\Tt_0)_{dg}\otimes\A)\#\Z$. Similarly, define $M_\phi^R$ to be the curved dg category $(\Om(\Tt_R)_{cdg}\otimes\A)\#\Z$, where the tensor product is over $\C$ (and $q$-completed) and the $\Z$-action is generated by $\tr\otimes\phi$ acting on $\Om(\Tt_R)_{cdg}\otimes\A$.
\end{defn}
\begin{rk}
The tensor product of a curved dg category with an uncurved dg category is defined in a way analogous to tensor product of dg categories. Note the curvature $\mu^0_{\scrF\times a}$ of an element $\scrF\times a\in Ob(\Om(\Tt_R)_{cdg}\otimes\A )=Ob(\Om(\Tt_R)_{cdg})\times Ob(\A )$ is $\mu^0_{\scrF}\otimes 1_{a}$, where $\mu^0_{\scrF}$ is the curvature of $\scrF$.
\end{rk}
\begin{rk}\label{actiononmt1}
The $\G(\C)$ (resp. $\G(R)$) action in Section \ref{subsec:action}(resp. Remark \ref{actionrkondefoOm}) induces an action on $(\Om(\Tt_0)_{dg}\otimes\A)$ (resp. $(\Om(\Tt_R)_{cdg}\otimes\A)$); which is compatible with $\tr\otimes \phi$ as the action on $\Om(\Tt_0)_{dg}$ (resp. $(\Om(\Tt_R)_{cdg}$) is chosen to be compatible with $\tr$. Hence, it descends to an action on $M_\phi$ (resp. $M_\phi^R$). Similarly, this action is not rational (resp. completed rational); however, we can pass to non-full quasi-equivalent (resp. quasi-equivalent at $q=0$) subcategories on which action is rational (resp. completed rational).
\end{rk}
\subsection{Bimodules over $\B\# G$ and over $M_\phi$}\label{sec:bimodgen}
Let us make some general remarks about the dg bimodules over $\B\#G$, where $(\B,G)$ is as in Definition \ref{smash}. Let $G_\Delta=\{(g,g):g\in G \}\subset G\times G$ and consider its action on $\B^e=\B\otimes\B^{op}$. One can then consider modules over $(\B^e)\#G_\Delta$. Concretely, any such module is given by 
\begin{itemize}
\item A $\B$-$\B$ bimodule $\mathfrak{M}$
\item For each $g\in G$, $b_1,b_2\in Ob(\B)$ chain isomorphisms \begin{equation}c_{b_1,b_2}(g):\mathfrak{M}(b_1,b_2)\rightarrow \mathfrak{M}(g(b_1),g(b_2))\end{equation} 
\end{itemize}such that $c_{g'(b_1),g'(b_2)}(g)\circ c_{b_1,b_2}(g')=c_{b_1,b_2}(g\circ g'),c_{b,b}(1_b)=1_{g(b)}$ and satisfying \begin{equation}g(f.m.f')=g(f).g(m).g(f') \end{equation} for any $f'\in hom_\B(b_1,b_2)$,$m\in \mathfrak{M}(b_2,b_3)$,$f\in hom_\B(b_3,b_4)$, where $g(m)$ denotes $c_{b_2,b_3}(g)(m)$.

Now construct the $\B\#G$-$\B\#G$ bimodule $\mathfrak{M}\#G$ as follows 
\begin{itemize}
\item $\mathfrak{M}\#G(b_1,b_2)=\bigoplus_{g\in G}\mathfrak{M}(g(b_1),b_2)$ as a complex. Let $m\otimes g$ denote $m\in \mathfrak{M}(g(b_1),b_2)$ when it is considered as an element of $\mathfrak{M}\#G(b_1,b_2)$
\item Given $g_1,g_2\in G$, $m\in\mathfrak{M}(g_1(b_1),b_2)$, $f\in hom_\B(g_2(b_2),b_3)$ \begin{equation}(f\otimes g_2)(m\otimes g_1)=fg_2(m)\otimes g_2g_1 \end{equation}
\item Given $g_1,g_2\in G$, $f\in hom_\B(g_1(b_1),b_2)$, $m\in\mathfrak{M}(g_2(b_2),b_3)$ \begin{equation}(m\otimes g_2)(f\otimes g_1)=mg_2(f)\otimes g_2g_1 \end{equation}
\end{itemize}
The simplest example is the diagonal bimodule of $\B$. In that case, the process clearly gives the diagonal bimodule of $\B\#G$.

This construction can be seen as a base change under the map \begin{equation}(B^e)\#G_\Delta\rightarrow (\B\# G)^e\cong (\B^e)\#(G\times G)\end{equation}  sending $(b\otimes b')\otimes g\mapsto (b\otimes g)\otimes (b'\otimes g^{-1})\in (\B\# G)^e$, which corresponds to $(b\otimes b')\otimes(g,g)\in (\B^e)\#(G\times G) $. To see this, one may simply prove this construction gives a left adjoint to the restriction map of modules under this map.

Also, note the functoriality of this construction in the dg category of dg bimodules. In particular, it sends exact triangles into exact triangles and quasi-isomorphisms into quasi-isomorphisms. 

To use this to produce bimodules over $M_\phi$, we first need to produce bimodules over $\Om(\Tt_0)_{dg}$ satisfying the above invariance condition. 
\begin{defn}
Given a complex $E$ of $\Om_{\Tt_0\times\Tt_0}$-modules we can define the corresponding $\Om(\Tt_0)_{dg}$-bimodule as \begin{equation}\label{eq:bimoddefnquotientmarks}
\mathfrak{M}_E:(\scrF,\scrF')\mapsto ``RHom_{\Om_{\Tt_0\times\Tt_0}}(q^*(\scrF),p^*(\scrF')\overset{L}{\otimes}_{\Om_{\Tt_0\times\Tt_0}}E )" \end{equation}where $q,p$ are projections to first and second factor respectively. 
\end{defn}
\begin{rk}\label{spalsrk}
To remove the quotation marks in the definition (i.e. to make it more precise), replace $\scrF$ by $R(\scrF)$ and $p^*(\scrF')\overset{L}{\otimes}_{\Om_{\Tt_0\times\Tt_0}}E$ by a (K-)injective resolution $I^\cdot_{\scrF'}$ of $p^*(R(\scrF'))\overset{}{\otimes}_{\Om_{\Tt_0\times\Tt_0}}E$ that is functorial in $\Om(\Tt_0)_{dg}$ (and replace $RHom$ by $Hom$). As we noted, we will often omit the subscripts of tensor product from the notation. To see the existence of such a resolution see $R(\scrF)\mapsto p^*(R(\scrF'))\overset{}{\otimes}_{\Om_{\Tt_0\times\Tt_0}}E$ as a dg functor from $\Om(\Tt_0)_{dg}$ to chains on the sheaves on $\Tt_0$. The latter has functorial K-injective resolutions since sheaves of $\Om_{\Tt_0}$-modules has functorial injective resolutions. See the construction in \cite{spaltenstein}.
\end{rk}
To endow it with a $\Z_\Delta$-action (i.e. with maps $c_{b_1,b_2}$ as above) fix an isomorphism
%(a chain isomorphism is good enough for our purposes) 
\begin{equation}\label{qiofcx}
E\simeq (\tr\times\tr)_*(E)\end{equation}and assume the injective resolution $I^\cdot_{\scrF'}$ of $p^*(R(\scrF'))\overset{L}{\otimes}E$ is carried to the injective resolution $I^\cdot_{\tr_*\scrF'}$ of $(\tr\times\tr)_*(p^*(R(\scrF'))\overset{}{\otimes}E)\simeq p^*(R(\tr_*\scrF'))\otimes (\tr\times\tr)_*E\simeq p^*(R(\tr_*\scrF'))\otimes E$ under $(\tr\times\tr)_*$. Then $(\tr\times\tr)_*$ gives us chain isomorphisms \begin{equation}hom^\cdot(q^*R(\scrF),I^\cdot_{\scrF'})\simeq hom^\cdot(q^*R(\tr_*\scrF),I^\cdot_{\tr_*\scrF'} ) \end{equation}which is the desired $\Z_\Delta$-action. In the following, the isomorphisms $E\simeq(\tr\times\tr)_*(E)$ will be obvious. 
\begin{defn}
We can produce another $\Om(\Tt_0)_{dg}$-bimodule out of the complex of $\Tt_0\times \Tt_0$ modules $E$. Namely define $\fM_E'$ by \begin{equation}\label{eq:quotationdefn1}
\mathfrak{M}'_E:(\scrF,\scrF')\mapsto ``RHom_{\Om_{\Tt_0\times\Tt_0}}(q^*(\scrF)\overset{L}{\otimes}_{\Om_{\Tt_0\times\Tt_0}}E,p^*(\scrF') )" \end{equation}
Remark \ref{spalsrk} applies in this case too. A quasi-isomorphism as in (\ref{qiofcx}) would be sufficient to endow $\fM'_E$ with a $\Z_\Delta$ equivariant structure. We will not use this fact and we skip the technical details. 
\end{defn}
Assume in addition we have a bimodule $\mathfrak{N}$ over $\A$ such that \begin{equation}\mathfrak{N}\simeq (\phi\otimes\phi)_*(\mathfrak{N}) \end{equation}strictly (via a dg-bimodule map that acts as chain isomorphisms for each pairs of objects). 
%(via an $A_\infty$-bimodule map without higher morphisms that acts as chain isomorphisms for each pairs of objects).
Hence, we have a $\Z_\Delta$-equivariant structure on $\mathfrak{N}$, i.e. an equivariant structure with respect to $\phi\otimes\phi$. Given $\Z_\Delta$-equivariant $\Om(\Tt_0)_{dg}$-bimodule $\fM$, we can endow the $\Om(\Tt_0)_{dg}\otimes \A$-bimodule $\fM\otimes \mathfrak{N}$ with a $\Z_\Delta$-equivariant structure (with respect to $\tr\otimes\phi$); hence obtain a bimodule over $M_\phi=(\Om(\Tt_0)_{dg}\otimes \A)\#\Z$ using the recipe above. In particular, the diagonal bimodule of $\A$ is an example of such an $\mathfrak N$.
%\begin{exmp}
%Assume $E$ is the structure sheaf of diagonal. Then $\fM_E$ is isomorphic to the diagonal bimodule by an isomorphism preserving $\Z_\Delta$-equivariant structure. Assume moreover $\mathfrak{N}$ is the diagonal bimodule (or more generally the bimodule corresponding to an autoequivalence that strictly commutes with $\phi$). Then the above construction gives us a bimodule isomorphic to the diagonal bimodule of $M_\phi$. 
%\end{exmp}

As an application of these ideas let us prove: 
\begin{prop}\label{smoothness} $M_\phi$ is a smooth category whenever $\A$ is. \end{prop}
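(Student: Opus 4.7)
The approach will be to use the smash-product bimodule construction from Section \ref{sec:bimodgen} to reduce smoothness of $M_\phi$ to a $\Z_\Delta$-equivariant statement about $\Om(\Tt_0)_{dg} \otimes \A$. First, I would note that the diagonal bimodule $\Delta_{M_\phi}$ arises as $(\Delta_{\Om(\Tt_0)_{dg} \otimes \A}) \# \Z$ via this construction, with the $\Z_\Delta$-equivariant structure induced by $\tr \otimes \phi$ acting diagonally. The smash-product functor $\fM \mapsto \fM \# \Z$ on bimodules is exact and sends equivariant Yoneda bimodules to Yoneda bimodules of $M_\phi$; hence it preserves perfection, and a finite equivariant resolution of $\Delta_{\Om(\Tt_0)_{dg} \otimes \A}$ by Yoneda bimodules descends to a finite resolution of $\Delta_{M_\phi}$.

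Next I would decompose $\Delta_{\Om(\Tt_0)_{dg} \otimes \A} \cong \Delta_{\Om(\Tt_0)_{dg}} \otimes \Delta_\A$ with compatible $\Z_\Delta$-equivariant structures. Since the external tensor product of perfect bimodules is perfect, and $\Delta_\A$ is perfect by smoothness of $\A$, the problem reduces to constructing a $(\tr \otimes \tr)$-equivariant resolution of $\Delta_{\Om(\Tt_0)_{dg}}$ by Yoneda bimodules that is \emph{locally finite}, i.e.\ finite in each $\Z$-orbit of pairs of objects, so that after smash-product with $\Z$ the resulting resolution of $\Delta_{M_\phi}$ has only finitely many terms.

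To construct this resolution, I would exploit the band-diagonal structure of $\Om(\Tt_0)_{dg}$: as recorded earlier, $hom^\cdot(\Om_{C_i}(a), \Om_{C_j}(b))$ vanishes for $|i-j| \geq 2$, so only pairs of objects supported on adjacent components interact. Working locally on the charts $\tU_{i+1/2}$ of $\Tt_0$ and using the coordinates $X_i, Y_{i+1}$ (with $X_i Y_{i+1} = 0$), one can resolve the diagonal sheaf by a Koszul-type complex. These local pieces can be assembled $\tr$-equivariantly into a resolution of the bimodule $\Delta_{\Om(\Tt_0)_{dg}}$ with only finitely many terms per $\Z$-orbit.

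The main obstacle is that the nodes of $\Tt_0$ are singular, so the structure sheaf of the diagonal $\Tt_0 \subset \Tt_0 \times \Tt_0$ is not a perfect complex on $\Tt_0 \times \Tt_0$ — the naive geometric resolution at the nodes is infinite. The resolution must therefore be built at the bimodule level, where the band-diagonal structure together with the $\Z$-symmetry allow the infinite tails to be absorbed after smashing with $\Z$. The technical heart of the proof will be producing this resolution explicitly and verifying compatibility of the equivariant structures; this then yields perfection of $\Delta_{M_\phi}$ and hence smoothness of $M_\phi$.
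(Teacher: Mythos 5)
Your reduction to a $\Z_\Delta$-equivariant statement about $\Om(\Tt_0)_{dg}\otimes\A$ matches the paper, and your identification of the obstacle — that the diagonal of $\Tt_0$ is not a perfect complex on $\Tt_0\times\Tt_0$ because of the nodes — is exactly right. But the step where you claim to overcome that obstacle does not work.

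You propose to resolve the diagonal by ``a Koszul-type complex'' locally on the charts and then argue that the ``band-diagonal structure together with the $\Z$-symmetry allow the infinite tails to be absorbed after smashing with $\Z$.'' There is no such mechanism. At a node the local ring is $\C[X,Y]/(XY)$, and the diagonal module has \emph{infinite} projective dimension over $\C[X,Y]/(XY)\otimes\C[X,Y]/(XY)$; truncating locally and hoping the smash product finishes the job cannot succeed, because $\#\Z$ identifies objects along the translation $\tr$, i.e.\ acts on the index $i$ of components $C_i$, and does nothing to the homological degree of a resolution. A $\Z$-periodic infinite resolution, after smashing, is still an infinite resolution. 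What actually makes the argument go through in the paper is the normalization $\pi:\mathbb{P}^1\times\Z\to\Tt_0$: the short exact sequence $0\to\Om_\Delta\to\tilde\Om_\Delta\to\bigoplus_j\Om_{x_{j+1/2}}\boxtimes\Om_{x_{j+1/2}}\to 0$ compares the singular diagonal with the pushforward of the diagonal of the \emph{smooth} normalization, which admits a finite (Beilinson) resolution by exterior products $\Om_{C_i}(-1)\boxtimes\Om_{C_i}(-1)\to\Om_{C_i}\boxtimes\Om_{C_i}$, and the error term is a direct sum of skyscrapers supported at the nodes, hence a finite complex of coherent sheaves after one more step. This geometric input has no counterpart in your proposal, and it is the essential idea that makes the diagonal bimodule perfect.

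There is a second, more technical gap: you speak of sending the resolution to ``Yoneda bimodules,'' but if you apply the naive assignment $E\mapsto\fM_E$ to a sheaf $E'\boxtimes E''$, the resulting bimodule is \emph{not} quasi-isomorphic to a Yoneda (representable) bimodule. The paper instead applies the contravariant version $E\mapsto\fM'_E$ to the \emph{derived dual} $\Om_\Delta^\vee$, using Lemma~\ref{dualitylemma} (a Gorenstein duality argument) to verify $\fM'_{\Om_\Delta^\vee}\simeq\Delta_{\Om(\Tt_0)_{dg}}$ and to make each term $\fM'_{E_i'^\vee\boxtimes E_i''}$ representable by $E_i'^\vee\times E_i''$. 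Without this dualization you would produce a twisted complex of the wrong type of bimodules, and perfection would not follow.
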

\begin{proof}
Consider the normalization $\pi:\mathbb{P}^1\times\Z\rightarrow \Tt_0$. Throughout this proof let $\Om_\Delta$ denote the structure sheaf of the diagonal of $\Tt_0$, and let $\tilde{\Om}_\Delta$ denote $(\pi\times\pi)_*(\Om_{\Delta_{\mathbb{P}^1\times\Z}})$, where $\Delta_{\mathbb{P}^1\times\Z}$ is the diagonal of $\mathbb{P}^1\times \Z$. We have a short exact sequence of sheaves on $\Tt_0\times \Tt_0$
\begin{equation}\label{eq:ses1}
0\rightarrow\Om_\Delta\rightarrow\tilde{\Om}_\Delta\rightarrow\bigoplus_{j\in\Z}\Om_{x_{j+1/2}}\boxtimes \Om_{x_{j+1/2}}\rightarrow 0 
\end{equation} 
Here, $x_{j+1/2}$ is the node in the chart $U_{i+1/2}$, and the map $\Om_\Delta\rightarrow\tilde{\Om}_\Delta$ comes as the push-forward of $\Om_{\Tt_0}\rightarrow\pi_*(\Om_{\mathbb{P}^1\times\Z})$ under the diagonal map. Using Beilinson's resolution of diagonal of $\mathbb{P}^1$ (at each component separately) and exactness of affine push-forward $(\pi\times\pi)_*$ we obtain a resolution 
\begin{equation}\label{eq:ses2}
0\rightarrow\bigoplus_{i\in\Z}\Om_{C_i}(-1)\boxtimes\Om_{C_i}(-1)\rightarrow \bigoplus_{i\in\Z}\Om_{C_i}\boxtimes\Om_{C_i}\rightarrow \tilde{\Om}_\Delta \rightarrow 0
\end{equation}
This implies the sheaf $\Om_\Delta$ is quasi-isomorphic to twisted complex
\begin{equation}\label{sheafres}
\bigoplus_{i\in\Z}\Om_{C_i}(-1)\boxtimes\Om_{C_i}(-1)\rightarrow \bigoplus_{i\in\Z}\Om_{C_i}\boxtimes\Om_{C_i}\rightarrow\bigoplus_{j\in\Z}\Om_{x_{j+1/2}}\boxtimes \Om_{x_{j+1/2}}
\end{equation}
We could apply $E\mapsto \fM_E$ to (\ref{sheafres}); however, $\fM_{E'\boxtimes E''}$ is not quasi-isomorphic to a Yoneda bimodule. Inspired by \cite{categoricalresolution}, we will instead apply $E\mapsto\fM'_E$ to $\Om_\Delta^\vee$(i.e. to derived dual of $\Om_\Delta$) and to dual of the resolution (\ref{sheafres}). First notice, \begin{multline}\label{blah}
\fM'_{\Om_\Delta^\vee}(\scrF,\scrF')=``RHom_{\Om_{\Tt_0\times\Tt_0}}(q^*(\scrF)\overset{L}{\otimes}_{\Om_{\Tt_0\times\Tt_0}}\Om_\Delta^\vee,p^*(\scrF') )"\simeq\\ ``RHom_{\Om_{\Tt_0\times\Tt_0}}(q^*(\scrF),R\sheafhom_{\Tt_0\times\Tt_0}(\Om_\Delta^\vee,p^*(\scrF')) )"\simeq\\ ``RHom_{\Om_{\Tt_0\times\Tt_0}}(q^*(\scrF),\Om_\Delta\otimes p^*(\scrF')) )"\simeq ``RHom_{\Tt_0}(\scrF,\scrF') "
\end{multline}
Here, the quotation marks are used to omit the resolutions that are necessary for (dg) functoriality of the corresponding expression from the notation (see Remark \ref{spalsrk}). As a result of (\ref{blah}), $\fM_{\Om_\Delta^\vee}$ is quasi-isomorphic to diagonal bimodule. The only non-trivial step is the quasi-isomorphism between second and third rows and this follows from Lemma \ref{dualitylemma} (let $X$ be $\Tt_0$ and $f$ be the diagonal embedding). Taking the derived duals, we find $\Om_\Delta^\vee$ is quasi-isomorphic to twisted complex
\begin{equation}\label{dualsheafres}
\bigoplus_{j\in\Z}\Om_{x_{j+1/2}}^\vee\boxtimes \Om_{x_{j+1/2}}^\vee \rightarrow \bigoplus_{i\in\Z}\Om_{C_i}^\vee\boxtimes\Om_{C_i}^\vee\rightarrow
\bigoplus_{i\in\Z}\Om_{C_i}(-1)^\vee\boxtimes\Om_{C_i}(-1)^\vee
\end{equation}
Notice the derived duals of coherent sheaves are quasi-isomorphic to bounded complexes of coherent sheaves, thanks to the Gorenstein property. 

Applying $E\mapsto \fM'_E$, we find $\fM'_{\Om_\Delta^\vee}$ is quasi-isomorphic to 
\begin{equation}
\bigoplus_{i\in\Z}\fM'_{\Om_{C_i}(-1)^\vee\boxtimes\Om_{C_i}(-1)^\vee}\rightarrow \bigoplus_{i\in\Z}\fM'_{\Om_{C_i}^\vee\boxtimes\Om_{C_i}^\vee}\rightarrow \bigoplus_{j\in\Z}\fM'_{\Om_{x_{j+1/2}}^\vee\boxtimes \Om_{x_{j+1/2}}^\vee}
\end{equation}
We implicitly use the fact that \begin{equation}\fM'_{\bigoplus_{m\in\Z}E_m}\simeq \bigoplus_{m\in\Z}\fM'_{E_m}\end{equation} for $\{E_m\}$ satisfying the following:
given $\scrF,\scrF'\in Coh_p(\Tt_0)$ there exists only finitely many $E_m$ whose support intersects $supp(q^*(\scrF))\cup supp(p^*(\scrF'))$.

Note also that the sheaves involved in expressions (\ref{eq:ses1}) and (\ref{eq:ses2}) can be made $(\tr\times\tr)_*$-equivariant in an obvious way so that the maps can be chosen to be compatible with these $\Z_\Delta$-equivariant structures. This does apply to their duals as well; hence, the bimodule $\fM'_{\Om_\Delta^\vee}\otimes \Delta_\A\simeq \Delta_{\Om(\Tt_0)_{dg}\otimes\A}$ is quasi-isomorphic to a twisted complex of bimodules
\begin{equation}\label{eq:diagseq}
\bigg\{
\bigoplus_{i\in\Z}\fM'_{\Om_{C_i}(-1)^\vee\boxtimes\Om_{C_i}(-1)^\vee}\rightarrow \bigoplus_{i\in\Z}\fM'_{\Om_{C_i}^\vee\boxtimes\Om_{C_i}^\vee}\rightarrow \bigoplus_{j\in\Z}\fM'_{\Om_{x_{j+1/2}}^\vee\boxtimes \Om_{x_{j+1/2}}^\vee} \bigg\}\otimes\Delta_\A
\end{equation}
compatibly with the $\Z_\Delta$-action.  

Assume $E_i=E_i'\boxtimes E_i''$, where $E_i',E_i''\in Coh_p(\Tt_0)$ satisfying $E_{i+1}'=\tr_* E_i'$ and $E_{i+1}''=\tr_* E_i''$, as in (\ref{dualsheafres}). Then $\fM'_{E_i'\boxtimes E_i''}$ is a right $\Om(\Tt_0)_{dg}\otimes \Om(\Tt_0)_{dg}^{op}$-module (i.e. a functor from $\Om(\Tt_0)_{dg}^{op}\otimes \Om(\Tt_0)_{dg}$ to chains over $\C$)
represented by \begin{equation}E_i'^\vee\times E_i''\in Ob(\Om(\Tt_0)_{dg}^{op}\otimes \Om(\Tt_0)_{dg})\end{equation} where $E_i'^\vee$ is-again- the derived dual of $E_i'$. This is essentially stating \begin{equation}RHom_{\Tt_0\times\Tt_0}(q^*\scrF\overset{L}{\otimes}(E_i'\boxtimes E_i''),p^*\scrF')\simeq RHom_{\Tt_0}(\scrF,E_i'^\vee)\otimes_\C RHom_{\Tt_0}(E_i'',\scrF') \end{equation}

Hence, $\bigoplus_{i\in\Z}\fM'_{E_i'\boxtimes E_i''}\otimes\Delta_\A$, with its obvious $(\tr_*\otimes\phi)\otimes(\tr_*\otimes \phi)$-equivariant structure, descends to \begin{equation}\bigg(\bigoplus_{i\in\Z}\fM'_{E_i'\boxtimes E_i''}\otimes\Delta_\A \bigg)\#\Z \end{equation} which is quasi-isomorphic to a twisted complex we informally denote by \begin{equation}``h_{E_0'^\vee\times E_0''}\otimes\Delta_\A"\end{equation} where $h_{E_0'^\vee\times E_0''}$ is the contravariant Yoneda functor associated to $E_0'^\vee\times E_0''$. %Recall $Ob(M_\phi)\cong Ob(\Om(\Tt_0)_{dg})\times Ob(\A)$, 
To see $``h_{E_0'^\vee\times E_0''}\otimes\Delta_\A"$ is quasi-isomorphic to a twisted complex over $M_\phi^e$ one may find a twisted complex $X=(X,\delta,\pi)$ over $\A^e$ that is quasi-isomorphic to $\Delta_\A$ and apply descent to an infinite equivariant sum and obtain \begin{equation}\bigg(\bigoplus_{i\in\Z}\fM'_{E_i'\boxtimes E_i''}\otimes(\phi\otimes\phi)^i(X) \bigg)\#\Z \end{equation} which can be represented by a twisted complex of objects $``E_0'^\vee\times a'\times E_0''\times a''"$.
%In general, the $G_\Delta$-equivariant bimodule $\bigoplus_{g\in G}\B(\cdot,gE)\otimes\B(gE',\cdot) $ descends to $(\B\# G)(\cdot,E)\otimes (\B\# G)(E',\cdot)$.

Hence, $\Delta_{M_\phi}$, which can be obtained by descent from $\Delta_{\Om(\Tt_0)_{dg}\otimes\A}$ can be represented by a twisted complex as the latter is $\Z_\Delta$-equivariantly quasi-isomorphic to (\ref{eq:diagseq}).
\end{proof}
\begin{lem}\label{dualitylemma}
	Let $X$,$Y$,$Z$ be (locally Noetherian) Gorenstein varieties over $\C$, $f:X\hookrightarrow Y$ be a closed embedding and $p:Y\rightarrow Z$ be a flat map. Assume $p\circ f$ is also flat. Then, for any coherent sheaf $\scrF$ on $Z$, there exists a natural isomorphism in the derived category
	\begin{equation}
	\Om_X\otimes_{\Om_Y} p^*(\scrF) \xrightarrow{\simeq}R\sheafhom_Y(\Om_X^\vee,p^*\scrF)
	\end{equation}Here $\Om_X=Rf_*\Om_X=f_*\Om_X$ and $\Om_X^\vee=R\sheafhom_Y (f_*\Om_X,\Om_Y)$.
\end{lem}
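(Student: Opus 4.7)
My plan is to construct the natural map from the evaluation pairing and then verify it is a quasi-isomorphism locally. Starting from $\mathcal{O}_X^\vee \otimes_{\mathcal{O}_Y}^L \mathcal{O}_X \to \mathcal{O}_Y$, tensoring with $p^*\mathcal{F}$ (which equals $Lp^*\mathcal{F}$ since $p$ is flat) and using the $\otimes$-$\sheafhom$ adjunction yields
\[\mathcal{O}_X \otimes_{\mathcal{O}_Y}^L p^*\mathcal{F} \longrightarrow R\sheafhom_Y(\mathcal{O}_X^\vee, p^*\mathcal{F}).\]
Since $p\circ f$ is flat, higher Tor of $\mathcal{O}_X$ against $p^*\mathcal{F}$ vanishes, so the left side is the underived $\mathcal{O}_X \otimes_{\mathcal{O}_Y} p^*\mathcal{F}$ from the statement. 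To check the map is a quasi-isomorphism I would pass to stalks at points of $X$, reducing to the following affine setup: Noetherian Gorenstein local rings $B$ and $C := B/J$, both flat over a Gorenstein local ring $R_0$, and the goal becomes showing
\[C \otimes_{R_0} M \;\xrightarrow{\simeq}\; R\mathrm{Hom}_B(K, B \otimes_{R_0} M), \qquad K := R\mathrm{Hom}_B(C, B),\]
for every $R_0$-module $M$.

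Two inputs drive the argument. First, $K$ lies naturally in $D(C)$, as the coinduction of $B$ along $B \to C$; Grothendieck duality for the closed embedding identifies $K$ (up to shift) with the dualizing complex of $\mathrm{Spec}(C)$. Since $C$ is Gorenstein, this dualizing complex is a shifted invertible $C$-module $L[-c]$, where $c$ is the codimension. This is the crucial geometric input that bypasses the fact that $X \hookrightarrow Y$ need not be a local complete intersection; indeed this failure is exactly what occurs at the nodes of the diagonal $\T_0 \hookrightarrow \T_0\times\T_0$ to which the lemma is being applied in Proposition \ref{smoothness}. Second, resolving $C$ by a complex $F_\bullet$ of finite free $B$-modules and taking a flat resolution $P_\bullet \to M$ over $R_0$, the finite-freeness of each $F_i$ combined with the flatness of $B$ over $R_0$ lets one commute $\mathrm{Hom}_B(F_i,-)$ with $-\otimes_{R_0} P_j$, giving $R\mathrm{Hom}_B(C, B \otimes_{R_0} M) \simeq K \otimes_{R_0}^L M$.

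Combining these via the restriction–coinduction adjunction $R\mathrm{Hom}_B(K,N) \simeq R\mathrm{Hom}_C(K, R\mathrm{Hom}_B(C,N))$, applied with $N = B \otimes_{R_0} M$, the right-hand side of the target isomorphism becomes
\[R\mathrm{Hom}_C(L[-c], L[-c] \otimes_{R_0}^L M) \simeq C \otimes_{R_0}^L M = C \otimes_{R_0} M,\]
the middle step using $R\mathrm{Hom}_C(L,L) = C$ for $L$ invertible, and the last equality by flatness of $C$ over $R_0$. The main obstacle I expect is the diagram-chase verifying that the quasi-isomorphism produced by this chain of identifications agrees with the natural map coming from the evaluation pairing; this amounts to tracing the units and counits of the involved adjunctions (in particular biduality on $C$). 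Once $K$ is identified with a shifted line bundle on $X$, the remaining computation collapses to a line-bundle $\mathrm{Hom}$ computation, which is the conceptual heart of the lemma.
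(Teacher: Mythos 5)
Your proof is correct, and at the skeletal level it follows the same path as the paper's: both construct the map from $\Om_X$-biduality (the adjoint of the evaluation pairing is exactly the paper's map $\Om_X\to\Om_X^{\vee\vee}$ post-tensored with $p^*\scrF$), both reduce to stalks at points of $X$, both invoke Grothendieck duality for the closed embedding $f$ together with the Gorenstein hypothesis to identify $\Om_X^\vee$ with (the pushforward of) a shifted line bundle on $X$, and both use the coinduction adjunction for $f$ --- the paper's $Rf_*R\sheafhom_X(\mathscr E,f^!\mathscr E')\simeq R\sheafhom_Y(Rf_*\mathscr E,\mathscr E')$ is precisely your $R\mathrm{Hom}_B(K,N)\simeq R\mathrm{Hom}_C(K,R\mathrm{Hom}_B(C,N))$. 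You diverge only in the endgame. The paper finishes by computing $f^!(D_Y\otimes p^*\scrF)\simeq (pf)^*\scrF\otimes D_X$ using the formula $f^!\simeq \mathbb{D}_X\circ Lf^*\circ\mathbb{D}_Y$, which is a second pass through the dualizing machinery. You instead establish the base-change identity $R\mathrm{Hom}_B(C,B\otimes_{R_0}M)\simeq K\otimes^L_{R_0}M$ directly from a resolution $F_\bullet\to C$ by finite free $B$-modules --- the point being that $\mathrm{Hom}_B(F_\bullet,B)$ is again finite free over $B$, hence $R_0$-flat, so the underived tensor already computes the derived one (the auxiliary flat resolution $P_\bullet\to M$ you mention is not even needed) --- and then collapse $R\mathrm{Hom}_C(L[-c],L[-c]\otimes^L_{R_0}M)\simeq C\otimes_{R_0}M$ by invertibility of $L$. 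Your route buys a more elementary module-theoretic finish; the paper's stays uniformly inside the $\mathbb D$/$f^!$ formalism. The essential inputs (commutation of $Rf_*$ with duality, Gorenstein dualizing complexes being shifted line bundles, flatness of $p$ and $pf$) are identical, so this is a genuine but mild reorganization rather than a different proof strategy. You are right to flag the unit/counit bookkeeping needed to verify that your composite quasi-isomorphism coincides with the evaluation map; the paper's proof has the same unaddressed step. One small slip of notation: the application in the proof of Proposition \ref{smoothness} uses the diagonal of $\Tt_0$, not $\T_0$, though the geometric remark about non-lci behavior at the nodes is correct in spirit.
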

\begin{proof}
	We will drop the subscript $\Om_Y$ of the tensor product and $\otimes$ refers to derived tensor product as usual. However, notice in this case flatness of $X$ over $Z$ implies $\Om_X\otimes p^*(\scrF)\simeq \Om_X\overset{L}{\otimes} p^*(\scrF)$. In particular $\Om_X\overset{L}{\otimes} p^*(\scrF)$ is a bounded complex of coherent sheaves.
	
	We also remark that $\Om^{\vee\vee}_X\simeq\Om_X$ over $Y$, thanks to the Gorenstein property (see \cite[Section V,Theorem 9.1]{resdual}). In other words, we have an isomorphism in the derived category\begin{equation}
	\Om_X\rightarrow R\sheafhom_Y (\Om_X^\vee,\Om_Y)
	\end{equation}
	which induces $\Om_X\otimes p^*\scrF\rightarrow R\sheafhom_Y (\Om_X^\vee,\Om_Y)\otimes p^*\scrF$. 
	Our asserted quasi-isomorphism is the composition of the natural maps \begin{equation}\label{oneeq}
	\Om_X\otimes p^*\scrF\rightarrow R\sheafhom_Y (\Om_X^\vee,\Om_Y)\otimes p^*\scrF\rightarrow R\sheafhom_Y (\Om_X^\vee,\Om_Y\otimes p^*\scrF)
	\end{equation}
	Whether (\ref{oneeq}) gives a quasi-isomorphism is a local question; thus, we can assume $X,Y,Z$ to be Noetherian (and even affine). First, let us compute $\Om_X^\vee$ using Duality theorem \cite[Section VII,Theorem 3.3]{resdual}. Let $D_Y$ be a dualizing complex on $Y$ and $D_X$ be a dualizing complex on $X$.  Assume $D_Y$ and $D_X$ are related by $f$ in an appropriate sense, i.e. $f^A D_Y\simeq D_X$ in the notation of \cite{resdual}.
	%Should I add: i.e. $f^! D_Y\simeq D_X$ or $f^A D_Y\simeq D_X$?
	One can define corresponding dualizing functors as $\mathbb{D}_Y(\mathscr E)=R\sheafhom_Y (\mathscr E,D_Y)$ and $\mathbb{D}_X(\mathscr E)=R\sheafhom_X (\mathscr E,D_X)$. Then \cite[Section VII,Theorem 3.3]{resdual} states that $Rf_*\circ\mathbb D_X\simeq \mathbb D_Y\circ Rf_*$.
	%(for every bounded above complex $\scrF$ with quasi-coherent cohomology). 
	If we apply this to $\scrF=\Om_X\in Coh(X)$, we obtain \begin{equation}R\sheafhom_Y(Rf_*\Om_X,D_Y)\simeq Rf_*R\sheafhom_X(\Om_X,D_X)\simeq Rf_*D_X \end{equation}
	As $X$ and $Y$ are Gorenstein, $D_X$ and $D_Y$ are quasi-isomorphic to shifted line bundles. Hence, \begin{equation}\Om_X^\vee=R\sheafhom _Y(Rf_*\Om_X,\Om_Y)\simeq D_Y^{-1}\otimes Rf_*D_X \end{equation} 
	Moreover, a functor $f^!:D^+_{coh}(Y)\rightarrow D^+_{coh}(X)$
	satisfying 
	\begin{equation}
	Rf_*R\sheafhom_X(\mathscr E,f^!\mathscr E')\xrightarrow{\simeq}R\sheafhom_Y(Rf_*\mathscr E,\mathscr E')
	\end{equation}
	for every $\mathscr E\in D^-_{qcoh}(X),\mathscr E'\in D^+_{coh}(X)$ is constructed in the proof of \cite[Section VII, Corollary 3.4]{resdual} and it also satisfies $f^!\simeq \mathbb D_X\circ Lf^*\circ \mathbb{D}_Y$. This implies
	\begin{equation}\label{birid0}
	R\sheafhom_Y(\Om_X^\vee,p^*\scrF)\simeq R\sheafhom_Y(D_Y^{-1}\otimes Rf_*D_X,p^*\scrF) \simeq\atop R\sheafhom_Y( Rf_*D_X,D_Y\otimes p^*\scrF)\simeq Rf_*R\sheafhom_X(D_X,f^!(D_Y\otimes p^*\scrF) )
	\end{equation}
	Note, we take $\mathscr E=D_X$ and $\mathscr E'=D_Y\otimes p^*\scrF$ for the last isomorphism.
	
	Now we assert,  \begin{equation}\label{birid}
	f^!(D_Y\otimes p^*\scrF)\simeq (pf)^*\scrF \otimes D_X
	\end{equation} %(to prove this one uses Gorenstein property and flatness of $pf$).
	Indeed,
	\begin{align*}
	f^!(D_Y\otimes p^*\scrF)\simeq \mathbb{D}_X Lf^* R\sheafhom_Y(D_Y\otimes p^*\scrF,D_Y) \simeq\\ \mathbb{D}_X Lf^* R\sheafhom_Y(p^*\scrF,\Om_Y)\simeq \mathbb{D}_X R\sheafhom_X((pf)^*\scrF,\Om_X)\simeq \\ (pf)^*\scrF \otimes D_X
	\end{align*}
	The last identity holds due to Gorenstein property. The identity \begin{equation}Lf^* R\sheafhom_Y(p^*\scrF,\Om_Y)\simeq R\sheafhom_X((pf)^*\scrF,\Om_X) \end{equation} can be proven using flatness of $p$ and $pf$. Namely let $E\xrightarrow{\simeq}\scrF$ be a locally free resolution. $R\sheafhom_Y(p^*\scrF,\Om_Y)\simeq p^*E^\vee\simeq Lp^*E^\vee$ is bounded below. Still \begin{equation}Lf^*Lp^*E^\vee\simeq L(pf)^*E^\vee\simeq (pf)^*E^\vee\simeq (pf)^*\scrF^\vee\end{equation}
	
	Combining (\ref{birid0}) and (\ref{birid}), we see that
	\begin{equation}
	R\sheafhom_Y(\Om_X^\vee,p^*\scrF)\simeq Rf_*((pf)^*\scrF)\simeq Rf_*(\Om_X)\otimes p^*\scrF=\Om_X\otimes p^*\scrF
	\end{equation}
	This finishes the proof.
	%The projection formula used at the end is obvious since $f$ is a closed embedding.
\end{proof}
\section{Hochschild cohomology of the mapping torus categories}\label{sec:hoch}
\subsection{Hochschild cohomology of $\Om(\Tt_0)_{dg}$ and $\Om(\Tt_R)_{cdg}$}
In this section we will compute the Hochschild cohomology of the mapping torus categories. For this we first need the Hochschild cohomology of $\Om(\Tt_0)_{dg}$ and $\Om(\Tt_R)_{cdg}$. Let $\scrC oh(\Tt_0)$ be a dg enhancement for the bounded derived category of coherent sheaves on $\Tt_0$. This clearly restricts to a dg enhancement of $D^b(Coh_p(\Tt_0))$. We will denote it by $\scrC oh_p(\Tt_0)$. Similarly, let $\scrC oh(U_{i+1/2})$ and $\scrC oh(U_{i+1/2}\cap U_{j+1/2})$ be dg enhancements of corresponding derived categories. Then there are pull-back maps 
\begin{equation}\label{postzerorestr} \scrC oh(\Tt_0)\rightarrow \scrC oh(U_{i+1/2})\rightarrow  \scrC oh(U_{i+1/2}\cap U_{j+1/2}) \end{equation} which are $A_\infty$-functors but without loss of generality one can choose the enhancements so that they become dg-functors (for instance replace each of these categories by their categories of perfect right $A_\infty$-modules, and the functors by the induced functors between the module categories). 
Hence, $\scrC oh(U_{i+1/2})$ and $  \scrC oh(U_{i+1/2}\cap U_{j+1/2})$ can be considered as bimodules over $\scrC oh(\Tt_0)$, where the bimodule structure is induced by the functor from $\scrC oh(\Tt_0)$ to the respective category. Moreover, 
\begin{lem}\label{lem:cohcolimit}
The diagonal bimodule $\scrC oh(\Tt_0)$ is quasi-isomorphic to the homotopy limit 
\begin{equation}\label{holim1} 
holim\bigg(\prod_{i} \scrC oh(U_{i+1/2})\rightrightarrows \prod_{j} \scrC oh(U_{j-1/2}\cap U_{j+1/2})\bigg) 
%holim\bigg(\prod_{i} \scrC oh(U_{i+1/2})\rightrightarrows \prod_{i<j} \scrC oh(U_{i+1/2}\cap U_{j+1/2})\bigg) 
\end{equation} 
as bimodules over $\scrC oh(\Tt_0)$ and as bimodules over its full subcategory $\scrC oh_p(\Tt_0)$.
\end{lem}
We use this notation to denote the homotopy limit of the big diagram involving $\scrC oh(U_{j+1/2})$ and $\scrC oh(U_{j-1/2}\cap U_{j+1/2})$ (as the intersections $U_{i+1/2}\cap U_{j+1/2}$ are empty for $|j-i|\geq 2$, no other $\scrC oh(U_{i+1/2}\cap U_{j+1/2})$ appears). It can be seen as the equalizer of the two arrows $\prod_{i}\scrC oh(U_{i+1/2})\rightarrow \prod_{j} \scrC oh(U_{j-1/2}\cap U_{j+1/2})$, given by the product of $\scrC oh(U_{i+1/2})\rightarrow \scrC oh(U_{i-1/2}\cap U_{i+1/2})$, and by the product of $\scrC oh(U_{i+1/2})\rightarrow \scrC oh(U_{i+1/2}\cap U_{i+3/2})$ respectively. Therefore, instead of appealing to model category structures on the category of bimodules, we define (\ref{holim1}) as the cocone (i.e. the cone shifted by $-1$) of the map 
\begin{equation}
	\prod_{i} \scrC oh(U_{i+1/2})\rightarrow \prod_{j} \scrC oh(U_{j-1/2}\cap U_{j+1/2}) 
\end{equation} given by the difference of the two arrows between them. In the following discussion, we use the analogous definition for homotopy limits of chain complexes. 

Note that Lemma \ref{lem:cohcolimit} can be seen as a Zariski descent statement for coherent sheaves on open subsets considered as bimodules, but not as categories. A version for categories can be found in \cite[Proposition 4.2.1]{indcoh}. One can presumably recover Lemma \ref{lem:cohcolimit} from this too, but we follow a different route. 
\begin{proof}[Proof of Lemma \ref{lem:cohcolimit}]
The maps (\ref{postzerorestr}) induce a bimodule homomorphism from $\scrC oh(\Tt_0)$ to (\ref{holim1}), where the latter is defined as the cocone as we have explained. One only needs to check that this homomorphism is a quasi-isomorphism. In other words, one has to show the induced map on cohomology is an isomorphism, and this follows from the Mayer-Vietoris property for $Ext$-groups. For a simple argument specific to this situation, let $U=\bigsqcup_{2|i} U_{i+1/2}$ and $U'=\bigsqcup_{2\nmid i}U_{i+1/2}$ (thus, $U\cup U'=\Tt_0$ and $U\cap U'=\bigsqcup_{j} U_{j-1/2}\cap U_{j+1/2}$). Then given $\scrF,\scrF'\in\scrC oh (\Tt_0)$, one has an exact triangle 
\begin{equation}\label{eq:mvexact}
	RHom_{\Tt_0}(\scrF,\scrF')\to RHom_U(\scrF|_U,\scrF'|_U)\times RHom_{U'}(\scrF|_{U'},\scrF'|_{U'})\to\atop  RHom_{U\cap U'}(\scrF|_{U\cap U'},\scrF'|_{U\cap U'})\xrightarrow{+1}
\end{equation}
which can be obtained by applying Mayer-Vietoris for $R\Gamma$ to local-hom $R\sheafhom_{\Tt_0}(\scrF,\scrF')$. 
%Only note that the triple intersections are empty. 
Clearly, (\ref{eq:mvexact}) is the same as
\begin{equation}\label{eq:mvprodexact}
		RHom_{\Tt_0}(\scrF,\scrF')\to \prod_{i} RHom_{U_{i+1/2}}(\scrF|_{U_{i+1/2}},\scrF'|_{U_{i+1/2}})\to \atop\prod_{j} RHom_{U_{j-1/2}\cap U_{j+1/2}}(\scrF|_{U_{j-1/2}\cap U_{j+1/2}},\scrF'|_{U_{j-1/2}\cap U_{j+1/2}})\xrightarrow{+1}
\end{equation}
Exactness of (\ref{eq:mvprodexact}) implies that the bimodule homomorphism 
\begin{equation}
	\scrC oh(\Tt_0)\to  cocone\bigg(\prod_{i} \scrC oh(U_{i+1/2})\rightarrow \prod_{j} \scrC oh(U_{j-1/2}\cap U_{j+1/2})  \bigg)
\end{equation}
evaluated at any $(\scrF,\scrF')$ is a quasi-isomorphism. This concludes the proof. 
\end{proof} 
Consider the functor 
\begin{align*}
Bimod(\scrC oh_p(\Tt_0), \scrC oh_p(\Tt_0))\longrightarrow \scrC_{dg}(\C) \\ \EuScript{B}\longmapsto CC^*(\scrC oh_p(\Tt_0),\EuScript{B})
\end{align*} where $\scrC_{dg}(\C)$ is the category of chains over $\C$. This functor can be seen as a Yoneda functor and hence it preserves the limits. Hence, 
\begin{equation}\label{postzeroholim} \scrC oh(\Tt_0)\simeq holim\bigg(\prod_{i} \scrC oh(U_{i+1/2})\rightrightarrows \prod_{j} \scrC oh(U_{j-1/2}\cap U_{j+1/2})\bigg) 
\end{equation} implies 
\begin{equation}\label{postholim2}
CC^*(\scrC oh_p(\Tt_0), \scrC oh(\Tt_0))\simeq \atop holim\bigg(\prod_{i} CC^*(\scrC oh_p(\Tt_0),\scrC oh(U_{i+1/2}))\rightrightarrows \prod_{j} CC^*(\scrC oh_p(\Tt_0),\scrC oh(U_{j-1/2}\cap U_{j+1/2}) )\bigg)\end{equation}

We can easily identify the chain complexes \begin{equation}\label{postide}
CC^*(\scrC oh_p(\Tt_0) )=CC^*(\scrC oh_p(\Tt_0),\scrC oh_p(\Tt_0))\cong CC^*(\scrC oh_p(\Tt_0),\scrC oh(\Tt_0))\end{equation} Moreover we have 
\begin{lem}\label{lemloc}
Let $U\subset\Tt_0$ be a quasi-compact open subvariety. Given a dg model $\scrC oh(U)$ and restriction(pull-back) functor $\scrC oh_p(\Tt_0)\rightarrow \scrC oh(U)$, the induced chain map \begin{equation}\label{postholim3}
CC^*(\scrC oh(U),\scrC oh(U))\rightarrow CC^*(\scrC oh_p(\Tt_0),\scrC oh(U)) \end{equation} is a quasi-isomorphism.
\end{lem}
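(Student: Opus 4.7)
The plan is to reformulate both sides as derived bimodule morphism spaces and then apply the restriction--induction adjunction along the dg functor $F=j_U^*\colon \scrC oh_p(\Tt_0)\rightarrow \scrC oh(U)$. Since $CC^*(\B,\fM)\simeq RHom_{\B^e}(\Delta_\B,\fM)$ functorially in the pair $(\B,\fM)$, the map (\ref{postholim3}) corresponds, under the hom--tensor adjunction for $F^e:=F\otimes F^{op}$, to the canonical morphism of $\scrC oh(U)$-bimodules
\begin{equation*}
(F^e)_!\Delta_{\scrC oh_p(\Tt_0)}\rightarrow \Delta_{\scrC oh(U)}.
\end{equation*}
Thus the lemma reduces to showing that this induced map is a quasi-isomorphism of bimodules.

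The geometric input is that $j\colon U\hookrightarrow \Tt_0$ is a flat open immersion with $U$ quasi-compact, so every object of $\scrC oh(U)$ is quasi-isomorphic to the restriction of a properly supported coherent sheaf on $\Tt_0$ (extend across the finitely many components of $\Tt_0$ that $U$ meets). In particular $F$ is essentially surjective on triangulated envelopes and realizes $\scrC oh(U)$ as the dg Verdier quotient of $\scrC oh_p(\Tt_0)$ by the thick subcategory $\EuScript{K}$ spanned by sheaves with support disjoint from $U$; equivalently $F$ is a flat epimorphism of dg categories, giving $\scrC oh(U)\otimes^L_{\scrC oh_p(\Tt_0)}\scrC oh(U)\simeq \scrC oh(U)$. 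Testing the bimodule map on generators of the form $F(\scrF_1),F(\scrF_2)$, the left-hand side is computed by a two-sided bar construction over $\scrC oh_p(\Tt_0)$, and the collapse identifies it with $\scrC oh(U)(F\scrF_1,F\scrF_2)$.

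The main obstacle is making this collapse rigorous on the dg level. A concrete way to do it is to bypass the abstract adjunction and use the \v{C}ech-type resolution (\ref{postzeroholim}) of $\Delta_{\scrC oh_p(\Tt_0)}$: applying $RHom(-,F^*\Delta_{\scrC oh(U)})$ to that homotopy limit turns it into a totalization in which the charts $U_{i+1/2}$ or $V_i$ disjoint from $U$ contribute zero (because $j^*$ annihilates their objects) while the remaining finitely many charts reassemble, via an affine Mayer--Vietoris for Hochschild cohomology, into $CC^*(\scrC oh(U))$. The one nontrivial affine input is the classical fact that $HH^*(R,S)\simeq HH^*(S)$ for a flat localization $R\to S$ of ordinary algebras, applied here to the localizations $\Om(U_{i+1/2})\to \Om(U_{i+1/2}\cap U)$ and lifted to their chosen dg enhancements. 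Once this affine step is in hand, the \v{C}ech comparison assembles it into the desired global quasi-isomorphism.
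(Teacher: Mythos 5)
Your second paragraph identifies the right mechanism and is essentially what underlies the paper's proof, repackaged. The paper's proof is the two-line citation of Lemma \ref{lemcoh} (on a curve, $\scrC oh(U)$ arises as the localization of $\scrC oh_p(\Tt_0)$ at a section $s\colon 1\to(\cdot)\otimes\mathcal{L}$ of a line bundle cutting out $U$) and Lemma \ref{lemgenloc} (Hochschild cohomology is unchanged under such a localization), the latter stated without proof. The fact behind Lemma \ref{lemgenloc} is precisely your observation that $\scrC oh(U)$ is a dg Verdier quotient of $\scrC oh_p(\Tt_0)$, so that the Drinfeld dg-quotient property gives $\scrC oh(U)\otimes^L_{\scrC oh_p(\Tt_0)}\scrC oh(U)\simeq\scrC oh(U)$; combined with the restriction--induction adjunction along $F^e$, this identifies $(F^e)_!\Delta_{\scrC oh_p(\Tt_0)}$ with $\Delta_{\scrC oh(U)}$ and yields the quasi-isomorphism (\ref{postholim3}). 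In this sense your version is actually more explicit than the paper's, which phrases the same underlying fact through the curve-specific device of a global section of a line bundle; either formulation is fine, though you should cite the dg-quotient / flat epimorphism fact rather than take it as obvious.

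Your third paragraph, however, should be dropped: it is both unnecessary (the second paragraph already closes the argument) and not convincingly correct as sketched. The vanishing of the contributions $RHom_{\scrC oh_p(\Tt_0)^e}(\scrC oh(W),\scrC oh(U))$ for charts $W$ disjoint from $U$ is not automatic, since these are bimodule hom-complexes over the non-affine $\scrC oh_p(\Tt_0)^e$, and the reassembly of the surviving terms into $CC^*(\scrC oh(U))$ would in effect require identifying $RHom_{\scrC oh_p(\Tt_0)^e}(\scrC oh(W),\scrC oh(U))$ with $RHom_{\scrC oh(U)^e}(\scrC oh(W\cap U),\scrC oh(U))$, which is morally another instance of the statement being proved. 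The ``classical fact'' you invoke --- $HH^*(R,S)\simeq HH^*(S)$ for flat localizations $R\to S$ of ordinary algebras --- does not apply directly, because after your \v{C}ech step the first slot of the Hochschild complex remains the non-affine category $\scrC oh_p(\Tt_0)$ rather than an affine coordinate ring. Rely on the dg-quotient argument from your second paragraph instead.
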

\begin{proof}
This follows from Lemma \ref{lemcoh} and Lemma \ref{lemgenloc}.
\end{proof}
\begin{lem}\label{lemcoh} Let $U\subset\Tt_0$ be an open quasi-compact subvariety. Then there exists a line bundle $\mathcal{L}$ and a section $s\in\Gamma(\mathcal{L})$ such that $U=\{s\neq 0 \}$ and for any such $(\mathcal{L},s)$ the localization of $\scrC oh_p(\Tt_0)$ at the natural transformation\begin{equation}s:1_{\scrC oh_p(\Tt_0)}\rightarrow (\cdot)\otimes\mathcal{L} \end{equation} is quasi-equivalent to $\scrC oh(U)$.
\end{lem}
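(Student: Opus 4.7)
The plan is to separate the statement into an existence part and a localization part. For existence, note that since $U$ is quasi-compact its complement $Z=\Tt_0\setminus U$ is closed and contains all but finitely many components $C_j$; write $Z=Z_{fin}\cup\bigcup_{|j|>N}C_j$ with $Z_{fin}$ contained in the quasi-projective open $W_N=\bigcup_{|i|\leq N}U_{i+1/2}$. On $W_N$ we can cut out $Z_{fin}$ by a section of some line bundle, and each $C_j$ is a Cartier divisor of $\Tt_0$ (locally defined by $Y_{j+1}=0$ on $U_{j+1/2}$ and $X_{j-1}=0$ on $U_{j-1/2}$, compatible on overlaps), so the Cartier divisor $D=Z_{fin}+\sum_{|j|>N}C_j$ makes sense on $\Tt_0$ and its associated line bundle $\mathcal{L}=\Om(D)$ has a canonical section $s$ with $\{s\neq 0\}=U$. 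If $Z_{fin}$ happens to contain a node we replace that node by twice the node to stay in the Cartier setting; any such adjustment still satisfies $\{s\neq 0\}=U$.

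For the localization part, fix any such $(\mathcal{L},s)$. Multiplication by $s$ is a dg natural transformation $s_\scrF:\scrF\to\scrF\otimes\mathcal{L}$ on $\scrC oh_p(\Tt_0)$, and since $s|_U$ is invertible the restriction functor $j^*:\scrC oh_p(\Tt_0)\to\scrC oh(U)$ sends each $s_\scrF$ to a quasi-isomorphism. Therefore $j^*$ descends through the Drinfeld localization $L_s\scrC oh_p(\Tt_0)$, producing a functor $\bar\jmath^*:L_s\scrC oh_p(\Tt_0)\to\scrC oh(U)$, and the task becomes showing that $\bar\jmath^*$ is a quasi-equivalence. Essential surjectivity follows by extending any $\mathcal G\in Coh(U)$ to a coherent sheaf on the quasi-compact closure of its support inside $\Tt_0$ and then extending by zero, which gives a properly supported preimage. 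Given this, Lemma \ref{lemgenloc} will reduce the remaining content to identifying the kernel of $j^*$ as precisely the subcategory generated (in the idempotent-closed sense) by cones of the $s_\scrF$.

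The step I expect to be the main obstacle is this kernel identification. One direction is clean: the cone of $s_\scrF$ is (up to twist) $\scrF\overset{L}{\otimes}\Om_Z$, hence is supported on $Z$ and properly supported whenever $\scrF$ is. For the converse, I would argue by d\'evissage: a properly supported coherent sheaf whose set-theoretic support lies in $Z$ is a finite iterated extension of sheaves supported on a single irreducible component of $Z_{fin}$ or on an entire $C_j$ for $|j|>N$, and each such building block is either of the form $\scrF\otimes\Om_{C_j}$ (handled by taking $\scrF\in\{\Om_{C_i},\Om_{C_i}(-1)\}$ and using that $C_j$ is a component of the zero locus of $s$) or supported at a smooth point of a fixed $C_i$, where it is resolved by a Koszul-style complex over $\mathbb{P}^1$ using the single equation defining that point. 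The Gorenstein property of $\Tt_0$ (used already via Lemma \ref{dualitylemma}) makes these resolutions behave well under derived duality, and the node case is handled by the auxiliary doubling in the construction of $(\mathcal{L},s)$. The subtlety throughout is to stay inside the properly-supported subcategory even though $\Tt_0$ itself is not quasi-compact; this is controlled because our $\scrF$ and the resulting iterated cones all have support contained in a fixed quasi-compact neighborhood of $Z_{fin}\cup \operatorname{Supp}(\scrF)$.
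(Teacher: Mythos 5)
Your existence construction rests on a claim that fails: the components $C_j$ are \emph{not} Cartier divisors on $\Tt_0$. At the node $x_{j+1/2}$ the local ring is $\C[X_j,Y_{j+1}]_{(X_j,Y_{j+1})}/(X_jY_{j+1})$ and the ideal of $C_j$ is $(Y_{j+1})$; since $Y_{j+1}$ is a zero divisor, this ideal is isomorphic to the (torsion) module $\Om_{C_{j+1}}$ near the node rather than to a rank-one free module, so it is not invertible and ``$\Om(D)$'' for $D=Z_{fin}+\sum C_j$ is undefined. ``Doubling a node'' cannot repair this, because the obstruction does not sit at isolated points of $Z_{fin}$ but at every node where exactly one branch lies in $Z$: no Cartier section can vanish set-theoretically along a single branch of a node. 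What is true, and is what the paper asserts while deferring the general localization statement to an external citation and only spelling out the cases $U=U_{i+1/2}$, $V_j$, is weaker: there is a line bundle $\mathcal L$ with a global section $s$ — necessarily a zero divisor wherever it vanishes along a whole branch — whose vanishing set equals $Z$. This can be built chart by chart (taking $s|_{U_{i+1/2}}$ to be $X_i$, $Y_{i+1}$, $X_iY_{i+1}=0$, or a unit according to which of $C_i,C_{i+1}$ lie in $Z$, possibly multiplied by a finite correction for $Z_{fin}$, with compatible transition functions on the $V_i$), but it is not an $\Om(D)$ in your sense.

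In the localization half the appeal to Lemma~\ref{lemgenloc} is misplaced: that lemma is a Hochschild-cohomology comparison the paper uses \emph{after} the present lemma, inside the proof of Lemma~\ref{lemloc}, and it does not by itself establish that $\bar\jmath^*$ is a quasi-equivalence. The d\'evissage at nodes is also left hanging — the ``auxiliary doubling'' modifies $(\mathcal L,s)$ but does not explain why $\Om_{x_{j+1/2}}$ lies in the thick closure of the cones $\mathrm{cone}(s_\scrF)$. None of that case analysis is needed: for any $\scrF\in\scrC oh_p(\Tt_0)$ whose cohomology sheaves are supported on $Z$, coherence forces some power $s^k$ to act as zero on each cohomology sheaf, and a truncation argument then shows $s^N_\scrF$ is null-homotopic for some $N$; since $s_\scrF$ becomes invertible in $L_s\scrC oh_p(\Tt_0)$, the object $\scrF$ is zero there, i.e.\ $\ker(j^*)$ is contained in the thick closure of the cones. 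Together with your (correct) observations that the cones lie in $\ker(j^*)$ because $s$ is invertible on $U$, and that $j^*$ is essentially surjective by extension of coherent sheaves along the quasi-compact closure of their supports, this gives the quasi-equivalence directly, with no distinction between smooth and nodal points.
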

\begin{proof}
See \cite{localization} for the definition of localization and the proof of a similar statement.
%complete proof in informal notes
Note, the existence of such a pair $(\mathcal L,s)$ holds for general $U$ only because we are on a curve. But, we only need it for $U=U_{i+1/2}$ or $V_j$ in which case there are obvious pairs $(\mathcal{L},s)$.
%In general find a meromorphic function f on $\Tt_0$ which is regular at the nodes, which vanishes at the rational curves not intersecting U, which vanishes on the nodes not contained in U and which does not vanish on the nodes contained in U and which does not identically vanish on $V_j$ intersecting U. For instance if U contains $U_{i+1/2}$, for i=n,...,m-1, but not the nodes at its end, declare the function to be zero at these nodes. In between, assign different non-zero values to other nodes and extend meromorphically. Now, f is a section of L(poles-(zeroes in U)), that does not vanish at these poles and as a section of this line bundle it does not vanish on U. Now, consider L(poles-(zeroes in U)+(smooth points on the curves intersected by U but not in U)), as a section of this it vanishes exactly at the complement of U.
\end{proof}
\begin{lem}\label{lemgenloc}
Let $\B$ be a dg category, $\Phi$ be an auto-equivalence and $T:1\rightarrow \Phi$ be a natural transformation. Consider the localization functor $\B\rightarrow T^{-1}\B$. Then, \begin{equation}CC^*(T^{-1}\B,T^{-1}\B)\simeq CC^*(\B,T^{-1}\B) \end{equation}	
\end{lem}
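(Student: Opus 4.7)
The plan is to reinterpret both sides as derived bimodule morphism spaces and then reduce the claim to two standard facts about the dg localization $L:\B\to T^{-1}\B$. Write $\Delta_\B$, $\Delta_{T^{-1}\B}$ for the diagonal bimodules, and write $\Delta_{T^{-1}\B}|_\B$ for the restriction of the latter along $L\otimes L^{op}:\B^e\to (T^{-1}\B)^e$, which is literally $T^{-1}\B$ viewed as a $\B$-$\B$ bimodule. Using the Yoneda-type identification $CC^*(\B,\fM)\simeq RHom_{\B^e}(\Delta_\B,\fM)$, the asserted equivalence becomes
\begin{equation}
RHom_{(T^{-1}\B)^e}\bigl(\Delta_{T^{-1}\B},\Delta_{T^{-1}\B}\bigr)\;\simeq\; RHom_{\B^e}\bigl(\Delta_\B,\Delta_{T^{-1}\B}|_\B\bigr).
\end{equation}

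The first step is to observe that $L\otimes L^{op}$ is again a dg-localization, inverting the natural transformations $T\otimes 1_{\B^{op}}$ and $1_\B\otimes T^{op}$. Hence restriction of bimodules along $L\otimes L^{op}$ is fully faithful, and its essential image consists of $\B$-$\B$ bimodules which are local on both sides (i.e.\ on which left and right multiplication by $T$ act as quasi-isomorphisms). Applying this to the diagonal bimodule of $T^{-1}\B$ gives
\begin{equation}
RHom_{(T^{-1}\B)^e}(\Delta_{T^{-1}\B},\Delta_{T^{-1}\B})\;\simeq\; RHom_{\B^e}(\Delta_{T^{-1}\B}|_\B,\Delta_{T^{-1}\B}|_\B).
\end{equation}

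The second step is to note that the canonical map $\Delta_\B\to\Delta_{T^{-1}\B}|_\B$ is itself the two-sided $T$-localization of $\Delta_\B$ inside $\B$-$\B$-bimodules: its cone is built out of mapping cones of left and right multiplication by $T$, and such cones admit no nonzero maps into a bilaterally $T$-local bimodule. Since $\Delta_{T^{-1}\B}|_\B$ is such a bimodule, precomposition with $\Delta_\B\to\Delta_{T^{-1}\B}|_\B$ induces
\begin{equation}
RHom_{\B^e}(\Delta_{T^{-1}\B}|_\B,\Delta_{T^{-1}\B}|_\B)\;\xrightarrow{\simeq}\; RHom_{\B^e}(\Delta_\B,\Delta_{T^{-1}\B}|_\B),
\end{equation}
and composing with the equivalence of the first step yields the lemma.

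The main obstacle is verifying the two ``localization'' claims in the dg (rather than triangulated) setting: that $L\otimes L^{op}$ is a dg-localization inverting exactly the expected natural transformations, and that the cone of $\Delta_\B\to\Delta_{T^{-1}\B}|_\B$ is generated, as a $\B^e$-module, by one-sided cones of $T$. Both are built into the Drinfeld/Toën construction of dg localization and can be extracted from the formalism of \cite{localization}; alternatively, one can cite the general principle that a dg epimorphism $\B\to\B'$ makes restriction on bimodules fully faithful with image the bimodules on which the associated kernel acts trivially.
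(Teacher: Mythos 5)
The paper explicitly declines to prove this lemma (``We will not include the proof here''), offering only the commutative-algebra motivation $RHom_{\B_f^e}(\B_f,\B_f)\cong RHom_{\B^e}(\B,\B_f)$. Your argument is a sound development of exactly that motivation at the dg-bimodule level, so there is nothing to compare against in the paper; the question is only whether your argument stands on its own, and I believe it does. The two pillars --- that $L\otimes L^{op}$ is itself a dg localization (hence restriction of bimodules along it is fully faithful with image the bilaterally $T$-local bimodules), and that $\Delta_\B\to\Delta_{T^{-1}\B}|_\B$ is the $T$-localization map in $\B$-bimodules --- are both standard for Dwyer--Kan/Drinfeld localizations, and the reduction to them is clean. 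The one step you should tighten before treating this as a complete proof is the claim that the cone of $\Delta_\B\to\Delta_{T^{-1}\B}|_\B$ lies in the localizing subcategory generated by one-sided cones of $T$: that needs an explicit model for $T^{-1}\B$, e.g.\ the telescope/Ore presentation $(T^{-1}\B)(Lb,Lb')\simeq\mathrm{hocolim}_n\,\B(b,\Phi^n b')$, so that the cone is exhibited as a filtered homotopy colimit of iterated cones of right-multiplication by $T$ --- and then one should note that $RHom_{\B^e}(\mathrm{hocolim},\,-)\simeq\mathrm{holim}\,RHom_{\B^e}(-,-)$ to conclude the vanishing of maps into a bilaterally local target. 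You gesture at this by invoking the formalism of the cited localization reference, which is the right place to look, but the homotopy-colimit bookkeeping is exactly where a careless reader could go wrong, so it is worth spelling out.
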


For motivation, one can consider the case where $\B$ is an ordinary commutative algebra and $T=f\in\B$. In this case, it is obvious that $RHom_{\B_f^e}(\B_f,\B_f)\cong RHom_{\B^e}(\B,\B_f)$.
\begin{proof}[Sketch of the Proof]
The general idea is the following: first one checks that $T^{-1}\B$, as defined in \cite{localization}, is the same as localization at morphisms $T_L:L\to \Phi(L)$, where the localization is defined as in \cite{GPS1}, i.e. as the quotient of $\B$ by the cones of these morphisms. This comparison can be done analogous to \cite[Lemma 3.37]{GPS1}. There is a similar notion of (left and right) localization of bimodules (see \cite{GPS1}), and it is easy to see that $T^{-1}\B$, as a $T^{-1}\B$-$\B$-bimodule can be obtained from the diagonal bimodule of $\B$ by left localization at these morphisms. Left localization is equivalent to $T^{-1}\B\otimes_\B(\cdot)$; hence, it is left adjoint to the restriction functor from $T^{-1}\B$-$\B$-bimodules to $\B$-$\B$-bimodules. Therefore, 
\begin{equation}
RHom_{\B^e}(\B,T^{-1}\B)\simeq 	RHom_{T^{-1}\B\otimes\B^{op}}(T^{-1}\B,T^{-1}\B)
\end{equation}
Moreover, $T^{-1}\B$ is right local with respect to these morphisms, i.e. the morphisms $T_L:L\to \Phi(L)$ act invertibly on the right. As a result, it is the restriction of a bimodule over $T^{-1}\B$ (indeed of the diagonal bimodule itself). By \cite{lyubamanquot}, the category of $T^{-1}\B\otimes \B^{op}$-modules on which all $1\otimes T_L$ act invertibly is equivalent to $T^{-1}\B\otimes T^{-1}\B^{op}$-modules, i.e. the category of right local $T^{-1}\B$-$\B$-bimodules is equivalent to the category of $T^{-1}\B$-$T^{-1}\B$-bimodules. Hence, 
\begin{equation}
	RHom_{T^{-1}\B\otimes\B^{op}}(T^{-1}\B,T^{-1}\B)\simeq RHom_{T^{-1}\B\otimes T^{-1}\B^{op}}(T^{-1}\B,T^{-1}\B)
\end{equation}
which implies
\begin{equation}
	RHom_{\B^e}(\B,T^{-1}\B)\simeq 	RHom_{T^{-1}\B\otimes T^{-1}\B^{op}}(T^{-1}\B,T^{-1}\B)
\end{equation}
This quasi-isomorphism can be realized as the natural restriction map from right hand side to left hand side, which implies the natural chain map $CC^*(T^{-1}\B,T^{-1}\B)\to CC^*(\B,T^{-1}\B)$ is also a quasi-isomorphism.
%
%An explicit proof can be given as follows: consider the bar resolution of the diagonal bimodule $\B$. This is an (infinite) complex of bimodules of the form \begin{equation}
%\B(L_n,\cdot)\otimes \B(L_{n-1}, L_n)\otimes \dots \B(L_0,L_1)\otimes \B(\cdot,L_0)
%\end{equation}
\end{proof}
We can summarize this discussion as 
\begin{equation}\label{eq:HHcolimit0}
CC^*(\scrC oh_p(\Tt_0))\simeq \atop holim\bigg(\prod CC^*(\scrC oh( U_{i+1/2})) \rightrightarrows \prod CC^*(\scrC oh(U_{j-1/2} \cap U_{j+1/2})) \bigg)  
\end{equation}			
For the moment let $\scrC oh(U_{i+1/2})_R$, $\scrC oh(V_i)_R$ denote some curved deformations compatible with the deformation of $\Tt_0$ to $\Tt_R$ (and restriction of this deformation to corresponding open subsets). Note the compatibility here is in a loose sense, see the notion of good deformation in Section \ref{subsec:defoofom} for instance. Most importantly, we need restriction functors (\ref{postzerorestr}) to deform so that the map in (\ref{postzerorestr}) deforms as well. The chain complexes \begin{equation}CC^*(\scrC oh(U_{i+1/2})_R)\text{ and }CC^*(\scrC oh(V_i)_R)\end{equation} deform the complexes $CC^*(\scrC oh(U_{i+1/2}))\text{ and }CC^*(\scrC oh(V_i))$ respectively. Similarly the complex $CC^*(\scrC oh_p(\Tt_0)_R)$ deforms $CC^*(\scrC oh_p(\Tt_0))$, where $\scrC oh_p(\Tt_0)_R$ is a curved deformation of $\scrC oh_p(\Tt_0)$ extending the one for $\Om(\Tt_0)_{dg}$. We can write the map 
\begin{equation}CC^*(\scrC oh_p(\Tt_0)_R)\xrightarrow{} \atop holim\bigg(\prod CC^*(\scrC oh( U_{i+1/2})_R) \rightrightarrows \prod CC^*(\scrC oh(U_{j-1/2} \cap U_{j+1/2})_R) \bigg)  \end{equation} following similar steps as before, for instance by deforming the maps in (\ref{postzeroholim}); and thus, (\ref{postholim2}). 
Note that the analogue of Lemma \ref{lemloc} can be shown by a semi-continuity/$q$-adic filtration argument. Namely:
\begin{lem}\label{semicont}
If one has a chain map \begin{equation}f:C^*_R\rightarrow C'^*_R \end{equation}of complexes of complete topological torsion free vector spaces over $R$ which deforms a quasi-isomorphism $C^*\xrightarrow{\simeq} C'^* $, then $f$ itself is a quasi-isomorphism.
\end{lem}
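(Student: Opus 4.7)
The plan is to reduce the statement to acyclicity of the mapping cone. Let $D^*_R := \mathrm{Cone}(f)$. Since $C^*_R$ and $C'^*_R$ are complete torsion-free $R$-modules in each degree, so is $D^*_R$ (being a direct sum of shifted terms), and the reduction $D^*_R/qD^*_R$ is canonically identified with $\mathrm{Cone}(\bar f)$, where $\bar f : C^* \to C'^*$ is the quasi-isomorphism over $\C$ that $f$ deforms. Thus $D^*_R/qD^*_R$ is acyclic, and the lemma is equivalent to showing that $D^*_R$ itself is acyclic.

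I would then prove acyclicity of $D^*_R$ by the standard bootstrapping argument in the $q$-adic filtration. Given a cocycle $d \in D^n_R$, its image $\bar d \in D^n_R / q D^n_R$ is a cocycle, hence a coboundary, so there is $e_0 \in D^{n-1}_R$ with $d - \partial e_0 \in q D^n_R$. Write $d - \partial e_0 = q d_1$. Torsion-freeness of $D^n_R$ over $R$ is used here crucially: from $q \partial d_1 = \partial(d - \partial e_0) = 0$ we can cancel $q$ to conclude that $d_1$ is itself a cocycle. Iterating, we produce sequences $e_k \in D^{n-1}_R$ and $d_{k+1} \in D^n_R$ with $d_k - \partial e_k = q d_{k+1}$, all cocycles. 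Setting $e := \sum_{k \geq 0} q^k e_k$, the sum converges in the $q$-adic topology thanks to completeness of $D^{n-1}_R$, and a telescoping computation gives $\partial e = d$.

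The main obstacle, and where I would expect to spend the most care, is verifying the hypotheses of completeness and torsion-freeness at the right moment. Completeness is needed to make sense of the infinite sum $\sum q^k e_k$ and to ensure that its differential equals the termwise $q$-adic limit; torsion-freeness is what allows the induction to propagate, since without it one cannot upgrade $q \partial d_{k+1} = 0$ to $\partial d_{k+1} = 0$. Everything else is formal, and the same cone-and-iterate scheme also shows injectivity of $H^*(f)$ (any cocycle in $C^*_R$ mapping to a coboundary in $C'^*_R$ is itself a coboundary), which is subsumed in acyclicity of the cone.
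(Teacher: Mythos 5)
Your proof is correct and is exactly the $q$-adic filtration/bootstrap argument the paper alludes to without writing out (the paper only remarks that the statement is proved ``by a semi-continuity/$q$-adic filtration argument''). The reduction to acyclicity of $\mathrm{Cone}(f)$, the use of torsion-freeness to divide $q\,\partial d_{k+1}=0$ by $q$, and completeness to make the telescoping sum converge are all exactly right; nothing is missing.
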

 Moreover, using the semi-continuity and deformability of the maps such as \ref{postholim2} and \ref{postholim3}, we prove
\begin{equation}\label{eq:HHcolimitR}
CC^*(\scrC oh_p(\Tt_0)_R)\simeq \atop holim\bigg(\prod CC^*(\scrC oh( U_{i+1/2})_R) \rightrightarrows \prod CC^*(\scrC oh(U_{j-1/2} \cap U_{j+1/2})_R) \bigg)
\end{equation}  
Now, let us turn to the questions about the Hochschild cohomology of $\scrC oh(U_{i+1/2})$, $\scrC oh(V_i)$ as well as their deformations. 
						
First note we can as well compute the Hochschild cohomology of perfect complexes $\scrP erf(U_{i+1/2})\subset\scrC oh(U_{i+1/2})$ and $\scrP erf(V_i)\subset \scrC oh(V_i)$ as well as their deformations. It is possible to show that the restriction maps induce isomorphisms \begin{equation}\label{eq:restr}CC^*(\scrC oh(U))\xrightarrow{\simeq} CC^*(\scrP erf(U)) \end{equation} and this implies by semi-continuity \begin{equation}CC^*(\scrC oh(U)_R)\xrightarrow{\simeq} CC^*(\scrP erf(U)_R) \end{equation} where $U$ is $U_{i+1/2}$ or $V_i$ and $\scrP erf(U)_R$ is the corresponding deformation. See \cite[Appendix F]{gaitarin} for an Ind-completed version of (\ref{eq:restr}). Alternatively one can identify Hochschild cohomologies of $\scrC oh(U)$, resp. $\scrP erf(U)$ with derived self-endomorphisms of the diagonal of $U$ in the category $D^bCoh(U\times U)$, resp. $D(QCoh(U\times U))$, which are known to match. 

As $U$ is affine, $CC^*(\scrP erf(U))\simeq CC^*(\Om(U))$ and similar for the deformations. Notice that we use the fact that we can deform the functors \begin{equation}\Om(U)\rightarrow \scrP erf(U)\rightarrow \scrC oh(U) \end{equation}which was imposed for ``good deformations''. Let $\Om(\tU)$ and $\Om(U)_R$ both denote the corresponding deformation of the algebra $\Om(U)$. More explicitly \begin{equation}\Om(\tU_{i+1/2})=\C[X_i,Y_{i+1}][[q]]/(X_iY_{i+1}-q) \end{equation}\begin{equation}\Om(\tV_i)=\C[X_i,X_i^{-1}][[q]]=\C[Y_i^{-1},Y_i][[q]] \end{equation} In summary \begin{equation}CC^*(\scrC oh(U_{i+1/2})_R)\simeq CC^*(\Om(\tU_{i+1/2})) \end{equation}\begin{equation}CC^*(\scrC oh(V_i)_R)\simeq CC^*(\Om(\tV_i)) \end{equation} where the Hochschild cohomologies are computed over $R$. Now, using \cite[Appendix, Theorem 2]{quantization} one can show:
\begin{lem}
\begin{equation}\label{postquant}
CC^*(\Om(U_{i+1/2}))\simeq \C[X_i,Y_{i+1},X_i^*,Y_{i+1}^*,\beta_{i+1/2}]/(X_iY_{i+1})  \end{equation} where the latter dga is the quotient of the free (super-commutative) graded algebra generated by the variables $X_i,Y_{i+1},X_i^*,Y_{i+1}^*,\beta_{i+1/2}$ with degrees $|X_i|=|Y_{i+1}|=0,|X_i^*|=|Y_{i+1}^*|=1,|\beta_{i+1/2}|=2$ as a graded algebra. Its differential $d$ satisfies \begin{equation}\label{eq:diffofhh1}d(X_i)=d(Y_{i+1/2})=d(\beta_{i+1/2})=0 \end{equation}\begin{equation}\label{eq:diffofhh2}d(X_i^*)=Y_{i+1}\beta_{i+1/2},d(Y_{i+1}^*)=X_{i}\beta_{i+1/2} \end{equation}
\end{lem}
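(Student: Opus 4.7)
The approach is to present $\Om(U_{i+1/2})$ as a hypersurface in smooth affine space and then apply the Hochschild--Kostant--Rosenberg-type formula for complete intersections supplied by the cited appendix of \cite{quantization}. Write $S=\C[X_i,Y_{i+1}]$, which is smooth of dimension two, and $f=X_iY_{i+1}\in S$, so that $\Om(U_{i+1/2})=S/(f)=A$. The theorem expresses $CC^*(A)$ up to quasi-isomorphism as a Koszul-type dg algebra built from the polyvector fields on $S$ tensored with a polynomial algebra on a single degree-two generator dual to the defining equation $f$, with differential determined by contraction against $df$.

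Carrying this out, the underlying graded algebra is the super-commutative $A$-algebra generated by $\partial_{X_i},\partial_{Y_{i+1}}$ in degree one (identified with $X_i^*,Y_{i+1}^*$) and a degree-two generator $\beta_{i+1/2}$ corresponding to $f$; this is exactly $\C[X_i,Y_{i+1},X_i^*,Y_{i+1}^*,\beta_{i+1/2}]/(X_iY_{i+1})$ with the stated degrees. The Koszul differential is read off from $df=Y_{i+1}\,dX_i+X_i\,dY_{i+1}$: contraction gives $\iota_{df}(\partial_{X_i})=Y_{i+1}$ and $\iota_{df}(\partial_{Y_{i+1}})=X_i$, forcing $d(X_i^*)=Y_{i+1}\beta_{i+1/2}$ and $d(Y_{i+1}^*)=X_i\beta_{i+1/2}$, while $X_i,Y_{i+1},\beta_{i+1/2}$ are closed and $d$ is extended by the Leibniz rule. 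One checks immediately that $d^2=0$ (e.g.\ $d^2(X_i^*)=d(Y_{i+1})\beta_{i+1/2}=0$) and that $d$ descends to the quotient by $(X_iY_{i+1})$.

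In practice the derivation can be done either by a direct appeal to the cited theorem, or by combining two standard ingredients: the smooth HKR quasi-isomorphism identifying $CC^*(S)$ with polyvector fields on $S$, and a Koszul resolution of $A$ as an $S^e$-module using the regular element $f$ on each side of the diagonal; computing $RHom_{A^e}(A,A)$ from the resulting bimodule resolution produces precisely the model above. The main obstacle I expect is bookkeeping: one has to verify that the Connes/Hochschild differential transported across the comparison quasi-isomorphism matches contraction by $df$ on the nose, and that the Yoneda product on $HH^*$ agrees with the super-commutative wedge-times-symmetric product on the polyvector/conormal side. Both are standard facts for complete intersections once the Koszul model is chosen functorially, but they are what tie the algebra structure (as opposed to just the graded vector space) to the stated answer.
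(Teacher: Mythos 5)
Your proposal is correct and takes essentially the same route as the paper: both invoke the complete-intersection HKR formula of \cite[Appendix, Theorem 2]{quantization} applied to the hypersurface presentation $\Om(U_{i+1/2})=\C[X_i,Y_{i+1}]/(X_iY_{i+1})$, reading off the degree-two generator $\beta_{i+1/2}$ dual to the defining equation and the Koszul differential from contraction against $df$. The paper simply cites the theorem without spelling out the bookkeeping you describe, so your elaboration (including the verification that $d^2=0$ and that $d$ descends modulo $(X_iY_{i+1})$) is a reasonable filling-in of the same argument rather than a different one.
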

Using an $R$-relative version of the same theorem, we can prove:
\begin{lem}
\begin{equation}\label{eq:defcc}CC^*(\Om(\tU_{i+1/2}))\simeq \C[X_i,Y_{i+1},X_i^*,Y_{i+1}^*,\beta_{i+1/2}][[q]]/(X_iY_{i+1}-q)  \end{equation} where the degrees of the variables are the same and the differential still satisfies (\ref{eq:diffofhh1}) and (\ref{eq:diffofhh2}). %\begin{equation}d(X_i)=d(Y_{i+1/2})=d(\beta_{i+1/2})=0 \end{equation}\begin{equation}d(X_i^*)=Y_{i+1}\beta_{i+1/2},d(Y_{i+1}^*)=X_{i}\beta_{i+1/2} \end{equation} 
We note that in (\ref{eq:defcc}) the $q$-adic completion of the free graded algebra $\C[X_i,Y_{i+1},X_i^*,Y_{i+1}^*,\beta_{i+1/2}]$ is taken separately at each degree. 
\end{lem}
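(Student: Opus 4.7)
The plan is to invoke the relative version of \cite[Appendix, Theorem 2]{quantization}, applied to the hypersurface presentation
\begin{equation}
\Om(\tU_{i+1/2}) \;=\; B_R/(f_R), \qquad B_R := \C[X_i,Y_{i+1}][[q]], \quad f_R := X_iY_{i+1}-q,
\end{equation}
viewed over the base $R=\C[[q]]$. Here $B_R$ is formally smooth over $R$ and $f_R$ is a non-zero divisor cutting out a flat hypersurface; these are exactly the hypotheses needed for the theorem in its $R$-relative form. The output of the theorem is a polyvector-field model for $CC^*$ over the base, involving the $R$-relative cotangent complex of $\Om(\tU_{i+1/2})$, which for a flat complete intersection is a two-term complex (conormal $\to$ restricted Kähler differentials).

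First, I would write down the relative HKR-type cochain complex explicitly: polyvector fields on $Spec(B_R)$ relative to $R$ are freely generated over $B_R$ by $X_i^*, Y_{i+1}^*$ (dual to $dX_i, dY_{i+1}$), with a degree-$2$ class $\beta_{i+1/2}$ coming from the generator of the conormal line $(f_R)/(f_R)^2$. Tensoring with $A_R := B_R/(f_R)$ gives the underlying graded algebra
\begin{equation}
\C[X_i,Y_{i+1},X_i^*,Y_{i+1}^*,\beta_{i+1/2}][[q]]/(X_iY_{i+1}-q),
\end{equation}
with the asserted degree conventions. The differential is the standard one produced by the theorem: contraction with $df_R$ followed by pairing with $\beta_{i+1/2}$. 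Since we work $R$-relatively, $dq=0$ in $\Omega^1_{B_R/R}$, so $df_R = Y_{i+1}\,dX_i + X_i\,dY_{i+1}$, which yields $d(X_i^*)=Y_{i+1}\beta_{i+1/2}$ and $d(Y_{i+1}^*)=X_i\beta_{i+1/2}$, matching formulas (\ref{eq:diffofhh1})--(\ref{eq:diffofhh2}) verbatim.

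Next I would verify that $q$-adic completeness is preserved throughout. The construction of the polyvector model is $R$-linear and the Hochschild complex is being computed in the category of complete $R$-modules (as stipulated in the notational remarks), so the free graded algebra needs to be $q$-adically completed in each degree separately, which is precisely how (\ref{eq:defcc}) is stated. As a consistency check, setting $q=0$ recovers the previous lemma's formula for $CC^*(\Om(U_{i+1/2}))$; conversely, since both complexes are $q$-torsion-free and termwise complete, Lemma \ref{semicont} ensures that any chain map of the expected form lifting the $q=0$ quasi-isomorphism is itself a quasi-isomorphism, giving a second route to the conclusion if one prefers to bypass checking the relative theorem's hypotheses directly.

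The main obstacle, I expect, will be confirming that \cite[Appendix, Theorem 2]{quantization} applies cleanly in the topological/continuous setting over $R$: namely, that the quasi-isomorphism between the bar complex and the polyvector-fields complex is constructed at the chain level and respects the $q$-adic filtration, and that the identification of $\beta_{i+1/2}$ as a generator of the conormal line extends flatly from the special fibre to $A_R$. Once that is in hand, the differential computation and the completeness bookkeeping are routine.
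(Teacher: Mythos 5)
Your proposal follows essentially the same route as the paper, which simply invokes ``an $R$-relative version of the same theorem'' (namely \cite[Appendix, Theorem 2]{quantization}) without spelling it out; you have correctly identified the hypersurface presentation over $R$, computed $df_R=Y_{i+1}\,dX_i+X_i\,dY_{i+1}$ using $dq=0$ in $\Omega^1_{B_R/R}$ to recover (\ref{eq:diffofhh1})--(\ref{eq:diffofhh2}), and the semi-continuity fallback via Lemma \ref{semicont} is exactly the kind of argument the paper relies on elsewhere to transfer $q=0$ quasi-isomorphisms to the curved deformation. This is the paper's argument with the missing details made explicit.
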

It is now easy to calculate the cohomology of the above dga's:
\begin{lem}
The cohomology of $CC^*(\Om(U_{i+1/2}))$ can be computed as \begin{equation}\label{eq:cohhoch}HH^*(\Om(U_{i+1/2}))=\begin{cases}
\Om(U_{i+1/2})=\C[X_i,Y_{i+1}]/(X_iY_{i+1})&*=0\\
\Om(U_{i+1/2})\langle X_iX_i^*\rangle\oplus\Om(U_{i+1/2})\langle Y_{i+1}Y_{i+1}^*\rangle&*=1\\
\C\langle\beta^k\rangle\cong \Om(U_{i+1/2})/(X_i,Y_{i+1})&*=2k,k\geq 1\\
\frac{\Om(U_{i+1/2})\langle X_iX_i^*\beta^k,Y_{i+1}Y_{i+1}^*\beta^k \rangle}{(( X_iX_i^*-Y_{i+1}Y_{i+1}^*)\beta^k)}&*=2k+1,k\geq 1
\end{cases} \end{equation}
which can be written concisely as the graded commutative algebra \begin{equation}\frac{\C[X_i,Y_{i+1},X_iX_i^*,Y_{i+1}Y_{i+1}^*,\beta_{i+1/2}]}{(X_iY_{i+1},X_i\beta_{i+1/2},Y_{i+1}\beta_{i+1/2},(X_iX_i^*-Y_{i+1}Y_{i+1}^*)\beta_{i+1/2})} \end{equation}
\end{lem}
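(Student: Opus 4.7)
The plan is to compute the cohomology of the explicit dga in (\ref{postquant}) by a direct calculation, using a monomial basis over $\Om=\Om(U_{i+1/2})=\C[X_i,Y_{i+1}]/(X_iY_{i+1})$. Writing $x=X_i$, $y=Y_{i+1}$, $x^*=X_i^*$, $y^*=Y_{i+1}^*$, $\beta=\beta_{i+1/2}$, the complex is a free graded $\Om$-module with basis $\{\beta^k,\beta^k x^*,\beta^k y^*,\beta^k x^*y^*:k\geq 0\}$ in degrees $2k$, $2k{+}1$, $2k{+}1$, $2k{+}2$ respectively. Applying the graded Leibniz rule to $dx^*=y\beta$, $dy^*=x\beta$, and $d\beta=0$, one finds
\begin{align*}
d(c\beta^k x^*+e\beta^k y^*) &= (cy+ex)\beta^{k+1},\\
d(a\beta^k+b\beta^{k-1}x^*y^*) &= -bx\,\beta^k x^* + by\,\beta^k y^*,
\end{align*}
for $a,b,c,e\in\Om$ (with $k\geq 0$ in the first formula and $k\geq 1$ in the second), while $d$ vanishes in degree $0$.

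The core of the computation uses four simple facts about $\Om$, all immediate from its monomial basis $\{1\}\cup\{x^i\}_{i\geq 1}\cup\{y^j\}_{j\geq 1}$: one has $\mathrm{ann}_\Om(x)=(y)$, $\mathrm{ann}_\Om(y)=(x)$, $(x)\cap(y)=0$, and $(x)+(y)=\mathfrak m$, the maximal ideal at the node. From the first two, $cy+ex=0$ in $\Om$ forces $c\in(x)$ and $e\in(y)$, so in each odd degree $2k+1$ (with $k\geq 0$) the cycles form $(x)\cdot\beta^k x^*\oplus(y)\cdot\beta^k y^*=\Om\cdot xx^*\beta^k\oplus\Om\cdot yy^*\beta^k$. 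In even degree $2k$ with $k\geq 1$, the cycle conditions $bx=0$ and $by=0$ combined with $(x)\cap(y)=0$ force $b=0$, so cycles equal $\Om\cdot\beta^k$; the boundaries are $((x)+(y))\cdot\beta^k=\mathfrak m\cdot\beta^k$, yielding $HH^{2k}=(\Om/\mathfrak m)\cdot\beta^k=\C\cdot\beta^k$. For $HH^1$ there are no boundaries to quotient by. For $HH^{2k+1}$ with $k\geq 1$, the boundaries are parametrized by $b\in\Om$ via $(-bx,by)$; specializing $b=1$ produces the relation $(xx^*-yy^*)\beta^k=0$, while $b=x^j$ or $b=y^j$ for $j\geq 1$ kills $x^j\cdot xx^*\beta^k$ and $y^j\cdot yy^*\beta^k$ respectively, matching the quotient $\Om\langle xx^*\beta^k,yy^*\beta^k\rangle/((xx^*-yy^*)\beta^k)$ in the statement.

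To pass from the case-by-case description to the compact algebra presentation, I would check that the natural ring homomorphism from $\C[x,y,xx^*,yy^*,\beta]/(xy,x\beta,y\beta,(xx^*-yy^*)\beta)$ to $HH^*$ is well-defined and bijective: $xy=0$ is inherited from $\Om$; $x\beta=d(y^*)$ and $y\beta=d(x^*)$ are coboundaries in $HH^2$; and $(xx^*-yy^*)\beta=-d(x^*y^*)$ is a coboundary in $HH^3$. A direct dimension count in each degree against the piecewise description then shows the map is an isomorphism. No serious obstacle is expected: the calculation is Koszul-type bookkeeping where all the nontrivial arithmetic happens in the cyclic $\Om$-modules $\Om/(x)\cong\C[y]$ and $\Om/(y)\cong\C[x]$. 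The only point requiring care is keeping the Koszul signs of the graded-commutative structure consistent, which in particular dictates the minus sign in $d(x^*y^*)=yy^*\beta-xx^*\beta$ and hence the precise form of the relation $(xx^*-yy^*)\beta$.
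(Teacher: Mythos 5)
Your computation is correct, and it fills in what the paper leaves as a routine calculation (the paper states this lemma with no proof, saying only that "it is now easy to calculate the cohomology of the above dga's"). Your bookkeeping of the Koszul differential and the use of $\mathrm{ann}(X_i)=(Y_{i+1})$, $\mathrm{ann}(Y_{i+1})=(X_i)$, $(X_i)\cap(Y_{i+1})=0$, $(X_i)+(Y_{i+1})=\mathfrak m$ is exactly the right way to read off cycles, boundaries and their quotients degree by degree; the signs you obtain for $d(X_i^*Y_{i+1}^*)$ and hence for the relation $(X_iX_i^*-Y_{i+1}Y_{i+1}^*)\beta$ are consistent. One small imprecision: deducing $c\in(X_i)$ and $e\in(Y_{i+1})$ from $cY_{i+1}+eX_i=0$ uses $(X_i)\cap(Y_{i+1})=0$ (to split the equation into $cY_{i+1}=0$ and $eX_i=0$) \emph{before} invoking the two annihilator facts, rather than ``the first two'' alone.

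On the ``concise'' presentation: be careful that if $X_iX_i^*$ and $Y_{i+1}Y_{i+1}^*$ are read as abstract odd generators $\xi,\eta$ of a free graded-commutative algebra, the displayed quotient has an extra summand $\Om\cdot\xi\eta$ in degree $2$ that is not killed by the listed relations, so your proposed dimension count would fail there. The presentation is correct once the generators are interpreted as the actual products inside the dga (equivalently, one adjoins the relation $\xi\eta=0$, which is automatic since $\xi\eta=X_iY_{i+1}X_i^*Y_{i+1}^*$ is a multiple of $X_iY_{i+1}$). Your plan to verify the isomorphism by dimension count would surface this, so it is worth stating the interpretation explicitly before comparing ranks.
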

Note, the cohomology groups (\ref{eq:cohhoch}) are not free over $\Om(U_{i+1/2})$ unless $*=0$. For instance, in the second line of (\ref{eq:cohhoch}) $Y_{i+1}(X_iX_i^*)=0$ still holds and $\Om(U_{i+1/2})\langle X_iX_i^*\rangle\oplus\Om(U_{i+1/2})\langle Y_{i+1}Y_{i+1}^*\rangle\cong \C[X_{i}]\oplus\C[Y_{i+1}]$. 
\begin{lem}
The cohomology of $CC^*(\Om(\tU_{i+1/2}))$ can be computed as \begin{equation}HH^*(\Om(\tU_{i+1/2}))=\begin{cases}
\Om(\tU_{i+1/2})=\C[X_i,Y_{i+1}][[q]]/(X_iY_{i+1}-q)&*=0\\
\Om(\tU_{i+1/2})\langle X_iX_i^*-Y_{i+1}Y_{i+1}^*\rangle&*=1\\
\C\langle\beta^k\rangle\cong \Om(\tU_{i+1/2})/(X_i,Y_{i+1})&*=2k,k\geq 1\\
0&*=2k+1,k\geq 1
\end{cases} \end{equation}
which can be written concisely as the graded commutative algebra \begin{equation}\frac{\C[X_i,Y_{i+1},X_iX_i^*-Y_{i+1}Y_{i+1}^*,\beta_{i+1/2}][[q]]}{(X_iY_{i+1}-q,X_i\beta_{i+1/2},Y_{i+1}\beta_{i+1/2},(X_iX_i^*-Y_{i+1}Y_{i+1}^*)\beta_{i+1/2})} \end{equation}where the $q$-completion is taken in each degree separately.
\end{lem}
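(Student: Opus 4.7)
The plan is to use the explicit dga model for $CC^*(\Om(\tU_{i+1/2}))$ from the preceding lemma and simply compute its cohomology directly; this is more transparent than running a $q$-adic filtration spectral sequence starting from the undeformed answer of the preceding lemma, although that alternative also works. As input one has the dga $\C[X_i,Y_{i+1},X_i^*,Y_{i+1}^*,\beta_{i+1/2}][[q]]/(X_iY_{i+1}-q)$ as a graded super-commutative algebra over $R$, with differential determined by (\ref{eq:diffofhh1}) and (\ref{eq:diffofhh2}). A crucial observation used throughout is that $\Om(\tU_{i+1/2})$ is a \emph{domain}: the assignment $X_i\mapsto X_i$, $Y_{i+1}\mapsto qX_i^{-1}$ embeds it into $\C((q))[X_i,X_i^{-1}]$.

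In degree $0$ there is no outgoing differential, giving $HH^0 = \Om(\tU_{i+1/2})$. In degree $1$, a general element $fX_i^* + gY_{i+1}^*$ with $f,g\in\Om(\tU_{i+1/2})$ has differential $(fY_{i+1}+gX_i)\beta_{i+1/2}$; by the domain property the cycle condition $fY_{i+1}+gX_i = 0$ forces $(f,g)=h(X_i,-Y_{i+1})$ for a unique $h$, so cycles are exactly $h(X_iX_i^*-Y_{i+1}Y_{i+1}^*)$, and since no degree-$0$ element has a nontrivial coboundary, this produces the free rank-one $\Om(\tU_{i+1/2})$-module of the claim. In even degree $2k\geq 2$, a cycle $f\beta_{i+1/2}^k + gX_i^*Y_{i+1}^*\beta_{i+1/2}^{k-1}$ has differential $g(Y_{i+1}Y_{i+1}^*-X_iX_i^*)\beta_{i+1/2}^k$, which forces $g=0$ again by the domain property; the coboundaries coming from degree $2k-1$ are $(fX_i+gY_{i+1})\beta_{i+1/2}^k$, so the quotient is $\Om(\tU_{i+1/2})/(X_i,Y_{i+1})\cong\C$ (the relation $X_iY_{i+1}=q$ forces $q=0$ in the quotient, confirming the claim). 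In odd degree $2k+1\geq 3$, the cycle analysis reduces to the degree-$1$ case and yields cycles $h(X_iX_i^*-Y_{i+1}Y_{i+1}^*)\beta_{i+1/2}^k$; but the decisive new phenomenon relative to the undeformed case is that $d(X_i^*Y_{i+1}^*\beta_{i+1/2}^{k-1}) = (Y_{i+1}Y_{i+1}^*-X_iX_i^*)\beta_{i+1/2}^k$, so every such cycle is a coboundary of $-hX_i^*Y_{i+1}^*\beta_{i+1/2}^{k-1}$, yielding $HH^{2k+1}=0$.

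I do not foresee any essential obstacle beyond routine bookkeeping of the $q$-adic completions. Each degree of the dga is a $q$-adically completed countable direct sum of copies of $R$, so one has to check that the cycle/boundary analyses above -- most concretely carried out coefficient-by-coefficient in a $\C[[q]]$-basis of $\Om(\tU_{i+1/2})$ such as $\{X_i^n\}_{n\geq 0}\cup\{Y_{i+1}^n\}_{n\geq 1}$ -- extend continuously. This is automatic because all the maps involved are $R$-linear and strict with respect to the $q$-adic filtration, so taking inverse limits commutes with the formation of the relevant kernels and cokernels.
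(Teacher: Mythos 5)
Your proposal is correct, and it is exactly the ``direct computation'' the paper gestures at (the paper states the result after ``It is now easy to calculate the cohomology of the above dga's'' and supplies no argument, so your write-up fills a genuine gap). Two small imprecisions are worth flagging, neither of which affects the conclusion. First, the asserted embedding of $\Om(\tU_{i+1/2})$ into $\C((q))[X_i,X_i^{-1}]$ is not quite right: an element such as $\sum_{n\geq 0}X_i^n q^n$ lies in $\Om(\tU_{i+1/2})$ but its image involves unboundedly many powers of $X_i$, so it is not a Laurent polynomial over $\C((q))$. The correct target is $\C[X_i,X_i^{-1}][[q]]$ (equivalently $\Om(\tV_i)$), into which $X_i\mapsto X_i$, $Y_{i+1}\mapsto qX_i^{-1}$ does embed; alternatively one can note that $X_iY_{i+1}-q$ is irreducible in the UFD $\C[X_i,Y_{i+1}][[q]]$. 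Either way the domain property holds. Second, the cycle analysis in odd degree uses slightly more than ``$\Om(\tU_{i+1/2})$ is a domain'': from $fY_{i+1}+gX_i=0$ you need to extract $f=hX_i$, $g=-hY_{i+1}$, and this requires that $(X_i,Y_{i+1})$ be a regular sequence (equivalently that $(X_i)$ be a prime ideal, which holds since $\Om(\tU_{i+1/2})/(X_i)\cong\C[Y_{i+1}]$). With that observation the computation goes through exactly as you say: in odd degrees $\geq 3$ the cycle $h(X_iX_i^*-Y_{i+1}Y_{i+1}^*)\beta^k$ is killed by $-hX_i^*Y_{i+1}^*\beta^{k-1}$, which is the one place the deformed answer genuinely differs from the $q=0$ answer.
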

The Hochschild cohomology of $\Om(V_i)$ and $\Om(\tV_i)$ can be computed using the same theorem or Hochschild-Kostant-Rosenberg isomorphism. We have \begin{equation}CC^*(\Om(V_i))\simeq \C[X_i,X_i^{-1},X_i^*]\text{ and } CC^*(\Om(\tV_i))\simeq \C[X_i,X_i^{-1},X_i^*][[q]] \end{equation} Here, $|X_i|=0,|X_i^*|=1$ and the differential vanishes. In the latter, the $q$-adic completion is taken separately at each degree. 
								
To compute the Hochschild cohomology of $\Om(\Tt_0)_{dg}$ and $\Om(\Tt_R)_{cdg}$ we also need the localization maps \begin{equation}\label{eq:HHloc0}HH^*(\Om(U_{i+1/2}))\rightarrow HH^*(\Om(V_i)),HH^*(\Om(U_{i+1/2}))\rightarrow HH^*(\Om(V_{i+1})) \end{equation} \begin{equation}\label{eq:HHlocR}HH^*(\Om(\tU_{i+1/2}))\rightarrow HH^*(\Om(\tV_i)),HH^*(\Om(\tU_{i+1/2}))\rightarrow HH^*(\Om(\tV_{i+1})) \end{equation}
They all vanish when $*\geq 2$ for the right hand side vanish. For the others identify\begin{equation}\Om(V_i)\cong \Om(U_{i+1/2})_{X_i},\Om(V_{i+1})\cong \Om(U_{i+1/2})_{Y_{i+1}} \end{equation}
\begin{equation}\Om(\tV_i)\cong (\Om(\tU_{i+1/2})_{X_i})[[q]],\Om(\tV_{i+1})\cong (\Om(\tU_{i+1/2})_{Y_{i+1}})[[q]] \end{equation}The identification gives the localization maps (\ref{eq:HHloc0}) and (\ref{eq:HHlocR}) for $*=0$.
								
For degree $*=1$ we have 
\begin{align*}
HH^1(\Om(U_{i+1/2}))\rightarrow HH^1(\Om(V_i))&&HH^1(\Om(U_{i+1/2}))\rightarrow HH^1(\Om(V_{i+1}))\\ X_iX_i^*\mapsto X_iX_i^*,Y_{i+1}Y_{i+1}^*\mapsto 0 && X_iX_i^*\mapsto 0,Y_{i+1}Y_{i+1}^*\mapsto Y_{i+1}Y_{i+1}^*
\end{align*}
and
\begin{align*}
HH^1(\Om(\tU_{i+1/2}))\rightarrow HH^1(\Om(\tV_i))&&HH^1(\Om(\tU_{i+1/2}))\rightarrow HH^1(\Om(\tV_{i+1}))\\ X_iX_i^*-Y_{i+1}Y_{i+1}^*\mapsto X_iX_i^* && X_iX_i^*-Y_{i+1}Y_{i+1}^*\mapsto -Y_{i+1}Y_{i+1}^*
\end{align*}    
To see this, for instance for $HH^1(\Om(\tU_{i+1/2}))\rightarrow HH^1(\Om(\tV_i))$, see $X_iX_i^*-Y_{i+1}Y_{i+1}^*$ as the derivation $X_i\partial_{X_i}-Y_{i+1}\partial_{Y_{i+1}}$ acting on $\C[X_i,Y_{i+1}][[q]]/(X_iY_{i+1}-q)$. As mentioned above, $\Om(\tV_i)=\C[X_i^{\pm}][[q]]\cong \C[X_i^{\pm},Y_{i+1}][[q]]/(X_iY_{i+1}-q)$ and the derivation acts as $X_i^m\mapsto mX_i^m$, which is exactly the action of $X_i\partial_{X_i}$ on $\C[X_i^{\pm}][[q]]$. The others follow from similar considerations. 
								
To compute the limits, we need one extra information. Namely, we identify $\Om(V_i)$ with $\C[X_i^{\pm}]$ and $\C[Y_i^{\pm}]$ and the coordinates satisfy $X_iY_i=1$. Basic calculus would tell us that the derivation corresponding to $X_iX_i^*$ acts the same as $-Y_iY_i^*$.
								
Now, we are ready to compute the Hochschild cohomology of $\Om(\Tt_0)_{dg}$ and $\Om(\Tt_R)_{cdg}$ in low degree. First, recall we can see the homotopy limit as the right derived functor of the limit functor. 
\begin{rk}\label{limitvssheaf}
For conceptual ease, we will think of above data and localization maps as defining sheaves on $\Tt_0$ and $\Tt_R$. We emphasize there is no need to pass to sheaves and one can merely work with diagram categories. However, this is the basis of many ideas we have used. Then, the desired (homotopy) limits can be thought as (right derived) global sections of these sheaves. For instance, for $\Tt_0$ consider the sheaf that assigns 
\begin{equation}
U\mapsto CC^*(\scrC oh_p(\Tt_0),\scrC oh(U) )
\end{equation}
for $U=U_{i+1/2}$ or $U_{j-1/2} \cap U_{j+1/2}$. The restriction maps are induced by the pull-back maps for the inclusions $U_{i-1/2}\cap U_{i+1/2}\to U_{i+1/2}$. By (\ref{postholim2}) and (\ref{postide}), the global sections of this sheaf compute the Hochschild cohomology of $\Om(\Tt_0)_{dg}$. See also (\ref{eq:HHcolimit0}). One can replace $CC^*(\scrC oh_p(\Tt_0),\scrC oh(U) )$ by explicit supercommutative dga as in (\ref{postquant}), but this will not be necessary since cohomology level information is sufficient to compute the cohomology of the global sections as we will see. 
\end{rk}
\begin{lem}\label{HHsheaveslemma}Cohomology of these sheaves are isomorphic to
\begin{align}
\Om_{\Tt_0}\text{ resp. }\Om_{\Tt_R}&\text{ for }*=0\\
\bigoplus_{i\in\Z}\Om_{C_i}\text{ resp. }\Om_{\Tt_R}&\text{ for }*=1\\
\bigoplus_{i\in\Z}\Om_{x_{i+1/2}}\text{ resp. }\bigoplus_{i\in\Z}\Om_{x_{i+1/2}}&\text{ for }*=2k,k\geq 1\\
\bigoplus_{i\in\Z}\Om_{x_{i+1/2}}\text{ resp. }0&\text{ for }*=2k+1,k\geq 1
\end{align}
\end{lem}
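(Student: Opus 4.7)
My plan is to verify the identifications degree by degree, using the explicit local Hochschild computations already in hand (the dga presentations of $CC^*(\Om(U_{i+1/2}))$ and $CC^*(\Om(\tU_{i+1/2}))$, together with the restriction maps (\ref{eq:HHloc0}) and (\ref{eq:HHlocR})) and matching them against the sections and restrictions of the asserted sheaves. Each sheaf on the right-hand side (namely $\Om_{\Tt_0}$, $\bigoplus_i \Om_{C_i}$, $\bigoplus_j \Om_{x_{j+1/2}}$, and their $R$-versions) is coherent on $\Tt_0$ or $\Tt_R$ and is already specified by its sections on $U_{i+1/2}$ and $V_i$ together with the canonical sheaf restrictions, so the whole problem reduces to identifying generators and checking two restriction formulas per pair of charts.

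First I would dispose of $\ast=0$ as tautological ($HH^0 = \Om$ with the obvious restrictions), and then handle the ``point'' degrees $\ast = 2k$, $k\geq 1$. There the computation shows $HH^{2k}(\Om(U_{i+1/2}))=\C\cdot \beta_{i+1/2}^k$, which is canonically $\Om(U_{i+1/2})/(X_i,Y_{i+1}) = \Om_{x_{i+1/2}}(U_{i+1/2})$; the restriction to $V_i$ is zero since $HH^{\geq 2}(\Om(V_i))$ vanishes and $V_i$ misses the node. This recovers $\bigoplus_j \Om_{x_{j+1/2}}$ in both the $\Tt_0$ and $\Tt_R$ cases. The odd degrees $\ast=2k+1$, $k\geq 1$ split: over $\Tt_R$ the local cohomology vanishes, giving the zero sheaf; over $\Tt_0$, the relation $(X_iX_i^*-Y_{i+1}Y_{i+1}^*)\beta^k = 0$ combined with $Y_{i+1}\cdot X_iX_i^*\beta^k = X_i\cdot Y_{i+1}Y_{i+1}^*\beta^k = 0$ collapses the local module to a single copy of $\C$ supported at the node, again giving $\bigoplus_j \Om_{x_{j+1/2}}$.

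The $\ast=1$ calculations are the core of the lemma. Over $\Tt_R$, the single summand $\Om(\tU_{i+1/2})\langle X_iX_i^* - Y_{i+1}Y_{i+1}^*\rangle$ is free of rank one because $X_iY_{i+1}-q$ is a non-zero-divisor, hence identifies with $\Om_{\tU_{i+1/2}}$; under restriction to $\tV_i$ its generator maps to $X_iX_i^*$, a unit generator of $\Om(\tV_i)\langle X_i^*\rangle \cong \Om_{\tV_i}$, matching the restriction of $\Om_{\Tt_R}$. Over $\Tt_0$, the module $\Om(U_{i+1/2})\langle X_iX_i^*\rangle$ is annihilated by $Y_{i+1}$ (since $X_iY_{i+1}=0$), so it is $\C[X_i] = \Om_{C_i}(U_{i+1/2})$; similarly $\Om(U_{i+1/2})\langle Y_{i+1}Y_{i+1}^*\rangle \cong \Om_{C_{i+1}}(U_{i+1/2})$. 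Together they account for $\bigl(\bigoplus_j \Om_{C_j}\bigr)(U_{i+1/2})$. The restriction $X_iX_i^*\mapsto X_iX_i^*$, $Y_{i+1}Y_{i+1}^*\mapsto 0$ into $HH^1(\Om(V_i))$ is exactly the restriction of $\Om_{C_i}\oplus\Om_{C_{i+1}}$ to $V_i$, on which only $\Om_{C_i}$ survives.

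The main subtlety, and the only place where the bookkeeping is not purely formal, is the sign in the overlap identification $X_iX_i^* = -Y_iY_i^*$ on $V_i$ that is used when gluing the $C_i$-components coming from $U_{i-1/2}$ and $U_{i+1/2}$ into a global copy of $\Om_{C_i}\cong \Om_{\mathbb{P}^1}$. This sign can be absorbed into the gluing datum of $\Om_{C_i}$ (equivalently, into a choice of trivialization of the generator $X_i^*$ on the two charts of $C_i=\mathbb{P}^1$) and does not affect the isomorphism type of the resulting sheaf; making this explicit is the one nontrivial verification. Everything else in the proof is a direct translation between the dga presentations of the local Hochschild cohomologies and the sections of the listed coherent sheaves.
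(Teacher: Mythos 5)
Your degree-by-degree translation from the local dga presentations (\ref{postquant})--(\ref{eq:defcc}) and the restriction maps (\ref{eq:HHloc0}),(\ref{eq:HHlocR}) into sections of the listed coherent sheaves is exactly how the lemma is meant to be read off, and matches the (implicit) argument in the paper, which states the lemma without further proof as a direct consequence of the preceding local computations. The one spot genuinely requiring care is the sign $X_iX_i^*=-Y_iY_i^*$ on $V_i$ when gluing the two affine pieces of $\Om_{C_i}$, and your observation that the sign is absorbed into a choice of trivialization of the generator on the two charts (so the resulting rank-one sheaf on $C_i\cong\mathbb{P}^1$ is still $\Om_{\mathbb{P}^1}$) is correct and is the key point to make explicit.
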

To relate the global sections (a.k.a. the limits of relevant diagrams) of these sheaves to desired homotopy limit, we can use the Grothendieck spectral sequence.
										
More precisely, let $\scrC\scrC^*$, resp. $\scrC\scrC^*_R$ denote the homotopy sheaves on $\Tt_0$, resp. $\Tt_R$ mentioned in Remark \ref{limitvssheaf}. We combine (\ref{eq:HHcolimit0}), (\ref{eq:HHcolimitR}), the invariance of Hochschild cohomology under passing to twisted complexes and Remark \ref{limitvssheaf}, and we apply Grothendieck spectral sequence to obtain two spectral sequences \begin{equation}E_2^{pq}=H^p(\mathcal{H}\mathcal{H}^q)\Rightarrow HH^{p+q}(\Om(\Tt_0)_{dg}) \end{equation} \begin{equation}E_2^{pq}=H^p(\mathcal{H}\mathcal{H}_R^q)\Rightarrow HH^{p+q}(\Om(\Tt_R)_{cdg}) \end{equation}
Here, $\mathcal{H}\mathcal{H}^q$, resp. $\mathcal{H}\mathcal{H}_R^q$, denotes the $q^{th}$ hypercohomology of $\scrC\scrC^*$, resp. $\scrC\scrC^*_R$, which are listed in Lemma \ref{HHsheaveslemma}. The spectral sequence degenerates in $E_2$ page (since $H^p=0$ unless $p=0,1$) and we can easily compute 
\begin{prop}\label{HHOmfinal}
\begin{equation}
HH^*(\Om(\Tt_0)_{dg})=\begin{cases}\C&*=0\\\prod_{i\in\Z}\C\langle Y_iY_i^*\rangle&*=1\\\prod_{i\in\Z}\C\langle \beta_{i+1/2}\rangle&*=2
\end{cases}
\end{equation}
\begin{equation}\label{eq:HHOmfinalR}
HH^*(\Om(\Tt_R)_{cdg})=\begin{cases}R&*=0\\R &*=1\\\prod_{i\in\Z}\C\langle \beta_{i+1/2}\rangle&*=2
\end{cases}\end{equation}
Moreover, $HH^1(\Om(\Tt_R)_{cdg})$ is generated by a class locally given by the derivation $Y_{i+1}Y_{i+1}^*-X_iX_i^*=Y_{i+1}\partial{Y_{i+1}}-X_i\partial{X_i}$ and $qHH^*(\Om(\Tt_R)_{cdg})=0$ for $*\geq 2$.
\end{prop}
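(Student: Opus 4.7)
The plan is to feed Lemma \ref{HHsheaveslemma} into the two spectral sequences set up just above,
\begin{equation*}
E_2^{pq}=H^p(\mathcal{H}\mathcal{H}^q)\Rightarrow HH^{p+q}(\Om(\Tt_0)_{dg}),\qquad E_2^{pq}=H^p(\mathcal{H}\mathcal{H}_R^q)\Rightarrow HH^{p+q}(\Om(\Tt_R)_{cdg}).
\end{equation*}
Since both spectral sequences are computed from a Čech complex associated with the affine covers $\{U_{i+1/2}\}$ (resp.\ $\{\tU_{i+1/2}\}$) whose triple intersections are empty, the sheaves $\mathcal{H}\mathcal{H}^q$ and $\mathcal{H}\mathcal{H}_R^q$ have cohomology concentrated in degrees $0$ and $1$, so the spectral sequences degenerate at $E_2$ and give short exact sequences
\begin{equation*}
0\to H^1(\mathcal{H}\mathcal{H}^{n-1})\to HH^n\to H^0(\mathcal{H}\mathcal{H}^n)\to 0.
\end{equation*}

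First I would read off the $H^0$-terms. Connectedness together with $q$-adic completeness (and reduction mod $q$) gives $H^0(\Om_{\Tt_0})=\C$ and $H^0(\Om_{\Tt_R})=R$. For the skyscraper-type sheaves $\bigoplus_i\Om_{C_i}$ and $\bigoplus_i\Om_{x_{i+1/2}}$, global sections form the unrestricted product $\prod_{i\in\Z}\C$: a section may assign independent data on each $C_i$ or each node, and no finite-support constraint is imposed because $\Tt_0$ fails to be quasi-compact.

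Next I would compute the $H^1$-terms via the two-term Čech complex
\begin{equation*}
\prod_{i}\Om(U_{i+1/2})\xrightarrow{\delta}\prod_{i}\Om(V_i),\qquad \delta((f_{k+1/2}))_i=f_{i+1/2}|_{V_i}-f_{i-1/2}|_{V_i}.
\end{equation*}
For $\Om_{\Tt_0}$, every element of $\Om(V_i)=\C[X_i,X_i^{-1}]$ splits into its nonnegative-power and strictly negative-power parts: the former lifts to $\Om(U_{i+1/2})$ and the latter, written in the coordinate $Y_i=X_i^{-1}$, lifts to $\Om(U_{i-1/2})$. Solving $\delta((f_{k+1/2}))=(g_i)$ inductively (starting from $f_{-1/2}=0$) proves surjectivity, so $H^1(\Om_{\Tt_0})=0$. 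The same procedure, adjusted for the relation $X_iY_{i+1}=q$ (so that $X_i^{-m}$ is lifted from $\tU_{i-1/2}$ as $q^m Y_i^m$) and respecting the $q$-adic topology, gives $H^1(\Om_{\Tt_R})=0$. The remaining sheaves have no $H^1$: each $\Om_{C_i}$ pushed forward from $\mathbb{P}^1$ satisfies $H^1(\Tt_0,\Om_{C_i})=H^1(\mathbb{P}^1,\Om)=0$, and skyscraper sheaves are acyclic. Plugging these into the short exact sequences produces the stated formulas for $HH^0$, $HH^1$, $HH^2$ in both cases.

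For the \emph{moreover} assertions: under the local identification $\mathcal{H}\mathcal{H}_R^1\cong\Om_{\Tt_R}$ on $\tU_{i+1/2}$, the free $\Om(\tU_{i+1/2})$-generator of $HH^1(\Om(\tU_{i+1/2}))$ computed earlier is $X_iX_i^*-Y_{i+1}Y_{i+1}^*$, which maps to the unit; these local generators agree on each overlap $\tV_i$ because the relation $X_iY_i=1$ identifies the derivation $X_i\partial_{X_i}$ with $-Y_i\partial_{Y_i}$. Hence the generator of $HH^1(\Om(\Tt_R)_{cdg})\cong R$ is globally given by the Euler-type derivation in the statement (up to sign). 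For $q$-torsion in degrees $\geq 2$, the local generator $\beta_{i+1/2}^k$ satisfies $X_i\beta_{i+1/2}=Y_{i+1}\beta_{i+1/2}=0$, so $q\beta_{i+1/2}^k=X_iY_{i+1}\beta_{i+1/2}^k=0$; since these classes generate $HH^{\geq 2}(\Om(\Tt_R)_{cdg})$, the entire module is $q$-torsion. The main obstacle is the surjectivity step in the deformed Čech computation: one must verify that the inductive splitting converges in the $q$-adic product topology and is compatible with the deformation relation $X_iY_{i+1}=q$ uniformly as $i\to\pm\infty$; once this is in hand, the rest is spectral-sequence bookkeeping combined with the explicit local computations from the preceding lemmas.
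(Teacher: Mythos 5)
Your approach is essentially the paper's: the proof there amounts to invoking the Grothendieck spectral sequence $E_2^{pq}=H^p(\mathcal{H}\mathcal{H}^q)\Rightarrow HH^{p+q}$ together with Lemma~\ref{HHsheaveslemma} and then declaring that one "can easily compute," and you supply exactly the missing \v{C}ech-level bookkeeping over the cover $\{U_{i+1/2}\}$. One small imprecision: the induction "starting from $f_{-1/2}=0$" does not literally prove surjectivity of $\delta$ (it would force $g_0\in\C[X_0]$); the correct step is to split each $g_i$ into strictly positive, strictly negative, and constant parts, lift the first two directly from the adjacent charts, and run the induction only on the remaining constants $c_{k+1/2}$ — but the idea and the conclusion are correct.
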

\begin{defn}\label{gammaom}
Let $\gamma_\Om\in HH^1(\Om(\Tt_0)_{dg})$ (resp. $\gamma^R_\Om\in HH^1(\Om(\Tt_R)_{cdg})$) denote the class locally given by $Y_{i+1}Y_{i+1}^*-X_iX_i^*$. Note $\gamma_\Om$ corresponds to $(Y_iY_i^*)_{i}$, where each $Y_iY_i^*$ is considered as a vector field on $C_i\subset \Tt_0$. %under (\ref{eq:HHcolimit0}). 
\end{defn}
As we will see $\gamma_\Om$ and $\gamma_\Om^R$ can be obtained as the ``infinitesimal action'' corresponding to $\G$-action mentioned in Section \ref{subsec:action}. See Prop \ref{postgammaomprop}, for instance.
\subsection{Hochschild cohomology of $M_\phi$}
Let us return to the main problem of computing $HH^*(M_\phi)$. The simple idea is as follows: Given two dg/$A_\infty$ categories (possibly with curvature) $\B_1$ and $\B_2$, we have a map \begin{equation}CC^*(\B_1,\B_1)\otimes CC^*(\B_2,\B_2)\rightarrow CC^*(\B_1\otimes \B_2,\B_1\otimes \B_2) \end{equation}Moreover, this is a quasi-isomorphism under certain compactness conditions on $\B_i$, for instance if both are smooth. In addition, given dg category $\B$ with a strict action of the discrete group $G$, we can compute $HH^*(\B\#G,\B\#G)$ as the derived invariants of the complex $CC^*(\B,\B\#G)$.

Let us first start with a few remarks on $HH^*(\B\#G,\B\#G)$. Let $\B$ be a dg category with a strict action of discrete group $G$. Let $\tilde{\fM}$ be a bimodule over $\B\#G$. Then we have \begin{equation}\label{eq:rhominv}RHom_{(\B\#G)^e}(\B\#G,\tilde{\fM})\cong RHom_{\B^e\#G_\Delta}(\B,\tilde{\fM}) \end{equation}This is true since $\B\#G$, as a bimodule over $\B\#G$ can be obtained as a base change under \begin{equation}\B^e\#G_\Delta\rightarrow (\B\#G)^e \end{equation}i.e. it is isomorphic to the induced representation $Ind^{G\times G}_{G_\Delta}(\B)$. 
%Later note: checked by hand, it is true, straightforward, but very messy. See below paragraph:
%A few notes: $\B^e\# G_\Delta$ embeds into $(\B\# G)\otimes (\B\# G)^{op}$ as $b\otimes b'\otimes g\to b\otimes g\otimes b'\otimes g^{-1}$, this is explained above I think. Now, one needs to consider $(\B\# G)^e\otimes_{\B^e\# G_\Delta}\fM$, where $\fM$ is a module over $\B^e\# G_\Delta$. Such modules are given explicit formulas above. Elements of $(\B\# G)^e\otimes_{\B^e\# G_\Delta}\fM$ has a normal form $1\otimes 1\otimes 1\otimes g\otimes m$, for a unique $g\in G,m\in \fM$. Now multiply with an element $b_1\otimes g_1(b_2)\otimes g_2$, to get $b_1\otimes g_1\otimes g(b_2)\otimes gg_2\otimes m$. Then $b_1\otimes g_1\otimes g(b_2)\otimes gg_2$ splits as $$(1\otimes 1\otimes 1\otimes g_1 gg_2)(b_1\otimes g_1\otimes g(b_2)\otimes g_1^{-1})$$ which is equal to 
%$$(1\otimes 1\otimes 1\otimes g_1 gg_2)(b_1\otimes 1\otimes g_1g(b_2)\otimes 1)(1\otimes g_1\otimes 1\otimes g_1^{-1})$$
%Applied to $m$ this gives  
%$$(1\otimes 1\otimes 1\otimes g_1 gg_2)(b_1 g_1(m)g_1(g(b_2))$$ Therefore, it identifies with $\fM\# G$ as defined before.
%
Hence, 
\begin{equation}\label{eq:rhominv2}
RHom_{(\B\#G)^e}(\B\#G,\tilde{\fM})\cong RHom_G(\C,RHom_{\B^e}(\B,\tilde{\fM}))
\end{equation} 
Here, $RHom_G(\C,\cdot)$ is the derived invariants functor on $D(Rep(G))$. Let $G=\Z$ and $C^*$ be a representation of $G$, where the generator $1\in \Z$ acts by $\eta\curvearrowright C^*$. Then, we can construct a chain model for the derived invariants as \begin{equation}\label{eq:invcocone}cocone(C^*\xrightarrow{\eta-1_{C^*}} C^*)=cone(C^*\xrightarrow{\eta-1_{C^*}} C^*)[-1] \end{equation}
Assume $G=\Z$ and the generator $1\in\Z$ acts on $\B$ by the strict auto-equivalence $\psi$. Let $\psi_*$ denote the auto-equivalence induced on $CC^*(\B,\tilde{\fM})$. Note the action on $\tilde{\fM}$ is by $t\otimes t^{-1}\in (\B\#\Z)^e$ where $t\in\B\#\Z$ denotes the generator of $\Z$ in $\B\#\Z$. We have 
\begin{equation}\label{eq:oldequivariance}
\xymatrix{CC^*(\B\#\Z,\tilde{\fM})\ar[d]^{i^*}\\CC^*(\B,\tilde{\fM})\ar[r]^{\psi_*-1} &CC^*(\B,\tilde{\fM}) } 
\end{equation}
where $i^*$ is induced by $i:\B\rightarrow\B\#\Z$ and where the composition is $0$ in cohomology. The composition claim follows from the identification (\ref{eq:rhominv2}), and the quasi-isomorphisms of Hochschild cochains with homomorphisms in the respective bimodule categories from the diagonal bimodules of $\B$, resp. $\B\# G$. 
To define a chain map
\begin{equation}\label{eq:earlyequivariancecocone}
	CC^*(\B\#\Z,\tilde{\fM})\rightarrow cocone( CC^*(\B,\tilde{\fM})\xrightarrow{\psi_*-1}CC^*(\B,\tilde{\fM}))
\end{equation}
one needs an $h$
\begin{equation}\label{eq:earlyequivariancetriangle}
\xymatrix{CC^*(\B\#\Z,\tilde{\fM})\ar[d]^{i^*}\ar@{-->}[rd]^h\\CC^*(\B,\tilde{\fM})\ar[r]^{\psi_*-1} &CC^*(\B,\tilde{\fM}) } 
\end{equation}
satisfying $d(h)=(\psi_*-1)\circ i^*$. Presumably, one can define such an $h$ explicitly. However, instead of appealing to this, we remark that (\ref{eq:oldequivariance}) can be completed to a natural strictly commutative square
\begin{equation}\label{eq:equivariancetriangle}
\xymatrix{CC^*(\B\#\Z,\tilde{\fM})\ar[d]^{i^*}&C(\B,\tilde{\fM})\ar[d]^{0}\ar[l]_{\;\;\;\;\simeq} \\CC^*(\B,\tilde{\fM})\ar[r]^{\psi_*-1} &CC^*(\B,\tilde{\fM}) } 
\end{equation}
where the upper vertical arrow is a quasi-isomorphism. This amounts to writing the map (\ref{eq:earlyequivariancecocone}) in the derived category. Such a complex $C(\B,\tilde{\fM})$ can be obtained using the quasi-isomorphisms between Hochschild complexes and hom-complexes in categories of bimodules over $\B$ and $\B\# G$. More precisely, one initially obtains a zigzag of quasi-isomorphisms to $CC^*(\B\#\Z,\tilde{\fM})$; however, it is standard to replace the zigzag by an actual quasi-isomorphism from a complex $C(\B,\tilde{\fM})$ to $CC^*(\B\#\Z,\tilde{\fM})$.

As a result, we have a natural map (in the derived category over the base ring, which is $R$ or $\C$)
%however, the map is defined via natural zigzags
\begin{equation}\label{eq:equivariancecocone}
CC^*(\B\#\Z,\tilde{\fM})\rightarrow cocone( CC^*(\B,\tilde{\fM})\xrightarrow{\psi_*-1}CC^*(\B,\tilde{\fM}))
\end{equation}
(\ref{eq:equivariancecocone}) is a quasi-isomorphism by the previous remarks (i.e. by (\ref{eq:rhominv}), (\ref{eq:rhominv2}), that (\ref{eq:invcocone}) computes the derived invariants, and by the identification of Hochschild complexes with $hom$-complexes in bimodule categories). Moreover, (\ref{eq:equivariancetriangle}) and (\ref{eq:equivariancecocone}) generalize to the curved case as well and (\ref{eq:equivariancecocone}) is still a quasi-isomorphism by Lemma \ref{semicont}. We prefer to notationally pretend that the quasi-isomorphism (\ref{eq:equivariancecocone}) is a chain map.
%May remove above stuff to leave \ref{eq:equivariancecocone} only. However the point is everthing is defined over $R$.

Using the remarks above we can prove:
\begin{prop}\label{CCtorus}
Let $\A$ be a dg category that satisfies Conditions \ref{C1}-\ref{C3}. Then \begin{equation}CC^*(M_\phi,M_\phi)\simeq cocone\big(CC^*(\Om(\Tt_0)_{dg},\Om(\Tt_0)_{dg} )\otimes CC^*(\A,\A)\atop\xrightarrow{\tr_*\otimes \phi_*-1} CC^*(\Om(\Tt_0)_{dg},\Om(\Tt_0)_{dg} )\otimes CC^*(\A,\A) \big) \end{equation}i.e. $CC^*(M_\phi,M_\phi)$ is given by the derived invariants of the $\Z$-action on \begin{equation}CC^*(\Om(\Tt_0)_{dg},\Om(\Tt_0)_{dg} )\otimes CC^*(\A,\A)\end{equation}
\end{prop}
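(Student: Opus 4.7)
The plan is to combine two tools developed just before the statement. The first is the cocone identification (\ref{eq:equivariancecocone}), which for any (possibly curved) dg category $\B$ carrying a strict $\Z$-action by an auto-equivalence $\psi$ produces a natural quasi-isomorphism
\begin{equation*}
CC^*(\B\#\Z)\simeq \mathrm{cocone}\bigl(CC^*(\B)\xrightarrow{\psi_*-1}CC^*(\B)\bigr).
\end{equation*}
The second is the K\"unneth map $CC^*(\B_1)\otimes CC^*(\B_2)\rightarrow CC^*(\B_1\otimes\B_2)$, which is a quasi-isomorphism under appropriate smoothness assumptions. Applying the first to $\B=\Om(\Tt_0)_{dg}\otimes\A$ with $\psi=\tr\otimes\phi$, and then applying the second to split the resulting Hochschild complexes, yields the claimed formula; the induced endomorphism of $CC^*(\Om(\Tt_0)_{dg})\otimes CC^*(\A)$ is $\tr_*\otimes\phi_*$ by naturality of the K\"unneth map under the strict auto-equivalences $\tr$ and $\phi$.

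To apply K\"unneth one must verify the relevant hypotheses. The category $\A$ is smooth (Condition \ref{C1}) with proper, bounded-below hom-complexes (Condition \ref{C2}). The remaining ingredient is smoothness of $\Om(\Tt_0)_{dg}$, which can be extracted from the proof of Proposition \ref{smoothness} specialized to $\A=\C$: the resolutions (\ref{eq:ses1})--(\ref{eq:ses2}) of the diagonal sheaf $\Om_\Delta$ by external tensor products, combined with the duality Lemma \ref{dualitylemma}, exhibit the diagonal bimodule of $\Om(\Tt_0)_{dg}$ as a finite twisted complex of Yoneda-type bimodules, which is the defining property of smoothness.

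The principal difficulty is the K\"unneth step itself, since $\Om(\Tt_0)_{dg}$ is not proper (it has infinitely many objects and the total hom is unbounded), so one cannot directly invoke a symmetric smooth-and-proper theorem. The cleanest route is to exploit the asymmetry: since the diagonal bimodule of $\Om(\Tt_0)_{dg}$ is perfect by the previous paragraph, the Hochschild cochains of the tensor product can be computed from the finite resolution of that diagonal by Yoneda bimodules obtained in Proposition \ref{smoothness}, and each summand reduces to an external tensor product of a hom-complex in $\Om(\Tt_0)_{dg}$ with Hochschild cochains of $\A$ (with convergence controlled by Condition \ref{C2}). Once this K\"unneth quasi-isomorphism is in place, its compatibility with the $\tr\otimes\phi$-action is immediate from strictness of the action, and taking cocones finishes the proof.
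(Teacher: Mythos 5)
There is a genuine gap in your argument. The first step misquotes the cocone identification. What (\ref{eq:equivariancecocone}) actually gives, taking $\tilde{\fM}=\B\#\Z$ with $\B=\Om(\Tt_0)_{dg}\otimes\A$ and $\psi=\tr\otimes\phi$, is
\begin{equation*}
CC^*(\B\#\Z,\B\#\Z)\simeq\mathrm{cocone}\bigl(CC^*(\B,\B\#\Z)\xrightarrow{\psi_*-1}CC^*(\B,\B\#\Z)\bigr),
\end{equation*}
where the coefficient $\B$-bimodule is $\B\#\Z$, not $\B$. As a $\B$-bimodule, $\B\#\Z$ decomposes as $\bigoplus_{n\in\Z}\B_{(\tr\otimes\phi)^n}$, and your proposal silently replaces this by just the $n=0$ summand $\B$. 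That replacement is exactly the nontrivial content of the proof: one must show $CC^*(\B,\B_{(\tr\otimes\phi)^n})\simeq 0$ for $n\neq 0$. This is where the specific geometry of $\Tt_0$ enters—one reduces to $RHom_{\Om(\Tt_0)_{dg}^e}(\Om(\Tt_0)_{dg},(\Om(\Tt_0)_{dg})_{\tr^n})=0$, which holds because the graph of $\tr^n$ and the diagonal of $\Tt_0\times\Tt_0$ are disjointly supported for $n\neq 0$. No amount of smoothness of $\Om(\Tt_0)_{dg}$ or K\"unneth manipulation substitutes for this vanishing, since it is not a formal consequence of the setup; for a generic dg category with $\Z$-action it would fail.

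Your observations about the K\"unneth step itself are in the right spirit and close to what the paper does: the paper also uses smoothness of $\A$ to rewrite $CC^*(\Om(\Tt_0)_{dg}\otimes\A)$ as $RHom_{\Om(\Tt_0)_{dg}^e}\bigl(\Om(\Tt_0)_{dg},CC^*(\A,\A)\otimes\Om(\Tt_0)_{dg}\bigr)$ and then exploits degreewise finite-dimensionality of $HH^*(\A)$ together with boundedness below of $RHom_{\Om(\Tt_0)_{dg}^e}(\Om(\Tt_0)_{dg},\Om(\Tt_0)_{dg})$. Your suggestion to instead resolve the diagonal bimodule of $\Om(\Tt_0)_{dg}$ by Yoneda-type bimodules via Proposition \ref{smoothness} is plausible but would need more care with the infinite direct sums appearing in (\ref{eq:diagseq}), and in any case it does not repair the missing reduction above.
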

\begin{proof}
We noted (\ref{eq:equivariancecocone}) is a quasi-isomorphism. As a special case, we obtain the quasi-isomorphism \begin{equation}CC^*(M_\phi,M_\phi)\simeq cocone\big(CC^*(\Om(\Tt_0)_{dg}\otimes\A,(\Om(\Tt_0)_{dg}\otimes\A)\#\Z) \atop\xrightarrow{\tr_*\otimes \phi_*-1} CC^*(\Om(\Tt_0)_{dg}\otimes\A,(\Om(\Tt_0)_{dg}\otimes\A)\#\Z) \big) \end{equation}
We can write \begin{equation}(\Om(\Tt_0)_{dg}\otimes\A)\#\Z=\bigoplus_{n\in\Z} (\Om(\Tt_0)_{dg}\otimes\A)_{(\tr\otimes \phi)^n}\end{equation} as a bimodule over $\Om(\Tt_0)_{dg}\otimes\A$. Here, $(\Om(\Tt_0)_{dg}\otimes\A)_{(\tr\otimes \phi)^n}$ denotes the diagonal bimodule of $\Om(\Tt_0)_{dg}\otimes\A$ twisted by $(\tr\otimes\phi)^n$ on the right (i.e. $\Om(\Tt_0)_{dg}\otimes\A$ acts on the right by the composition of $(\tr\otimes\phi)^n$ and the right action on the diagonal bimodule). If $n\neq 0$ \begin{equation}\label{eq:bort}CC^*(\Om(\Tt_0)_{dg}\otimes\A,(\Om(\Tt_0)_{dg}\otimes\A)_{(\tr\otimes \phi)^n})\simeq 0\end{equation}which follows from \begin{equation}\label{eq:zort}
RHom_{\Om(\Tt_0)^e}(\Om(\Tt_0)_{dg},(\Om(\Tt_0)_{dg})_{\tr^n})=0
\end{equation} unless $n=0$. That (\ref{eq:zort}) implies (\ref{eq:bort}) follows from a calculation very similar to the calculation below. 

We will not prove (\ref{eq:zort}) here but simply mention that its proof is based on showing \begin{equation}\label{eq:fmvsshv}
RHom_{\Om(\Tt_0)^e}(\Om(\Tt_0)_{dg},(\Om(\Tt_0)_{dg})_{\tr^n})\simeq RHom_{\Tt_0\times\Tt_0}(\Om_{graph(\tr^n)}^\vee,\Om_{\Delta_{\Tt_0}}^\vee) \end{equation}which is $0$ as the graph of $\tr^n$ and the diagonal are disjointly supported. For the equivalence one does not need to fully develop Fourier-Mukai theory for compactly supported coherent sheaves on $\Tt_0$. Instead, we can write resolutions of $\Om_{\Delta_{\Tt_0}}$ and $\Om_{graph(\tr^n)}$ by infinite direct sums of exterior products of compactly supported sheaves (such as $\Om_{C_i}\boxtimes\Om_{C_i}$) such that direct sums satisfy some finiteness property (as in (\ref{sheafres}) and (\ref{dualsheafres})). We can make the comparison in (\ref{eq:fmvsshv}) (i.e. compare the homomorphisms of coherent sheaves and induced bimodules) for these exterior tensor products first, and then use this to deduce (\ref{eq:fmvsshv}). 
%informal notes include a proof for $\fM$ version, $\fM'$ is easier

In summary\begin{equation}CC^*(M_\phi,M_\phi)\simeq cocone\big(CC^*(\Om(\Tt_0)_{dg}\otimes\A,\Om(\Tt_0)_{dg}\otimes\A) \atop\xrightarrow{\tr_*\otimes \phi_*-1} CC^*(\Om(\Tt_0)_{dg}\otimes\A,\Om(\Tt_0)_{dg}\otimes\A) \big) \end{equation}
Now, consider the natural map \begin{equation}CC^*(\Om(\Tt_0)_{dg},\Om(\Tt_0)_{dg} )\otimes CC^*(\A,\A)\rightarrow CC^*(\Om(\Tt_0)_{dg}\otimes\A,\Om(\Tt_0)_{dg}\otimes\A)\end{equation}
We would like to show this gives a quasi-isomorphism. Notice
%however, this could fail as $\Om(\Tt_0)_{dg}$ is not smooth. 
%However, the diagonal of $\Om(\Tt_0)_{dg}$ is smooth ``up to $\Z$-action'', as in (\ref{eq:diagseq}), which appears in the proof of Prop \ref{smoothness}. We will exploit this.
\begin{align*}
CC^*(\Om(\Tt_0)_{dg}\otimes\A,\Om(\Tt_0)_{dg}\otimes\A)\simeq\\RHom_{(\Om(\Tt_0)_{dg}\otimes\A)^e}(\Om(\Tt_0)_{dg}\otimes\A,\Om(\Tt_0)_{dg}\otimes\A)\simeq\\ RHom_{\Om(\Tt_0)_{dg}^e}(\Om(\Tt_0)_{dg},RHom_{\A^e}(\A,\Om(\Tt_0)_{dg}\otimes\A)\simeq\\
RHom_{\Om(\Tt_0)_{dg}^e}(\Om(\Tt_0)_{dg},CC^*(\A,\A)\otimes\Om(\Tt_0)_{dg})
\end{align*}
The last quasi-isomorphism is due to smoothness of $\A$. The K\"unneth map \begin{equation}RHom_{\Om(\Tt_0)_{dg}^e}(\Om(\Tt_0)_{dg},\Om(\Tt_0)_{dg})\otimes CC^*(\A,\A)\rightarrow\atop RHom_{\Om(\Tt_0)_{dg}^e}(\Om(\Tt_0)_{dg},CC^*(\A,\A)\otimes\Om(\Tt_0)_{dg}) \end{equation}is obvious. Clearly, this map  strictly commutes with $\Z$-actions; hence, it induces a map between derived $\Z$-invariants of left and right hand sides. We want to show this map is a quasi-isomorphism. The conditions \ref{C1},\ref{C2} imply that $CC^*(\A,\A)$ has finite dimensional cohomology in each degree. Moreover, \begin{equation}RHom_{\Om(\Tt_0)_{dg}^e}(\Om(\Tt_0)_{dg},\Om(\Tt_0)_{dg})\end{equation} has bounded below cohomology. This is sufficient to show that the map above induces a quasi-isomorphism. 
%See "A comparison for CCtorus"
This finishes the proof. 
\end{proof}
\begin{cor}\label{HHtoruscor}
For $\A$ satisfying the conditions \ref{C1}-\ref{C3}, we have isomorphisms \begin{equation}HH^0(M_\phi)\cong \C,HH^1(M_\phi)\cong HH^1(\T_0)\oplus HH^1(\A)^\phi\cong \C^2\oplus HH^1(\A)^\phi  \end{equation} as vector spaces.
\end{cor}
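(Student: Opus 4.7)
My plan is to feed the cocone model of Proposition~\ref{CCtorus} into the long exact sequence in cohomology and then compute the low-degree pieces using the Hochschild cohomology of $\Om(\Tt_0)_{dg}$ from Proposition~\ref{HHOmfinal} together with the assumptions \ref{C2}--\ref{C3} on $\A$. Concretely, applying cohomology to
\begin{equation}
CC^*(M_\phi)\simeq \mathrm{cocone}\bigl(CC^*(\Om(\Tt_0)_{dg})\otimes CC^*(\A)\xrightarrow{\;\tr_*\otimes\phi_*-1\;}CC^*(\Om(\Tt_0)_{dg})\otimes CC^*(\A)\bigr)
\end{equation}
yields a long exact sequence
\begin{equation}
\cdots\to HH^{n-1}(\Om\otimes\A)\xrightarrow{\tr_*\otimes\phi_*-1}HH^{n-1}(\Om\otimes\A)\to HH^{n}(M_\phi)\to HH^{n}(\Om\otimes\A)\xrightarrow{\tr_*\otimes\phi_*-1}HH^{n}(\Om\otimes\A)\to\cdots
\end{equation}
Since we work over the field $\C$, the K\"unneth formula identifies $HH^n(\Om(\Tt_0)_{dg}\otimes\A)=\bigoplus_{p+q=n}HH^p(\Om(\Tt_0)_{dg})\otimes HH^q(\A)$.

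For degree $0$, condition \ref{C2} (boundedness below of $\A$) gives $HH^{-1}(\Om\otimes\A)=0$, so $HH^0(M_\phi)$ is the kernel of $\tr_*\otimes\phi_*-1$ on $HH^0(\Om\otimes\A)=HH^0(\Om(\Tt_0)_{dg})\otimes HH^0(\A)=\C\otimes\C$. Both $\tr_*$ and $\phi_*$ act as the identity on this one-dimensional piece (they are unital auto-equivalences on the unit of $HH^0$), so the map is zero and $HH^0(M_\phi)\cong\C$.

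For degree $1$, the long exact sequence collapses to the short exact sequence
\begin{equation}
0\to \mathrm{coker}\bigl(HH^0(\Om\otimes\A)\xrightarrow{0}HH^0(\Om\otimes\A)\bigr)\to HH^1(M_\phi)\to \ker\bigl(HH^1(\Om\otimes\A)\xrightarrow{\tr_*\otimes\phi_*-1}HH^1(\Om\otimes\A)\bigr)\to 0,
\end{equation}
which splits over $\C$. The cokernel contributes a copy of $\C$. On the other hand, $HH^1(\Om\otimes\A)=HH^1(\Om(\Tt_0)_{dg})\oplus HH^1(\A)$, with $HH^1(\Om(\Tt_0)_{dg})=\prod_{i\in\Z}\C\langle Y_iY_i^*\rangle$. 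On the second summand, $\tr_*$ acts trivially so the kernel of $\phi_*-1$ is $HH^1(\A)^\phi$. On the first summand, $\tr_*$ shifts $Y_iY_i^*\mapsto Y_{i+1}Y_{i+1}^*$ while $\phi_*$ acts as the identity on $HH^0(\A)$, so $\tr_*\otimes\phi_*-1$ is the shift-minus-identity on $\prod_i\C$, whose kernel is the one-dimensional subspace of constant sequences, namely $\C\cdot\gamma_\Om$ in the notation of Definition~\ref{gammaom}. Combining the two contributions and recognizing the cokernel $\C$ together with $\C\langle\gamma_\Om\rangle$ as $HH^1(\T_0)\cong\C^2$ (via the analogous cocone computation applied to $\Om(\Tt_0)_{dg}\#\Z$) gives $HH^1(M_\phi)\cong HH^1(\T_0)\oplus HH^1(\A)^\phi\cong\C^2\oplus HH^1(\A)^\phi$.

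The main subtlety will be justifying the equivariance of the K\"unneth isomorphism with respect to $\tr_*\otimes\phi_*$ on the nose (so that kernels/cokernels can be computed summand by summand), and, relatedly, identifying the invariants of the shift on the \emph{product} $\prod_{i\in\Z}\C\langle Y_iY_i^*\rangle$ with the single class $\gamma_\Om$: the product is not the direct sum, but a sequence $(c_i)_{i\in\Z}$ fixed under $c_i\mapsto c_{i-1}$ is forced to be constant, giving exactly $\C$. Once this bookkeeping is in place, the rest is an immediate assembly of the two low-degree pieces of the long exact sequence.
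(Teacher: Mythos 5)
Your proposal is correct and is exactly the argument the paper intends when it says the corollary ``follows from Prop \ref{HHOmfinal} and Prop \ref{CCtorus}'': long exact sequence of the cocone, K\"unneth over $\C$, shift-invariants on $\prod_i\C\langle Y_iY_i^*\rangle$ giving the constant sequence $\gamma_\Om$. One small slip: the vanishing of $HH^{-1}(\Om(\Tt_0)_{dg}\otimes\A)$ comes from condition \ref{C3} (which asserts $HH^i(\A)=0$ for $i<0$), not from the boundedness-below in condition \ref{C2}, which bounds the hom-complexes of $\A$ but does not by itself control negative Hochschild cohomology.
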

\begin{proof}
This follows from Prop \ref{HHOmfinal} and Prop \ref{CCtorus}.
\end{proof}
Recall $\T_0$ denotes the nodal elliptic curve over $\C$. 
\begin{cor}\label{HHtoruscor2}
If $HH^1(\A)=HH^2(\A)=0$, then $HH^1(M_\phi)\cong\C^2$ and $HH^2(M_\phi)\cong\C$. 
\end{cor}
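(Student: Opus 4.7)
The plan is to combine Proposition \ref{CCtorus} and Proposition \ref{HHOmfinal} with the vanishing hypotheses on $HH^1(\A)$ and $HH^2(\A)$.

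For the first assertion $HH^1(M_\phi)\cong\C^2$, I would simply invoke Corollary \ref{HHtoruscor}: the assumption $HH^1(\A)=0$ forces $HH^1(\A)^\phi=0$, leaving only the $\C^2$ summand coming from $HH^1(\T_0)$.

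For $HH^2(M_\phi)\cong\C$, I would use the cocone description of Proposition \ref{CCtorus}. Write
$$X:=CC^*(\Om(\Tt_0)_{dg})\otimes CC^*(\A),\qquad f:=\tr_*\otimes\phi_*-1,$$
so that $CC^*(M_\phi)\simeq\mathrm{cocone}(X\xrightarrow{f} X)$. Since $\A$ is smooth and proper (conditions \ref{C1}, \ref{C2}), $HH^*(\A)$ is finite-dimensional in each degree and K\"unneth applies, giving $H^n(X)\cong\bigoplus_{p+q=n}HH^p(\Om(\Tt_0)_{dg})\otimes HH^q(\A)$. Combined with Proposition \ref{HHOmfinal}, the vanishing of $HH^1(\A),HH^2(\A)$, and $HH^0(\A)=\C$ from condition \ref{C3}, this reduces to
$$H^0(X)=\C,\qquad H^1(X)\cong\prod_{i\in\Z}\C\langle Y_iY_i^*\rangle,\qquad H^2(X)\cong\prod_{i\in\Z}\C\langle\beta_{i+1/2}\rangle,$$
with $f$ acting on $H^1(X)$ and $H^2(X)$ solely through $\tr_*-1$ (the factor $\phi_*$ being trivial on $HH^0(\A)=\C$, as any auto-equivalence acts by the identity on $HH^0$).

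The long exact sequence of the cocone then gives
$$0\to\mathrm{coker}(f_1)\to HH^2(M_\phi)\to\ker(f_2)\to 0,$$
and the whole question reduces to computing kernel and cokernel of the shift-minus-identity operator $T\colon(a_i)\mapsto(a_{i-1}-a_i)$ on $\prod_{i\in\Z}\C$. A direct check shows $\ker T$ consists exactly of the constant sequences (so $\cong\C$) and $T$ is surjective: given $(b_i)$, the recursion $a_i=a_{i-1}-b_i$ determines $(a_i)$ once $a_0$ is fixed, with no obstruction since we are working in the full product. Feeding this in yields $HH^2(M_\phi)\cong\C$. The only genuinely subtle point here is the distinction between $\prod$ and $\bigoplus$: if $HH^{1}(\Om(\Tt_0)_{dg})$ and $HH^2(\Om(\Tt_0)_{dg})$ were direct sums, the cokernel of $T$ would be infinite-dimensional and $HH^2(M_\phi)$ would be strictly larger than $\C$; it is exactly the product structure inherited from the homotopy limit in \eqref{eq:HHcolimit0} that forces the clean answer.
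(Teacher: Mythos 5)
Your proof is correct and follows exactly the route the paper leaves implicit: the corollary is stated without proof, being a direct consequence of Proposition~\ref{CCtorus} and Proposition~\ref{HHOmfinal}, and your reconstruction spells this out in the only reasonable way. In particular, the key observation that $\ker(\tr_*-1)\cong\C$ and $\operatorname{coker}(\tr_*-1)=0$ on the product $\prod_{i\in\Z}\C$ (as opposed to the direct sum, where surjectivity fails) is precisely the point the paper is relying on, and your parenthetical justification that $\phi_*=\mathrm{id}$ on $HH^0(\A)$ is sound because $HH^0(\A)\cong\C$ by Condition~\ref{C3} and any auto-equivalence fixes the unit class.
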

\begin{rk}
The analogue of Prop \ref{CCtorus} holds for $M_\phi^R$ as well. In other words, \begin{equation}CC^*(M_\phi^R,M_\phi^R)\simeq cocone\big(CC^*(\Om(\Tt_R)_{cdg},\Om(\Tt_R)_{cdg} )\otimes CC^*(\A,\A)\atop\xrightarrow{\tr_*\otimes \phi_*-1} CC^*(\Om(\Tt_R)_{cdg},\Om(\Tt_R)_{cdg} )\otimes CC^*(\A,\A) \big) \end{equation}
where $\otimes$ denotes the $q$-adic completion of the tensor product over $\C$. The proof works similarly. One can alternatively use the semi-continuity (Lemma \ref{semicont}) since the K\"unneth map and the map in (\ref{eq:equivariancecocone}) admit natural deformations over $R$.
\end{rk}
\begin{defn}
Let $\gamma_\phi\in HH^1(M_\phi)$ (resp. $\gamma^R_\phi\in HH^1(M_\phi^R)$) denote the class obtained by ``descent'' of $\gamma_\Om\otimes 1$ (resp. $\gamma^R_\Om\otimes 1$).
\end{defn}
Similar to $\gamma_\Om$ and $\gamma_\Om^R$, these classes come as the infinitesimal action of $\G$. This will be shown in Corollary \ref{postgammacor} for $\gamma_\Om^R$. 
\section{A family of endo-functors of $M_\phi$}\label{sec:family}
\subsection{Introduction}
In this section, we will use $\cG_R\subset\Tt_R\times\Tt_R\times Spf(A_R)$, resp. $\cG:=\cG_R|_{q=0}\subset \Tt_0\times\Tt_0\times Spec(A)$ to define explicit modules over $M_\phi^R\otimes M_\phi^{R,op}\otimes A_R$, resp. $M_\phi\otimes M_\phi^{op}\otimes A$, i.e.``families of bimodules parametrized by $A_R$, resp. $A$". We can see them as bimodules over $M_\phi^R$, resp. $M_\phi$, taking values in $A_R$-modules (resp. $A$-modules). 

First, define a bimodule over $\Om(\Tt_R)_{cdg}$ with values in $A_R$-modules by the formula 
\begin{equation}\label{eq:bigbimodR}
(\scrF,\scrF')\mapsto hom^\cdot_{\Om_{\Tt_R\times\Tt_R}}(q^*(R(\scrF)_R),p^*(R(\scrF')_R)\otimes_{\Om_{\Tt_R\times\Tt_R}}\Om_{\cG_R})
\end{equation}
Here, as before $q$ and $p$ are projections onto first and second factor respectively. Recall, the $R(\scrF)_R$ and $R(\scrF')_R$ are ``pseudo-complexes" of sheaves, i.e. graded sheaves whose $d^2$ is divisible by $q\in R$. See Definition \ref{defn:psdcx} and Section \ref{subsec:defoofom}. Tensor product is taken in each degree separately and hom-complex is as in ordinary complexes. Homomorphisms are over $\Om_{\Tt_R\times\Tt_R}$; hence, we obtain an $A_R$-module, which is flat by Lemma \ref{flatsub}. Denote the $A_R$-semi-flat bimodule defined by (\ref{eq:bigbimodR}) by $\scrG_{R}^{pre}$ (A pseudo-complex is called $A_R$-semi-flat if it is flat over $A_R$ in each degree. Similarly, $A_R$-semi-flatness of a bimodule $\fM$ means each $\fM(L,L')$ is an $A_R$-semi-flat pseudo-complex, and the bimodule maps are $A_R$ linear.). 
The only subtlety with semi-flatness of \ref{eq:bigbimodR} is that it involves infinite products of flat $A_R$-modules. However, this does not cause a problem as the flatness of these infinite products can be shown explicitly, or alternatively one can use \cite[Theorem 2.1]{chase}.

Similarly define a bimodule over $\Om(\Tt_0)_{dg}$ with values in $A$-modules by 
\begin{equation}\label{eq:bigbimod0}
(\scrF,\scrF')\mapsto hom^\cdot_{\Om_{\Tt_0\times\Tt_0}}(q^*R(\scrF),p^*R(\scrF')\otimes_{\Om_{\Tt_0\times\Tt_0}}\Om_{\cG})
\end{equation}
This bimodule is the restriction of the bimodule $\scrG_{R}^{pre}$ defined by (\ref{eq:bigbimodR}) to $q=0$. It is again $A$-semi-flat. Denote it by $\scrG^{pre}$.

Both $\cG_R$ and $\cG$ are invariant under the action of $\tr\times\tr\times 1$. This implies $\scrG_{R}^{pre}$ and $\scrG^{pre}$ satisfy the invariance condition (i.e. carry a  $\Z_\Delta$-equivariant structure) in Section \ref{sec:bimodgen}. Therefore, so does the $A_R$ (resp. $A$)-valued $\Om(\Tt_R)_{cdg}\otimes\A$ (resp. $\Om(\Tt_0)_{dg}\otimes \A$)-bimodule $\scrG_{R}^{pre}\otimes_\C\Delta_\A$ (resp. $\scrG^{pre}\otimes_\C\Delta_\A$). Recall that $\Z_\Delta$ is the diagonal action corresponding to action generated by $\tr\times\phi$ on $\Om(\Tt_R)_{cdg}\otimes\A$ (resp. $\Om(\Tt_0)_{dg}\otimes \A$).
%there is an extra A-component, in addition to previous stuff
\begin{defn}
Let $\scrG_R$ denote the $A_R$-valued $M_\phi^R$-bimodule obtained by descent of $\scrG_{R}^{pre}\otimes_\C\Delta_\A$ as in Section \ref{sec:bimodgen}. Similarly, let $\scrG$ denote the $A$-valued $M_\phi$-bimodule obtained by descent of $\scrG^{pre}\otimes_\C\Delta_\A$. 
\end{defn}
\begin{rk} Clearly, $\scrG=\scrG_R|_{q=0}$. Also, $\scrG_R$ and $\scrG$ are semi-flat (over $A_R$, resp. $A$) as well, i.e. $\scrG_R(L,L')$ is flat over $A_R$ in each degree (and same for $\scrG(L,L')$). 
\end{rk}
\subsection{Review of generalities on families of objects and their infinitesimal change}\label{subsec:familyreview}
In this subsection, we will recall how to make notions such as families of (bi)modules and their infinitesimal change precise. We will mostly follow the first section of \cite{flux}. We will write the definitions for curved algebras over $R$; however, it works for curved categories over other pro-finite local rings as well (hence for uncurved categories). Contrary to most of the rest of the paper we will work with $A_\infty$-algebras/categories and modules, instead of dg algebras/categories. These can be considered as a special case of $A_\infty$-algebras. The only major difference is in the homomorphisms between them; for instance, homomorphisms of $A_\infty$-modules are automatically derived. We used the notation $RHom$ to remove any ambiguity before, but below the hom-complexes are complexes of $A_\infty$-morphisms.

First a preliminary definition:
\begin{defn}\label{defn:psdcx}A pseudo-complex over the local ring $R=\C[[q]]$ is a graded(and complete in each degree) $R$-module $C^*$ and a degree $1$ endomorphism, ``the differential'', $d$ such that $d^2$ is a multiple of $q\in R$. Pseudo-complexes form a curved dg category, where the homomorphisms of a given degree are given by graded module homomorphisms and the curvature element is $d^2$. We denote this category by $\mathcal{C}_{cdg}(R)$. \end{defn}
\begin{defn}
Similarly, we can form a curved category of pseudo-complexes over $A_R$, which we denote by $\mathcal{C}_{cdg}(A_R)$. Let $\mathcal C_{cdg}^{sf}(A_R)$ denote the full (curved) subcategory of $\mathcal{C}_{cdg}(A_R)$ spanned by pseudo-complexes that are $q$-adically complete and topologically free (i.e. $q$-adic completion of a free $A_R$-module) in each degree and whose restrictions to $q=0$ give K-projective complexes of $A$-modules (recall that an unbounded complex $C^\cdot$ of $A$-modules is called $K$-projective if $hom^\cdot_A(C^\cdot,\cdot)$ sends acyclic complexes to acyclic complexes).
\end{defn}
\begin{defn}\label{curvedfamily}
Let $\B$ be a curved $A_\infty$-algebra over $R=\C[[q]]$. A family of (right) modules parametrized by $Spf(A_R)$ is an $A_\infty$-homomorphism $\B^{op}\rightarrow \mathcal{C}_{cdg}^{sf}(A_R)$. In other words it is a module $\fM$ over $\B$ such that each $\fM(b)^i$ is a topologically free $A_R$-module, $\fM(b)|_{q=0}$ is K-projective over $A$ and the structure maps are $A_R$-linear and continuous. Families of left modules and bimodules are defined similarly. \end{defn}
\begin{defn}\label{uncurvedfamily}
If $\B_0$ is an uncurved category over $\C$, then a family over it is defined similarly as a functor from $\B^{op}_0$ to K-projective complexes of $A$-modules.
\end{defn}
\begin{rk}
$\scrG_R$ fails to be ``semi-K-projective''(i.e. each $\fM(b)|_{q=0}$ is K-projective) but we will pass to a semi-free replacement of it satisfying K-projectivity condition. The phrase ``$A_R$-valued'' bimodule/module refers to such a bimodule/module with complete $A_R$-linear structure as above, where freeness/K-projectivity conditions are dropped. In other words, a given module $\fM$ is $A_R$-valued if $\fM(b)$ is an $A_R$-module in each degree for every object $b$, and the structure maps of the module are linear over $A_R$.
\end{rk}
Now, let us make the condition \ref{G1} precise:
\begin{defn}\label{compactfamdef}
Given an $A_\infty$-category $\B_0$ over $\C$, define a coherent twi-family of $\B_0$-modules parametrized by $A$ to be a twisted complex of objects $b\otimes M\in ob(\B_0\otimes \mathcal C_{dg}(A) )$, where $b\in ob(\B_0)$ and $M$ is a K-projective complex of flat $A$-modules whose cohomology is bounded and coherent (finitely generated) over $A$. In other words, this is the category spanned by such objects $b\otimes M$, $b'\otimes M'$ whose hom-complex is given by $\B_0(b,b')\otimes hom_A^\cdot(M,M')$. There exists a Yoneda functor from the category of twi-families to $(\B_0)_A^{mod}$, the category of families of $\B_0$-modules parametrized by $Spec(A)$ with $A$-linear morphisms, see Definition \ref{arfamilycat}. This functor can be shown to be cohomologically fully faithful. A family that is quasi-isomorphic to an object in the image of the idempotent completion of coherent twi-families is called a coherent family.
\end{defn}
\begin{rk}
Coherent twi-families are analogous to families of twisted complexes defined in \cite{flux}, except we allow (K-projective replacements of) coherent sheaves $M$ that are not just vector bundles over the base curve. See \cite[Section 1f]{flux}. Yoneda lemma- for this Yoneda functor- can be shown in a way similar to well known Yoneda lemma for $A_\infty$-(bi)modules.  
When we write $b\otimes M$ for a finitely generated module $M$ over $A$, we will mean a K-projective replacement of $M$. We will elaborate more on this later (to replace in a way compatible with $\tr$ and action). We denote the replacements by $\scrG^{pre,sf},\scrG^{pre,sf}_R,\scrG^{sf},\scrG^{sf}_R$, etc. 
\end{rk}
\begin{lem}
$K$-projective replacements of families exist, and they are unique up to quasi-isomorphism of families.
\end{lem}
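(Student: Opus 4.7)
The plan is to produce the $K$-projective replacement explicitly via the one-sided $A_\infty$ bar construction, and to deduce uniqueness from $K$-projectivity by the standard lifting argument.

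\emph{Existence via bar resolution.} Given a family $\fM$ of right $\B$-modules parametrized by $Spf(A_R)$, I would form
$$\bar{\fM}(b) := \bigoplus_{n\geq 0}\bigoplus_{b_0,\ldots,b_n} \fM(b_0) \otimes \B(b_1,b_0)[1] \otimes \cdots \otimes \B(b_n,b_{n-1})[1] \otimes \B(b,b_n),$$
with all tensor products $q$-adically completed over $R$, equipped with the standard bar differential assembled from the module structure maps of $\fM$, the $A_\infty$ operations on $\B$, and the curvature term. The canonical augmentation $\bar{\fM}\to\fM$ is a quasi-isomorphism of families via the usual extra-degeneracy contracting homotopy; this works over $R$ because the summands are $q$-adically complete and because at $q=0$ one recovers the classical uncurved $A_\infty$ bar resolution, whose augmentation is a standard quasi-isomorphism.

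\emph{Verification of the $K$-projectivity conditions.} Termwise topological freeness over $A_R$ is immediate, since each summand contains a free factor $\B(b,b_n)$ (base-changed to $A_R$) and the freeness of $\fM(b_0)$ is inherited. For $K$-projectivity of $\bar{\fM}(b)|_{q=0}$ as a complex of $A$-modules, I would filter $\bar{\fM}$ by the number of bar factors $n$: the associated graded is a direct sum of complexes of the form $\fM(b_0)|_{q=0}\otimes_\C (\text{complex of } \C\text{-vector spaces}) \otimes_\C \B(b,b_n)|_{q=0}$, with differential coming only from the internal differentials of $\fM|_{q=0}$ and $\B|_{q=0}$. Since each $\fM(b_0)|_{q=0}$ is $K$-projective over $A$ by hypothesis, and $K$-projectivity is preserved under arbitrary direct sums, tensoring with $\C$-flat complexes, and the countable (bounded-below-in-the-filtration) extensions that assemble the filtration, the conclusion follows.

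\emph{Uniqueness.} Suppose $\fM'$ and $\fM''$ are two $K$-projective replacements of $\fM$. The defining property of $K$-projectivity is that $hom_{\mathrm{fam}}(\fM',-)$ sends quasi-isomorphisms of families to quasi-isomorphisms of chain complexes. Applied to $\fM''\to\fM$, this produces a quasi-isomorphism $hom_{\mathrm{fam}}(\fM',\fM'') \xrightarrow{\simeq} hom_{\mathrm{fam}}(\fM',\fM)$, through which we may lift the quasi-isomorphism $\fM'\to\fM$ to a morphism $\fM'\to\fM''$ of families. By $2$-out-of-$3$, the lift is itself a quasi-isomorphism.

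\emph{Main obstacle.} The delicate step is the $K$-projectivity verification at $q=0$ over the singular base $A=\C[u,t]/(ut)$, where $K$-projectivity is strictly stronger than termwise flatness: the bar sum is infinite and is completed $q$-adically, so one must argue that the relevant filtered colimit / completed extension of $K$-projective complexes remains $K$-projective. This reduces to a standard but careful Mittag-Leffler / bounded-below argument using that arbitrary coproducts and suitable filtered colimits of $K$-projective $A$-complexes are $K$-projective, together with the compatibility of $q$-adic completion with the filtration by number of bar factors.
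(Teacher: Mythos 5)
Your existence argument via the bar resolution is a valid alternative to the paper's, which instead post-composes the $A_\infty$-functor $\B^{op}\to\scrC_{cdg}(A_R)$ with a functorial $K$-projective replacement functor $\scrC_{cdg}(A_R)\to\scrC_{cdg}^{sf}(A_R)$, citing Spaltenstein. These are genuinely different constructions: the paper modifies each pseudo-complex $\fM(b)$ over $A_R$ pointwise, carrying the $\B$-module structure along via the functoriality of the replacement, whereas you resolve the $\B$-module structure by a bar-type $\B$-free object and inherit the $A_R$-structure from the leftmost factor $\fM(b_0)$. Your verification that the bar complex lands in $\scrC_{cdg}^{sf}(A_R)$ at $q=0$, via the filtration by number of bar factors plus preservation of $K$-projectivity under direct sums, $\C$-linear tensoring, and countable graded-split filtered colimits, is the right argument and is sound (keeping in mind that the infinite direct sum over $n$ must itself be $q$-adically completed, per the paper's standing convention).

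For uniqueness, however, there is an imprecision that mirrors exactly the step the paper takes pains to spell out. You write that ``the defining property of $K$-projectivity is that $hom_{\mathrm{fam}}(\fM',-)$ sends quasi-isomorphisms of families to quasi-isomorphisms.'' In Definition \ref{curvedfamily} a family is only required to have each $\fM'(b)|_{q=0}$ $K$-projective over $A$; the hom-complex in $\B_{A_R}^{mod}$ is the naive $A_R$-linear one, not derived over $A_R$, so the preservation property you invoke is not a definition but a theorem. It is precisely what the paper's proof establishes: $\B_{A_R}^{mod}(\fM,\fM')$ is a deformation of the complex of $A$-linear pre-module homomorphisms $\fM|_{q=0}\to\fM'|_{q=0}$, so by the $q$-adic filtration acyclicity of the latter implies acyclicity of the former, and at $q=0$ one applies the bar-length filtration together with $K$-projectivity of $\fM|_{q=0}$. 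Once this reduction is supplied, your two-out-of-three lifting argument closes the proof; as written, the uniqueness half silently assumes the key lemma.
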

\begin{proof}
The existence follows from the existence of functorial $K$-projective replacement functors on $\scrC_{dg}(A)$ that extend to $\scrC_{cdg}(A_R)\to \scrC_{cdg}^{sf}(A_R)\subset \scrC_{cdg}(A_R)$. See the construction in \cite{spaltenstein}. Their uniqueness follow from a length filtration argument similar to \cite[Lemma 1.10]{flux}. More precisely, one only needs to show that the homomorphisms from $\fM^{sf}$ to $\fM'$ is acyclic when $\fM^{sf}$ is $K$-projective at $q=0$, and $\fM'|_{q=0}$ is acyclic. As $hom(\fM,\fM')$ deforms $hom(\fM|_{q=0},\fM'|_{q=0} )$, and acyclicity of the latter implies that of the former, we can focus on $hom(\fM|_{q=0},\fM'|_{q=0} )$. The length filtration argument, and $K$-projectivity of $\fM|_{q=0}$ implies the result.
\end{proof}
\begin{rk}
Presumably, one may modify the definition of a family as a functor from $\B_0$ to Ind-coherent sheaves over $A$ and realize coherent families as compact objects of category of such. However we do not need this.
\end{rk}
Now, we state a lemma:
\begin{lem}\label{cptfamilylem}
If $\fM$ and $\fM'$ are coherent families and $\B_0$ satisfies conditions \ref{C1}-\ref{C2} (see Section \ref{subsec:introcatcon}), then $(\B_0)_{A}^{mod}(\fM,\fM')$ is cohomologically bounded below and finitely generated over $A$.
\end{lem}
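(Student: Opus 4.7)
My plan is to reduce the question to an explicit computation on coherent twi-families via the cohomological full faithfulness of the Yoneda functor, then compute via a Künneth decomposition, using condition \ref{C2} on the $\B_0$-factor and Noetherianness of $A$ on the module factor.

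First, I would reduce to the twi-family case. Since $\fM$ and $\fM'$ are coherent families, each is quasi-isomorphic (as a family) to an idempotent summand of the Yoneda image of a coherent twi-family. A coherent twi-family is built as a finite iterated cone of objects of the form $b\otimes M$, with $b\in Ob(\B_0)$ and $M$ a $K$-projective replacement of a bounded complex of coherent $A$-modules. Since the property ``cohomologically bounded below with finitely generated cohomology over $A$'' is preserved under finite cones and (using that $A$ is Noetherian, so summands of finitely generated modules are finitely generated) under idempotent summands, it suffices to prove the claim when $\fM=\fM_{b\otimes M}$ and $\fM'=\fM_{b'\otimes M'}$ are the Yoneda images of single objects.

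In this case, the cohomological full faithfulness of the Yoneda functor supplies a quasi-isomorphism
\begin{equation*}
(\B_0)_A^{mod}(\fM_{b\otimes M},\fM_{b'\otimes M'})\simeq \B_0(b,b')\otimes_\C hom_A^\cdot(M,M').
\end{equation*}
Since $\C$ is a field, the Künneth formula gives
\begin{equation*}
H^n\bigl(\B_0(b,b')\otimes_\C hom_A^\cdot(M,M')\bigr)\cong \bigoplus_{i+j=n} H^i(\B_0(b,b'))\otimes_\C Ext^j_A(M,M').
\end{equation*}
Condition \ref{C2} says each $H^i(\B_0(b,b'))$ is finite dimensional over $\C$ and vanishes for $i\ll 0$. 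For the $A$-factor, $M$ and $M'$ represent objects of $D^b_{coh}(A)$, so $Ext^j_A(M,M')$ vanishes for $j\ll 0$ (by amplitude bounds) and is finitely generated over $A$ for each $j$: one takes a resolution of $M$ by a bounded above complex of finitely generated projectives, so that $Hom^j(P_\cdot,M')$ is a finite direct sum of Hom-modules between finitely generated $A$-modules in each degree, and then uses Noetherianness to conclude finite generation of the subquotient $Ext^j_A(M,M')$. In each degree $n$ the Künneth sum is then a finite direct sum of tensor products of a finite dimensional $\C$-space with a finitely generated $A$-module, hence finitely generated over $A$; and the total cohomology vanishes for $n\ll 0$.

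The main obstacle is purely bookkeeping: one must check that the K-projective replacement implicit in Definition \ref{compactfamdef} does not destroy coherence of cohomology, so that the computation of $RHom_A(M,M')$ via K-projective replacements agrees with that via a finitely generated projective resolution. This is standard over the Noetherian ring $A$, so no new ideas are needed beyond the reduction to single objects and Künneth above; the genuinely content-bearing input is that $\B_0$ satisfies \ref{C1}--\ref{C2} (giving degreewise finite dimensionality and boundedness below of $H^*(\B_0)$) and that $A$ is Noetherian.
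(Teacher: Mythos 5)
Your proof is correct and takes essentially the same route as the paper's (one-line) argument: reduce to coherent twi-families via cohomological full faithfulness of the Yoneda functor, and then verify the claim directly there. You fill in what the paper asserts as "the analogous statement for coherent twi-families"---namely the Künneth decomposition of $\B_0(b,b')\otimes_\C hom_A^\cdot(M,M')$ combined with \ref{C2} on the $\B_0$-factor and Noetherianness of $A$ on the other factor---and the reduction to single generators $b\otimes M$ via finite cones and summands is the standard way to make the twi-family case precise.
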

\begin{proof}
This follows from the analogous statement for coherent twi-families and Yoneda lemma.
\end{proof}
\begin{cor}\label{cptfamilycor}
Let $\fM$ and $\fM'$ are families over a curved category $\B$ over $R$ such that their restrictions to $q=0$ are coherent, and $\B|_{q=0}$ satisfies \ref{C1}-\ref{C2}. Then $\B_{A_R}^{mod}(\fM,\fM')$ is cohomologically bounded below and cohomologically finitely generated over $A_R$.
\end{cor}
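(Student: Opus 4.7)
The plan is to deduce this from Lemma \ref{cptfamilylem} by a topological Nakayama argument over the pair $(A_R,q)$. Put $C^{\bullet} := \B_{A_R}^{mod}(\fM,\fM')$. The first step is to identify the reduction at $q=0$. Since $\fM$ and $\fM'$ are (in each degree) topologically free over $A_R$ and all $A_\infty$-structure maps are continuous and $A_R$-linear, forming the hom-complex commutes with reducing mod $q$, yielding a natural isomorphism of chain complexes
\begin{equation}
C^{\bullet}/qC^{\bullet} \;\cong\; (\B|_{q=0})_{A}^{mod}\bigl(\fM|_{q=0},\fM'|_{q=0}\bigr).
\end{equation}
By hypothesis $\fM|_{q=0}$ and $\fM'|_{q=0}$ are coherent families over $\B|_{q=0}$, which satisfies \ref{C1}--\ref{C2}. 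Hence Lemma \ref{cptfamilylem} applies, and $H^\ast(C^{\bullet}/qC^{\bullet})$ is cohomologically bounded below and finitely generated over $A$ in each degree.

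The next step is to transfer both properties from $C^{\bullet}/qC^{\bullet}$ to $C^{\bullet}$, using that each $C^i$ is $q$-adically complete and $q$-torsion-free. For boundedness below, assume $H^i(C^{\bullet}/qC^{\bullet})=0$ for $i<N$. Given a cocycle $x\in C^i$ with $i<N$, its reduction bounds some $\bar y_0$; lifting to $y_0\in C^{i-1}$ makes $x-dy_0\in qC^i$, and $q$-torsion-freeness makes $(x-dy_0)/q$ a cocycle again. Iterating produces $y=\sum_{k\geq 0}q^k y_k$ with $dy=x$, which converges in the $q$-adic topology. For finite generation in degree $i$, one lifts $A$-generators of $H^i(C^{\bullet}/qC^{\bullet})$ to cocycles $a_1,\dots,a_n\in Z^i(C^{\bullet})$ and runs the analogous successive-approximation argument: any class $[x]\in H^i(C^{\bullet})$ is written as $\sum c_j[a_j]$ by iteratively solving modulo higher powers of $q$, the coefficients assembling $q$-adically into elements of $A_R$.

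The main delicate point is really the first one, namely that $C^{\bullet}/qC^{\bullet}$ genuinely computes the uncurved hom-complex at $q=0$ and not merely a quasi-isomorphic deformation of it. This is where Definition \ref{curvedfamily} pays off: because $\fM(b)^i$ is topologically free over $A_R$ and each structure map is a continuous $A_R$-linear morphism, the hom-complex is built out of $q$-adically completed products of topologically free $A_R$-modules, so tensoring with $A_R/(q)$ is exact on the underlying graded pieces and compatible with the differential. Once this identification is in place, everything else is standard topological Nakayama over the complete local pair $(A_R,q)$, noting that $A=A_R/(q)$ is Noetherian so that submodules of finitely generated $A$-modules are finitely generated, which is what allows the lifted generators to suffice.
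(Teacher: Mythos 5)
Your overall strategy is the same as the paper's: reduce mod $q$, invoke Lemma \ref{cptfamilylem}, then transfer back to $A_R$ by a $q$-adic successive-approximation argument. The paper simply cites Lemma \ref{semicont2} for this transfer step; you reprove it inline, and the finite-generation half of your inline argument has a gap. You assert that one may lift $A$-generators of $H^i(C^\bullet/qC^\bullet)$ to cocycles $a_1,\dots,a_n\in Z^i(C^\bullet)$, but such lifts need not exist: if $y\in C^i$ has closed reduction then $dy\in qC^{i+1}$, and the class of $dy/q$ in $H^{i+1}(C^\bullet/qC^\bullet)$ is the first-order obstruction to correcting $y$ to a genuine cocycle, with analogous higher-order obstructions beyond that. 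A generator whose obstruction is nontrivial admits no cocycle lift, and your successive-approximation scheme then has no seeds to run on.

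The repair, which is exactly what Lemma \ref{semicont2} carries out, is to work instead with the $A$-submodule of \emph{deformable} elements: pick closed $y_1,\dots,y_n\in C^i/qC^i$ whose classes generate $H^i(C^\bullet/qC^\bullet)$; inside $A\langle y_1,\dots,y_n\rangle$ take the submodule of elements admitting a closed lift to $C^i$; this submodule is finitely generated because $A$ is Noetherian; and closed lifts of finitely many of its generators are precisely the seeds the iteration needs, since any $[x]\in H^i(C^\bullet)$ reduces to a deformable class by construction. Your closing remark that Noetherianity of $A$ is ``what allows the lifted generators to suffice'' points toward exactly this fix, but as written the argument lifts generators of the full cohomology rather than of the deformer submodule, so that remark does not rescue the step as stated. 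The boundedness-below half of your transfer argument is correct, and for what it is worth it supplies a detail that the paper's one-line citation and the statement of Lemma \ref{semicont2} leave implicit.
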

\begin{proof}
This follows from Lemma \ref{cptfamilylem} and \ref{semicont2}.
\end{proof}
\begin{rk}\label{perfseirk}
Definition \ref{curvedfamily} and \ref{uncurvedfamily} are obvious generalizations of definition of families of modules over smooth curves in \cite{flux}. Moreover, one can define pushforward of families along $Spec(\C[t])\rightarrow Spec(A)$ etc. and the pushforward of (a direct summand of) a family of modules coming from a family of twisted complexes is obviously coherent. For instance, when $\B_0=\C$, a vector bundle over $Spec(\C[t])$ gives such a family that pushes forward to a coherent family.
\end{rk}
\begin{figure}
	\centering
	\includegraphics[height=4cm]{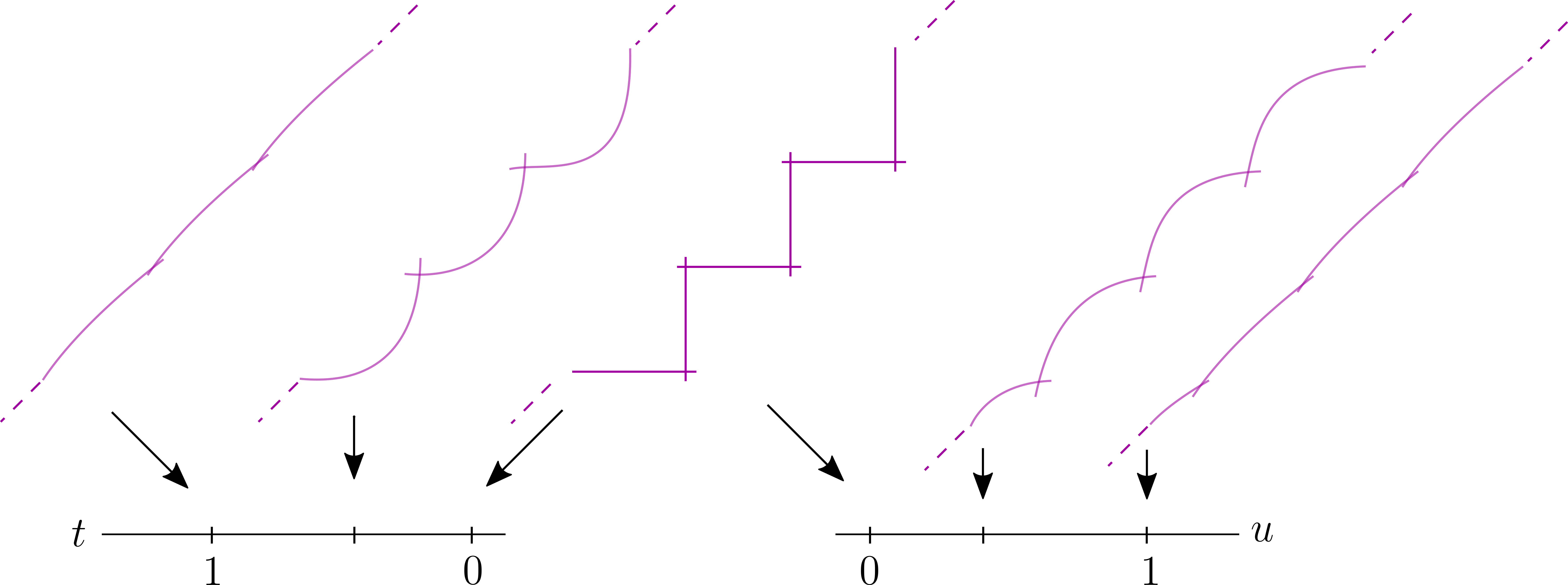}
	\caption{The graph $\cG$ shown separately over $t$ and $u$ axes of $Spec(\C[t,u]/(tu))$}
	\label{figure:graph}
\end{figure}
\begin{figure}
	\centering
	\includegraphics[height=4cm]{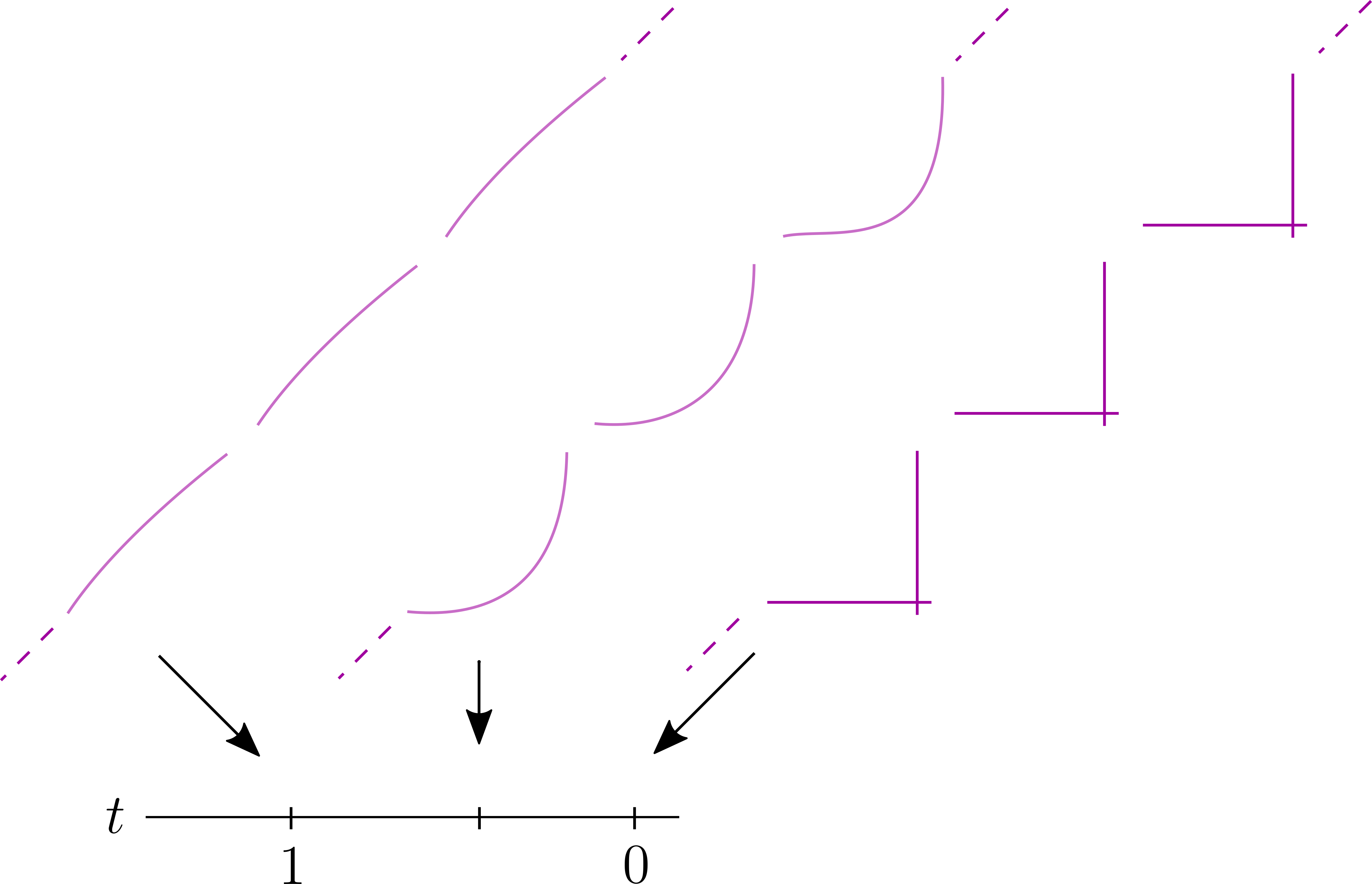}
	\caption{The relative partial normalization $\tilde{\cG_t}$ which can also be seen as a degeneration of $\G$ action on $\mathbb P^1\times \Z$}
	\label{figure:normalizedgraph}
\end{figure}
\begin{prop}\label{compactfamily}
The family $\scrG_R^{sf}|_{q=0}\simeq \scrG^{sf}$ is coherent. 	
\end{prop}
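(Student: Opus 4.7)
The plan is to mimic the proof of Proposition \ref{smoothness}, with the subscheme $\cG\subset \Tt_0\times\Tt_0\times Spec(A)$ playing the role that the diagonal $\Delta\subset\Tt_0\times\Tt_0$ played there. The goal is to exhibit a $\Z_\Delta$-equivariant quasi-isomorphism displaying $\Om_\cG$ (or its derived dual) as a twisted complex of sheaves of the form $E'\boxtimes E''\boxtimes M$, where $E',E''\in\{\Om_{C_i},\Om_{C_i}(-1)\}$ and $M$ is a flat coherent $A$-module. Applying the bimodule construction and tensoring with $\Delta_\A$, the $\Z_\Delta$-equivariance will then permit descent to $M_\phi$ and display $\scrG^{sf}$ as an object of the idempotent completion of coherent twi-families.

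Concretely, I would consider the relative partial normalization $\pi\colon \tilde{\cG}\to \cG$ over $Spec(A)$, sketched in Figure \ref{figure:normalizedgraph}. Locally, (\ref{eq:graph1}) at $q=0$ reads $tY_{i+1}=Y_{i+1}'$, $tX_i'=X_i$, $Y_{i+1}X_i'=u$, so $\cG$ consists of two irreducible pieces meeting along a nodal locus, while $\tilde\cG$ is a $\Z$-indexed chain of smooth pieces obtained as base changes of the $\mathbb{P}^1$-factors in $\pi\times\pi\colon (\mathbb{P}^1\times\Z)^2\to \Tt_0\times\Tt_0$, with $\tr\times\tr\times 1$ acting by translation along the chain. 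One thereby obtains a short exact sequence
\begin{equation*}
0\to \Om_\cG\to \tilde{\Om}_\cG\to K\to 0
\end{equation*}
with $K$ supported on the nodal locus; both $\tilde{\Om}_\cG$ and $K$ admit componentwise Beilinson-type resolutions by external tensor products $E'\boxtimes E''\boxtimes M$ with $M$ a flat coherent $A$-module, and after passing to derived duals as in (\ref{dualsheafres}) (invoking Lemma \ref{dualitylemma} to translate between $\fM$ and $\fM'$) one obtains the desired $\Z_\Delta$-equivariant resolution of $\Om_\cG^\vee$.

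Feeding this into the bimodule functor $E\mapsto \fM'_E$ resolves $\scrG^{pre}$ by terms whose value on $(\scrF,\scrF')$ factors as $\fM'_{E'\boxtimes E''}(\scrF,\scrF')\otimes_\C M$, and each such bimodule is representable in $\Om(\Tt_0)_{dg}^{op}\otimes \Om(\Tt_0)_{dg}\otimes \scrC_{dg}(A)$ by exactly the same computation used in Proposition \ref{smoothness}. Tensoring with $\Delta_\A$ and descending through the smash construction of Section \ref{sec:bimodgen}---replacing $\Delta_\A$ by an equivariant twisted complex representative over $\A^e$ as in the smoothness proof---then exhibits $\scrG^{sf}$, up to a split summand, as a twisted complex over $M_\phi^e\otimes \scrC_{dg}(A)$ whose $A$-module coefficients are flat and coherent. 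This is precisely the definition of a coherent family.

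The main technical obstacle I foresee is to carry out the partial normalization and the Beilinson-type resolution $\Z_\Delta$-equivariantly with coefficient $A$-modules that are verifiably flat and coherent; the explicit local equations (\ref{eq:graph1})--(\ref{eq:graph2}) together with the flatness of $\cG\to Spec(A)$ from Lemma \ref{flatsub} should make these $A$-modules controllable, but the bookkeeping---in particular the local finiteness of the infinite direct sums, which mirrors the support condition from Proposition \ref{smoothness} and follows from each component of $\cG$ being supported on a single pair $(C_i,C_j)$ with $|i-j|\le 1$---is delicate.
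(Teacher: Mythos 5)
Your high-level strategy is the same as the paper's (normalize, resolve by Beilinson-type exterior products, dualize via Lemma \ref{dualitylemma}, descend through the $\Z_\Delta$-equivariance), but you omit the first step of the paper's argument, and that omission creates a genuine gap. The paper does not normalize $\cG$ over $Spec(A)$ directly. It first tensors $\Om_\cG$ with (the pullback of) the short exact sequence $0\to A\to A/u\oplus A/t\to A/(u,t)\to 0$, reducing to showing coherence of the pieces $\fM'_{\Om_{\cG}^\vee|_{t=0}}$, $\fM'_{\Om_{\cG}^\vee|_{u=0}}$, $\fM'_{\Om_{\cG}^\vee|_{t=u=0}}$, and only then normalizes $\cG_t=\cG|_{u=0}$ over $Spec(\C[t])$, finally invoking push-forward along $Spec(\C[t])\to Spec(A)$ (Remark \ref{perfseirk}) to get back a coherent family over $A$.

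The reason this reduction is not optional: your local description of $\cG$ is inaccurate. In the chart $U_{i+1/2}\times U_{i+1/2}\times Spec(A)$ the coordinate ring of $\cG$ is $\C[Y_{i+1},X_i',t]/(Y_{i+1}X_i't)$, which has \emph{three} irreducible components, not two, because the base $Spec(A)$ is itself reducible. Correspondingly, the nodal locus of $\cG\to Spec(A)$ lives over the $t$-axis in the $U_{i+1/2}\times U_{i+1/2}$-type charts and over the $u$-axis in the $U_{i+1/2}\times U_{i-1/2}$-type charts; no single scheme $\tilde\cG$ obtained by base-changing $\pi\times\pi$ from $(\mathbb{P}^1\times\Z)^2$ is a partial normalization of $\cG$ over $Spec(A)$ in the way you describe, and Figure \ref{figure:normalizedgraph} depicts $\tilde{\cG_t}$ (the normalization of the restriction to $u=0$), not a normalization of $\cG$ itself. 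Once you do restrict to $u=0$ (or $t=0$), the geometry is exactly as you say and the rest of your outline goes through. So the fix is precisely to insert the base-decomposition step, and then to record the push-forward of coherent families along $Spec(\C[t])\to Spec(A)$ as the mechanism producing coherent $A$-families out of coherent $\C[t]$-families; note in particular that the resulting coefficient modules (e.g.\ $\C[t]$ as an $A$-module) are not flat over $A$, which is why the push-forward formalism of Remark \ref{perfseirk}, rather than a literal flat $A$-module $M$, is what lands you in the class of coherent twi-families.
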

\begin{proof}
The proof is similar to proof of Prop \ref{smoothness}; thus, we skip some details. 

First, apply Lemma \ref{dualitylemma} when $X=\cG$, $Y=\Tt_0\times\Tt_0\times Spec(A)$, $Z=\Tt_0$ and $p$ is the second projection. It implies the family $\scrG^{pre,sf}$ is ($\Z_\Delta$-equivariantly) quasi-isomorphic to \begin{equation}\label{birequ}
(\scrF,\scrF')\mapsto ``RHom_{\Tt_0\times \Tt_0}(q^*\scrF\otimes \Om_{\cG}^\vee,p^*\scrF')"
\end{equation}
As before (e.g. (\ref{eq:bimoddefnquotientmarks}), (\ref{eq:quotationdefn1}) and (\ref{blah})), we put quotation marks since we use a suitable enhancement of (\ref{birequ}) to a dg functor (see also Remark \ref{spalsrk}). Also note that  $RHom_{\Tt_0\times \Tt_0}(\cdot,\cdot)$ in (\ref{birequ}) is an abbreviation for 
\begin{equation}
RHom_{\Tt_0\times \Tt_0\times Spec(A)/Spec(A)}(\cdot,\cdot)
\end{equation}
We can use the notation of Section \ref{sec:construction} and denote this family by $\fM_{\Om_\scrG^{\vee}}'$.
Tensoring $\Om_{\cG}$ with the (pull-back of) short exact sequence $0\rightarrow A\rightarrow A/u\oplus A/t\rightarrow A/(u,t)\rightarrow 0$, we obtain a quasi-isomorphism
\begin{equation}
\Om_{\cG}^\vee\simeq cocone (\Om_{\cG}^\vee|_{t=0}\oplus \Om_{\cG}^\vee|_{u=0}\rightarrow \Om_{\cG}^\vee|_{t=u=0} )
\end{equation}
where $\Om_{\cG}^\vee|_{t=0}$ refers to push-forward of derived restriction of $\Om_{\cG}^\vee$ along $Spec(\C[u])\rightarrow Spec(A)$ (similar for others). This quasi-isomorphism is compatible with natural $\Z_\Delta$-actions and it implies 
\begin{equation}
\fM'_{\Om_\cG^\vee}\simeq cone( \fM'_{\Om_{\cG}^\vee|_{t=u=0}} \rightarrow \fM'_{\Om_{\cG}^\vee|_{t=0}}\oplus \fM'_{\Om_{\cG}^\vee|_{u=0}})
\end{equation}
$\Z_\Delta$-equivariantly. Hence, it is sufficient to prove $\fM'_{\Om_{\cG}^\vee|_{t=0}}$, $\fM'_{\Om_{\cG}^\vee|_{u=0}}$ and $ \fM'_{\Om_{\cG}^\vee|_{t=u=0}}$ descend to coherent families over $M_\phi$, after tensoring with $\A$. That $ \fM'_{\Om_{\cG}^\vee|_{t=u=0}}$ is coherent follows from the others. Also, the proof for $\fM'_{\Om_{\cG}^\vee|_{t=0}}$ is almost the same as the proof for $\fM'_{\Om_{\cG}^\vee|_{u=0}}$; hence, we prove coherence only for the latter. $\fM'_{\Om_{\cG}^\vee|_{u=0}}$- as a family over $Spec(\C[u,t]/(ut) )$- can be seen as the push-forward of the family $\fM'_{\Om_{\cG|_{u=0}}^\vee}$ considered as a family of bimodules over $Spec(\C[t])$.

Consider the subscheme $\cG|_{u=0}\subset \Tt_0\times\Tt_0\times Spec(\C[t])$, where we identify $\C[t]$ with $A/(u)$. We proceed similar to Prop \ref{smoothness}. The subscheme $\cG_t:=\cG|_{u=0}$ is given by 
\begin{equation}
tY_{i+1}=Y_{i+1}',tX_i'=X_i,Y_{i+1}X_i'=0\text{ on } U_{i+1/2}\times U_{i+1/2}\times Spec(\C[t])
\end{equation}
\begin{equation}
Y_{i+1}=X_{i-1}'=0,Y_{i}'X_i=t\text{ on }U_{i+1/2}\times U_{i-1/2}\times Spec(\C[t]) 
\end{equation}
It is flat over $\C[t]$ by Lemma \ref{flatsub} and can be seen as a flat degeneration of the graph of $Spec(\C[t,t^{-1}])$-action on $\Tt_0$. Consider the normalization $\pi:\mathbb{P}^1\times \Z\rightarrow \Tt_0$, where \begin{equation}\pi|_{\mathbb{P}^1\times\{n\}}:\mathbb{P}^1\times\{n\}\rightarrow C_n\end{equation} is an isomorphism. We can see $X_i,Y_i$ as coordinates of $\mathbb{P}^1\times\{i\}$ satisfying $X_iY_i=1$. Let $\tilde{\cG_t}\subset \mathbb{P}^1\times\Z\times\mathbb{P}^1\times\Z\times Spec(\C[t])$ denote a natural flat degeneration of the graph of $\G=Spec(\C[t,t^{-1}])$-action on $\mathbb{P}^1\times\Z$ given by \begin{equation}Y_{i}\mapsto tY_i,X_i\mapsto t^{-1}X_i \end{equation}
More precisely, $\tilde{\cG_t}$ is given by
\begin{equation}Y_i'=tY_i\text{ on }Spec(\C[Y_i,Y_i',t]) \cong\{X_i\neq 0 \}\times \{X'_i\neq 0 \}\times Spec(\C[t]) \end{equation}
\begin{equation}X_i=tX_i'\text{ on }Spec(\C[X_i,X_i',t]) \cong\{Y_i\neq 0 \}\times \{Y'_i\neq 0 \}\times Spec(\C[t]) \end{equation}
\begin{equation}X_iY_i'=t\text{ on }Spec(\C[X_i,Y_i',t]) \cong\{Y_i\neq 0 \}\times \{X'_i\neq 0 \}\times Spec(\C[t]) \end{equation}
The domains on the right side are considered as subsets of \begin{equation}\mathbb{P}^1\times\{i\}\times\mathbb{P}^1\times\{i\}\times Spec(\C[t])\end{equation}
See Figure \ref{figure:normalizedgraph} for a picture of $\tilde{\cG_t}$, and Figure \ref{figure:graph} for a picture of $\cG$, where $\cG_t=\cG|_{u=0}$ and $\cG|_{t=0}$ are drawn separately.

It is easy to check that $\pi\times \pi\times 1$ restricts to a morphism $\tilde{\pi}:\tilde{\cG_t}\rightarrow\cG_t$. It is an isomorphism over the part of $\cG_t$ in $U_{i+1/2}\times U_{i-1/2}\times Spec(\C[t])$. The part of $\cG_t$ in $U_{i+1/2}\times U_{i+1/2}\times Spec(\C[t])$ has coordinate ring
\begin{equation}\C[X_i,Y_{i+1},X_i',Y_{i+1}',t]/(X_iY_{i+1},X'_iY'_{i+1},Y_{i+1}X_i',Y_{i+1}'-tY_{i+1},X_i-tX'_i )\atop\cong \C[Y_{i+1},X_i',t]/(Y_{i+1}X_i') \end{equation} 
and the part of $\tilde{\cG_t}$ over it has coordinate ring 
\begin{equation}\C[X_i,X_i',t]/(X_i-tX_i')\times \C[Y_{i+1},Y_{i+1}',t]/(Y_{i+1}'-tY_{i+1})\cong\C[X_i',t]\times \C[Y_{i+1},t] \end{equation}
The map induced on coordinate rings from the former to the latter is given by \begin{equation}X_i'\mapsto (X_i',0),Y_{i+1}\mapsto (0,Y_{i+1}),t\mapsto (t,t)\end{equation}

Hence, it is isomorphic to normalization map of an affine nodal curve relative to $\C[t]$. This description implies the map  $\Om_{\cG_t}\rightarrow\tilde{\pi}_*\Om_{\tilde{\cG_t}}$ corresponding to $\tilde{\cG_t}\rightarrow\cG_t$ is injective with cokernel $\bigoplus_{i\in\Z}\Om_{(x_{i+1/2},x_{i+1/2})}\boxtimes \C[t]$, where $x_{i+1/2}$ still denotes the nodal point in $U_{i+1/2}$. In other words, we have a short exact sequence 
\begin{equation}\label{eq:ses3}
0\rightarrow \Om_{\cG_t}\rightarrow\tilde{\pi}_*\Om_{\tilde{\cG_t}}\rightarrow \bigoplus_{i\in\Z}(\Om_{x_{i+1/2}}\boxtimes\Om_{x_{i+1/2}})\boxtimes \C[t]\rightarrow 0
\end{equation}This is the analogue of (\ref{eq:ses1}) in Prop \ref{smoothness}. Moreover, using smoothness of $\mathbb{P}^1\times\{i \}$, we can resolve $\Om_{\tilde{\cG_t}}|_{\mathbb{P}^1\times\{i \}\times\mathbb{P}^1\times\{i \}\times Spec(\C[t])}$ by sheaves of type $E\boxtimes E'$, where $E$ and $E'$ are coherent. More precisely, it is quasi-isomorphic to a complex of sheaves of type $E\boxtimes E'$(we do not need to consider its direct summands as $D^b(\mathbb{P}^1\times \mathbb{P}^1)$ is generated by exterior products, but it would not affect us.) Concretely, one can use \begin{equation}\label{eq:respr}
\Om(-1)\boxtimes\Om(-1)\boxtimes  \C[t]\xrightarrow{X\boxtimes Y'\boxtimes 1-Y\boxtimes X'\boxtimes t}\Om\boxtimes\Om\boxtimes \C[t]\end{equation} (\ref{eq:ses3}) and resolution maps from (\ref{eq:respr}) can be made invariant under $\tr\times \tr\times 1$ and hence $\tilde{\pi}_*\Om_{\tilde{\cG}_t}$ is quasi-isomorphic to a finite complex of sheaves of type $\bigoplus_{i\in\Z}({\tr^i\tilde{E'}\boxtimes \tr^i\tilde{E'}})\boxtimes\C[t]$, where $\tilde{E}'$ and $\tilde{E}''$ are push-forwards of compactly supported coherent sheaves on the normalization $\mathbb{P}^1\times \Z$ (hence, isomorphic to twisted complexes over $\Om(\Tt_0)_{dg}$). This complex is the analogue of (\ref{eq:ses2}) in Prop \ref{smoothness}. The same holds for $\Om_{\cG_t}$ by (\ref{eq:ses3}) and for $\Om_{\cG_t}^\vee$ by taking duals. 

Let $\mathbf{E}=\bigoplus_{i\in\Z}({\tr^i\tilde{E'}^\vee\boxtimes \tr^i\tilde{E'}^\vee})\boxtimes\C[t]$. A $\C[t]$-relative version of the idea in the proof of Prop \ref{smoothness} shows that  $\fM'_\mathbf{E}\otimes \A$ , as a family over $Spec(\C[t])$ descends to a family of bimodules over $M_\phi$ that is representable by a (family of) twisted complexes (see \cite{flux}). Hence, its push-forward along $Spec(\C[t])\rightarrow Spec(A)$ is coherent by Remark \ref{perfseirk} and the same holds for $\fM'_{\Om_{\cG} ^\vee|_{u=0}}\simeq \fM'_{\Om_{\cG_t} ^\vee}$, which finishes the proof.
\end{proof}
\begin{rk}
One can presumably show that $\scrG$ is representable by a family of twisted complexes and $\scrG_R$ is representable by an (unobstructed) twisted complex over $M_\phi^R\otimes M_\phi^{R,op}\otimes A_R$, as it is a deformation of $\scrG$. However, we do not need this.	
\end{rk}
Before making infinitesimal change precise, we need a few more definitions:
\begin{defn}\label{arfamilycat}
Let $\B_{A_R}^{pre}$ be the category whose objects are families of modules over $\B$ parametrized by $Spf(A_R)$ and morphisms $\fM\rightarrow \fM'$ are pre-module homomorphisms $\fM\rightarrow \fM'$ over $\B$ in the sense of \cite{seidelbook}. This is a dg category. Let $\B_{A_R}^{mod}$ (or simply $\B_{A_R}$ abusing the notation) denote the subcategory where the pre-module homomorphisms are the $A_R$-linear ones. One can define such categories for left modules and bimodules similarly.
\end{defn}
\begin{rk}
The superscripts ``pre'' in Definition \ref{arfamilycat} and in $\scrG^{pre}_R$ are unrelated. 
\end{rk}
\begin{rk}\label{arfamilycatrk}
More explicitly, a morphism of $\B^{pre}_{A_R}$
can be defined as a sequence of $R$-linear maps 
\begin{flalign*}
f^1:&\fM\rightarrow\fM\\
f^2:&\fM\otimes \B\rightarrow \fM [-1]\\ \dots
\end{flalign*}
One obtains a morphism of $\B_{A_R}^{mod}$ if $A_R$-linearity is imposed.
\end{rk}
\begin{rk}
Notice the hom-sets of $\B_{A_R}^{pre}$ have the structure of an $A_R\otimes A_R^{op}$-module, which comes from the algebra maps \begin{equation}A_R\rightarrow \B_{A_R}^{pre}(\fM,\fM) \end{equation} for each $\fM$. The algebra map sends $a\in A_R$ to $f=(f^1=a,0,0,\dots)$, i.e. to the multiplication by $a$. 
\end{rk}
Now, we will make ``the infinitesimal change of the family'' precise, still following \cite{flux}. For simplicity let us confine ourselves to $A_\infty$-algebras, keeping in mind that one can pass to them from $A_\infty$-categories via constructions similar to total algebra.

Let $\B$ be a curved $A_\infty$-algebra over $R=\C[[q]]$. Let $B(\B)$ denote the graded $q$-adic completion of $T(\B[1])=\bigoplus_{n\geq 0}\B[1]^{\otimes n}$. Recall $B(\B)$ is a (co-unital) coalgebra (in the category of $q$-adically complete, graded $R$-modules) and one can see the $A_\infty$-structure as a coderivation $\mu$ of degree $1$ satisfying $\mu\circ\mu=\frac{1}{2}[\mu,\mu]=0$ and such that $\mu\circ\epsilon$ is a multiple of $q\in R$, where $\epsilon:R\rightarrow B(\B)$ is the natural coaugmentation given by inclusion of $\B^{\otimes 0} =R$. 

A right $A_\infty$-module structure on graded complete $R$-module $\fM$ is given by a degree one endomorphism of the comodule $\fM\otimes B(\B)$ satisfying co-Leibniz rule with $\mu$. In other words, it is a dg comodule over the dg coalgebra $(B(\B),\mu)$. See \cite{koso} for details. Note again the tensor product denotes the completed tensor product over $R$. In this language, the morphisms of $\B_{A_R}^{pre}$ are comodule homomorphisms of  $\fM\otimes B(\B)$ and the differential on the hom-set is given by the commutator with the endomorphism corresponding to the $A_\infty$-structure.
\begin{rk}
If the module $\fM$ is $A_R$-valued- e.g. if it is a family of modules- then the comodule $\fM\otimes B(\B)$ has an $A_R$-linear structure commuting with the comodule structure maps. 
\end{rk}
To define the infinitesimal change let us introduce an auxiliary notion:
\begin{defn}\label{precondefn}
Let $\fM$ be a family of modules over $\B$(or more generally an $A_R$-valued $\B$-module). A pre-connection on $\fM$ along the derivation $D_{A_R}=t\partial_t-u\partial_u$ (see Appendix \ref{sec:modules}) is an element $\precon\in (\B_{A_R}^{pre})^0(\fM,\fM)$ such that $[\precon,a]=D_{A_R}(a).1_{\fM}$ for every $a\in A_R$ considered as an element of $(\B_{A_R}^{pre})^0(\fM,\fM)$. 
\end{defn}
In other words, a pre-connection is a comodule endomorphism of $\fM\otimes B(\B)$ that satisfies Leibniz rule (with respect to natural $A_R$-linear structure on $\fM\otimes B(\B)$). Families of modules (over $A_\infty$-categories over $\C$) parametrized by $Spec(A)$ and pre-connections on them along $D_A$ can be defined analogously. 
\begin{rk}
The endomorphisms of the comodule $\fM\otimes B(\B)$ can be shown to be in one to one correspondence with the pre-module endomorphisms in the sense of \cite{seidelbook}. Hence, $\precon$ can be defined as a sequence of maps 
\begin{flalign*}
\precon^1:&\fM\rightarrow\fM\\
\precon^2:&\fM\otimes \B\rightarrow \fM [-1]\\ \dots
\end{flalign*}such that $\precon^1$ is a connection of the graded module $\fM$(up to sign) and $\precon^i$, $i\geq 2$ are $A_R$-linear and there is no further constraint. This is how they are defined in \cite{flux} and this approach shows that $A_R$-semi-freeness implies existence of pre-connections.
\end{rk}
\begin{defn}
Given a pre-connection $\precon$ on $\fM$ its deformation class is $def(\precon):=d(\precon)\in (\B_{A_R}^{pre})^1(\fM,\fM)$, i.e. the differential of $\precon$ in the dg category $\B_{A_R}^{pre}$. It is $A_R$-linear due to commutation relation in Definition \ref{precondefn}. Its class in $H^1(\B_{A_R}(\fM,\fM))$ will be denoted by $Def(\precon)$.
\end{defn}
\begin{rk}
Two pre-connections on $\fM$ differ by an element of $(\B_{A_R})^0(\fM,\fM)$; hence, the classes of their differentials are the same in the category $\B_{A_R}$. We denote it by $Def(\fM)$ as well.
\end{rk}
Let us show the naturality of this class:
\begin{lem}
Let $\fM$ and $\fM'$ be two families of modules with pre-connections $\precon$ and $\precon'$ resp. Let $f:\fM\rightarrow \fM'$ be a closed morphism in $\B_{A_R}^{0}$. Then the images of $Def(\fM)$ and $Def(\fM')$ coincide under the natural maps \begin{equation}H^1(\B_{A_R}(\fM,\fM))\xrightarrow{f\circ(\cdot)}H^1(\B_{A_R}(\fM,\fM'))\xleftarrow{(\cdot)\circ f }H^1(\B_{A_R}(\fM',\fM')) \end{equation}
\end{lem}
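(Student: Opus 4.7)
The plan is to produce an explicit $A_R$-linear pre-module homomorphism $\fM \to \fM'$ whose differential realizes the difference $f\circ d(\precon) - d(\precon')\circ f$ as a coboundary in $\B_{A_R}(\fM,\fM')$. The natural candidate is
\begin{equation}
g := f\circ \precon - \precon'\circ f \in (\B_{A_R}^{pre})^0(\fM,\fM').
\end{equation}
A priori $g$ only lives in the larger pre-module hom complex, so the first step is to show that $g$ in fact lies in the $A_R$-linear subcategory $\B_{A_R}(\fM,\fM')$.

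For this, I would compute the commutator of $g$ with an arbitrary $a\in A_R$ viewed as an element of $(\B_{A_R}^{pre})^0(\fM,\fM)$ or $(\B_{A_R}^{pre})^0(\fM',\fM')$. Using that $f$ is $A_R$-linear (since $f\in \B_{A_R}^0$), i.e.\ $[f,a]=0$, together with the defining property of pre-connections $[\precon,a]=D_{A_R}(a)\cdot 1_\fM$ and $[\precon',a]=D_{A_R}(a)\cdot 1_{\fM'}$ from Definition \ref{precondefn}, one obtains
\begin{equation}
[g,a] \;=\; f\circ [\precon,a] - [\precon',a]\circ f \;=\; D_{A_R}(a)\,f - D_{A_R}(a)\,f \;=\; 0.
\end{equation}
Hence $g$ is $A_R$-linear, i.e.\ a genuine morphism in $\B_{A_R}^{0}(\fM,\fM')$.

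Next, I would apply the graded Leibniz rule for the differential of $\B_{A_R}^{pre}$ (equivalently, commutator with the coderivation $\mu$). Since $f$, $\precon$, $\precon'$ all have degree $0$ and $d(f)=0$, we get
\begin{equation}
d(g) \;=\; d(f)\circ\precon + f\circ d(\precon) - d(\precon')\circ f - \precon'\circ d(f) \;=\; f\circ d(\precon) - d(\precon')\circ f,
\end{equation}
where $d(\precon) = \mathrm{def}(\precon)$ and $d(\precon')=\mathrm{def}(\precon')$ are both $A_R$-linear by construction. Therefore $f\circ \mathrm{def}(\precon)$ and $\mathrm{def}(\precon')\circ f$ differ by the coboundary $d(g)$ inside the $A_R$-linear complex $\B_{A_R}(\fM,\fM')$, and their classes in $H^1(\B_{A_R}(\fM,\fM'))$ coincide, which is exactly the claimed equality of images.

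The only subtle point is the first step: one must use that $f$ is taken in $\B_{A_R}$ and not merely in $\B_{A_R}^{pre}$, because otherwise $g$ would only be a pre-module homomorphism and its differential would not represent a class in the $A_R$-linear cohomology. There are no curvature issues since $\B$ being curved over $R$ only enters through the coderivation $\mu$, and the commutator identity $d^2 = [\mu,[\mu,-]]/2 = 0$ on morphisms holds in any (curved) dg category of pre-modules. Hence the computation above is purely formal once $A_R$-linearity of $g$ is established.
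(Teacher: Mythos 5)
Your proposal is correct and is essentially the same argument as the paper's proof: you consider the homomorphism $f\circ\precon-\precon'\circ f$ (the paper uses its negative, $\precon'\circ f-f\circ\precon$), verify $A_R$-linearity from $A_R$-linearity of $f$ and the commutation relation for pre-connections, and compute its differential via the Leibniz rule to obtain the coboundary relating $f\circ\mathrm{def}(\precon)$ and $\mathrm{def}(\precon')\circ f$. You merely spell out the commutator computation $[g,a]=0$ and the Leibniz step more explicitly than the paper does.
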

\begin{proof}
Consider the pre-module homomorphism \begin{equation}\precon'\circ f-f\circ \precon:\fM\rightarrow\fM' \end{equation}It is $A_R$-linear, as $f$ is $A_R$-linear; hence, it falls into category $\B_{A_R}$. Its differential is equal to \begin{equation}d(\precon')\circ f-f\circ d(\precon)=def(\precon')\circ f-f\circ def(\precon) \end{equation}Hence, the $\B_{A_R}$-classes of $def(\precon')\circ f$ and $f\circ def(\precon)$ are the same. 
\end{proof}
\begin{cor}Let $f:\fM\rightarrow\fM'$ be a quasi-isomorphism of families with pre-connections. Then $Def(\fM)$ corresponds to $Def(\fM')$ under the natural isomorphism \begin{equation}H^1(\B_{A_R}(\fM,\fM) )\cong H^1(\B_{A_R}(\fM',\fM') ) \end{equation}
\end{cor}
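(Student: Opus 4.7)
The corollary is essentially a clean specialization of the lemma that precedes it, so my plan is to deduce it formally from that lemma together with the standard fact that quasi-isomorphisms induce isomorphisms on hom-spaces in the derived/$A_\infty$-module category. Concretely, I would argue as follows.

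First, apply the previous lemma directly to the given $f:\fM\to\fM'$. This yields that the two classes $f\circ Def(\fM)$ and $Def(\fM')\circ f$ agree in $H^1(\B_{A_R}(\fM,\fM'))$, via the two maps
\begin{equation}
H^1(\B_{A_R}(\fM,\fM))\xrightarrow{f\circ(\cdot)}H^1(\B_{A_R}(\fM,\fM'))\xleftarrow{(\cdot)\circ f} H^1(\B_{A_R}(\fM',\fM')).
\end{equation}
So the only remaining point is to know that both of these horizontal maps are isomorphisms.

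Second, I would invoke the standard fact that, for an $A_\infty$-module quasi-isomorphism $f$, the left and right composition maps with $f$ are quasi-isomorphisms of hom-complexes. This is a well-known fact for $A_\infty$-modules, and it passes to the $A_R$-linear/families setting after the usual $A_R$-linear length-filtration argument (the same kind of filtration-plus-semi-continuity argument already used repeatedly in the paper, cf.\ Lemma \ref{semicont}). In more detail: $f$ being a quasi-isomorphism of families means it is a quasi-isomorphism after restriction to $q=0$ and after further specializing the base $A$-variable; one then uses that over $\C$ an $A_\infty$-module quasi-isomorphism induces quasi-isomorphisms on hom-complexes, and lifts by the usual $q$-adic and $A_R$-adic filtration arguments to conclude the same in $\B_{A_R}$.

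Third, with both horizontal arrows now established to be isomorphisms, one defines the natural isomorphism
\begin{equation}
H^1(\B_{A_R}(\fM,\fM))\;\xrightarrow{\;\cong\;}\;H^1(\B_{A_R}(\fM',\fM'))
\end{equation}
as $((\cdot)\circ f)^{-1}\circ(f\circ(\cdot))$, and the first step says precisely that this isomorphism carries $Def(\fM)$ to $Def(\fM')$. This also matches the obvious naturality, so the isomorphism is independent of the choice between pre-composing and post-composing with $f$.

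The only nontrivial ingredient is the fact that quasi-isomorphisms of families induce quasi-isomorphisms on $\B_{A_R}$-hom-complexes; aside from that, the corollary is a direct consequence of the lemma. I would expect the writeup to be short, essentially a single paragraph, since no new construction is needed beyond assembling existing pieces.
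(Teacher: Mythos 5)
Your proposal is correct and is exactly the argument the paper implicitly intends; the corollary is stated without proof because, as you say, it follows directly from the preceding lemma once one knows that pre- and post-composition with a quasi-isomorphism $f$ induce isomorphisms on cohomology of the hom-complexes in $\B_{A_R}$. That ingredient is automatic here since morphisms of $A_\infty$-modules are already derived (as the paper notes at the start of Section \ref{subsec:familyreview}), so the filtration argument you invoke is available but not strictly needed.
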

We also want to show naturality of deformation classes under Morita equivalences. Let $\B'$ be another curved $A_\infty$-algebra and let $X$ be a $\B$-$\B'$-bimodule. For a definition of $A_\infty$-bimodules see \cite{categoricaldynamics}.
One can also see a bimodule as a graded complete module over $R$ such that the bicomodule $B(\B)\otimes X\otimes B(\B')$(again tensor product is over $R$ and completed) has a differential compatible with the coderivations of $B(\B)$ and $B(\B')$. Such a bimodule $X$ induces a dg functor
\begin{equation}\label{eq:conv1}
\B^{mod}\rightarrow\B'^{mod} 
\end{equation} 
between the categories of right modules as well as dg functors \begin{equation}\label{eq:conv2}
\B_{A_R}^{pre}\rightarrow\B'^{pre}_{A_R}
\end{equation}
\begin{equation}\label{eq:conv3}
\B_{A_R}^{mod}\rightarrow\B'^{mod}_{A_R} 
\end{equation}
between the categories of families. It is given by $(\cdot)\otimes_\B X$. See \cite{generation} for a definition. Note also, (\ref{eq:conv2}) (resp. (\ref{eq:conv3}) is $A_R\otimes A_R$ (resp. $A_R$)-linear.
\begin{defn}\label{moritadefn}
$X$ is called a Morita equivalence if there exists a $\B'$-$\B$ bimodule $Y$ such that
\begin{equation}Y\otimes_\B X\simeq \B'\text{ in the dg category of bimodules over }\B' \end{equation}
\begin{equation}X\otimes_{\B'} Y\simeq \B\text{ in the dg category of bimodules over }\B \end{equation}In this case, $\B$ and $\B'$ are called Morita equivalent.
\end{defn}
If $X$ is a Morita equivalence, then the induced functors (\ref{eq:conv1}),(\ref{eq:conv2}) and (\ref{eq:conv3}) are quasi-equivalences.
For a definition of $X\otimes_{\B'}Y$ and $Y\otimes_\B X$ see \cite{generation}. One can also define them as cotensor products of corresponding dg bicomodules, clarifying the module structure. As the transformation (\ref{eq:conv2}) is $A_R\otimes A_R$-linear and strictly unital, it sends an endomorphism $\precon$ satisfying the commutation rule $[\precon,a]=D(a).1$ to such an endomorphism of the image. In other words, it produces a pre-connection of the image and clearly $def(\precon)$ is sent to the deformation class of the image. Hence, we have proved
\begin{cor}\label{defmorita}
Let $X$ be a $\B$-$\B'$-bimodule admitting an ``inverse'' $Y$ as above and thus inducing an equivalence $\Phi_X:\B^{mod}_{A_R}\rightarrow \B'^{mod}_{A_R}$. Then, for a given family (with connection) $\fM\in Ob(\B^{mod}_{A_R})$, the deformation class $Def(\fM)$ is sent to $Def(\Phi_X(\fM) )$ under \begin{equation}\Phi_X:H^1(\B_{A_R}(\fM,\fM))\rightarrow H^1(\B'_{A_R}(\Phi(\fM),\Phi(\fM) )) \end{equation}
\end{cor}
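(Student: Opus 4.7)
The plan is to exploit the explicit description of $\Phi_X$ as the convolution functor $(\cdot)\otimes_\B X$, which the text has already noted is a dg functor $\B_{A_R}^{pre}\to \B'^{pre}_{A_R}$ that is strictly unital and $A_R\otimes A_R$-linear on morphisms. These two properties are exactly what is needed to transport pre-connections and their deformation classes along the equivalence.

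First I would fix a pre-connection $\precon$ on $\fM$ and show that $\precon':=\Phi_X(\precon)\in (\B'^{pre}_{A_R})^0(\Phi_X(\fM),\Phi_X(\fM))$ is a pre-connection on $\Phi_X(\fM)$. For any $a\in A_R$, viewed as an element of $(\B_{A_R}^{pre})^0(\fM,\fM)$ via $a\mapsto (a\cdot 1_\fM,0,0,\dots)$, the $A_R\otimes A_R$-linearity and unitality of $\Phi_X$ give $\Phi_X(a\cdot 1_\fM)=a\cdot 1_{\Phi_X(\fM)}$; combined with functoriality of composition, this yields $[\precon',a]=\Phi_X([\precon,a])=\Phi_X(D_{A_R}(a)\cdot 1_\fM)=D_{A_R}(a)\cdot 1_{\Phi_X(\fM)}$, verifying Definition \ref{precondefn}.

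Next I would compute $def(\precon')=d(\precon')=d(\Phi_X(\precon))=\Phi_X(d(\precon))=\Phi_X(def(\precon))$, using that $\Phi_X$ is a dg functor and hence commutes with the differential in $\B_{A_R}^{pre}$. Since $def(\precon)$ already lies in the $A_R$-linear subcategory $\B_{A_R}^{mod}(\fM,\fM)$, and the dg functor $\Phi_X$ restricts to $\B_{A_R}^{mod}\to\B'^{mod}_{A_R}$, its image $\Phi_X(def(\precon))$ lies in $\B'_{A_R}(\Phi_X(\fM),\Phi_X(\fM))$. Passing to cohomology shows $Def(\Phi_X(\fM))=[\Phi_X(def(\precon))]=\Phi_X([def(\precon)])=\Phi_X(Def(\fM))$, which is the assertion.

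Finally, one should confirm that this is independent of the pre-connection chosen on $\fM$: if $\precon$ and $\tilde\precon$ are two such, then $\precon-\tilde\precon\in (\B_{A_R}^{mod})^0(\fM,\fM)$, so $def(\precon)-def(\tilde\precon)$ is a coboundary in $\B_{A_R}(\fM,\fM)$, which $\Phi_X$ sends to a coboundary in $\B'_{A_R}(\Phi_X(\fM),\Phi_X(\fM))$. The only delicate point is really the bookkeeping that the $A_R$-structure on $\Phi_X(\fM)=\fM\otimes_\B X$ is inherited entirely from that on $\fM$ (so that the commutation relation makes sense on both sides and is preserved by the tensor product), but this follows from how the $A_R$-linear structure on $\B_{A_R}^{pre}$ is built into $\Phi_X$ as remarked right before the corollary.
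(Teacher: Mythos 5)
Your proof is correct and follows exactly the paper's approach: the paper's argument (in the paragraph immediately preceding the corollary) is precisely that $\Phi_X=(\cdot)\otimes_\B X$ is a strictly unital, $A_R\otimes A_R$-linear dg functor, so it carries pre-connections to pre-connections and commutes with the differential, hence sends $def(\precon)$ to $def(\Phi_X(\precon))$. Your additional checks (the explicit $[\precon',a]$ computation and well-definedness with respect to the choice of pre-connection) are just a careful spelling out of the same idea.
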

Now, let us make the meaning of infinitesimal change precise following \cite{flux}. First, recall there exists a natural map \begin{equation}\label{eq:restright}
CC^*(\B,\B)\rightarrow CC^*(\B^{mod},\B^{mod}) \end{equation} inducing a chain map \begin{equation}\label{eq:restrrighttomod}CC^*(\B,\B)\rightarrow \B^{mod}(\mathfrak{N},\mathfrak{N}) \end{equation} for every $\B$-module $\mathfrak{N}$. Seen as a map $T\B[1]\otimes \mathfrak{N}\rightarrow \mathfrak{N}$, the latter is given by explicit formulas 
\begin{equation}\label{eq:CCmaphom}
-\sum\mu_\mathfrak{N}^i(1_\mathfrak{N}\otimes 1_\B^{\otimes r}\otimes g^j\otimes 1_\B^{\otimes s})
\end{equation}
where $g^j$ denotes the components of a cochain $g\in CC^*(\B,\B)$ (note again the Kozsul signs or see (1.19) in \cite{flux}, up to possible differences in signs). 
Using the same formula, we have \begin{equation}CC^*(\B\otimes A_R,\B\otimes A_R)\rightarrow \B_{A_R}(\fM,\fM) \end{equation} for every family of $\B$-modules. Moreover, any cochain $\gamma\in CC^*(\B,\B)$ induces a cochain in $CC^*(\B\otimes A_R,\B\otimes A_R)$; hence, we have a chain map 
\begin{equation}\label{eq:restrightfamily}
\Gamma_\fM:CC^*(\B,\B)\rightarrow \B_{A_R}(\fM,\fM)
\end{equation}
given by the formula (\ref{eq:CCmaphom}). Indeed, we can simply treat $\fM$ as a $\B$-module to compute the class. Then, the induced $A_\infty$-module endomorphism on $\fM$ is $A_R$-linear. 
\begin{rk}
There are analogues of 	(\ref{eq:restright}) and (\ref{eq:restrrighttomod}) for left modules and bimodules, which also generalizes to families as in (\ref{eq:restrightfamily}). For instance, given a $\B$-$\B$-bimodule (or more generally $\B''$-$\B$-bimodule) $\fM$, we have the map
\begin{equation}
CC^*(\B,\B)\to hom^*_{Bimod(\B,\B)}(\fM,\fM) 
\end{equation} that maps $g\in CC^*(\B,\B)$ to $-\sum \mu^{i'|1|i}_\fM(1_{\B}^{\otimes i'}|1_\fM|1_\B^{\otimes r}\otimes g^j\otimes 1_\B^{\otimes s} )$ with similar sign conventions. The image of $g$ will be denoted by $\Gamma_\fM(1\otimes g)$. 
\end{rk}
\begin{defn}
Let $\gamma\in CC^1(\B,\B)$	be a closed cochain and $\fM$ be a family of right $\B$ modules (resp. $\B$-$\B$-bimodules) admitting a pre-connection. We will say $\fM$ follows $[\gamma]$ (resp. $1\otimes \gamma$) if $Def(\fM)=[\Gamma_\fM(\gamma)]\in H^1(\B_{A_R}(\fM,\fM) )$ (resp. $Def(\fM)=[\Gamma_\fM(1\otimes \gamma)]$). 
\end{defn}
\begin{rk}\label{rkmorita}
$CC^*(\B,\B)$ is quasi-isomorphic to endomorphisms of the diagonal bimodule. $\mathfrak N\otimes_\B \B\simeq \mathfrak N$, for any right module $\mathfrak N$; thus, we have a natural map \begin{equation}CC^*(\B,\B)\simeq hom^*_{\B^e}(\B,\B)\rightarrow hom^*_\B(\mathfrak N\otimes_\B \B,\mathfrak N\otimes_\B \B)\simeq hom^*_\B(\mathfrak N,\mathfrak N) \end{equation}It is possible to show this map is $\pm \Gamma_{\mathfrak N}$ in cohomology (the notation $\Gamma_{\mathfrak N}$ is used for $\B$-modules in general not only families over $A_R$). 
%Elaborated in the Informal notes see Sections: an isomorphism between CC...(aka \ref{sec:ccisohom} labelled there) 
It follows in the setting of Corollary \ref{defmorita} that if a family $\fM$ follows $\gamma$ then $\Phi(\fM)$ follows the class corresponding to $\gamma$ under the quasi-isomorphism $CC^*(\B,\B)\simeq CC^*(\B',\B')$. The same holds for families of left modules and bimodules.
\end{rk}
Now, we want to prove the cohomology groups of hom-complexes between families following the same class admit connections along $D_{A_R}$ (see Definition \ref{connectionalong} for the notion of connections along $A_R$-modules). First some preliminaries:
\begin{defn}
Let $E=(E^\cdot,d)$ be a chain complex of $A_R$-modules (resp. $A$-modules). A pre-connection on $(E^\cdot,d)$ is a choice of connections $D_{E^i}:E^i\rightarrow E^i$ along $D_{A_R}$ (resp. $D_A$) for each $i$. We will denote a pre-connection by $\precon_{E}$, or simply by $\precon$.
\end{defn}
We will mostly drop ``resp. $D_A$'' keeping in mind that the definitions and proofs would go through analogously.
\begin{defn}Define the Atiyah class $at(\precon):E^\cdot\rightarrow E^\cdot[1]$ of a pre-connection to be the differential of $\precon:E^\cdot\rightarrow E^\cdot$ considered as an $R$-linear map. More precisely, \begin{equation}at(\precon):=d\circ \precon-\precon\circ d \end{equation}It is a chain map over $A_R$. 
\end{defn}
\begin{rk}The cohomology class $[at(\precon)]\in hom^\cdot_{A_R}(E^\cdot,E^\cdot)$ is independent of the choice of pre-connection. Denote it by $At(E^\cdot)$. 
\end{rk}
%\begin{rk}These definitions and the remark work well with what we call pseudo-complexes, i.e. graded modules whose $d^2$ is divisible by $q\in R$.\end{rk}
We also include the following, which is proven in \cite{flux}.
\begin{lem}\label{preconcomplexlemma}
$At(E^\cdot)=0$ if and only if one can find a pre-connection that is a chain map over $R$. 
\end{lem}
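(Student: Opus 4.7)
The plan is to prove the two implications by an explicit adjustment of a given pre-connection, exploiting the fact that any two pre-connections on $E^\cdot$ differ by an $A_R$-linear endomorphism of degree $0$.

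The easy direction $(\Leftarrow)$ is essentially tautological: if some pre-connection $\precon'$ is a chain map over $R$, i.e.\ $d\circ \precon'=\precon'\circ d$, then the cochain representative $at(\precon')=d\circ\precon'-\precon'\circ d$ is literally zero, hence $At(E^\cdot)=[at(\precon')]=0$ in $hom^\cdot_{A_R}(E^\cdot,E^\cdot)$.

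For the direction $(\Rightarrow)$ I would start with an arbitrary pre-connection $\precon$ (the sum of a choice of $D_{A_R}$-connection on each $E^i$). By hypothesis $At(E^\cdot)=0$, so the degree $1$ $A_R$-linear cocycle $at(\precon)=d\circ\precon-\precon\circ d$ is a coboundary; pick a degree $0$, $A_R$-linear map $h\colon E^\cdot\to E^\cdot$ with $d\circ h-h\circ d=at(\precon)$. Set $\precon':=\precon-h$. Because $h$ is $A_R$-linear, for every $a\in A_R$ one has
\begin{equation}
[\precon',a]=[\precon,a]-[h,a]=D_{A_R}(a)\cdot 1_{E^\cdot},
\end{equation}
so $\precon'$ restricts to a genuine connection along $D_{A_R}$ on each $E^i$; that is, $\precon'$ is a pre-connection. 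Moreover
\begin{equation}
d\circ\precon'-\precon'\circ d=at(\precon)-(d\circ h-h\circ d)=0,
\end{equation}
so $\precon'$ is a chain map over $R$, as required.

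There is no real obstacle here: the only content is the standard fact that the space of pre-connections is a torsor over the $A_R$-linear degree $0$ endomorphisms, and the Atiyah cocycle lives in the corresponding $A_R$-linear complex, so $At(E^\cdot)$ is tautologically the obstruction to promoting a pre-connection to an $R$-linear chain map. The only minor caveat one should state explicitly is that one needs at least one pre-connection to exist to begin with in order to even define $At(E^\cdot)$; for the $A_R$-semi-flat complexes arising in this paper this is guaranteed (cf.\ the remark after Definition \ref{precondefn}).
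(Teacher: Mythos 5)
Your proof is correct and follows the same line as the paper's: both directions are dispatched by observing that pre-connections form a torsor over $A_R$-linear degree-$0$ endomorphisms, and that subtracting a cochain $h$ with $d(h)=at(\precon)$ yields a pre-connection which is a chain map. The only difference is cosmetic: you spell out the trivial direction and the verification that $\precon-h$ retains the Leibniz property, both of which the paper leaves implicit.
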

\begin{proof}
The only if part is clear. Let us prove the if part. If $At(E^\cdot)=0$, that means there exists a pre-connection such that $at(\precon)=d(c)$ for some $c\in hom^0_{A_R}(E^\cdot,E^\cdot)$ such that $at(\precon)=d(c)$. Thus, $\precon-c$ is a degree $0$ chain map, which is still a pre-connection since $c$ is linear over $A_R$. 	
\end{proof}
We will call such a connection a homotopy connection. Let us note a general lemma that will be used later:
\begin{lem}\label{fgcomplex}
Let $C^*$ be a chain complex of complete $A_R$-modules whose cohomology groups are finitely generated over $A_R$ in each degree. Assume $t-1\in A_R$ acts injectively on $C^*$(e.g. when it is flat over $A_R$). Also, assume $C^*$ carries a homotopy connection, i.e. a pre-connection that is also a chain map. Then \begin{equation}H^i(C^*/(t-1)C^*)\cong H^i(C^*)/(t-1)H^i(C^*)\end{equation}
\end{lem}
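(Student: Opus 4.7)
The argument will proceed in three steps; all the nontrivial content is located in Appendix~\ref{sec:modules}. First, I will use the injectivity of $t-1$ on $C^*$ to produce the short exact sequence of chain complexes
$$
0 \to C^* \xrightarrow{\,t-1\,} C^* \to C^*/(t-1)C^* \to 0,
$$
whose long exact sequence in cohomology yields, for each $i$, a short exact sequence
$$
0 \to H^i(C^*)/(t-1)H^i(C^*) \to H^i\bigl(C^*/(t-1)C^*\bigr) \to \ker\!\bigl(t-1\colon H^{i+1}(C^*)\to H^{i+1}(C^*)\bigr) \to 0.
$$
The conclusion of the lemma is therefore equivalent to the statement that $t-1$ acts injectively on $H^j(C^*)$ for every $j$.

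Second, I will promote the homotopy connection on $C^*$ to a connection on the cohomology. Writing $\precon$ for the pre-connection, the chain-map hypothesis $[d,\precon]=0$ implies that $\precon$ preserves cocycles and coboundaries, and hence induces an $R$-linear endomorphism $\precon_*$ of $H^j(C^*)$. The defining relation $[\precon,a]=D_{A_R}(a)\cdot 1_{C^*}$ for $a\in A_R$ holds at the chain level and descends to cohomology, so $\precon_*$ is a genuine connection along $D_{A_R}=t\partial_t-u\partial_u$ on the $A_R$-module $H^j(C^*)$; this module is finitely generated by hypothesis.

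Finally, I will invoke the main structural result of Appendix~\ref{sec:modules}: any finitely generated $A_R$-module $M$ equipped with a connection along $D_{A_R}$ is free modulo its $q$-torsion submodule, and as a consequence $t-1$ acts injectively on $M$. On the free summand this is immediate since $t-1$ is a non-zero-divisor in $A_R$, but on the $q$-torsion part the injectivity is the substantive point and requires genuine use of the connection: a $q$-torsion, finitely generated $A_R$-module is controlled by the finite-dimensional ring $A_R/q^N=\C[u,t]/(ut)^N$, and one must argue that any element killed by $t-1$ in such a connection module must vanish. This is where I expect the main obstacle to lie, and it will be handled in the appendix via a local study near the locus $t=1$, where $u=q/t$ specializes to $q$ and the formal geometry collapses to $Spf(R)$. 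Once the appendix is granted, applying the injectivity of $t-1$ on $H^{i+1}(C^*)$ in the short exact sequence above closes the argument.
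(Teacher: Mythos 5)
Your outer scaffolding matches the paper exactly: the short exact sequence $0 \to C^* \xrightarrow{t-1} C^* \to C^*/(t-1)C^* \to 0$, the resulting long exact sequence reducing the claim to injectivity of $t-1$ on each $H^j(C^*)$, and the observation that the homotopy connection descends to a genuine connection on the (finitely generated) cohomology modules. Up to that point you are on the same track as the paper.

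The gap is the final step, which is where the actual content of the lemma lives. You assert that the appendix shows $t-1$ acts injectively on a finitely generated $A_R$-module with connection, citing ``free up to $q$-torsion'' (Proposition \ref{freeuptoqtors}) — but freeness up to $q$-torsion does not imply $t-1$ is injective, since nothing a priori prevents the $q$-torsion submodule from having $(t-1)$-torsion. You acknowledge this yourself and then defer the resolution to a ``local study near $t=1$'' in the appendix; but no such result exists in Appendix \ref{sec:modules}, and the lemma is used to \emph{prove} results there, so this would be circular in any case. The paper instead gives a short, self-contained argument inside the proof of the lemma itself: let $N = H^j(C^*)$ with connection $D_N$, set $N_0 = \{x \in N : (t-1)^n x = 0 \text{ for some } n\}$, note $N_0$ is stable under $D_N$ (differentiate the relation $(t-1)^n x = 0$) and finitely generated, so there is a minimal $n_0$ with $(t-1)^{n_0}N_0 = 0$. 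Differentiating $(t-1)^{n_0}x = 0$ and using $D_{A_R}(t-1) = t$ together with $(t-1)^{n_0-1}tx = (t-1)^{n_0-1}x$ gives $n_0(t-1)^{n_0-1}x = 0$ for all $x \in N_0$, hence $(t-1)^{n_0-1}N_0 = 0$, contradicting minimality unless $N_0 = 0$. This elementary torsion-submodule argument is exactly what your proposal is missing.
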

\begin{proof}
First, note $t-1$ acts injectively on any finitely generated $A_R$-module $N$ that carries a connection along $D_{A_R}$. To see this consider \begin{equation}N_0=\{x\in N:(t-1)^nx=0\text{ for some }n\geq 0 \}\subset N\end{equation}It is invariant under the connection $D_N$ on $N$. As it is still finitely generated, there exists $n_0\geq 0$ such that $(t-1)^{n_0}N_0=0$. Given $x\in N_0$
\begin{equation}
0=D_N( (t-1)^{n_0}x  )=n_0(t-1)^{n_0-1}tx+(t-1)^{n_0}D_N(x)=n_0(t-1)^{n_0-1}x
\end{equation}
as $(t-1)^{n_0-1}tx=(t-1)^{n_0}x+(t-1)^{n_0-1}x=(t-1)^{n_0-1}x$ and $D_N(x)\in N_0$. Thus, $(t-1)^{n_0-1}N_0$. By induction \begin{equation}(t-1)^{n_0-1}N_0=(t-1)^{n_0-2}N_0=\dots =N_0=0\end{equation}

%$(t-1)^{n_0}x=(t-1)^{n_0}D_{N}(x)=0$ implies \begin{equation}D_N((t-1)^{n_0}x)=(t-1)^{n_0}D_N(x)+n_0t(t-1)^{n_0-1}x=0\end{equation} thus, $(t-1)^{n_0-1}x=t(t-1)^{n_0-1}x-(t-1)^{n_0}x=0$.
Now consider the long exact sequence associated to short exact sequence \begin{equation}0\rightarrow C^*\xrightarrow{t-1}C^*\rightarrow C^*/(t-1)C^*\rightarrow 0\end{equation}
Putting $N=H^i(C^*)$, we see $H^i(C^*)\xrightarrow{t-1}H^i(C^*)$ is injective; hence, the induced map\begin{equation}H^i(C^*)/(t-1)H^i(C^*)\rightarrow H^i(C^*/(t-1)C^*) \end{equation}
is an isomorphism.
\end{proof}
Now let us prove a crucial result, again following \cite{flux}:
\begin{prop}\label{followsame}
Let $\fM$ and $\fM'$ be two families of $\B$-modules with pre-connections. Assume there exists a class $[\gamma]\in HH^1(\B,\B)$ such that both $\fM$ and $\fM'$ follow $[\gamma]$. Then, $\B_{A_R}(\fM,\fM')$ has Atiyah class $0$, and it admits a homotopy connection. Moreover, the homotopy connection can be chosen to be compatible with the composition (possibly up to homotopy).
\end{prop}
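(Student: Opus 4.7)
The plan is to construct the pre-connection on $\B_{A_R}(\fM,\fM')$ by the graded commutator of the given pre-connections on $\fM$ and $\fM'$, then to identify its Atiyah class with a commutator $[\Gamma(\gamma),-]$ up to exact terms, and finally to kill this commutator by a standard Hochschild-type homotopy using that $\gamma$ is closed.

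First I would define
\begin{equation}
\precon(f) := \precon_{\fM'}\circ f - f\circ \precon_{\fM} \qquad \text{for } f\in \B_{A_R}(\fM,\fM').
\end{equation}
Using $[\precon_{\fM},a]=D_{A_R}(a)\cdot 1_\fM$, and the fact that $f$ commutes with $a$ (as it is $A_R$-linear), a one-line computation gives $[\precon,a](f)=D_{A_R}(a)\,f$, so $\precon$ is a pre-connection on the hom complex in the sense of Definition \ref{precondefn}. A completely analogous commutator formula for three families $\fM,\fM',\fM''$ shows that $\precon$ satisfies the strict Leibniz rule with respect to composition. Next, expanding $at(\precon)(f)=D\bigl(\precon(f)\bigr)-\precon(Df)$ and cancelling the mixed terms $\precon_{\fM'}\circ Df$ and $Df\circ \precon_{\fM}$ with the corresponding pieces coming from Leibniz for $D$, one obtains
\begin{equation}
at(\precon)(f) \;=\; def(\precon_{\fM'})\circ f \;-\; (-1)^{|f|}\,f\circ def(\precon_{\fM}).
\end{equation}

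By hypothesis, $[def(\precon_{\fM})]=[\Gamma_{\fM}(\gamma)]$ and $[def(\precon_{\fM'})]=[\Gamma_{\fM'}(\gamma)]$ in $\B_{A_R}$, so we may write $def(\precon_{\fM})=\Gamma_{\fM}(\gamma)+Dh_{\fM}$ and $def(\precon_{\fM'})=\Gamma_{\fM'}(\gamma)+Dh_{\fM'}$ with $h_{\fM},h_{\fM'}\in \B_{A_R}^{0}$. The Leibniz rule for $D$ converts the $Dh$-terms into the $D$-coboundary of the $A_R$-linear operator $f\mapsto h_{\fM'}\circ f - (-1)^{|f|} f\circ h_{\fM}$. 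Thus, modulo $D$-exact and $A_R$-linear terms,
\begin{equation}
at(\precon)(f) \;\equiv\; \Gamma_{\fM'}(\gamma)\circ f \;-\; (-1)^{|f|}\,f\circ \Gamma_{\fM}(\gamma).
\end{equation}
The main obstacle is to show that this commutator is $D$-exact in $\B_{A_R}(\fM,\fM')$. This is an instance of the naturality of the action of Hochschild cohomology on Ext: I would construct an explicit $R$-linear homotopy $K(f)$ using the same bar-complex insertion formula as in \eqref{eq:CCmaphom}, namely by a signed sum of ways to insert the components $\gamma^j$ and the components $f^r$ into the module structure maps $\mu^\bullet_{\fM'}$. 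A direct calculation, identical to the one used to establish the Gerstenhaber-type action of $CC^*(\B,\B)$ on $\B^{mod}(\fM,\fM')$, yields
\begin{equation}
D\,K + K\,D \;=\; \Gamma_{\fM'}(\gamma)\circ(\cdot) \;-\; (-1)^{|\cdot|}(\cdot)\circ \Gamma_{\fM}(\gamma),
\end{equation}
provided $d\gamma=0$, which is the only place closedness of $\gamma$ is used. Since every component of $K(f)$ is built $A_R$-linearly from components of $\gamma$ (which lie in $\B$, hence are $A_R$-linear after extension) and $f$ (which is $A_R$-linear), $K$ lands in $\B_{A_R}$. Combining this with the previous paragraph shows $at(\precon)$ is $D$-exact in $\B_{A_R}$, so $At(\B_{A_R}(\fM,\fM'))=0$ and Lemma \ref{preconcomplexlemma} produces a homotopy connection $\tilde\precon = \precon - c$ for some $A_R$-linear primitive $c$ of $at(\precon)$.

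Finally, for compatibility with composition, the commutator pre-connections $\precon$ already satisfy the strict Leibniz rule under composition $\B_{A_R}(\fM,\fM')\otimes \B_{A_R}(\fM',\fM'')\to \B_{A_R}(\fM,\fM'')$, as noted above. Replacing $\precon$ by $\tilde\precon=\precon - c$ breaks strict Leibniz only by the error $c(g\circ f) - c(g)\circ f - g\circ c(f)$, which is a closed $A_R$-linear cochain in the appropriate hom complex. Applying the same Hochschild-style homotopy machinery one level up — i.e., constructing a higher primitive out of $\gamma$ and the relevant compositions — exhibits this error as a $D$-coboundary, so the primitives $c$ can be chosen coherently to make the homotopy connections compatible with composition up to chain homotopy. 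This completes the proposed proof.
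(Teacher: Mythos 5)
Your proof is correct and follows essentially the same route as the paper: the commutator pre-connection $\precon_{\fM'}\circ(\cdot)-(\cdot)\circ\precon_{\fM}$, the reduction of the Atiyah class to the commutator with $\Gamma_\fM(\gamma)$, and a Hochschild-style primitive built from $\gamma$. The homotopy $K$ you propose to construct ``by a signed sum of ways to insert the components $\gamma^j$'' is precisely what the paper calls $\pm\Gamma(\gamma)^1$, the length-$1$ component of the image of $\gamma$ under the chain map $\Gamma\colon CC^*(\B,\B)\to CC^*(\B^{mod},\B^{mod})$; the paper obtains $DK+KD=\Gamma_{\fM'}(\gamma)\circ(\cdot)-(\cdot)\circ\Gamma_\fM(\gamma)$ directly from the fact that $\Gamma$ is a chain map and $\gamma$ is closed, whereas you would reprove it componentwise.

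The one place where your argument is weaker than the paper's is the final paragraph. You only claim that the Leibniz error $c(g\circ f)-c(g)\circ f - g\circ c(f)$ is a coboundary, and invoke an unspecified ``one level up'' Hochschild homotopy to handle it. The paper instead makes the \emph{specific} choice $c=\Gamma(\gamma)^1$, and observes that the cocycle equation for $\Gamma(\gamma)$ (which has only length-$0$ and length-$1$ components) forces
\begin{equation}
\pm\mu^2(\Gamma(\gamma)^1(\cdot),\cdot)\pm\mu^2(\cdot,\Gamma(\gamma)^1(\cdot))\pm\Gamma(\gamma)^1(\mu^2(\cdot,\cdot))=0,
\end{equation}
i.e.\ $\Gamma(\gamma)^1$ is already a derivation of $\mu^2$, so the corrected $\tilde\precon=\precon-\Gamma(\gamma)^1$ is \emph{strictly} compatible with composition with no further homotopies needed. (One also has to first adjust $\precon_\fM,\precon_{\fM'}$ so that $def(\precon_\fM)=\Gamma_\fM(\gamma)$ exactly rather than up to a coboundary, as the paper does; your $h_\fM,h_{\fM'}$-terms handle the same issue, but you push them into the Atiyah class rather than absorbing them into the pre-connections.) Since the proposition only asserts compatibility up to homotopy, your argument suffices, but it leaves the ``one level up'' step as an unproven black box when a canonical choice eliminates it entirely.
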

\begin{proof}Recall $\B_{A_R}(\fM,\fM')$ can be thought as the comodule homomorphisms and the map
$\precon_{\fM'}\circ(\cdot)-(\cdot)\circ \precon_{\fM}$ gives a pre-connection on this complex. Its differential is given by 
\begin{equation}\label{precatat}
d(\precon_{\fM'})\circ(\cdot)-(\cdot)\circ d(\precon_{\fM})=def(\precon_{\fM'})\circ(\cdot)-(\cdot)\circ def(\precon_{\fM})
\end{equation}
which is cohomologous to
\begin{equation}\label{precatgam}
\Gamma_{\fM'}(\gamma)\circ(\cdot)-(\cdot)\circ \Gamma_{\fM}(\gamma)
\end{equation}
However, as $\gamma$ is a closed class it induces a natural transformation and this cocycle is null-homotopic. Indeed, $\Gamma_\fM(\gamma)$ is the degree $0$ part of restriction of a Hochschild cocycle $\Gamma(\gamma)$ to $\fM$ where \begin{equation}\Gamma:CC^*(\B,\B)\rightarrow CC^*(\B^{mod},\B^{mod}) \end{equation} is a chain map, and components of $\Gamma(\gamma)$ can be non-vanishing only if the input length is $0$ or $1$. It is what Seidel denotes by $\gamma^{mod}$ in \cite{flux} (up to sign). As $\gamma$ is closed, $\Gamma(\gamma)$ too is closed. The vanishing of the differential implies \begin{equation}\Gamma_{\fM'}(\gamma)\circ(\cdot)-(\cdot)\circ \Gamma_{\fM}(\gamma) \end{equation} is equal to differential of degree $1$ part $\pm\Gamma(\gamma)^1$. Hence, the Atiyah class vanishes and by Lemma \ref{preconcomplexlemma}, the complex admits a homotopy connection. 

For the compatibility with the composition, first correct the pre-connections $\precon_\fM$ and $\precon_{\fM'}$ by $A_R$-linear cochains bounding $def(\precon_{\fM})-\Gamma_{\fM}(\gamma)$ and $def(\precon_{\fM'})-\Gamma_{\fM'}(\gamma)$ so that (\ref{precatat}) and (\ref{precatgam}) would actually be equal. Second, note the pre-connection $\precon_{\fM'}\circ(\cdot)-(\cdot)\circ\precon_{\fM}$ is automatically compatible with the composition. To make it into a homotopy connection as in Lemma \ref{preconcomplexlemma}, we have to correct it by a cochain bounding the Atiyah class and $\Gamma(\gamma)^1$ is a natural choice. The closedness of $\Gamma(\gamma)$ implies \begin{equation}\pm\Gamma(\gamma)^1(\cdot)\circ (\cdot)\pm(\cdot)\circ\Gamma(\gamma)^1(\cdot)\pm\Gamma(\gamma)^1(\cdot\circ\cdot)=\atop \pm\mu^2(\Gamma(\gamma)^1,\cdot)\pm\mu^2(\cdot,\Gamma(\gamma)^1)\pm\Gamma(\gamma)^1(\mu^2(\cdot,\cdot))=0\end{equation}
Hence, the pre-connection $\precon_{\fM'}\circ(\cdot)-(\cdot)\circ\precon_{\fM}$ corrected by $\Gamma(\gamma)^1$ is still compatible with composition $\mu^2$ and is a homotopy connection. 
\end{proof}
\begin{cor}The cohomology groups $H^i(\B_{A_R}(\fM,\fM'))$, considered as $A_R$-modules, admit connections along $D_{A_R}.$
\end{cor}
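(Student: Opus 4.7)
The plan is to show that this corollary is essentially an immediate consequence of Proposition \ref{followsame}, combined with the elementary observation that a pre-connection that commutes with the differential descends to cohomology as a bona fide connection.

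First, I would invoke Proposition \ref{followsame} to obtain a homotopy connection $\precon$ on the complex $\B_{A_R}(\fM,\fM')$. By definition this is a sequence of connections $\precon_i : \B_{A_R}(\fM,\fM')^i \to \B_{A_R}(\fM,\fM')^i$ along $D_{A_R}$ (so $[\precon_i, a] = D_{A_R}(a)\cdot 1$ for every $a \in A_R$) whose totality commutes with the differential $d$ of the complex, i.e. $d \circ \precon = \precon \circ d$.

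Second, since $\precon$ is an $R$-linear chain map of degree $0$, it induces an $R$-linear endomorphism
\begin{equation}
H^i(\precon) : H^i(\B_{A_R}(\fM,\fM')) \to H^i(\B_{A_R}(\fM,\fM'))
\end{equation}
for each $i$. It remains to verify that $H^i(\precon)$ satisfies the Leibniz rule with respect to $D_{A_R}$. But the relation $\precon(a \cdot x) - a \cdot \precon(x) = D_{A_R}(a)\cdot x$ holds at the chain level for every cocycle $x$ and every $a \in A_R$, so passing to cohomology classes gives the same identity in $H^i(\B_{A_R}(\fM,\fM'))$. This is precisely the defining condition (in the sense of Definition \ref{connectionalong}, cf. Appendix \ref{sec:modules}) for $H^i(\precon)$ to be a connection along $D_{A_R}$ on the $A_R$-module $H^i(\B_{A_R}(\fM,\fM'))$.

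There is essentially no obstacle: all of the work has been done in Proposition \ref{followsame}, where the vanishing of the Atiyah class (which in turn comes from the fact that both families follow the same Hochschild class $[\gamma]$) was used to promote the naive pre-connection $\precon_{\fM'} \circ (\cdot) - (\cdot) \circ \precon_{\fM}$ to one that is a chain map. The only mild subtlety worth mentioning is that one should make sure the connection is well defined on cohomology independently of the choice of homotopy connection; but any two such differ by an $A_R$-linear chain map, which induces a map commuting with scalar multiplication, and hence the two induced connections on $H^i$ differ by an $A_R$-linear endomorphism, confirming that the structure of ``module with connection'' is canonical on cohomology.
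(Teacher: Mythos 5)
Your proof is correct and follows exactly the route the paper intends: the corollary is stated immediately after Proposition \ref{followsame} precisely because it is a direct consequence of the existence of a homotopy connection (a pre-connection that is a chain map), which descends to cohomology with the Leibniz rule preserved. The extra remark about independence of the choice of homotopy connection is accurate but not needed for the statement as given, since the corollary asserts only existence.
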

As we mentioned, the notion of following a class $[\gamma]$ measures infinitesimal change on a family. Now, we will give a recipe to compute the class which a family follows by using $\G$-actions. 
\begin{rk}
The heuristic is as follows: let $M$ be a manifold and $G$ be a Lie group acting smoothly on $M$. Then, to any $X\in Lie(G)$, one associates a vector field $X^\#$ on $M$ obtained as $X^\#_m=\frac{d(exp(tX).m )}{dt}|_{t=0}$. Smooth equivariant maps relate infinitesimal action on both sides.
\end{rk}
We start with some generalities. First, let us define a class of categories and equivariant modules on which one can make sense of the infinitesimal action. As we will follow a formal approach, it will not make a big difference to work over $\C$ or over $R$.
\begin{defn}\label{defn:prorat}
Let $\B_0$ be an $A_\infty$-category over $\C$ with a strict action of $\G(\C)$ (i.e. it acts on hom-sets and differentials and compositions are equivariant). Call this action pro-rational if one can choose a product decomposition for each $hom^i_{\B_0}(b,b')$ into countably many rational representations of $\G$ such that the decompositions together satisfy the following: if we restrict the differential or one factor of the composition into a finite subproduct it factors through a finite subproduct of the target. Similarly, call a strict action of $\G(R)$ on a curved $A_\infty$-category $\B$ over $R$ pro-rational if each $hom^i_{\B}(b,b')$ admits a product decomposition into $R$-free completed rational representations (i.e. $q$-adic completion of rational representations of $\G(R)$). Same condition for differential and compositions is imposed (and no extra condition on curvature is needed).
%as we impose factor through property of one component of the composition
\end{defn}
\begin{exmp}
The guiding example is the following: consider the abelian category $Rat(\G)$ of rational representations of $\G=\mathbb G_{m,\C}$. Let $\B_0$ be the dg category of unbounded complexes over $Rat(\G)$. Then given such complexes $C^\cdot,D^\cdot$ \begin{equation}
hom_{\B_0}^i(C^\cdot,D^\cdot)=\prod_{n\in\Z} hom_{Rat(\G)}(C^n,D^{n+i})
\end{equation}
Clearly, this decomposition satisfies desired property. One can give analogous example for pseudo-complexes.
\end{exmp}
The following can be thought as a special case of this example:
\begin{lem}\label{lem:prorat}
$\Om(\Tt_0)_{dg}$ and $\Om(\Tt_R)_{cdg}$ are pro-rational.
\end{lem}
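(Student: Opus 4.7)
The plan is to verify Definition \ref{defn:prorat} directly from the explicit construction in Section \ref{subsec:action}. For each generator $\scrF\in\{\Om_{C_i},\Om_{C_i}(-1)\}$ the resolution $R(\scrF)$ was built $\G$-equivariantly, each graded piece being a finite direct sum of sheaves $j_!(E)$ for open immersions $j:U\hookrightarrow\Tt_0$ with $U$ equal to some $V_k$ or $U_{k+1/2}$ and $E$ a $\G$-equivariant line bundle on $U$. For generators $\scrF,\scrF'$ this yields a product decomposition
\begin{equation}
hom^i_{\Om(\Tt_0)_{dg}}(\scrF,\scrF')=\prod_{n-m=i}Hom_{\Om_{\Tt_0}}(R(\scrF)^m,R(\scrF')^n),
\end{equation}
each factor being a finite direct sum of terms $Hom_{\Om_{\Tt_0}}(j_!E,j'_!E')$. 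Each such term is a rational $\G$-representation by direct inspection: the coordinate rings of $V_k$ and $U_{k+1/2}$ are polynomial algebras carrying natural rational $\G$-actions (by the formulas $Y_{i+1}\mapsto tY_{i+1}$, $X_i\mapsto t^{-1}X_i$), and the chosen $\G$-equivariant structures on $E,E'$ contribute only weight shifts.

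For the compatibility condition, observe that the differential on $hom^i$ evaluated at index $(m,n+1)$ depends only on the components $f_{m,n}$ and $f_{m+1,n+1}$ composed with the $\G$-equivariant differentials of $R(\scrF),R(\scrF')$; hence the projection onto any finite subproduct of the target factors through a finite subproduct of the source. The analogous statement for composition $\mu^2$ is immediate, the finite support being even smaller in that case. This establishes pro-rationality of $\Om(\Tt_0)_{dg}$.

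For $\Om(\Tt_R)_{cdg}$ the argument is the same after $q$-adic completion in each degree. By Subsection \ref{subsec:defoofom} the hom-pseudo-complexes are built from the same double resolutions deformed over $R$, and each factor becomes a $q$-adically completed rational representation of $\G(R)=R^\times$, as already recorded in Remark \ref{actionrkondefoOm}. The differential, composition and curvature are $\G(R)$-equivariant deformations of their $q=0$ counterparts (with curvature contributing only to a fixed hom-complex, on which Definition \ref{defn:prorat} imposes no further constraint), so the factoring-through-finite-subproducts property is inherited.

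The only nontrivial ingredient is that the individual rational decompositions of $Hom(j_!E,j'_!E')$ must be compatible with the simultaneous choice of $\G$-equivariant trivializations and resolutions across all $i$, and in such a way that $\tr$ also respects the structure. This was already arranged in Section \ref{subsec:action} by making all choices $\tr$-equivariantly for all $i$ at once, so that no obstruction to assembling the pieces into a global pro-rational action arises; everything else is bookkeeping.
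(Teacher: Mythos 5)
Your proof is correct and follows essentially the same line as the paper: the paper's proof is the same observation stated very tersely ("hom's are products of hom's of $\G$-equivariant sheaves, each factor a rational representation, compatible with differentials and compositions"), and you simply unpack what "a closer examination" means — identifying each factor $Hom(R(\scrF)^m,R(\scrF')^{m+i})$ as a finite sum of $Hom(j_!E,j'_!E')$ terms, checking rationality of each, and verifying the local-finiteness of the differential and composition against the product decomposition.
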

\begin{proof}
$\Om(\Tt_0)_{dg}$ is a subcategory of chain complexes of sheaves on $\Tt_0$. Each degree of these complexes is equipped with a $\G$-equivariant structure, and hom's of these complexes are given by products of hom's of these $\G$-equivariant sheaves. A closer examination shows each factor in these products is a rational representation, and the product decomposition is compatible with differentials and compositions. The proof is the same for $\Om(\Tt_R)_{cdg}$.
\end{proof}
\begin{defn}
A strictly equivariant module $\fM$ over a (uncurved/curved) category with strict pro-rational action of $\G(\C)$/$\G(R)$ is called pro-rational if each $\fM^i(b)$ admits a product decomposition such that the module maps satisfy a similar local finiteness as above. Similarly, a family of modules $\fM\in\B_{A_R}=\B_{A_R}^{mod}$ is called strictly pro-rational if it admits a strict pro-rational $\G(R)$-equivariant structure as a $\B\otimes A_R$-module. Here $z\in \G(R)$ acts on $A_R$ by $t\mapsto zt,u\mapsto z^{-1}u$. In other words it is pro-rational as a $\B$-module and each $\fM^i(b)$ is equivariant as an $A_R$-module.

Similar definitions make sense for bimodules and tri-modules and so on. 
\end{defn}
By Section \ref{sec:flowg}, we have the following (which is similar to Lemma \ref{lem:prorat}).
\begin{lem}$A_R$, resp. $A$-valued bimodules $\scrG_R^{pre}$, resp. $\scrG^{pre}$ are pro-rational.
\end{lem}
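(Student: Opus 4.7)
The plan is to upgrade the pro-rationality of $\Om(\Tt_R)_{cdg}$ established in Lemma \ref{lem:prorat} to $\scrG_R^{pre}$, by equipping the sheaf $\Om_{\cG_R}$ on $\Tt_R\times\Tt_R\times Spf(A_R)$ with a $\G(R)$-equivariant structure compatible with the equivariant resolutions $R(\scrF)_R$. First I would identify a one-parameter subgroup of actions on $\Tt_R\times\Tt_R\times Spf(A_R)$ which preserves $\cG_R$ and restricts on $Spf(A_R)$ to the prescribed action $t\mapsto zt,u\mapsto z^{-1}u$. Parametrizing a candidate action by the weights $(a,b,c,d)$ on the generators $Y_i,Y'_i,t,u$ (so that $X_i,X'_i$ have weights $-a,-b$) and testing against (\ref{eq:graph1})--(\ref{eq:graph2}), one finds that the equations $tY_{i+1}=Y'_{i+1}$, $tX'_i=X_i$ and $Y_{i+1}X'_i=u$ all impose the single relation $c+a=b$, while $ut=q\in R$ forces $c+d=0$. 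Fixing $c=1,d=-1$ leaves a free parameter in $(a,b)$ with $b=a+1$; any such choice yields a $\G(R)$-equivariant structure on $\Om_{\cG_R}$ whose $Spf(A_R)$-component is the prescribed one.

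With that structure in hand, the $\tr$-compatible $\G(R)$-equivariant resolutions $R(\scrF)_R$ from Subsection \ref{subsec:defoofom} pull back to $\G(R)$-equivariant complexes $q^*R(\scrF)_R, p^*R(\scrF')_R$ on $\Tt_R\times\Tt_R$. Combining these with the equivariant structure on $\Om_{\cG_R}$ in the defining formula (\ref{eq:bigbimodR}) endows $\scrG_R^{pre}(\scrF,\scrF')$ in each degree with a natural $\G(R)$-action whose induced $A_R$-linear structure transforms by $t\mapsto zt,u\mapsto z^{-1}u$. All bimodule structure maps are then equivariant by naturality of the construction.

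For pro-rationality in the sense of Definition \ref{defn:prorat} one proceeds chart-by-chart. On a product chart $\tU_{k+1/2}\times\tU_{l+1/2}\times Spf(A_R)$, the local model for $\Om_{\cG_R}$ is a cyclic module whose generator is a $\G(R)$-eigenvector and whose coordinate ring is generated by the weight vectors $X_i,Y_{i+1},X'_j,Y'_{j+1},t,u$; the local pieces of $R(\scrF)_R$ and $R(\scrF')_R$ are (by Lemma \ref{lem:prorat}) $q$-adic completions of free $\G(R)$-modules with explicit weight decompositions. The Hom-complex in each degree therefore decomposes as a $q$-adically completed product of rational $\G(R)$-representations. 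The local finiteness condition of Definition \ref{defn:prorat} reduces to the analogous statement for $\Om(\Tt_R)_{cdg}$: only finitely many charts meet any quasi-compact open subset, and multiplication by the generators shifts weights by bounded amounts, so differentials and bimodule structure maps factor through finite subproducts. Replacing $R,A_R,\cG_R$ by $\C,A,\cG$ gives the statement for $\scrG^{pre}$. The only non-routine step is the first-paragraph weight computation picking out the correct lift of the prescribed $A_R$-action; the rest is a direct adaptation of the bookkeeping of Lemma \ref{lem:prorat}.
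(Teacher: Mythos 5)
Your argument follows the same route the paper takes, though the paper states the $\G(R)$-action (trivial on the first $\Tt_R$ factor, as in Remark \ref{actionremark} on the second, $z\colon t\mapsto zt,\,u\mapsto z^{-1}u$ on $Spf(A_R)$) directly and then declares strict pro-rationality of $\scrG_R^{pre}$ ``obvious''; what you supply is essentially the derivation of that action and the chart-by-chart verification that the paper omits. One imprecision worth noting: the relation $Y_{i+1}X_i'=u$ by itself imposes $a-b=d$, not $c+a=b$; the three constraints from (\ref{eq:graph1}) collapse to the single relation $c+a=b$ only after you have already imposed $c+d=0$ from $R$-linearity (invariance of $q=ut$), so the logical order in your first paragraph should be reversed. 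The end result -- a two-parameter family of admissible weights with the paper's choice $(a,b,c,d)=(0,1,1,-1)$ -- is correct.
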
 
\begin{rk}\label{rk:diffac}
Putting a pro-rational action on a category is essentially a special case of enriching the category in the complexes/pseudo-complexes in pro-completion of the category of rational (or completed rational) representations. For a rational representation $W$, we can formally differentiate the $\G$-action and obtain an operator $(z\partial_z)^\#$ associated to $z\partial_z\in Lie(\G)$. If $\G$-action on $v\in W$ is by $z\mapsto z^m$, the $(z\partial_z)^\#$ action is by $m$. The pro-completion process remembers the cofiltration of representations and hence we can formally define $(z\partial_z)^\#$-operator on the vector spaces underlying pro-objects. As the morphisms between pro-objects are compatible, they intertwine with $(z\partial_z)^\#$. Similar statements hold for ind-completion; hence, we can formally define this operator on direct sums of pro-objects of rational representations.
\end{rk}
\begin{rk}\label{rk:indprorat}
$M_\phi$ and $M_\phi^R$ are not pro-rational in the sense above, neither are $\scrG_R$ and $\scrG$ for their definition includes direct sums. However, the complexes involved are direct sums of pro-rational representations; hence, we can define infinitesimal action of $z\partial_z$ at each component separately, and we will use this. It is straightforward to define the notion of ``ind-pro rational'' as direct sums of pro-rational representations. More generally, such direct sums would be included (and infinitesimal action would be built-in), if we used sub-representations of products of rational representations that are invariant under $(z\partial_z)^\#$ (which is already defined on the product). Note, the morphisms of the latter category are assumed to be not only $\G$-equivariant, but also compatible with $(z\partial_z)^\#$. It is closed under products, direct sums and tensor products. 
\end{rk}
%\begin{rk}
%If $V$ is a rational representation of $\G$ then infinitesimal action $(z\partial_z)^\#$ makes sense: namely it is an operator that restricts to multiplication by $m$ on the subspace where $z\in\G(\C)$ acts by $z^m$. For products of rational representations we can simply differentiate on each component separately; hence, the operator $(z\partial_z)^\#$ is still meaningful. Same for such representations of $\G(R)$. Moreover, an equivariant map of two such representations $f:V=\prod_{i\in I} V_i\rightarrow W=\prod_{j\in J} W_j$ intertwines $(z\partial_z)^\#$, if for any finite $F_I\subset I$, there exists a finite $F_J\subset J$ such that $f(\prod_{i\in F_I} V_i)\subset \prod_{j\in F_J} W_j\subset W$. This is the main point of introducing the notion. 
%\end{rk}
\begin{lem}\label{generalinfinitesimalalgebra}
Given a strictly pro-rational curved $A_\infty$ category $\B$ over $R$, \begin{equation}(z\partial_z)^\#:hom^i_{\B}(b,b')\rightarrow hom^i_{\B}(b,b')\end{equation} defines a Hochschild cocycle of degree $1$. Same holds for uncurved categories over $\C$.
\end{lem}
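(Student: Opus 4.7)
My plan is to realize $D := (z\partial_z)^{\#}$ as the length-$1$ component of a Hochschild $1$-cochain (with all other length components set to zero) and then verify directly that the Hochschild differential annihilates it. First I would check that $D$ is well defined. By strict pro-rationality, for each pair $(b,b')$ and each degree $i$ the space $hom^{i}_\B(b,b')$ is a topological product of $R$-free completed rational representations of $\G(R)$; on each such factor $W$ of weight $n$, the rule $(z\partial_z)^{\#}w = n\cdot w$ defines a continuous $R$-linear degree-$0$ endomorphism, and the compatibility conditions built into Definition~\ref{defn:prorat} together with Remark~\ref{rk:diffac} guarantee that these factor-wise operators assemble into a continuous degree-$0$ endomorphism of the whole product that interacts correctly with $\mu^k$.

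Next I would spell out the Hochschild differential. For a length-$1$ cochain $D$ of operator degree $0$, the coboundary $\delta D$ has, for each $k\ge 1$, the component
\[
(\delta D)^k(a_1,\ldots,a_k) = D\bigl(\mu^k(a_1,\ldots,a_k)\bigr) - \sum_{j=1}^k (-1)^{\sigma_j}\,\mu^k\bigl(a_1,\ldots,D(a_j),\ldots,a_k\bigr),
\]
with the standard Koszul signs $\sigma_j$, plus a length-$0$ component $D(\mu^0_b)$ coming from the curvature. The strict pro-rational $\G(R)$-equivariance of each structure map asserts that for every $z\in\G(R)$ and every object $b$,
\[
\mu^k(z\cdot a_1,\ldots,z\cdot a_k) = z\cdot \mu^k(a_1,\ldots,a_k), \qquad z\cdot \mu^0_b = \mu^0_b
\]
(the second equation because the strict action is understood to fix objects, so each curvature element is $\G(R)$-invariant). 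Applying the operator $z\partial_z$ at $z=1$ to both sides of these identities and using the Leibniz rule yields exactly $(\delta D)^k = 0$ for all $k\ge 0$, which is the desired cocycle identity.

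The main technical obstacle is justifying the ``formal differentiation at $z=1$'' rigorously, since the hom-complexes are only topological products of rational representations, not ordinary direct sums. I would handle this by appealing to the local finiteness in Definition~\ref{defn:prorat}: any $\mu^k(a_1,\ldots,a_k)$, when restricted to a finite sub-product of rational weight pieces in each input slot, lands in a finite sub-product of rational weight pieces in the output. On these finite sub-products, both sides of the equivariance identity become polynomial (in fact monomial, on pure weight vectors) functions of $z$ with values in finite-dimensional graded vector spaces, so $z\partial_z|_{z=1}$ acts entry-wise in the obvious way, and the resulting identity passes to the pro-limit by continuity. Once this is in place, the cocycle relation is exactly the infinitesimal shadow of strict equivariance, realizing the heuristic $X\mapsto X^{\#}$ of Remark~\ref{rk:diffac} in the $A_\infty$ setting.
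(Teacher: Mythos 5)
Your proposal is correct, and it is precisely the ``explicit computation'' that the paper's proof mentions in passing before opting for a more packaged alternative. The paper's actual argument differentiates the equivariance relation $\rho_z\circ\mu=\mu\circ\rho_z$ at the level of the bar construction $B(\B)$, where $\mu$ is viewed as a single coderivation and the $\G(R)$-action as a family of dg coalgebra automorphisms $\rho_z$; differentiating this relation at $z=1$ gives $[\mu,(z\partial_z)^\#]=0$ in one stroke, with no need to track the $\mu^k$-by-$\mu^k$ terms, Koszul signs, or the curvature term separately, since all of that is subsumed in the coderivation bracket. Your route unpacks this same identity component by component: you write out $(\delta D)^k$ explicitly, differentiate $\mu^k(z\cdot a_1,\ldots,z\cdot a_k)=z\cdot\mu^k(a_1,\ldots,a_k)$ at $z=1$, and observe that the length-$0$ piece vanishes because the curvature is fixed by a strict action on objects. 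Both approaches turn on the same observation — the cocycle relation is the infinitesimal shadow of strict equivariance — but the bar-construction formulation localizes the entire bookkeeping (signs, arities, curvature) into the single statement that $\mu$ is an equivariant coderivation, which is why the paper prefers it. Your treatment of the topological subtlety via the local-finiteness clause in Definition~\ref{defn:prorat} is sound, and it corresponds to the paper's remark that $B(\B)$ is not itself a product of rational representations but that one can nonetheless differentiate the action to get a well-defined coderivation.
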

\begin{proof}
One only needs to check the closedness of the class, which can be proven using explicit computation. Alternatively, one can turn the $\G(R)$-action on $\B$ into an action on the bar construction $B(\B)$. The $A_\infty$ structure $\mu$ can be considered as a coderivation, which is clearly equivariant; hence the action is by dg coalgebra automorphisms. The bar construction is not a product of rational representations, but one can still differentiate the action to obtain a meaningful coderivation $(z\partial_z)^\#$.
%analogously one obtains by differentiating the action on a coalgebra
Let $z$ act by $\rho_z\curvearrowright B(\B)$. Differentiate the relation $\rho_z\circ\mu=\mu\circ\rho_z$ to obtain $[\mu,(z\partial_z)^\#]=0$.
\end{proof}
\begin{lem}\label{followinfi}
Let $\fM$ be an $A_R$-semi-free strictly pro-rational family of $\B$-modules. Then, $\fM$ admits a natural pre-connection and its deformation class is the image of $[(z\partial_z)^\#]\in HH^1(\B,\B)$ under the natural map $HH^1(\B,\B)\rightarrow H^1(\B_{A_R}(\fM,\fM))$. In other words, $\fM$ follows $(z\partial_z)^\#$.
\end{lem}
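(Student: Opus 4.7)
The plan is to take the pre-connection to be the formal infinitesimal action coming from the strict pro-rational $\G(R)$-equivariant structure, and then observe that the difference between this ``differentiation'' commuting with $\mu_\fM^{1|d}$ on one side versus on the other is precisely the Hochschild cocycle formula for $\Gamma_\fM((z\partial_z)^\#)$.

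First, I would define $\precon$ to be the pre-module endomorphism of $\fM$ whose only nontrivial component is $\precon^1 = (z\partial_z)^\#_\fM$, i.e.\ the formal derivative of the $\G(R)$-action on $\fM$ defined degree by degree via the pro-rational decomposition (cf.\ Remarks \ref{rk:diffac}, \ref{rk:indprorat}); the higher components $\precon^i$ for $i \ge 2$ are set to zero. The infinitesimal action is well-defined on each $\fM^i(b)$ because each $\fM^i(b)$ is an $A_R$-linearly completed product of rational $\G(R)$-representations. Semi-freeness of $\fM$ ensures that this makes $\precon^1$ an $R$-linear endomorphism; the pro-rationality of $\B$ and the strict equivariance of the module structure maps guarantee that $\precon$ is a well-defined element of $(\B_{A_R}^{pre})^0(\fM, \fM)$ in the sense of Remark \ref{arfamilycatrk}.

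Second, I would verify the commutation relation $[\precon, a] = D_{A_R}(a)\cdot 1_\fM$ of Definition \ref{precondefn}. Since $z \in \G(R)$ acts on $A_R$ by $t \mapsto zt$, $u \mapsto z^{-1}u$, its infinitesimal action on $A_R$ is precisely $D_{A_R} = t\partial_t - u\partial_u$. For $a \in A_R$ acting on $\fM$ by multiplication (which is the $A_R$-linear structure on the family), the Leibniz rule for the derivation $(z\partial_z)^\#$ applied to the product $a \cdot m$ gives $\precon^1(a\cdot m) = D_{A_R}(a)\cdot m + a\cdot \precon^1(m)$, which is exactly the desired commutation relation. Hence $\precon$ is a pre-connection.

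Third, I would compute $def(\precon) = d(\precon)$. Because the components $\mu_\fM^{1|d}$ of the $A_\infty$-module structure are strictly $\G(R)$-equivariant, differentiating this equivariance formally (which is legitimate thanks to pro-rationality and the local finiteness condition that makes the infinite sums terminate in each rational piece) yields the identity
\begin{equation}
(z\partial_z)^\#_\fM\circ \mu_\fM^{1|d} - \mu_\fM^{1|d}\circ\bigl((z\partial_z)^\#_\fM\otimes 1^{\otimes d}\bigr) = \sum_{r+1+s=d}\mu_\fM^{1|d}\bigl(1_\fM\otimes 1_\B^{\otimes r}\otimes (z\partial_z)^\#_\B\otimes 1_\B^{\otimes s}\bigr).
\end{equation}
The left hand side is, up to the usual Koszul sign, the $(1|d)$-component of $d(\precon)$ in the dg category $\B_{A_R}^{pre}$, while the right hand side is, again up to sign, the $(1|d)$-component of $\Gamma_\fM((z\partial_z)^\#)$ as defined by formula (\ref{eq:CCmaphom}) applied to the Hochschild $1$-cocycle $(z\partial_z)^\#_\B$ of Lemma \ref{generalinfinitesimalalgebra}. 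Matching signs gives $def(\precon) = \pm\,\Gamma_\fM((z\partial_z)^\#)$ on the nose, so in particular $Def(\fM) = [\Gamma_\fM((z\partial_z)^\#)]$ in $H^1(\B_{A_R}(\fM,\fM))$, which is the claim.

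The main expected obstacle is purely notational and bookkeeping: the formal differentiation of the $\G(R)$-action must be carried out factor by factor in the pro-rational decomposition and then reassembled, and one must check that the signs arising from differentiation match the Koszul signs in the definition of $d$ on $\B_{A_R}^{pre}$ and in formula (\ref{eq:CCmaphom}). Once the signs are pinned down, the identity is formal and follows from a single differentiation of the equivariance equation for each $\mu_\fM^{1|d}$.
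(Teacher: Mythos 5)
Your proof is essentially the same as the paper's: you define $\precon$ by $\precon^1 = (z\partial_z)^\#_\fM$ with vanishing higher components, verify the commutation relation via the Leibniz rule for the infinitesimal action, and compute $def(\precon)$ by differentiating the strict $\G(R)$-equivariance of the module structure maps. The only difference is presentational: the paper routes the computation of $def(\precon)$ through the bar construction and comodule endomorphisms (defining $\overline{\precon_\fM}$ and splitting the bar differential as $\delta_\fM + \delta_\B$, then observing $[\delta_\B, \overline{\precon_\fM}] = 0$), whereas you work directly with the $\mu^{1|d}_\fM$; both routes produce the same identity, matching (\ref{eq:CCmaphom}).
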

\begin{proof}
$z\in \G(R)$ acts as an operator $\eta_z\in \fM(b,b')\rightarrow \fM(b,b')$ and it is possible to differentiate it by the remarks above. Moreover, $\eta_z(ax)=(z.a)\eta_z(x)$ for any $a\in A_R,x\in \fM(b,b')$ and differentiating this relation, we obtain \begin{equation}(z\partial_z)^\#_\fM(ax)=a(z\partial_z)^\#_\fM(x)+(z\partial_z)^\#_{A_R}(a)x \end{equation} where $(z\partial_z)^\#_\fM(x)=``(\frac{d\eta_z(x)}{dz}|_{z=1})"$ and $(z\partial_z)^\#_{A_R}(a)=``(\frac{d(z.a)}{dz}|_{z=1})"$ denote the corresponding infinitesimal actions. As it will be remarked in Appendix \ref{sec:modules}, \begin{equation}(z\partial_z)^\#_{A_R}(a)=D_{A_R}(a)\end{equation} hence, $(z\partial_z)^\#_\fM(x)$ is a pre-connection with no higher maps. We denote it by $\precon_\fM$.

Now similarly differentiate $\G(R)$-action on $\fM\otimes B(\B)$. We obtain a coderivation acting on $\fM\otimes B(\B)$. Let us project it to $\fM$ and extend as a comodule homomorphism. This way we obtain the pre-connection $\precon_{\fM}$ seen as a comodule endomorphism of $\fM\otimes B(\B)$. 

Now, its differential: first note the differential on dg comodule $\fM\otimes B(\B)$ can be written as the sum $\delta_\fM+\delta_\B\curvearrowright  \fM\otimes B(\B)$. Here, $\delta_\fM$ is the extension of the structure maps of the module as a comodule endomorphism. More explicitly, it is given by \begin{equation}\delta_\fM=\sum \mu_\fM^i\otimes 1_\B\otimes 1_\B\otimes \dots\otimes 1_\B\end{equation} $\delta_\B$ denotes the remaining terms, i.e. \begin{equation}\delta_\B=\sum 1_\fM\otimes1_\B\otimes\dots \otimes\mu^j_\B\otimes\dots\otimes1_\B \end{equation}
 
The extension of an operator $\precon_\fM:\fM\rightarrow\fM$ to $\fM\otimes B(\B)$ is via the formula $\precon_\fM\otimes 1_\B\otimes 1_\B\otimes \dots 1_\B$ on the each summand $\fM\otimes \B^{\otimes i}$. Let $\overline{\precon_\fM}$ denote this operator (only for the rest of this proof). The differential of it is given by the commutator \begin{equation}(\delta_\fM+\delta_\B)\circ \overline{\precon_\fM}-\overline{\precon_\fM}\circ(\delta_\fM+\delta_\B)=[\delta_\fM,\overline{\precon_\fM}]+[\delta_\B,\overline{\precon_\fM}] \end{equation}
By the formulas above, $[\delta_\B,\overline{\precon_\fM}]=0$. Hence, \begin{equation}def(\precon_\fM)=\delta_\fM\circ \overline{\precon_\fM}-\overline{\precon_\fM}\circ \delta_\fM\end{equation}which is a comodule homomorphism whose composition with the projection $\fM\otimes B(\B)\rightarrow\fM$ is given by \begin{equation} \mu^i_\fM(\precon_\fM\otimes 1_\B\dots\otimes 1_\B)-\precon_\fM\circ \mu^i_\fM  \end{equation}on the summand $\fM\otimes \B[1]^{\otimes i-1}$. 
For a fixed $i$, equivariance of $\mu^i_\fM$ and the fact that $\precon_\fM$ is obtained as the infinitesimal action of $z\partial_z\in Lie(\G)$ implies \begin{equation}\mu^i_\fM\circ(\precon_\fM\otimes 1_\B^{\otimes i-1})+\sum_j\mu^i_\fM\circ(1_\fM\otimes 1_\B^{\otimes j}\otimes(z\partial_z)^\#\otimes 1_\B^{\otimes i-j-2} )=\precon_\fM\circ\mu_\fM^i \end{equation}
Hence, the deformation class(projected to $\fM$) is given by \begin{equation}-\sum_j\mu^i_\fM\circ(1_\fM\otimes 1_\B^{\otimes j}\otimes(z\partial_z)^\#\otimes 1_\B^{\otimes i-j-2} )\end{equation}The Lemma follows from the definition of $\Gamma_\fM(\gamma)$ given by formula (\ref{eq:CCmaphom}).
\end{proof}
Most of the results/definitions above follow similarly for left modules and bimodules. Indeed, we will use the results for a semi-free, semi K-projective replacement of $\scrG_R$, which is a family of bimodules.
\subsection{The deformation class of $\scrG_R$}\label{sec:flowg}
To apply the results above, first we should clarify the $\G(R)$-action on $\scrG_R$. First, the graph $\cG_R\subset \Tt_R\times\Tt_R\times Spf(A_R)$ (resp. $\cG\subset \Tt_0\times\Tt_0\times Spec(A)$) is invariant under the action of $\G(R)$ ( resp. $\G(\C)$) which is trivial on the first factor, as in Remark \ref{actionremark} on the second factor, and by $z:t\mapsto zt,u\mapsto z^{-1}u$ on the third. This is clear from the defining equations (\ref{eq:graph1}) and (\ref{eq:graph2}). 

Hence, there is an action of $\G(R)$ (resp. $\G(\C)$) on $\scrG_R^{pre}(\scrF,\scrF')$ (resp. $\scrG^{pre}(\scrF,\scrF')$) compatible with the action of the same group on $\Om(\Tt_R)_{cdg}\otimes \Om(\Tt_R)_{cdg}^{op}\otimes A_R$ (resp. $\Om(\Tt_0)_{dg}\otimes \Om(\Tt_0)_{dg}^{op}\otimes A$) that is trivial on the first factor, as in Remark \ref{actionrkondefoOm} (resp. Section \ref{subsec:action}) in the second factor, and by $z:t\mapsto zt,u\mapsto z^{-1}u$ on the third. A similar action exists for $A_R$ (resp. $A$)-valued $M_\phi^R$ (resp. $M_\phi$)-bimodule $\scrG_R$ (resp. $\scrG$). The strict pro-rationality is obvious in the former case. In the latter, we do not have pro-rationality. However, by Remark \ref{rk:indprorat}, it is still sensible to formally differentiate the action and the results above are valid since we have direct sums of pro-rational representations. 

However, as mentioned the bimodules above are not semi-free over $A_R$ (resp. $A$) and they do not satisfy K-projectivity condition. This is easy to resolve by passing to equivariant semi-free replacements that are K-projective over $A$ at $q=0$. More generally, consider a curved $A_\infty$-category $\B$ with strict and pro-rational action of $\G(R)$. Let $\fM$ be a bimodule over $\B$ with a pro-rational action. We may weaken pro-rationality to ``differentiability" of the action (see Remarks \ref{rk:diffac} and \ref{rk:indprorat}). Then the bar construction gives us a bimodule. To its objects, it assigns a complex which in each degree is given by a product of expressions such as  \begin{equation} \B^{i_k}(b_k,\cdot)\otimes\B^{i_{k-1}}(b_{k-1},b_k)\otimes\dots\otimes\fM^{i_l}(b_{l-1},b_l)\otimes\dots\otimes \B^{i_1}(b_1,b_2)\otimes\B^{i_0}(\cdot,b_1) \end{equation} 
Its differential and structure maps can also be given by explicit expressions involving the structure maps of $\fM$ and $A_\infty$-products of $\B$ (alternating sums of expressions such as $1_\B\otimes 1_\B\otimes \dots \otimes \mu_\fM^{p|1|q}\otimes\dots \otimes 1_\B$) and it also defines a strictly equivariant $\B$-bimodule, where the infinitesimal action can be formally defined again (i.e. by differentiating each component of the tensor product and applying the Leibniz rule, see also Remark \ref{rk:indprorat}). We gave it for bimodules just as an illustration, and this can be done for left modules, right modules, trimodules, and so on. In particular, we can use this construction to replace $\scrG_R^{pre}$ and $\scrG_R$ with $A_R$-semi-free, semi-K-projective families of bimodules over $\Om(\Tt_R)_{cdg}$ and $M_\phi^R$ in a canonical way. It is compatible with $\tr$ in the former case. Let us denote these replacements by $\scrG_R^{pre,sf}$ and $\scrG_R^{sf}$.

To find Hochschild cohomology classes that are followed by $\scrG_R^{pre,sf}$ and $\scrG_R^{sf}$, we can apply Lemma \ref{followinfi}. The following proposition relates the infinitesimal action cocycle $(z\partial_z)^\#$ to previously defined Hochschild classes.
\begin{prop}\label{postgammaomprop}
The infinitesimal action cocycle $(z\partial_z)^\#$ defined in Lemma \ref{generalinfinitesimalalgebra} of $\G(R)$-action on $\Om(\Tt_R)_{cdg}$ has the class $\gamma_\Om^R$.
\end{prop}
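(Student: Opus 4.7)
The plan is to compute the class of $(z\partial_z)^\#$ by pulling back to the local charts $\tU_{i+1/2}$ and matching it with the local description of $\gamma_\Om^R$ given in Definition \ref{gammaom}. Because $HH^1(\Om(\Tt_R)_{cdg})$ was computed in Proposition \ref{HHOmfinal} as a suitable derived limit of local Hochschild cohomologies via the sheaf $\scrC\scrC_R^*$ (see Remark \ref{limitvssheaf}), it suffices to verify that both classes agree after restriction to each $\scrC oh(\tU_{i+1/2})_R \simeq \Om(\tU_{i+1/2})$.

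First I would establish naturality of the infinitesimal action. By Proposition \ref{limdef}, the pullback functors $j_{\tU_{i+1/2}}^*: \Om(\Tt_R)_{cdg} \to \scrC oh(\tU_{i+1/2})_R$ can be arranged $\tr$-equivariantly, and in fact $\G(R)$-equivariantly, since the construction of Section \ref{subsec:action}--\ref{subsec:defoofom} was performed using explicit $\G$-equivariant resolutions. Consequently, the induced map on Hochschild cochain complexes intertwines the infinitesimal action cocycles on both sides: the image of $(z\partial_z)^\#_{\Om(\Tt_R)_{cdg}}$ is $(z\partial_z)^\#_{\Om(\tU_{i+1/2})}$. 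Combined with Lemma \ref{lemloc} (deformed via Lemma \ref{semicont}), this means it is enough to show that, inside $HH^1(\Om(\tU_{i+1/2}))$, the cocycle $(z\partial_z)^\#$ represents the class $Y_{i+1}Y_{i+1}^* - X_iX_i^*$.

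Next I would carry out the local computation. On $\tU_{i+1/2}$, the $\G(R)$-action is given by $Y_{i+1}\mapsto zY_{i+1}$, $X_i\mapsto z^{-1}X_i$. Differentiating at $z=1$ yields the derivation
\begin{equation}
(z\partial_z)^\# = Y_{i+1}\partial_{Y_{i+1}} - X_i\partial_{X_i}
\end{equation}
acting on $\Om(\tU_{i+1/2}) = \C[X_i,Y_{i+1}][[q]]/(X_iY_{i+1}-q)$; this is a well-defined derivation because $(z\partial_z)^\#(X_iY_{i+1}-q)=0$. Under the Hochschild--Kostant--Rosenberg style identification (\ref{eq:defcc}), a derivation $f\partial_{Y_{i+1}} + g\partial_{X_i}$ corresponds to the cocycle $fY_{i+1}^* + gX_i^*$ in degree $1$, so $(z\partial_z)^\#$ is represented by $Y_{i+1}Y_{i+1}^* - X_iX_i^*$. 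This is exactly the local representative of $\gamma_\Om^R$ in Definition \ref{gammaom}.

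Finally I would assemble the local data. The classes $[(z\partial_z)^\#]$ and $\gamma_\Om^R$ both lie in $HH^1(\Om(\Tt_R)_{cdg})\cong R$, and by Proposition \ref{HHOmfinal} this group injects into $H^0$ of the sheaf $\mathcal{H}\mathcal{H}_R^1$, hence into the product $\prod_i HH^1(\Om(\tU_{i+1/2}))$ along the local restriction maps. Since the two classes agree in each local factor by the computation above, they agree globally. The main technical subtlety is ensuring that the $\G(R)$-equivariance actually descends through the resolutions and the pullback functors $j_{\tU_{i+1/2}}^*$ so that the infinitesimal actions are genuinely intertwined at the chain level; this is handled by the equivariant choices of resolutions made in Section \ref{subsec:action} and Remark \ref{actionrkondefoOm}, together with the analogue of Lemma \ref{generalinfinitesimalalgebra} for the pullback.
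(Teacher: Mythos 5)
Your proof is correct and follows essentially the same strategy as the paper's: reduce to the local charts $\tU_{i+1/2}$ using the $\G(R)$-equivariance of the restriction functors underlying the local-to-global isomorphism (\ref{eq:HHcolimitR}), then verify via the HKR identification that $(z\partial_z)^\#$ corresponds to the derivation $Y_{i+1}\partial_{Y_{i+1}}-X_i\partial_{X_i}$, i.e.\ to $Y_{i+1}Y_{i+1}^*-X_iX_i^*$. The only extra step you add, observing that $HH^1(\Om(\Tt_R)_{cdg})$ injects into $\prod_i HH^1(\Om(\tU_{i+1/2}))$, is a reasonable way to conclude and is implicit in the paper's appeal to (\ref{eq:HHcolimitR}).
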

\begin{proof}
This follows from local to global techniques of Section \ref{sec:hoch}. For instance, consider the isomorphism (\ref{eq:HHcolimitR}). It is based on maps \begin{equation} 
\xymatrix{CC^*(\Om(\Tt_R)_{cdg},\Om(\Tt_R)_{cdg})\ar[r]&CC^*(\Om(\Tt_R)_{cdg},\scrC oh(U)_R)\\ &CC^*(\scrC oh(U)_R,\scrC oh(U)_R)\ar[u]_\simeq }\end{equation}where $U=U_{i+1/2}$ or $V_i$ and $\scrC oh (U)_R$ is a curved deformation of $\scrC oh (U)$. We can replace $\scrC oh (U)$ by the image of the restriction functor from $\Om(\Tt_R)_{cdg}$ and use a strictly $\G(R)$-equivariant model such that this functor would be strictly equivariant too.
Moreover, it is easy to see that the images of cocycles $(z\partial_z)^\#$ in lower right and upper left complexes correspond in cohomology. Similar statements hold for the restriction maps \begin{equation}CC^*(\scrC oh(U_{i+1/2}) )\rightarrow CC^*(\C[X_i,Y_{i+1}][[q]]/(X_iY_{i+1}-q) ) \end{equation} and so on.
Hence, it is enough to prove the infinitesimal action cocycle $(z\partial_z)^\#$ is the same as local building blocks of $\gamma_\Om^R$. In other words, we wish to show $Y_{i+1}Y_{i+1}^*-X_iX_i^*$ corresponds to cocycle $(z\partial_z)^\#$ for the action \begin{equation}z:Y_{i+1}\mapsto zY_{i+1},X_i\mapsto z^{-1}X_i \end{equation}
under the Hochschild-Kostant-Rosenberg isomorphism of \cite[Appendix, Theorem 2]{quantization}. Examining this isomorphism, we can see $Y_{i+1}Y_{i+1}^*-X_iX_i^*$ corresponds to derivation $Y_{i+1}\partial_{Y_{i+1}}-X_i\partial_{X_i}$; i.e., to $(z\partial_z)^\#$ for the given action. 
\end{proof}
\begin{cor}\label{postgammacor}Consider the $\G(R)$-action on $M_\phi^R$. The infinitesimal action cocycle $(z\partial_z)^\#$ has the same class as $\gamma_\phi^R$.
\end{cor}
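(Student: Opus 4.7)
The plan is to reduce the statement to Proposition \ref{postgammaomprop} using the naturality of the infinitesimal action construction under tensor products and under the smash product $\#\Z$ that defines $M_\phi^R$.

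First I would invoke the deformed version of Proposition \ref{CCtorus} (and the underlying chain map \eqref{eq:equivariancecocone}), which identifies $CC^*(M_\phi^R)$ with the cocone of $\tr_*\otimes \phi_*-1$ acting on $CC^*(\Om(\Tt_R)_{cdg})\otimes CC^*(\A)$, via the restriction map $i^*\colon CC^*(M_\phi^R)\to CC^*(\Om(\Tt_R)_{cdg}\otimes \A)$. By construction (Remark \ref{actiononmt1}), the $\G(R)$-action on $M_\phi^R$ is the descent of the $\G(R)$-action on $\Om(\Tt_R)_{cdg}\otimes \A$ that is the chosen action on the first tensor factor and trivial on $\A$; in particular this action strictly commutes with $\tr\otimes \phi$, so the infinitesimal cocycle $(z\partial_z)^\#$ on $M_\phi^R$ pulls back under $i^*$ to the infinitesimal cocycle of the tensor-product action on $\Om(\Tt_R)_{cdg}\otimes \A$, and this pullback already lies in the kernel of $(\tr_*\otimes \phi_*-1)$ on the nose.

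Next, because the infinitesimal action differentiates a group action term by term, it is compatible with the K\"unneth map
\[
CC^*(\Om(\Tt_R)_{cdg})\otimes CC^*(\A)\longrightarrow CC^*(\Om(\Tt_R)_{cdg}\otimes \A)
\]
used in the proof of Proposition \ref{CCtorus}. Since $\G(R)$ acts trivially on $\A$, its infinitesimal cocycle on $\A$ vanishes, and hence the cocycle $(z\partial_z)^\#$ for the product action corresponds under K\"unneth to $(z\partial_z)^\#_{\Om(\Tt_R)_{cdg}}\otimes 1_\A$ (with the second factor read as the Hochschild unit). By Proposition \ref{postgammaomprop}, the first factor represents $\gamma_\Om^R$.

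Combining, the cocone representative of $i^*(z\partial_z)^\#$ coincides with $\gamma_\Om^R\otimes 1$, which is precisely the class descended to $\gamma_\phi^R$ by definition. The only real verifications are that (a) the map \eqref{eq:equivariancecocone} is compatible with infinitesimal actions on both sides, so that a $(\tr\otimes \phi)$-invariant class on $\Om(\Tt_R)_{cdg}\otimes \A$ obtained by differentiating descends to the corresponding class on $M_\phi^R$, and (b) the K\"unneth map intertwines the infinitesimal actions. Both are essentially tautological from the explicit chain-level formulas, and the main (mild) obstacle will simply be bookkeeping: making sure that the semi-free/equivariant replacements chosen in Section \ref{subsec:familyreview} do not break strict $\G(R)$-equivariance so that the Hochschild cocycle $(z\partial_z)^\#$ is actually defined on the model used to compute $CC^*(M_\phi^R)$.
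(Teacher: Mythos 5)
Your proof is correct and fills in exactly the details that the paper's one-line proof (``Follows from a similar examination of the isomorphism in Prop \ref{CCtorus}'') elides: restricting $(z\partial_z)^\#$ along $i^*$, observing it lands in the $n=0$ summand of $M_\phi^R$ (the diagonal part), identifying it under K\"unneth with $\gamma_\Om^R\otimes 1$ because the $\G(R)$-action on $\A$ is trivial, and invoking Proposition \ref{postgammaomprop}. This is the same route the paper intends; your verification points (a) and (b) are real but minor bookkeeping steps, as you note.
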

\begin{proof}
This follows from a similar examination of the isomorphism in Prop \ref{CCtorus}.
\end{proof}
\begin{cor}\label{gfollows}
The family $\scrG_R^{pre,sf}$ of $\Om(\Tt_R)_{cdg}$-bimodules follows $1\otimes \gamma^\Om_R\in HH^1(\Om(\Tt_R)^e_{cdg})$. Similarly, the family of $M_\phi^R$-bimodules $\scrG_R^{sf}$ follows $1\otimes \gamma^\phi_R\in HH^1((M_\phi^R)^e)$.
\end{cor}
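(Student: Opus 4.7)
The plan is to directly apply the bimodule analogue of Lemma \ref{followinfi} to the semi-free replacements and then identify the resulting Hochschild class via Proposition \ref{postgammaomprop} and Corollary \ref{postgammacor}. First I would observe that the bar-construction replacement procedure used to produce $\scrG_R^{pre,sf}$ and $\scrG_R^{sf}$ is functorial and strictly equivariant, hence transports the $\G(R)$-action on $\scrG_R^{pre}$ (resp.\ $\scrG_R$) to its semi-free, semi-K-projective model, preserving the ind-pro-rational structure. Moreover, since the replacement introduces only products/direct sums of tensor powers of the hom-spaces of $\Om(\Tt_R)_{cdg}$ (resp.\ $M_\phi^R$) with those of the original bimodule, the infinitesimal operator $(z\partial_z)^\#$ still makes formal sense by the argument of Remarks \ref{rk:diffac} and \ref{rk:indprorat} (differentiate each tensor factor and apply Leibniz).

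Next I would formulate the bimodule version of Lemma \ref{followinfi}: if $\fM$ is an $A_R$-semi-free, semi-K-projective strictly (ind-pro) equivariant family of $\B$-bimodules, with the $\G(R)$-action on $\B\otimes\B^{op}\otimes A_R$ being trivial on the first factor, standard on the second, and by $t\mapsto zt,\, u\mapsto z^{-1}u$ on $A_R$, then $\fM$ admits a natural pre-connection whose deformation class is the image of $1\otimes(z\partial_z)^\#\in HH^1(\B^e)$ under $\Gamma_\fM$. The proof is the direct analogue of Lemma \ref{followinfi}: the pre-connection $\precon_\fM$ is the formal derivative of the $\G(R)$-action; the commutator $[\precon_\fM,a]=D_{A_R}(a).1_\fM$ holds because the $A_R$-action is twisted by $z$; the differential computation uses equivariance of the bimodule structure maps $\mu_\fM^{p|1|q}$ together with the fact that $(z\partial_z)^\#$ acts trivially on the first factor, so only the second-factor contributions remain. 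These second-factor contributions assemble precisely to the $\Gamma_\fM$-image of $1\otimes(z\partial_z)^\#$ via formula \eqref{eq:CCmaphom}.

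Finally, I would identify $(z\partial_z)^\#$ with the appropriate cohomology class. For $\scrG_R^{pre,sf}$ this is exactly Proposition \ref{postgammaomprop}: the infinitesimal action cocycle on $\Om(\Tt_R)_{cdg}$ represents $\gamma_\Om^R\in HH^1(\Om(\Tt_R)_{cdg})$. Therefore $1\otimes (z\partial_z)^\#$ represents $1\otimes \gamma_\Om^R\in HH^1(\Om(\Tt_R)^e_{cdg})$, and the bimodule version of Lemma \ref{followinfi} gives the first assertion. For $\scrG_R^{sf}$ the same argument applies over $M_\phi^R$, and Corollary \ref{postgammacor} identifies the infinitesimal action cocycle with $\gamma_\phi^R$, yielding that $\scrG_R^{sf}$ follows $1\otimes \gamma_\phi^R$.

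The main obstacle I anticipate is a careful bookkeeping of the ind-pro-rational setting: strictly speaking neither $M_\phi^R$ nor $\scrG_R^{sf}$ is pro-rational in the sense of Definition \ref{defn:prorat}, only a direct sum of such, so one has to check (following Remark \ref{rk:indprorat}) that the differentiation and Leibniz identities used in the proof of Lemma \ref{followinfi} survive the passage to such direct sums component-wise, and that the naturality of $\Gamma_\fM$ with respect to descent along $(\Om(\Tt_R)_{cdg}\otimes\A)\#\Z$ makes the identification of the resulting class with $1\otimes\gamma_\phi^R$ (rather than with an a priori different lift) unambiguous. Once this is verified on each ind-summand separately, the conclusion follows.
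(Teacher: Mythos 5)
Your proposal is correct and follows essentially the same route as the paper: the paper's proof is a one-line reference to the remarks in Section \ref{sec:flowg} (the equivariant bar replacement carrying the $\G(R)$-action, with the action chosen trivial on the first factor and by $t\mapsto zt,\, u\mapsto z^{-1}u$ on $A_R$) combined with the bimodule version of Lemma \ref{followinfi}, with the class identification coming from Proposition \ref{postgammaomprop} and Corollary \ref{postgammacor}, precisely the chain you spell out. Your elaboration of the bimodule analogue and the caveat about the ind-pro-rational setting matches the paper's own remarks (``Most of the results/definitions above follow similarly for left modules and bimodules'' and Remark \ref{rk:indprorat}), so no gap remains.
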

\begin{proof}
This follows from the remarks at Section \ref{sec:flowg} about the $\G(R)$-action on $\scrG_R^{pre},\scrG_R$ as well as on families $\scrG_R^{pre,sf},\scrG_R^{sf}$ by applying Lemma \ref{followinfi}.
\end{proof}
\begin{rk}\label{grk}
Results similar to Corollary \ref{gfollows} hold for $\scrG_R^{pre,sf}|_{q=0}$, which is a semi-free replacement of $\scrG^{pre}$ and for $\scrG_R^{sf}|_{q=0}$, which is a semi-free replacement of $\scrG$.
\end{rk}
%2)A q-flat module $M$ over $A_R$ such that $M|_{q=0}$ is free over $A$, is free over $A_R$.

\section{A rank $2$ lattice inside $HH^1(M_\phi^R)$}\label{sec:rank2}
In this section, we will find a subgroup of $HH^1(M_\phi^R)$ that contains $\gamma_\phi^R$, that is isomorphic to $\Z^2$ and that is preserved under Morita equivalences. The basic idea is as follows:

Given an $A_\infty$-category $\B$, one can define the derived Picard group as the functor from commutative rings to groups sending \begin{equation}DPic:T \mapsto\{T\text{-semifree, invertible }\B^e\otimes T\text{-modules} \}/\text{quasi-isomorphism} \end{equation}Here we call a $\B^e\otimes T$-module $\fM$ invertible if there exists another $\B^e\otimes T$-module $\mathfrak{N}$ such that \begin{equation}\fM\otimes_\B\mathfrak{N}\simeq\mathfrak{N}\otimes_\B\fM\simeq \B\otimes T \end{equation}In other words it is a ``family'' of invertible $\B$-bimodules parametrized by $T$. The group structure is given by $(\fM,\fM')\mapsto \fM\otimes_\B\fM'$ See \cite{dpic} for a infinitesimal and derived version of it. In \cite{dpic}, the author also argues to show that the Lie algebra of this group is isomorphic to $HH^1(\B)$, with the Gerstenhaber bracket as the Lie bracket. This group functor is obviously Morita invariant. Hence, it is natural to look at its coroots, i.e. maps $\G\rightarrow DPic$ and the induced image of $(z\partial_z)\in Lie(\G)$. This subset will be a lattice in our case. However, we will not formally refer to this group functor again. Instead, we will simply use of group like families of bimodules, whose definition is close to definition above.

In the case of mapping tori, notice another $\G(\C)$, resp. $\G(R)$-action on $M_\phi$, resp. $M_\phi^R$, this time rational, resp. completed rational (which we will informally refer as another $\G$/$\widehat{\G}$-action). By definition $\B\#\Z$ is automatically equipped with an extra $\Z$-grading. Recall the morphisms from $b_1$ to $b_2$ are \begin{equation}\bigoplus_{g\in \Z}hom_{\B}(g(b_1),b_2)\end{equation} and we declare $hom_{\B}(g(b_1),b_2)$ to be the degree $g$ part in this extra grading. In particular, $M_\phi$ and $M_\phi^R$ carry this extra grading and we let $z\in\G$ act by $z^g$ on degree $g$ part. 
\begin{rk}
When $\A=\C$ and $\phi=1_\C$, this new action corresponds to twist by line bundles in $Pic^0(\T_R/R)$.
\end{rk}

It is easy to see that the new and old $\G$-actions (resp. $\widehat{\G}$-actions) commute strictly. Hence, we have an action of $\G\times\G$ (resp. $\widehat{\G}\times_{Spf{R}}\widehat{\G}$) on $M_\phi$ (resp. $M_\phi^R$). We want to organize them in group-like families of bimodules. First, let us give meaning to this more general notion of families, mimicking \cite{flux} and \cite{dpic}. We will work over $R$, as everything is similar over $\C$.
\begin{defn}Let $\B$ be a curved $A_\infty$-category over $R$. Let $T$ be a topologically finitely generated complete commutative $R$-algebra (examples are given by formal completions of affine subschemes of affine varieties). A family of $\B$-modules/bimodules parametrized by $T$ is an $A_\infty$-module/bimodule $\fM$ over $\B$ such that each $\fM(b)$ resp. $\fM(b,b')$ is a free $T$-module (in each degree), $\fM(b)/q\fM(b)$ resp. $\fM(b,b')/q\fM(b,b')$ is K-projective over $T/qT$ and the module maps are $T$-linear. The morphisms between such families can be defined in a analogous way to Definition \ref{arfamilycat} and Remark \ref{arfamilycatrk}. We denote the category of families (with $T$-linear morphisms) by $\B_T^{mod}$ or simply by $\B_T$.
\end{defn}
Contrary to previous section, we will assume $T$ is the ring of functions on a formal affine scheme that is smooth over $R$. We can define pre-connections in a similar way:
\begin{defn}
A pre-connection $\quabla$ on the family $\fM$ of right $\B$-modules parametrized by $T$ is a $B(\B)$-comodule map \begin{equation}\fM\otimes B(\B)\rightarrow  \Omega^1_{T/R}\otimes\fM\otimes B(\B) \end{equation} that satisfies the Leibniz rule with respect to $T$.
\end{defn}
This can again be seen as a collection of maps 
\begin{flalign*}
\quabla^1:\fM(b_0)\rightarrow \Omega^1_{T/R}\otimes\fM(b_0)\\ \quabla^2:\fM(b_1)\otimes \B(b_0,b_1)\rightarrow \Omega^1_{T/R}\otimes\fM(b_0)\\  \quabla^3:\fM(b_2)\otimes \B(b_1,b_2)\otimes \B(b_0,b_1)\rightarrow \Omega^1_{T/R}\otimes\fM(b_0)\\ \dots
\end{flalign*} such that $\quabla^1$ satisfies the Leibniz rule and $\quabla^i$ are $T$-linear for $i>1$. See \cite{flux}. The deformation class \begin{equation}def(\quabla)\in (\B_T^{mod})^1(\fM,\Omega^1_{T/R}\otimes \fM)\end{equation} has a similar definition, i.e. as the differential of $\quabla$. The class of $def(\quabla)$ is independent of the choice of pre-connection and will again be denoted by $Def(\fM)$. Everything works for bimodules in a similar way.

Now we define group-like families. The moral of the definition is simple: For a (formal) affine group $G$ the transformations ``$G\rightarrow DPic(\B)$'' are the families parametrized by $G$ and group-like families are those corresponding to group homomorphisms ``$G\rightarrow DPic(\B)$''. More explicitly, let $T=\Om(G_0)[[q]]=\Om(G)$, where $G_0$ is an affine algebraic group over $\C$ and $G=G_0\times_\C Spf(R)$. $T$ is a Hopf algebra in the category of complete $R$-modules. Let \begin{equation}m,f_1,f_2:G\times G\rightarrow G \end{equation} denote multiplication, first and second projection respectively. 
\begin{defn}
A group-like family of invertible bimodules parametrized by $T$ is a family $\fM$ such that $f_1^*\fM\otimes_\B f_2^*\fM\simeq m^*\fM $.
\end{defn}
This condition can be phrased in terms of the Hopf algebra structure of $T$. In particular, it is easy to see that a (strict) $G$-action on $\B$ gives a group-like family.
\begin{exmp}\label{2dfamily}
We mentioned an action of $\widehat{\G\times\G}:=\widehat{\G}\times_{Spf(R)}\widehat{\G}$ on $M_\phi^R$. This was a pointwise action; however, it is easy to see it as a group-like family parametrized by $\C[z_1^\pm,z_2^\pm][[q]]$. Denote it by $\rho_{uni}$. Similarly, there is group-like family of bimodules over $M_\phi$ parametrized by $\G\times\G$.
\end{exmp}
\begin{lem}\label{uniinjective}
The restriction of the family parametrized by $\widehat{\G\times\G}$ to two different $R$-points are different. In other words, ``$\widehat{\G\times\G}(R)\rightarrow DPic_{M_\phi^R}(R)$'' is injective. 
\end{lem}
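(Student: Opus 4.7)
By the group structure of $\rho_{uni}$, it suffices to show that if $(w_1,w_2)\in (R^*)^2=\widehat{\G\times\G}(R)$ satisfies $\fM_{(w_1,w_2)}\simeq \Delta_{M_\phi^R}$ as $M_\phi^R$-bimodules, then $(w_1,w_2)=(1,1)$. Using the splitting $R^*\cong \C^*\times (1+qR)$, I would write $w_i=c_i u_i$ with $c_i\in\C^*$ and $u_i\in 1+qR$, and treat the discrete and infinitesimal parts separately.

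\textbf{Discrete part ($c_1,c_2$).} Restricting the assumed quasi-isomorphism to $q=0$ gives $\fM_{(c_1,c_2)}\simeq \Delta_{M_\phi}$, i.e.\ a natural isomorphism of the auto-equivalence $F_{(c_1,c_2)}$ with the identity on $D^\pi(M_\phi)$. For $c_1\neq 1$, I would realize the skyscraper $\Om_p$ at a smooth point $p\in C_0$ as the twisted complex $\{\Om_{C_0}(-1)\xrightarrow{s_p}\Om_{C_0}\}$ in $tw^\pi(\Om(\Tt_0)_{dg})$ (tensored with any $a\in\A$). The first $\G$-action rescales the defining section $s_p$ (which carries nontrivial $\G$-weight), so $F_{(c_1,1)}$ carries this object to the skyscraper at $c_1\cdot p\neq p$; as these are non-isomorphic in $D^\pi(M_\phi)$, we conclude $c_1=1$. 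For $c_2\neq 1$, the functor $F_{(1,c_2)}$ is the identity on objects and scales every $\#\Z$-degree-$g$ morphism by $c_2^g$. Applying a putative natural isomorphism $\eta\colon F_{(1,c_2)}\Rightarrow\mathrm{id}$ to the canonical degree-$1$ generator $s_L\in hom^0_{M_\phi}(L,(\tr\otimes\phi)(L))$ (the identity of $(\tr\otimes\phi)(L)$ included in the $g=1$ summand of Definition \ref{smash}) together with its degree-$(-1)$ partner forces incompatible rescalings on $\eta_L$; equivalently, such an $\eta$ would define a unit in $HH^0(M_\phi)$ realizing a nontrivial grading shift, contradicting $HH^0(M_\phi)\cong \C$ of Corollary \ref{HHtoruscor}.

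\textbf{Infinitesimal part ($u_1,u_2$).} Now $(c_1,c_2)=(1,1)$ and $(w_1,w_2)\in (1+qR)^2$; suppose $(w_1,w_2)\neq(1,1)$, and let $n\geq 1$ be maximal with $w_i\in 1+q^n R$, so $w_i=1+q^n h_i+O(q^{n+1})$ and $(h_1,h_2)\not\equiv (0,0)\bmod q$. I would pull $\rho_{uni}$ back along the one-parameter subgroup $\widehat{\G}\to\widehat{\G\times\G}$ joining $(1,1)$ to $(w_1,w_2)$. Applying Lemma \ref{followinfi} to the two $\G(R)$-actions underlying $\rho_{uni}$, together with Corollary \ref{postgammacor}, shows the resulting family follows the Hochschild class $q^n(h_1\gamma_1+h_2\gamma_2)\in HH^1(M_\phi^R)$, where $\gamma_1=\gamma_\phi^R$ and $\gamma_2$ is the infinitesimal of the second (grading) $\G$-action. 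Under the hypotheses of Theorem \ref{mainthm}, the local-to-global Hochschild computation of Section \ref{sec:hoch} identifies $HH^1(M_\phi^R)$ with a free rank-$2$ $R$-module in which $\gamma_1,\gamma_2$ form a basis. Hence the infinitesimal class is nonzero modulo $q^{n+1}$, but $\fM_{(w_1,w_2)}\simeq \Delta_{M_\phi^R}$ would force it to vanish, a contradiction.

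\textbf{Main obstacle.} The most delicate steps are the $c_2\neq 1$ sub-case (extracting a clean nonvanishing obstruction from a purely ``scaling'' auto-equivalence in the $\#\Z$-graded structure) and the verification that $\gamma_1,\gamma_2$ are an $R$-basis of $HH^1(M_\phi^R)$ rather than merely generating a rank-$2$ sub-lattice --- this requires matching the infinitesimals of the two $\G$-actions with the two local generators of $HH^1(\Tt_R)$ provided by the computation in Proposition \ref{HHOmfinal}.
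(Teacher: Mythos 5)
Your decomposition of $R^*$ into $\C^*\times(1+qR)$ and the two-stage argument is a genuinely different strategy from the paper, which avoids any such splitting: both steps of the paper's proof work directly over $R$ (picking a smooth $R$-point $p$ of $\Tt_R$ to show $z_1=1\in R^*$, then computing $HH^0(M_\phi^R,\,_{z_2}(M_\phi^R)_1)$ to show $z_2=1\in R^*$). Unfortunately, in your version the infinitesimal half has a real gap. You propose to ``pull $\rho_{uni}$ back along the one-parameter subgroup $\widehat{\G}\to\widehat{\G\times\G}$ joining $(1,1)$ to $(w_1,w_2)$,'' but such a group homomorphism is a cocharacter $z\mapsto(z^a,z^b)$, and an arbitrary pair $(w_1,w_2)\in(1+qR)^2$ is generically not of this form --- for instance when the leading coefficients $h_1(0),h_2(0)$ are $\Q$-linearly independent, no integers $a,b$ work, even approximately. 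Without the cocharacter there is no pullback family and nothing for Lemma \ref{followinfi} to apply to, so the intended contradiction with Lemma \ref{lieinjective} never materializes. The Lie-algebra map being injective controls tangent vectors at the identity (i.e.\ $\C[\epsilon]/\epsilon^2$-points), not general $R$-points of the formal group $\widehat{\G\times\G}$; turning the one into the other requires a $\log/\exp$-style argument on the $DPic$ side that you haven't supplied.

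There are also two smaller issues in the discrete part. In the $c_1\neq 1$ case you compute $F_{(c_1,1)}(\Om_p)=\Om_{c_1\cdot p}$, but you are given $F_{(c_1,c_2)}\simeq\mathrm{id}$, not $F_{(c_1,1)}\simeq\mathrm{id}$; you must first note (as the paper does) that the morphism $\Om_{C_0}(-1)\to\Om_{C_0}$ carries extra grading $0$, so the second $\G$-action fixes $\Om_p\otimes\A$ and hence $F_{(c_1,c_2)}(\Om_p\otimes a)\simeq\Om_{c_1\cdot p}\otimes a$. In the $c_2\neq 1$ case the phrase ``such an $\eta$ would define a unit in $HH^0(M_\phi)$ realizing a nontrivial grading shift'' isn't quite right: $\eta$ is a $\Rightarrow$ $F_{(1,c_2)}\to\mathrm{id}$, not $\mathrm{id}\to\mathrm{id}$, so it isn't a class in $HH^0(M_\phi)$. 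The correct observation is that $\fM_{(1,c_2)}\simeq\Delta$ would force $HH^0(M_\phi,\fM_{(1,c_2)})\cong HH^0(M_\phi)\cong\C$, while the derived-invariants computation of Proposition \ref{CCtorus} with the $z_2$-twisted $\Z$-action gives $HH^0(M_\phi,\fM_{(1,c_2)})=0$ for $c_2\neq 1$. This is the argument the paper runs (over $R$), and it subsumes both your discrete case $c_2\neq1$ and the entire infinitesimal half, since the twisted invariants vanish for any $z_2\in R^*\setminus\{1\}$, not just $z_2|_{q=0}\neq 1$.
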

\begin{proof}
Assume the family is trivial over an element of $\widehat{\G\times\G}(R)=R^*\times R^*$. This implies there exists $z_1,z_2\in R^*$ such that the bimodule corresponding to $z_1$ for the action in Remark \ref{actiononmt1} is quasi-isomorphic to bimodule corresponding to $z_2$ for the action coming from extra grading. 

First, let us show $z_1=1$. Pick a smooth $R$-point $p\in\Tt_R$ such that the restriction to $q=0$ is on $C_0\subset\Tt_0$. We can represent $\Om_p$ as a deformation of a mapping cone $Cone(\Om_{C_0}(-1)\rightarrow \Om_{C_0})$; hence, by a twisted complex of this form, which we also denote by $\Om_p$ (it is an unobstructed object over $\Om(\Tt_R)_{cdg}$, the differential of this twisted complex may include other terms that are $O(q)$). Hence, we obtain a subcategory $\Om_p\otimes\A=\{\Om_p\otimes a:a\in ob(\A) \}\subset tw^\pi(M_\phi^R)$. The $z_1$-action moves this subcategory to $\Om_{z_1.p}\otimes \A$. Moreover, the morphism $\Om_{C_0}(-1)\rightarrow \Om_{C_0}$ is of degree $0$, in the extra grading; hence, $z_2$ fixes $\Om_{p}\otimes \A$. It is not hard to prove $\Om_{p}\otimes \A$ is orthogonal to $\Om_{z_1.p}\otimes \A$ unless $z_1.p=p$. Thus, $z_1=1$.

Hence, by assumption the bimodule $_{z_2}(M_\phi^R)_1$ corresponding to $z_2$-action is quasi-isomorphic to diagonal bimodule. The bimodule $_{z_2}(M_\phi^R)_1$ has the same underlying pseudo-complexes but the $M_\phi^R$ action is twisted by $z_2$ on the right (i.e. $f.x.f'=z_2(f)xf'$). This bimodule can be obtained from $\Om(\Tt_R)_{cdg}\otimes \A$ as in Section \ref{sec:bimodgen} and the $z_2$ twist amounts to changing the $\Z_\Delta $-equivariant structure, where the new $\Z_\Delta $-equivariant structure is given by $g.m=z_2^{g}g(m)$. The complex $CC^*(M_\phi^R,\,_{z_2}(M_\phi^R)_1)$ can be computed to be the derived invariants of \begin{equation}CC^*(\Om(\Tt_R)_{cdg},\Om(\Tt_R)_{cdg})\otimes CC^*(\A,\A) \end{equation} as in Proposition \ref{CCtorus}, but again the $\Z$-action on $CC^*(\Om(\Tt_R)_{cdg},\Om(\Tt_R)_{cdg})$ is different from the one in Proposition \ref{CCtorus} by $z_2$ (i.e. it acts by $z_2(tr_*)$ in the first component).

This complex has no negative degree cohomology and its cohomology in degree $0$ is isomorphic to $R$, fixed by the previous action. Hence, the derived invariants of it in degree $0$ is $0$ with respect to new action, unless $z_2=1$. Therefore, $HH^0(M_\phi^R,\,_{z_2}(M_\phi^R)_1)=0$ unless $z_2=1$; thus, $z_2=1$.
\end{proof}
\begin{rk}\label{defact}
For a group-like family $\fM$, the deformation class \begin{equation}Def(\fM)=[def(\quabla)]\in  H^1(\B_T^{e,mod})(\fM,\Omega^1_{T/R}\otimes \fM)\simeq HH^*(\B,\Omega^1_{T/R}\otimes \B) \end{equation} induces a linear map \begin{equation}\mathfrak g\rightarrow HH^1(\B,\B) \end{equation}where $\mathfrak g=Lie(G/R)=Lie(G_0)[[q]]$.
\end{rk}
\begin{lem}\label{lieinjective}
The map $R^2=Lie(\widehat{\G\times\G})\rightarrow HH^1(M_\phi^R,M_\phi^R)$ induced as in Remark \ref{defact} is an isomorphism. 	
\end{lem}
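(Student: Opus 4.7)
My plan has three stages. First, identify the target as a rank-$2$ free $R$-module. Under the standing hypothesis $HH^1(\A) = HH^2(\A) = 0$ from the ambient context of Theorem \ref{mainthm}, the $R$-analogue of Corollary \ref{HHtoruscor2} --- obtained by running Prop \ref{CCtorus} over $R$ --- shows that $HH^1(M_\phi^R)$ is free of rank $2$, fitting in a short exact sequence
\[
0 \to \mathrm{coker}\bigl((\tr_* \otimes \phi_*) - 1\big|_{HH^0}\bigr) \to HH^1(M_\phi^R) \to \ker\bigl((\tr_* \otimes \phi_*) - 1\big|_{HH^1}\bigr) \to 0
\]
whose outer terms are each $R$: the cokernel (coming from the cocone shift) is generated by the class of $1 \in HH^0(\Om(\Tt_R)_{cdg} \otimes \A) = R$, while the kernel is generated by $\gamma_\Om^R \otimes 1 \in HH^1(\Om(\Tt_R)_{cdg}) \otimes HH^0(\A)$ using Prop \ref{HHOmfinal}.

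Second, identify the images of the two generators of $Lie(\widehat{\G \times \G}) = R e_1 \oplus R e_2$. Combining Remark \ref{defact} with Lemma \ref{followinfi}, each $e_i$ maps to the Hochschild class of the infinitesimal action of the corresponding $\widehat{\G}$-factor on $M_\phi^R$. By Corollary \ref{postgammacor}, $e_1 \mapsto \gamma_\phi^R$. The generator $e_2$, coming from the extra $\Z$-grading on $M_\phi^R = (\Om(\Tt_R)_{cdg} \otimes \A) \# \Z$, maps to the length-$1$ Hochschild cocycle $\delta_\phi^R$ given on $hom_{M_\phi^R}(b_1, b_2) = \bigoplus_g hom_{\Om(\Tt_R)_{cdg} \otimes \A}(g(b_1), b_2)$ by $(f \otimes g) \mapsto g \cdot (f \otimes g)$.

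Third, since both source and target are free of rank $2$, Nakayama reduces the question to showing that $\gamma_\phi, \delta_\phi$ form a $\C$-basis of $HH^1(M_\phi) \cong \C^2$ at $q = 0$. Tracing through the restriction $i^*\colon CC^*(M_\phi) \to CC^*(\Om(\Tt_0)_{dg} \otimes \A, M_\phi)$: on the one hand $\gamma_\phi$ restricts to $\gamma_\Om \otimes 1$, a generator of the ``kernel'' summand, by its very definition as the descent of $\gamma_\Om \otimes 1$; on the other hand $i^*\delta_\phi = 0$ since the embedding $\Om(\Tt_0)_{dg} \otimes \A \hookrightarrow M_\phi$ lands in the weight-$0$ component where $\delta_\phi$ vanishes. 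Hence $\delta_\phi$ falls entirely into the ``cokernel'' summand, and everything reduces to showing $\delta_\phi \neq 0$ there.

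The main obstacle is this last non-vanishing. My plan is to deduce it from the infinitesimal form of Lemma \ref{uniinjective}: repeating the computation of $HH^0(M_\phi^R, {}_{z_2}(M_\phi^R)_1)$ there with $z_2 = 1+\epsilon$ over the dual numbers $R[\epsilon]/(\epsilon^2)$ identifies the leading-order obstruction to extending the identity quasi-isomorphism at $z_2 = 1$ with the image of $\delta_\phi^R$ in the cokernel summand, and this must be non-zero since the same computation shows the obstruction is strictly non-zero at every honest $z_2 \neq 1$. A direct alternative notes that a length-$0$ cochain $\phi_0 \in CC^0(M_\phi)$ bounding $\delta_\phi$ would be a choice of $\phi_0(b) \in M_\phi(b,b) = \bigoplus_g hom_{\Om(\Tt_0)_{dg} \otimes \A}(g(b), b)$ whose commutator with the weight-$1$ generator of the $\Z$-action equals that generator --- a formal logarithm of the grading, incompatible with convergence in the direct sum defining $M_\phi$.
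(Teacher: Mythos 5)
Your plan follows the paper's own argument in all its structural steps: the cocone long exact sequence from Proposition \ref{CCtorus} producing the short exact sequence you display, the identification $e_1 \mapsto \gamma_\phi^R$ via Corollary \ref{postgammacor} and $e_2 \mapsto \gamma_2^R$ (your $\delta_\phi^R$), the Nakayama reduction to $q=0$, and the observation that $i^*\gamma_\phi = \gamma_\Om\otimes 1 \neq 0$ while $i^*\gamma_2 = 0$. You make the Nakayama step and the split structure of the SES explicit, but they are present implicitly in the paper.

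The one place you genuinely improve on the paper's exposition is your observation that the two restriction computations alone only show $\gamma_\phi$ is not proportional to $\gamma_2$, and that independence still requires $\gamma_2 \neq 0$ --- the paper asserts independence without flagging this. However, neither of your two proposed closings is quite tight. For the dual-number route, the inference ``the obstruction is nonzero at every honest $z_2 \neq 1$, hence its derivative at $z_2 = 1$ is nonzero'' is not valid in general; what actually saves the argument is the explicit form computed in the proof of Lemma \ref{uniinjective}, where the twisted $\Z$-action on $HH^0(\Om(\Tt_R)_{cdg}\otimes\A)\cong R$ is multiplication by $z_2$, whose derivative at $z_2=1$ is visibly $1$ --- you need to extract exactly that linear-order nonvanishing, not a pointwise one. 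For the ``formal logarithm'' alternative: a putative primitive $\eta\in CC^0(M_\phi)$ with $d\eta=\gamma_2$ is \emph{not} just a length-$0$ cochain $\eta^0\in\prod_b M_\phi^0(b,b)$; the length-$1$ piece $(d\eta)^1$ also receives a contribution from $\eta^1$ through $\mu^1$, so ruling out $\eta^0$ by itself does not rule out a primitive. One can repair this by first restricting to the weight-zero (extra-grading-invariant) part of the Hochschild complex, but it is cleaner to just recognize $\gamma_2$, up to a nonzero scalar, as the image of $1\in HH^0(\Om(\Tt_0)_{dg}\otimes\A)$ under the connecting map of the cocone LES, which is injective since $\psi_*-1$ vanishes on $HH^0$; this requires tracing once through the quasi-isomorphism of (\ref{eq:equivariancecocone}).
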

\begin{proof}
We know both sides are isomorphic to $R^2$. It is enough to show the restriction $\C^2\rightarrow HH^1(M_\phi,M_\phi)$ is an isomorphism. This map is given by the deformation class of a group-like family parametrized by $\G\times\G$. The restriction of the family to $\G\times\{1 \}$ corresponds to the action in Remark \ref{actiononmt1}. This restricted family carries a $\G$-equivariant structure; where $\G$ action on $M_\phi\otimes M_\phi^{op}\otimes \Om(\G)$ is trivial on the first factor, by $z:t\mapsto zt$ on $\Om(\G)$ and as in Remark \ref{actiononmt1} on the second. By a version of Lemma \ref{followinfi}, we can show the restricted family follows $1\otimes\gamma_\phi\in HH^1(M_\phi^e,M_\phi^e)$. Its restriction to diagonal bimodule gives $\pm\gamma_\phi\in HH^1(M_\phi,M_\phi)$; hence, the image of $(z\partial_z,0)\in Lie(\G\times\G)$ is $\pm\gamma_\phi$. The sign depends on the identification of $RHom^*_{M_\phi^e}(M_\phi^e,M_\phi^e)$ with $HH^*(M_\phi,M_\phi)$.
%Elaborated in the Informal notes see Sections: an isomorphism between CC...(aka \ref{sec:ccisohom} labelled there) and the follwing section Comparison of defomration class  and infinitesimal actions for group actions

Similarly, consider the restriction to $\{1 \}\otimes \G$. The infinitesimal action $\gamma_2:=(0,z\partial_z)^\#$ is a 1-cocycle on $M_\phi^R$ that acts by $n\in \Z$ on degree $n$ morphisms with respect to extra grading($\gamma_2^1(f)=nf$ for $ |f|=n$, $\gamma_2^i=0$ for $i\neq 1$) and as before the family restricted to $\{1\}\times\G$ follows $\pm\gamma_2$ (it follows $1\otimes \gamma_2$ to be precise, and the restriction to diagonal gives $\pm\gamma_2$). 

We want to show $\gamma_\phi$ and $\gamma_2$ are independent. By the proof of Proposition \ref{CCtorus}, we have a long exact sequence \begin{equation}\dots\rightarrow HH^0(\Om(\Tt_0)_{dg}\otimes \A)\xrightarrow{\tr_*\otimes\phi_*-1=0}HH^0(\Om(\Tt_0)_{dg}\otimes \A)\rightarrow \atop HH^1(M_\phi)\rightarrow HH^1(\Om(\Tt_0)_{dg}\otimes\A)\rightarrow\dots \end{equation}
$\gamma_\phi\in HH^1(M_\phi)$ maps to $\gamma_\Om\otimes 1$ and the image is non-zero. On the other hand, the map on the lower line is the equivalent to \begin{equation}HH^1((\Om(\Tt_0)_{dg}\otimes\A)\#\Z,(\Om(\Tt_0)_{dg}\otimes\A)\#\Z )\rightarrow HH^1(\Om(\Tt_0)_{dg}\otimes\A,(\Om(\Tt_0)_{dg}\otimes\A)\#\Z ) \end{equation}and the restriction of $\gamma_2$ to $\Om(\Tt_0)_{dg}\otimes \A$ is zero since the functor $\Om(\Tt_0)_{dg}\otimes \A\rightarrow (\Om(\Tt_0)_{dg}\otimes \A)\#\Z$ maps to degree $0$ part (with respect to extra grading). Thus, $\gamma_\phi$ and $\gamma_2$ are independent and this finishes the proof.
\end{proof}
\begin{lem}\label{equalizer}
Let $\B$ and $G=G_0\times_{\C}Spf(R)$ be as above and let $\rho$ and $\rho'$ be two group-like families of bimodules parametrized by $G$. Further assume $\B|_{q=0}$ is smooth and proper in each degree and $HH^0(\B)=R$. Assume $\rho$ and $\rho'$ can be represented as objects of $tw^\pi(\B^e\otimes \Om(G))$, where the tensor product is over $R$ and completed as usual (more precisely, $\rho$ and $\rho'$ are direct summands of families of twisted complexes in the sense of \cite{flux}). Then there is a natural formal subgroup scheme $S\subset G$ whose $R$-points are given by $\{x\in G(R):\rho_x\simeq \rho'_x \}$. Moreover, $S$ is closed.
%that is flat over $R$ and
\end{lem}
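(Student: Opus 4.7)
The subgroup property of $S(R)$ is immediate from the group-like condition: given $R$-linear isomorphisms $\alpha_x:\rho_x\simeq \rho'_x$ and $\alpha_y:\rho_y\simeq \rho'_y$, the identifications $f_1^*\rho\otimes_{\B_T}f_2^*\rho\simeq m^*\rho$ and similarly for $\rho'$ yield, after pulling back along $(x,y)\in G(R)\times G(R)$, an isomorphism $\rho_{xy}\simeq \rho_x\otimes_{\B_R}\rho_y\simeq \rho'_x\otimes_{\B_R}\rho'_y\simeq \rho'_{xy}$. Inverses are handled in the same way using $\rho_{x^{-1}}\simeq \rho_x^{-1}$. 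So modulo constructing $S$ scheme-theoretically, the subgroup property is automatic.

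For the subscheme structure I would work with $T:=\Om(G)$ and the hom complex
\[
C^*:=hom^*_{(\B^e)_T}(\rho,\rho')
\]
of $T$-modules. Since $\rho$ and $\rho'$ are representable (as direct summands of) twisted complexes over $\B^e\otimes T$, and $\B|_{q=0}$ is smooth and proper in each degree, an analogue of Corollary \ref{cptfamilycor} with $T$ in place of $A_R$ shows that $C^*$ is cohomologically bounded below and that each $H^i(C^*)$ is finitely generated over $T$. View $\mathcal{H}^0:=H^0(C^*)$ as a coherent sheaf on $\mathrm{Spf}(T)=G$ and define $S:=\mathrm{Supp}(\mathcal{H}^0)$, a closed formal subscheme of $G$. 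The subgroup property of the $R$-points then upgrades $S$ to a formal subgroup scheme (one can check the group axioms via the faithfulness of the $R$-point functor together with the group-like identities, after possibly extending scalars).

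The main obstacle is matching $S(R)$ with $\{x\in G(R):\rho_x\simeq\rho'_x\}$. One inclusion is routine: an isomorphism $\rho_x\simeq \rho'_x$ produces a nonzero element in the fiber $\mathcal{H}^0\otimes_T k(x)$, whence by Nakayama the stalk at $x$ is nonzero, so $x\in S$. The harder direction asks that at every $x$ in $\mathrm{Supp}(\mathcal{H}^0)$ there actually exists an isomorphism, not merely a nonzero morphism. Here I would first reduce to the case $\rho=\B_T$ by tensoring with $\rho^{-1}$, so that we study a group-like family $\mathfrak{M}:=\rho^{-1}\otimes_{\B_T}\rho'$ with $\mathfrak{M}_e\simeq \B$, and seek the locus where $\mathfrak{M}_x\simeq \B_R$. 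Invertibility of $\mathfrak{M}_x$ gives $\Hom^0(\mathfrak{M}_x,\mathfrak{M}_x)\simeq HH^0(\B_R)=R$, so any nonzero morphism $s:\B_R\to \mathfrak{M}_x$ induces an endomorphism $s\otimes 1_{\mathfrak{M}_x^{-1}}$ of $\mathfrak{M}_x\otimes \mathfrak{M}_x^{-1}\simeq \B_R$ that lies in $R$; combined with the composition pairing and the assumption $HH^0(\B)=R$, this pins down $s$ up to an $R$-scalar and forces it to be an isomorphism.

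The hard part of the argument is thus the final rigidity step: a nonzero morphism between invertible bimodules need not be an isomorphism in general, and the argument must genuinely use both $HH^0(\B)=R$ and invertibility to promote nonvanishing of the stalk of $\mathcal{H}^0$ to an actual isomorphism at $x$. The key technical input is the tensor--Hom adjunction $\hom^*(\mathfrak{M}_x,\mathfrak{M}_x)\simeq \hom^*(\B_R,\mathfrak{M}_x\otimes \mathfrak{M}_x^{-1})$ together with semi-continuity (Lemma \ref{semicont}) to pass between fibers and stalks; everything else is formal bookkeeping in the category of families parametrized by $\mathrm{Spf}(T)$.
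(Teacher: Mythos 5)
Your construction of $S$ as $\mathrm{Supp}(H^0(C^*))$ does not work, and the gap is exactly the one the paper's degeneracy-locus machinery is designed to close. For a complex $C^*$ of free $T$-modules the support of $H^0(C^*)$ as a coherent sheaf on $G$ is \emph{not} the locus of points $x$ where $H^0(C^*_x) \neq 0$. The universal coefficients spectral sequence gives a map $H^0(C^*)\otimes_T k(x)\twoheadrightarrow E_\infty^{0,0}\hookrightarrow H^0(C^*_x)$, so neither implication between ``$H^0(C^*)\otimes k(x)\neq 0$'' and ``$H^0(C^*_x)\neq 0$'' is automatic. A concrete example: $C^*\colon T\xrightarrow{\,t\,}T$ placed in degrees $0,1$ over $T=\C[t]$ has $H^0(C^*)=\ker(t)=0$, hence empty support, while $H^0(C^*_x)=\ker(t_x)$ is one-dimensional at $x=0$. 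So your ``routine'' inclusion --- an isomorphism $\rho_x\simeq\rho'_x$ giving a nonzero element of $H^0(C^*)\otimes k(x)$ --- does not hold: what you get is a class in $H^0(C^*_x)$, which need not lift to (or survive from) $H^0(C^*)\otimes k(x)$. This is precisely why the paper works with matrix-rank conditions on the differentials $d^{-1},d^0$ (the degeneracy loci $S_m(C^*)$ of Green--Lazarsfeld type), which \emph{do} cut out the fiberwise cohomology-rank conditions.

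Two further issues follow. First, the correct locus ``$\dim H^0(C^*_x)\geq 1$'' is closed but too big; the paper first cuts to the locally closed stratum where all four hom spaces among $\rho_x,\rho'_x$ have rank exactly $1$, and then to the open sublocus (via Lemma \ref{balik}) where a chosen generator extends to a closed local section whose composition is invertible. Only after that is $S$ recognized as locally closed, and closedness is obtained from the general fact that a locally closed subgroup scheme is closed --- not directly from coherence. Second, checking that the $R$-points of $S$ form a subgroup does not equip $S$ with the structure of a formal subgroup scheme: a formal scheme over $R$ is not determined by its $R$-points. The paper instead provides a functorial description of $S$ on test algebras $T\in Alg_{R,f}$ (algebras where $q$ is nilpotent) and verifies that $S$ is a subgroup \emph{functor} of $G$; the subgroup property at the level of $R$-points that you establish at the start is a consequence, not an input. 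Your rigidity step using $HH^0(\B)=R$, invertibility, and the tensor--hom adjunction is in the right spirit and does appear in the paper's argument for showing surjectivity of the composition map, but it has to be carried out locally on the correct locally closed stratum $S_1$, not on $\mathrm{Supp}(\mathcal{H}^0)$.
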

\begin{proof}
By choosing a minimal model for $\B|_{q=0}$ and considering the corresponding deformation, we may assume $\B(b,b')$ is a bounded below complex of finite rank free $R$-modules. Hence, choosing representatives for $\rho$ and $\rho'$ by twisted complexes, we can assume \begin{equation}hom^\cdot_{(\B^e)^{mod}_{\Om(G)}}(\rho,\rho')=:hom^\cdot(\rho,\rho')\end{equation}(and other hom-complexes among $\rho$ and $\rho'$) is a bounded below complex of finitely generated $\Om(G)$-modules.
Here,  $(\B^e)^{mod}_{\Om(G)}$ is the category of families of $\B$-bimodules parametrized by $\Om(G)$. 

Given a bounded below complex $C^*$ of finitely generated, free $\Om(G)$-modules, we can define ``a closed locus of points such that $rkH^0(C^*_x )\geq m$" as follows: consider \begin{equation}\dots\rightarrow C^{-1}\xrightarrow{d^{-1}}C^0\xrightarrow{d^0}C^1\rightarrow\dots \end{equation}Let $C^0\cong \Om(G)^r$. We wish to define the locus of points $x$ where \begin{equation}rk(d^{-1}_x)+rk(d^0_x)\leq r-m\end{equation} where $rk$ is the matrix rank. This is the same as \begin{equation}\bigcup_{a+b=r-m+2} \{rk(d^{-1}_x)<a\text{ and }rk(d^{0}_x)<b \}\end{equation} and hence can be defined by the ideal \begin{equation}\label{ideal}\bigcap_{a+b=r-m+2} (I_a(d^{-1})+I_b(d^0))\end{equation} where $I_k(d^i)$ denotes the ideal generated by $k\times k$ minors of $d^i$, where $d^i$ is considered as an $\Om(G)$-valued matrix
(or alternatively one can realize it as \begin{equation}\bigcap_{\alpha+\beta=r-m+1 }\{rk(d^{-1}_x)<\alpha \text{ or } rk(d^{0}_x)<\beta \}\end{equation} as in \cite{greenlazarsfeld}).
Borrowing the terminology of \cite{greenlazarsfeld}, denote the (formal) subscheme defined by the ideal (\ref{ideal}) by $S_m(C^*)$. Let \begin{equation}S_1=S_1(hom^\cdot(\rho,\rho) )\cap S_1(hom^\cdot(\rho',\rho) )\cap S_1(hom^\cdot(\rho,\rho') )\cap S_1(hom^\cdot(\rho',\rho') )\setminus\atop \big(S_2(hom^\cdot(\rho,\rho) )\cup S_2(hom^\cdot(\rho',\rho) )\cup S_2(hom^\cdot(\rho,\rho') )\cup S_2(hom^\cdot(\rho',\rho') ) \big)\end{equation}  which deforms the locus of points $x\in G_0=G|_{q=0}$ where $H^0(hom^\cdot(\rho_x,\rho'_x) )$ etc. are of rank $1$. Note as we are working with formal schemes, this is not immediately a defining condition. Note also we do not assert $R$-flatness of $S_1$. $S_1$ is an (formal) open subscheme of the closed intersection of $S_1(hom^\cdot(\rho,\rho'))$ and so on. 

Now, let us define $S$ as a formal open subscheme of $S_1$. Roughly, we want to define a subscheme whose $K$-points satisfy the property that the composition \begin{equation}H^0(hom^\cdot(\rho'_x,\rho_x) )\otimes_{\Om(G)} H^0(hom^\cdot(\rho_x,\rho'_x) )\rightarrow H^0(hom^\cdot(\rho_x,\rho_x) ) \end{equation} is surjective, where $R/q\subset K$. Let $x\in S_1(K)$ be a $K$-point, where $R/q=\C\rightarrow K$ is a field extension. Let $v\in hom^0(\rho_x,\rho'_x)$ be a closed element generating $H^0(hom^\cdot(\rho_x,\rho'_x))=K$. We can extend $v$ to $\tilde v$ defined on a neighborhood of $x\in S_1$ such that $d^0(\tilde v)=0$. The existence of such a $\tilde v$ follows from Lemma \ref{balik}.
Pick $\tilde v'$, a closed section of $hom^0(\rho',\rho)$ satisfying the same property. The composition $\tilde v'\circ\tilde v$ generates the cohomology $H^0(hom^\cdot(\rho_x,\rho_x) )$ and by the proof of Lemma \ref{balik} it is invertible in a neighborhood of $x$ in $S_1$. Same holds for the composition $\tilde v\circ \tilde v'$ as well. Hence, there exists a neighborhood $\Omega $ of $x$ in $S_1$ such that $\rho|_{\Omega}\simeq \rho'|_\Omega$. Define $S$ to be the union of open subsets $\Omega\subset S_1$ such that $\rho|_{\Omega}\simeq \rho'|_\Omega$.

We want to characterize $S$ functorially. A formal scheme $\mathfrak X$ over $R=\C[[q]]$ be seen as an ind-scheme, presented as the colimit of $\mathfrak X\times_R \C[[q]]/(q^{n+1})$. See \cite[\href{http://stacks.math.columbia.edu/tag/0AIT}{Tag 0AIT}]{stacks-project}, \cite{33079} or \cite{236351}. More explicitly, we can realize it as a functor over $Alg_\C$, algebras over $\C$ or even better over $Alg_{R,f}$ algebras over $R$ such that $q$ maps to a nilpotent element (such as $R/(q^m)$). Given $T\in Alg_{R,f}$, the $R$-linear maps $Spec(T)\rightarrow S\subset G$ are maps $Spec(T)\rightarrow G$ such that $\rho_T$ and $\rho'_T$ are locally isomorphic over $T$. Indeed, if we have such a map  $f:Spec(T)\rightarrow S$ and a point $x\in Spec(T)$ we can choose a neighborhood $f(x)\in\Omega \subset S$ such that $\rho_\Omega\simeq\rho'_\Omega$ and thus $\rho_{f^{-1}(\Omega)}\simeq \rho'_{f^{-1}(\Omega)}$; hence they are locally isomorphic. On the other hand, if we assume $\rho_T$ and $\rho_T'$  are locally isomorphic, we can easily check the rank conditions so that $f$ factors through $S_1\subset G$. Local surjectivity is also easy to check; thus, it is actually a map into $S$.

From this functorial description, it is easy to see that $S$ is a subgroup functor of $G$ over $R$ (hence, it has a flat unit section over $R$). Clearly, it is locally closed. But locally closed subgroup schemes (such as $S|_{q=0}$) are actually closed. Hence, we are done.
\end{proof}
\begin{lem}\label{balik}
Let $U,V,W$ be finite dimensional vector spaces over $\C$ and \begin{equation}U\xrightarrow{ A(q,s)} V\xrightarrow{ B(q,s)} W\end{equation}	be a family of matrices parametrized by a formal scheme $\mathfrak X'$ over $R$ (we do not assume $R$-flatness). Assume $B\circ A=0$ identically and restrict to locus \begin{equation}``\mathfrak X:=\bigcup_{a+b=r+1}\{rkA< a,rk B<b \}\setminus\bigcup_{a+b=r}\{rkA< a,rk B<b \} "\end{equation}where $r=dim V$. Then given a point $x=(q=0,s=s_0)$ of $X=\mathfrak X|_{q=0}$, there exists a neighborhood $\Omega$ of $x$ inside $\mathfrak X$ ($X$ and $\mathfrak X$ has the same underlying topological space) and a section $v(q,s)$ defined over $\Omega$ (i.e. a family of vectors in $V$ parametrized by $\Omega$) which restricts to a generator of $ker (B(0,s))/Im (A(0,s))$ for all $(0,s)\in \Omega$.
\end{lem}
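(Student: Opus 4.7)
The plan is to exploit lower semicontinuity of rank to show that $rk\,A$ and $rk\,B$ are locally constant on $\mathfrak{X}$ near $x$, and then to extract the section $v$ by elementary linear algebra via Cramer's rule over $\mathcal{O}_\Omega$.

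Set $a_0=rk\,A(x)$ and $b_0=rk\,B(x)$; since $x\in\mathfrak{X}$, $a_0+b_0=r-1$. The loci $\{rk\,A<k\}$ and $\{rk\,B<k\}$ in $\mathfrak{X}'$ are closed, being cut out by vanishing of $k\times k$ minors, so on some neighborhood of $x$ in $\mathfrak{X}'$ we have $rk\,A\ge a_0$ and $rk\,B\ge b_0$. Intersected with $\mathfrak{X}$, where by definition $rk\,A+rk\,B\le r-1$, this forces $rk\,A=a_0$ and $rk\,B=b_0$ identically on a neighborhood $\Omega$ of $x$ in $\mathfrak{X}$. Then I would pick a $b_0\times b_0$ minor of $B$ nonzero at $x$ — after shrinking $\Omega$ the minor becomes a unit on $\Omega$, as the vanishing locus is closed and misses $x$ — and apply Cramer's rule to exhibit an explicit $\mathcal{O}_\Omega$-basis $e_1,\dots,e_{a_0+1}$ of $\ker B$. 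The analogous construction for $A$ produces a basis $f_1,\dots,f_{a_0}$ of $\operatorname{Im} A$ as a free $\mathcal{O}_\Omega$-module of rank $a_0$.

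Because $BA=0$, each $f_j$ lies in $\ker B$ and can be written as $f_j=\sum_{i=1}^{a_0+1} M_{ij}\,e_i$ for an $(a_0+1)\times a_0$ matrix $M$ over $\mathcal{O}_\Omega$; the constancy $rk\,A=a_0$ on $\Omega$ forces $M$ to have pointwise rank $a_0$, so some $a_0\times a_0$ minor of $M$ is a unit on $\Omega$ after another shrink. Elementary column operations (change of basis on the $\{f_j\}$) reduce $M$ to $\bigl(\begin{smallmatrix} I_{a_0} \\ m' \end{smallmatrix}\bigr)$ for some row $m'$, and then the change of basis $e'_i:=e_i+m'_i\,e_{a_0+1}$ for $i\le a_0$ together with $e'_{a_0+1}:=e_{a_0+1}$ brings $M$ to $\bigl(\begin{smallmatrix} I_{a_0} \\ 0 \end{smallmatrix}\bigr)$. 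Set $v(q,s):=e'_{a_0+1}$: this is a section of $\ker B$ over $\Omega$, and in the basis $\{e'_1,\dots,e'_{a_0+1}\}$ one has $\operatorname{Im} A=\operatorname{span}(e'_1,\dots,e'_{a_0})$, so $v(0,s)$ generates $\ker B(0,s)/\operatorname{Im} A(0,s)$ for every $(0,s)\in\Omega$, as required.

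The main obstacle is the first paragraph: handling the formal-scheme subtleties so that ``nonvanishing at $x$'' implies ``invertible on a neighborhood'' and so that the pointwise rank behaves as expected. These are routine in the intended application to Lemma \ref{equalizer}, where $\mathfrak{X}'$ sits inside the formal algebraic group $G=G_0\times_\C Spf(R)$ and is covered by formal spectra of Noetherian topological rings; once this is in place, the remainder of the argument is standard linear algebra packaged by Cramer's rule.
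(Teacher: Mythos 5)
Your proposal is correct and follows essentially the same approach as the paper: rank constancy on $\Omega$ near $x$ via lower semicontinuity combined with the upper bound $rk\,A + rk\,B \le r-1$ built into $\mathfrak{X}$, then normalization of $B$ using an invertible $b_0\times b_0$ minor to describe $\ker B$ as a free module. The only difference is in the final step: the paper picks a section $v$ with $B v=0$ and $v(0,s_0)\notin\operatorname{Im}A(0,s_0)$ and then argues that the latter condition is open in $s$ to shrink $\Omega$, whereas you also normalize $A$ (producing the basis $f_1,\dots,f_{a_0}$ of $\operatorname{Im}A$ via a second invertible minor) and perform the change of basis $e'_i=e_i+m'_i e_{a_0+1}$ so that $v=e'_{a_0+1}$ visibly generates the cohomology at \emph{every} point of $\Omega$, with no further shrinking needed. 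Both are sound; your variant is slightly more explicit at the cost of one additional Cramer's-rule normalization.
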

\begin{proof}
The locus $\mathfrak X$ is essentially the locus of points at which the cohomology $kerB/ImA$ is exactly of rank $1$. Let $x=(0,s_0)$ satisfy $rkA(x)<a,rkB(x)<b$ for $a+b=r+1$. When we perturb $x$, $nullity(B)$ may only decrease and $rk(A)$ may only increase, but if this happens the rank of cohomology decreases too. Thus, $rk A$ and $rk B$ are constant in a neighborhood of $x$ inside $\mathfrak X$. 
%in a neigh. inside $\mathfrak X$ secretly assumes this in the $q$-direction too
%this is easy to see from the defining formulae too, clearly $rkA\not < a-1$ and $rk B\not < b-1$
In other words, it has a neighborhood $\Omega\subset\mathfrak X$ on which $a\times a$ minors of $A(q,s)$ and $b\times b$ minors of $B(q,s)$ all vanish and there exists an $b-1\times b-1$ minor of $B$ which is invertible on $\Omega$. By row and column operations we can assume this minor is the upper-left principal minor and the upper-left $b-1\times b-1$ square submatrix of $B$ is the identity matrix. By more row and column operations
%done in parameters $q,s$
%this changes the matrix A as well 
we can assume the rest of the entries of $B$ are $0$. Hence, $ker B$ has the simple description as the column vectors with vanishing first $a-1$-entries. This implies there exists $v(q,s),(q,s)\in\Omega$ such that $B(q,s)v(q,s)=0$ and $v(0,s_0)\not\in Im(A(0,s))$. $Im(A(0,s_0))$ is generated by columns of $A(0,s_0)$ and the condition $v(0,s_0)\not\in Im(A(0,s_0))$ can be phrased as the columns of $[A(0,s_0),v(0,s_0)]$ generate the subspace of vectors with vanishing first $a-1$ entries. Hence, it is an open condition and by further shrinking $\Omega $ we can ensure $v(0,s)$ generates the cohomology at $(0,s)\in\Omega$.
\end{proof}
\begin{rk}
Note the statement of the Lemma \ref{equalizer} does not immediately imply flatness of $S$ over $R$; however, we believe this to be true.	
\end{rk}
\begin{rk}
There is a possibility that it is unnecessary to assume the representability by objects $tw^\pi(\B^e\otimes\Om(G))$ as it may be a corollary of smoothness of $\B|_{q=0}$.
\end{rk}
\begin{prop}\label{liftingcoroot}
Let $\rho$ be a group-like family of invertible bimodules over $M_\phi^R$ parametrized by $\widehat{\G}$. There exists a homomorphism $\eta:\widehat{\G}\rightarrow \widehat{\G\times\G}$ of formal group schemes over $R$ such that $\rho$ is the pull-back of the family $\rho_{uni}$ (see Example \ref{2dfamily}) under $\eta$.
\end{prop}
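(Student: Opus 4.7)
The plan is to apply Lemma \ref{equalizer} to the tensor quotient $\sigma := \pi_1^*\rho \otimes_{M_\phi^R} (\pi_2^*\rho_{uni})^{-1}$, viewed as a group-like family of invertible bimodules over $M_\phi^R$ parametrized by $G := \widehat{\G} \times_{Spf(R)} \widehat{\G \times \G}$, where $\pi_1, \pi_2$ are the projections to the two factors. Comparing $\sigma$ with the trivial family $M_\phi^R \otimes \Om(G)$, the lemma produces a closed formal subscheme $S \subset G$ whose $R$-points are exactly those $(w, z_1, z_2)$ with $\rho_w \simeq (\rho_{uni})_{(z_1, z_2)}$. The multiplicativity inherited from $\rho$ and $\rho_{uni}$ being group-like forces $S$ to be a subgroup of $G$. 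The hypothesis of Lemma \ref{equalizer} that $\sigma$ and the trivial family are representable in $tw^\pi(M_\phi^{R,e} \otimes \Om(G))$ should follow from the resolution techniques used in Proposition \ref{smoothness} applied fiberwise, together with invertibility of the bimodules in play.

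The proposition reduces to showing $\pi_1 : S \to \widehat{\G}$ is an isomorphism of formal group schemes over $R$: once this is established, $\eta := \pi_2 \circ \pi_1^{-1} : \widehat{\G} \to \widehat{\G \times \G}$ is a homomorphism of formal group schemes, and by construction $(\mathrm{id},\eta)^*\sigma$ is trivial, giving $\rho \simeq \eta^*\rho_{uni}$. Injectivity of $\pi_1$ on $T$-points for any topologically finitely generated $R$-algebra $T$ follows from Lemma \ref{uniinjective}: if $\rho_w \simeq (\rho_{uni})_{(z_1, z_2)} \simeq (\rho_{uni})_{(z_1', z_2')}$, then $(z_1, z_2) = (z_1', z_2')$. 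At the Lie algebra level, the deformation class analysis in Remark \ref{defact} identifies $Lie(S)$ with the kernel of the map
\begin{equation*}
Lie(\widehat{\G}) \oplus Lie(\widehat{\G \times \G}) \longrightarrow HH^1(M_\phi^R), \qquad (\alpha,\beta) \longmapsto Def(\rho)(\alpha) - Def(\rho_{uni})(\beta).
\end{equation*}
Since Lemma \ref{lieinjective} says $Lie(\widehat{\G \times \G}) \to HH^1(M_\phi^R)$ is an isomorphism, for every $\alpha \in Lie(\widehat{\G})$ there is a unique $\beta$ matching it; hence $\pi_1$ induces an isomorphism $Lie(S) \xrightarrow{\cong} Lie(\widehat{\G}) = R$.

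To upgrade this to an honest isomorphism of formal group schemes, one first reduces mod $q$: $S|_{q=0} \hookrightarrow \G$ is a closed embedding of algebraic groups whose Lie algebras match in dimension, so $S|_{q=0} \to \G$ is an isomorphism; flatness of $S$ over $R$ then promotes this to an isomorphism over $R$ via an inductive Nakayama-type argument along the $q$-adic filtration of $S$, using the Lie algebra isomorphism at each order. Finally, the integrality of $\eta$—that $\eta$ has the form $z \mapsto (z^m, z^n)$ for some $(m,n) \in \Z^2$—is automatic, because the group-like elements of the Hopf algebra $\C[t,t^{-1}][[q]]$ over $R$ are exactly $\{t^n : n \in \Z\}$ (any putative unit-scalar twist $t^n \cdot e^{qa}$ fails the comultiplication identity), so every formal group scheme homomorphism between completed split tori over $Spf(R)$ is an integer character pair.

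The main obstacle will be the last upgrade: passing from an isomorphism on $R$-points and on Lie algebras to an isomorphism of formal schemes over $R$. Lemma \ref{equalizer} produces $S$ as a closed formal subgroup but does not assert flatness over $R$, so an extra argument is needed—either by noting that a subgroup scheme with full-rank tangent space inside a smooth formal group is itself smooth, or by directly constructing a section of $\pi_1$ order by order in $q$ using the Lie algebra isomorphism to integrate obstructions. A secondary obstacle is the representability check for applying Lemma \ref{equalizer}, which requires adapting the resolution argument of Proposition \ref{smoothness} to families parametrized by $G$.
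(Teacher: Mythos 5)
Your strategy is essentially the paper's: apply Lemma \ref{equalizer} on the product group, invoke Lemma \ref{uniinjective} for injectivity on points and Lemma \ref{lieinjective} for the Lie algebra isomorphism, and reduce the proposition to showing that the resulting closed formal subgroup $S$ projects isomorphically onto $\widehat{\G}$. Two points need attention, one minor and one substantive.

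The minor one: you apply Lemma \ref{equalizer} to the tensor quotient $\sigma = \pi_1^*\rho\otimes(\pi_2^*\rho_{uni})^{-1}$ versus the trivial family, and assert $\sigma$ is group-like. But group-like-ness of a convolution $x\mapsto \rho_{a,x}\otimes\rho_{b,x}^{-1}$ would require $\rho_{b,z}^{-1}\otimes\rho_{a,w}\simeq\rho_{a,w}\otimes\rho_{b,z}^{-1}$ for all $w,z$, i.e.\ that the two families commute, which is not known a priori. This is avoided by comparing the two pullbacks $\pi_1^*\rho$ and $\pi_2^*\rho_{uni}$ directly, which are each group-like because pullback along a group homomorphism preserves group-like-ness; the lemma is stated to accept two group-like inputs precisely so you don't need to form the quotient.

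The substantive gap is the one you flag yourself: upgrading from ``closed formal subgroup scheme $S\subset\widehat\G\times G$ with $S|_{q=0}\xrightarrow{\cong}\G$ and $Lie(S)\cong Lie(\widehat\G)$'' to ``$S\cong\widehat{\G}$ over $R$.'' Your first proposed remedy assumes $R$-flatness of $S$, but Lemma \ref{equalizer} gives no such flatness and the paper explicitly remarks this is not known. Your second remedy (a subgroup with full tangent space in a smooth formal group is smooth) is a Cartier-type statement that is valid for group schemes over a field of characteristic zero; over the non-reduced base $Spf(R)$ it does not apply in the $q$-direction without further argument. Your third remedy (integrate order by order in $q$) is the right shape of idea but is not carried out, and the obstruction theory here is nontrivial because $S$ is only known to be a closed subscheme. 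The paper resolves this without flatness by a direct Hopf-algebra computation: after showing $\Om(S)$ is a quotient $\C[z^\pm][[q]]/I$ with $I\subset(z-1)^2$ (from the unit section and the $\epsilon$-points), it uses the comultiplication $\Delta(z)=z_1z_2$ and the fact that $I$ is a coideal to show inductively $I\subset(z-1)^n$ for all $n$, hence $I=0$. That step is the actual content of the proposition and is missing from your argument. Your closing observation about group-like elements of $\C[t^\pm][[q]]$ being integer characters is correct but belongs to the corollary classifying cocharacters, not to the proof of this proposition.
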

\begin{proof}
Let $G=\widehat{\G\times \G}$ and recall $\rho_{uni}$ denote the family in Example \ref{2dfamily}. Pulling back $\rho$ resp. $\rho_{uni}$ under projections $G\times\widehat{\G}\rightarrow\widehat{\G}$ resp. $G\times\widehat{\G}\rightarrow G$ we obtain two group-like families on $G\times\widehat{\G}$, which we denote by $\rho'$ resp. $\rho''$. Apply Lemma \ref{equalizer} to $\rho'$ and $\rho''$ to obtain a formal subgroup scheme $S$ of $G\times\widehat{\G}$, ``the locus of points such that $\rho_y'\simeq\rho_y''$''. Hence, \begin{equation}S(R)=\{(x,x')\in G(R)\times\widehat{\G}(R):\rho_{uni,x}\simeq \rho_{x'} \} \end{equation}
By Lemma \ref{uniinjective}, the map $S(R)\rightarrow\widehat{\G}(R)$ is injective. It is easy to prove a version of Lemma \ref{uniinjective} for the special fiber $q=0$ by using the same idea; thus, $S|_{q=0}(\C)=S(\C)\rightarrow\G(\C)$ is injective as well.

By the functorial description of $S$, the Lie algebra (i.e. $R[\epsilon]/(\epsilon^2)$-points that specialize to identity at $\epsilon=0$) of $S$ has a description as $Lie(G)\times_{HH^1(M^R_\phi)}Lie(\widehat{\G})$ and by Lemma \ref{lieinjective} the map $Lie(S)\rightarrow Lie(\widehat{\G})$ is an isomorphism. Similarly, $Lie(S|_{q=0})\rightarrow Lie(\widehat{\G}|_{q=0})$ is an isomorphism. 

Combined with the injectivity statement above, this shows $S|_{q=0}\rightarrow\G$ is an isomorphism. Being a formal closed subscheme of a formal affine scheme, $S=Spf(B)$ for some quotient $B$ of $\Om(G\times \widehat{\G})$ and the map $S\rightarrow \widehat{\G}$ corresponds to an algebra map $\C[z^{\pm}][[q]]\rightarrow B$ inducing an isomorphism $\C[z^{\pm}]\rightarrow B/qB$. Thus, the map $\C[z^{\pm}][[q]]\rightarrow B$ is surjective and $S$ can be seen as a formal affine subscheme of $\widehat{\G}$. One can prove surjectivity of $\C[z^{\pm}][[q]]\rightarrow B$ by lifting an element $b\in B$ step by step. This uses the fact that $\bigcap_{n}q^nB=0$, which follows from $q$-adic completeness of $B$.
%B is finitely generated over $B_1=\C[z_1^\pm,z_2^\pm,z^\pm][[q]]$ hence it is $q$-adically complete and by definition it is Hausdorff in $q$-adic topology, i.e. $\bigcap_{n}q^nB=0$. To see, completeness of $B$, can use finite generatedness result in \cite{comalg}.
%Or alternatively: Let $B=B_1/J$ where $B_1=\C[z_1^\pm,z_2^\pm,z^\pm][[q]]$. Let $\bar x\in \bigcap q^nB$ and choose elements $x_n\in B_1$ such that $q^nx_n\in B_1$ maps to $\bar x\in B$. Thus, $q^{n+1}x_{n+1}-q^nx_n\in J\cap q^nB_1$. By Artin-Rees lemma, there exists $n_0$ such that $J\cap q^nB_1=q^{n-n_0}(J\cap q^{n_0}B_1)\subset q^{n-n_0}J$ for all $n\geq n_0$. Hence, $q^{n+1}x_{n+1}-q^nx_n=q^{n-n_0}j_{n-n_0}$ for some $j_{n-n_0}\in J$. Thus, $q^nx_n=q^{n_0}x_{n_0}+j_0+qj_1+\dots+q^{n+1-n_0}j_{n+1-n_0}$ for $n\geq n_0$. The limit $n\rightarrow \infty$ exists and unique in $B_1$ and looking at the left hand side we conclude it is $0$. Looking at the left hand side we see it is $q^{n_0}x_{n_0}+\sum_{k\geq 0} q^kj_k$. Clearly, $\sum_{k\geq 0} q^kj_k\in J$(Choose finitely many generators $f_1,\dots ,f_r$ for $J$ and write each $j_n$ in terms of these. Then the infinite sum will converge termwise and will be an element of $(f_q,\dots ,f_r)$). To sum up, $q^{n_0}x_{n_0}+\sum_{k\geq 0} q^kj_k=0$ and $q^{n_0}x_{n_0}\equiv 0$ modulo $J$. This element represents $\bar x\in B=B_1/J$; hence, $\bar x=0$.

Thus, let $B=\C[z^\pm][[q]]/I$. The identity morphism $Spf(R)\rightarrow \widehat{\G}$ factors through $S$; hence, $I\subset (z-1)$. Moreover, the $R[\epsilon]/\epsilon^2$-points of $S$ that specialize to identity at $\epsilon=0$ are in correspondence with such points of $\widehat{\G}$. Thus, $I\subset((z-1)^2)$.
%These are the maps $B\rightarrow R[\epsilon]/\epsilon^2$ that send $z$ to $1+\epsilon \alpha$, for some $\alpha \in R$ such that the image of $I$ is $0$. In other words for $f\in I$, $f(1+\epsilon\alpha)=f(1)+\epsilon\alpha f'(1)=0$. For the correspondence to hold, this has to be the case for all $\alpha $, thus $f'(1)=$ for every $f\in I$ and $I\subset ((z-1)^2)$.
$S$ is a subgroup of $\widehat{\G}$ over $Spf(R)$, thus the comultiplication \begin{equation}\Delta:\C[z^\pm][[q]]\rightarrow\C[z^\pm][[q]]\otimes\C[z^\pm][[q]]\cong\C[z_1^\pm,z_2^\pm][[q]],z\mapsto z\otimes z\cong z_1z_2 \end{equation} should induce a map $B\rightarrow B\otimes B$. Let $f(z)=g(z)(z-1)^2\in I$. \begin{equation}\Delta(f(z))=\Delta(g(z))(z_1z_2-1)^2\in I\otimes\C[z^\pm][[q]]+\C[z^\pm][[q]]\otimes I\subset\atop ((z_1-1)^2,(z_2-1)^2)\subset\C[z_1^\pm,z_2^\pm][[q]]\end{equation}
As $(z_1z_2-1)^2=(z_1z_2-z_2+z_2-1)^2\equiv 2(z_1-1)(z_2-1)(mod((z_1-1)^2,(z_2-1)^2)  )$, \begin{equation}\Delta(g(z))(z_1-1)(z_2-1)\in ((z_1-1)^2,(z_2-1)^2) \end{equation}Thus, $\Delta(g(z))\in (z_1-1,z_2-1)$. Thus, $g(z)\in (z-1)$ and $f(z)\in((z-1)^3)$; hence, $I\subset ((z-1)^3)$. Inductively, $I\subset (z-1)^n$ for all $n$. This shows $I=0$ and the embedding $S\rightarrow\widehat{\G}$ is an isomorphism.
%Assume $I\subset ((z-1)^N)$. Let $f(z)=g(z)(z-1)^N\in I$. Similar to above, \begin{equation}\Delta(g(z))(z_1z_2-1)^N\in ((z_1-1)^N,(z_2-1)^N )\end{equation} and \begin{equation}(z_1z_2-1)^N\equiv N(z_1-1)z_2(z_2-1)^{N-1}+{N\choose 2}(z_1-1)^2z_2^2(z_2-1)^{N-2}+\atop\dots+N(z_1-1)^{N-1}z_2^{N-1}(z_2-1)\end{equation} modulo $ ((z_1-1)^N,(z_2-1)^N )$.
%Hence, \begin{equation}(z_1z_2-1)^N\equiv {N\choose i}(z_1-1)^iz_2^i(z_2-1)^{N-i} \end{equation} modulo the ideal generated by \begin{equation}(z_1-1)^N,(z_1-1)^{N-1}(z_2-1),\dots, \widehat{(z_1-1)^i(z_2-1)^{N-i}}, \dots,(z_2-1)^N\end{equation}Here $\widehat{(z_1-1)^i(z_2-1)^{N-i}}$ means this terms is omitted. Clearly $(z_1-1)^iz_2^i(z_2-1)^{N-i}\equiv (z_1-1)^i(z_2-1)^{N-i}$ modulo the same ideal. This shows \begin{equation}\Delta(g(z))(z_1-1)^i(z_2-1)^{N-i} \end{equation}is in the ideal generated by \begin{equation}(z_1-1)^N,(z_1-1)^{N-1}(z_2-1),\dots, \widehat{(z_1-1)^i(z_2-1)^{N-i}}, \dots,(z_2-1)^N\end{equation} and this implies $\Delta(g(z))\in (z_1-1,z_2-1)$. In other words, $g(z)\in(z-1)$ and $I\subset ((z-1)^{N+1})$
%The argument simplifies a lot of we consider $\C[t]$ with the comultiplication $t\mapsto t\otimes 1+1\otimes t=t_1+t_2$. In that case, if $I\subset (t^N)$, $f(t)=g(t)t^N\in I$, then $\Delta(g(t))(t_1+t_2)^N\in (t_1^N,t_2^N)\subset (t_1^N,\dots,\widehat{t_1^it_2^{N-i}},\dots,t_2^N)$ thus $\Delta(g(t))\in(t_1,t_2)$ and $g(t)\in (t)$.

Hence, we have a diagram $G\leftarrow S\xrightarrow{\cong}\widehat{\G}$ of groups and inverting the isomorphism a group homomorphism $\eta:\widehat{\G}\rightarrow G$. The pull-back of $\rho_{uni}$ along this map is the quasi-isomorphic to pull-back of $\rho''$ along $\eta\times 1:\widehat{\G}\rightarrow G\times\widehat{\G}$, which factors through $S$; hence, it is (locally) quasi-isomorphic to pull-back of $\rho'$ and thus to the family $\rho$ on $\widehat{\G}$(there is no non-trivial line bundle on $\widehat{\G}$ so local isomorphism is the same as isomorphism). This completes the proof.
\end{proof}
\begin{defn}
For a curved $A_\infty$-category $\B$ over $R=\C[[q]]$, define $L(\B)\subset HH^1(\B,\B)$ to be the set of           $<z\partial_z,Def(\rho)>$ for all group-like families $\rho$ parametrized by $\widehat{\G}$. In other words, it is the set of images of $(z\partial_z)^\#$ under the map $Lie(\widehat{\G})\rightarrow HH^1(\B,\B)$ induced by the deformation class of a group-like family $\rho$.
\end{defn}
\begin{cor}
$L(M_\phi^R)\subset HH^1(M_\phi^R,M_\phi^R)\cong R^2$ is a subgroup isomorphic to $\Z^2$ spanned by a basis of the free module $R^2$.	
\end{cor}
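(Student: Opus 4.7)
The plan is to combine Proposition \ref{liftingcoroot}, Lemma \ref{lieinjective}, and a classification of group homomorphisms $\widehat{\G} \to \widehat{\G \times \G}$ to identify $L(M_\phi^R)$ with the standard copy of $\Z^2$ inside $Lie(\widehat{\G \times \G}) \cong R^2 \cong HH^1(M_\phi^R)$.

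First, observe that the construction $\rho \mapsto \langle z\partial_z, Def(\rho)\rangle$ is functorial under pullback. More precisely, if $\rho$ is the pullback of a group-like family $\rho'$ parametrized by $H$ along a homomorphism $\eta : \widehat{\G} \to H$ of formal group schemes over $R$, then $\langle z\partial_z, Def(\rho)\rangle$ equals the image of $d\eta(z\partial_z) \in Lie(H)$ under the deformation class map $Lie(H) \to HH^1(M_\phi^R)$ of $\rho'$. This is immediate from the compatibility of pre-connections with pullback (the pre-connection on $\eta^*\rho'$ is the pullback of that on $\rho'$, and its differential pulls back similarly).

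Applying this to Proposition \ref{liftingcoroot}, every group-like family $\rho$ parametrized by $\widehat{\G}$ is of the form $\eta^*\rho_{uni}$ for some homomorphism $\eta : \widehat{\G} \to \widehat{\G \times \G}$ of formal group schemes over $R$. Therefore
\[
\langle z\partial_z, Def(\rho)\rangle = \iota\bigl(d\eta(z\partial_z)\bigr),
\]
where $\iota : Lie(\widehat{\G \times \G}) = R^2 \xrightarrow{\cong} HH^1(M_\phi^R)$ is the isomorphism from Lemma \ref{lieinjective}. Hence $L(M_\phi^R) = \iota\bigl(\{d\eta(z\partial_z) : \eta \in \operatorname{Hom}_R(\widehat{\G},\widehat{\G \times \G})\}\bigr)$.

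It remains to identify this set. A homomorphism $\eta$ is a pair of homomorphisms $\eta_1,\eta_2 : \widehat{\G} \to \widehat{\G}$ over $R$, each given by a group-like element $f \in \C[z^{\pm}][[q]]$, i.e.\ an element with $\Delta(f) = f \otimes f$ and $f(1) = 1$. Expanding $f = \sum_{n \in \Z} a_n(q) z^n$, the equation $f(z_1 z_2) = f(z_1) f(z_2)$ forces $a_n a_m = 0$ for $n \neq m$ and $a_n^2 = a_n$; since the only idempotents of the local ring $R$ are $0$ and $1$, exactly one $a_n$ is $1$ and the rest are $0$, so $f = z^n$ for a unique $n \in \Z$. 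Thus homomorphisms $\eta : \widehat{\G} \to \widehat{\G \times \G}$ correspond to pairs $(n,m) \in \Z^2$ via $\eta(z) = (z^n, z^m)$, and $d\eta(z\partial_z) = (n,m)$ in the basis of $Lie(\widehat{\G \times \G}) = R \cdot (z_1\partial_{z_1}) \oplus R \cdot (z_2\partial_{z_2})$. Hence $L(M_\phi^R) = \iota(\Z^2)$, which is the $\Z$-span of two elements $\iota(1,0), \iota(0,1) \in HH^1(M_\phi^R)$ forming an $R$-basis, completing the proof. The only step requiring any delicacy is the classification of group-like elements, which is handled by the idempotent argument above.
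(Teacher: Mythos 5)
Your proof is correct and follows essentially the same route as the paper: reduce via Proposition \ref{liftingcoroot} to pullbacks of $\rho_{uni}$ along cocharacters $\eta:\widehat{\G}\to\widehat{\G\times\G}$, invoke Lemma \ref{lieinjective} for the isomorphism on Lie algebras, and classify cocharacters as $\Z^2$. The only difference is that you carry out the classification of cocharacters explicitly via group-like elements and the local-ring idempotent argument, whereas the paper simply remarks that ``the proof is exactly the same as $\mathbb G_{m,\C}$.''
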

\begin{proof}
By Prop \ref{liftingcoroot}, the deformation class of a group-like family parametrized by $\widehat{\G}$ can be computed as the pull-back of $Def(\rho_{uni})$ under a group homomorphism $\widehat{\G}\rightarrow G\cong\widehat{\G}\times\widehat{\G}$. It is easy to classify such maps as the rank $2$ coroot lattice inside $R^2$ (the proof is exactly the same as $\mathbb G_{m,\C}$) and this observation together with Lemma \ref{lieinjective} concludes the proof.
\end{proof}
\section{Two relative spherical twists of the trivial mapping torus}\label{sec:symmetries}
To prove the theorem, we need to modify the Morita equivalence $M_\phi\simeq M_{1_\A}$ such that the induced isomorphism $HH^1(M_\phi,M_\phi)\cong HH^1( M_{1_\A}, M_{1_\A})$ carries $\gamma_\phi$ to $\gamma_{1_\A}$. It is easy to show $M_{1_\A}\simeq \A\otimes (\Om(\Tt_0)_{dg}\#\Z)$, and $\Om(\Tt_0)_{dg}\#\Z$ is Morita equivalent to wrapped Fukaya category of a punctured torus by \cite{lekpol}; thus, it must have a $SL_2(\Z)$-symmetry, which we would expect to act transitively on the primitive lattice points of $HH^1(M_{1_\A},M_{1_\A})$. We will not use action coming through this Morita equivalence, and instead we will write spherical twists that act in the desired way. The first twist is more general, but the second one only exists for the trivial mapping torus. 
\subsection{The twist of $M_{\phi}^R$ along a smooth point}\label{subsec:firsttwist}
We first find a self-Morita equivalence of the category $M_{\phi}^R$ that sends $\gamma_\phi^R$ to $\gamma_\phi^R\pm \gamma_2^R$ and that fixes $\gamma_2^R$. Here $\gamma_2^R\in HH^1(M_\phi^R,M_\phi^R)$ denotes the $R$-relative version of the Hochschild cocycle $\gamma_2$ defined in Lemma \ref{lieinjective} of Section \ref{sec:rank2}. Namely, $\gamma_2^R$ is the infinitesimal action of second $\widehat{\G}(R)$-action (which has weight equal to extra grading) and it forms a basis of $HH^1(M_\phi^R,M_\phi^R)\cong R^2$ together with $\gamma_\phi^R$. 

In this subsection, we will not officially refer to twists by spherical functors as defined for instance in \cite{logvinenko}. However, for those interested we remark that what we construct is equivalent to using twist by the functor \begin{equation}\A[[q]]\rightarrow M^R_\phi\atop ``a\mapsto a\otimes\Om_x" \end{equation}
where $\Om_x$ is the structure sheaf of a smooth $R$-section of $\T_R=\Tt_R/\Z$. After showing existence of right and left Morita adjoints, one can write the spherical twist and conclude that it is an equivalence at $q=0$ using \cite{logvinenko}. Then it is easy to show that invertible bimodules over $M_\phi$ deform only to invertible bimodules over $M_\phi^R$.
The spherical twist/cotwist by the structure sheaf of a smooth point $p$ on a curve $C$ is simply $(\cdot)\otimes\Om_C(p)$, resp. $(\cdot)\otimes\Om_C(-p)$, so we will use this directly. 

Let $p=p_0$ be a smooth $R$-point of $\Tt_R$ supported on $C_0$. Let $p_i=\tr^i(p)$ and consider the line bundle $\mathcal L_p=\Om\big(\sum_{i\in\Z}p_i \big)$. Define a bimodule over $\Om(\Tt_R)_{cdg}$ by the rule \begin{equation}\tilde\Lambda_p :(\scrF,\scrF')\mapsto hom^\cdot_{\Om_{\Tt_R}}(R(\scrF)_R,R(\scrF')_R\otimes \mathcal L_p ) \end{equation}Recall $R(\scrF)_R$,$R(\scrF')_R\otimes \mathcal L_p $ are pseudo-complexes of $\Om_{\Tt_R}$-modules and we are taking their hom pseudo-complexes as usual. This defines an (unobstructed for tautological reasons) $\Om(\Tt_R)_{cdg}$-bimodule. It is the bimodule corresponding to (non-existent) functor $(\cdot)\otimes \mathcal L_p$ and we will pretend as if it is this functor. One can make its restriction to $q=0$ into an actual functor by extending $\Om(\Tt_0)_{dg}$ with similar resolutions $R(\Om_{C_i}(a))$ of $\Om_{C_i}(a)$, for all $a\in\Z$. Call this bigger category $\Om(\Tt_0)_{dg}^{super}$. The line bundle $\mathcal L_p$ is invariant under $\tr_*$ and thus the $\Om(\Tt_R)_{cdg}\otimes \A$ bimodule $\tilde{\Lambda}_p\otimes \A$ has an obvious $\Z_\Delta$-equivariant structure. We can thus descent it to a bimodule \begin{equation}\Lambda_p=(\tilde{\Lambda}_p\otimes \A)\#\Z\end{equation}over $M_\phi^R$. Its restriction $\Lambda_p|_{q=0}$ still does not induce an $A_\infty$-functor; however, we can construct $M_\phi^{super}:=(\Om(\Tt_0)_{dg}^{super}\otimes\A)\#\Z\supset M_\phi$ (which is equivalent to a full subcategory of $tw^\pi(M_\phi)$) on which $\Lambda_p|_{q=0}$ acts as an $A_\infty$-functor ($``(\cdot)\otimes \Om(p)"$). It is easy to see this functor is a quasi-equivalence with a quasi inverse defined by the similar formula $``(\cdot)\otimes \Om(-p)"$ (more precisely by using $\mathcal{L}_p^{-1}$ in place of $\mathcal L_p$ in the definition of $\tilde{\Lambda}_p$).
This implies $\Lambda_p|_{q=0}$; thus, $\Lambda_p$ is invertible. 

Fix $p\in\Tt_R(R)$ as above. Let $\rho$ temporarily denote the group like family $\rho_{uni}|_{\G\times\{1\}}$, which corresponds to $\widehat{\G}(R)$-action on $M_\phi^R$ defined earlier in Remark \ref{actiononmt1}. Using $\Lambda_p$ we can define a new group like family \begin{equation}``\Lambda_p\circ \rho\circ \Lambda_p^{-1}":\simeq z\mapsto \Lambda_p\otimes_{M_\phi^R}\rho_z\otimes_{M_\phi^R}\Lambda_p^{-1}  \end{equation}The reason composition is in quotation marks is again that we have ``quasi-functors" instead of actual functors. However, we will abuse the notation and simply use composition symbol. By Proposition, \ref{liftingcoroot} this family can be seen as the restriction of $\rho_{uni}$ along a cocharacter of $\widehat{\G\times\G}$. We wish to compute this cocharacter. 

Consider instead the group-like family \begin{equation}\Lambda_p\circ \rho\circ \Lambda_p^{-1}\circ\rho^{-1}:z\mapsto \Lambda_p\otimes_{M_\phi^R} \rho_z\otimes_{M_\phi^R} \Lambda_p^{-1}\otimes_{M_\phi^R}\rho_z^{-1}\end{equation}%It is group-like as it is the composition of $\Lambda_p\circ\rho\circ\Lambda_p^{-1}$ and $\rho$ and as $\widehat{\G\times\G}$ is abelian. 
It can be seen as the composition of $\Lambda_p$ and $\rho\circ\Lambda_p^{-1}\circ \rho^{-1}$. Given $z\in\widehat{\G}$, we can compute $\rho_z\circ\Lambda_p^{-1}\circ\rho^{-1}_z\simeq \Lambda_{z.p}^{-1}$.
Hence, \begin{equation}\Lambda_p\circ \rho_z\circ \Lambda_p^{-1}\circ\rho^{-1}_z\simeq\Lambda_p\circ\Lambda_{z.p}^{-1} \end{equation}
\begin{lem}\label{linebundlebimod}
$\Lambda_p\circ\Lambda_{z.p}^{-1}$ is the bimodule obtained by replacing $\mathcal L_p$ by $\mathcal L_p\otimes\mathcal L_{z.p}^{-1}$ at the beginning. In other words, first consider \begin{equation}(\scrF,\scrF')\mapsto hom^\cdot_{\Om_{\Tt_R}}(R(\scrF)_R,R(\scrF')_R\otimes \mathcal L_p\otimes\mathcal L_{z.p}^{-1} ) \end{equation} then take its exterior product with $\A$ and descent to $M_\phi^R$.	
\end{lem}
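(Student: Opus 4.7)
The plan is to view $\tilde\Lambda_p$ as the ``kernel bimodule'' representing the quasi-auto-equivalence $(\cdot)\otimes \mathcal L_p$ on $\Om(\Tt_R)_{cdg}$, and similarly $\tilde\Lambda_{z.p}^{-1}$ as representing $(\cdot)\otimes \mathcal L_{z.p}^{-1}$. In any Fourier--Mukai-type setup, tensor composition of such kernel bimodules corresponds to composition of the represented quasi-functors, and the latter is simply tensoring with $\mathcal L_p\otimes \mathcal L_{z.p}^{-1}$. So the content of the lemma is really a compatibility statement, and the proof proceeds in two stages: first push the question upstairs before the descent by $\Z$, then carry out a direct Yoneda-style composition argument for $\Om(\Tt_R)_{cdg}$-bimodules.

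The first step is to check that the smash/descent functor $\fM\mapsto \fM\#\Z$ from $\Z_\Delta$-equivariant $(\Om(\Tt_R)_{cdg}\otimes \A)$-bimodules to $M_\phi^R$-bimodules is compatible with bimodule composition in the sense that $(\fM_1\#\Z)\otimes_{M_\phi^R}(\fM_2\#\Z)\simeq (\fM_1\otimes_{\Om(\Tt_R)_{cdg}\otimes \A}\fM_2)\#\Z$, $\Z_\Delta$-equivariantly. This is the bimodule version of the base change identity from Section~\ref{sec:bimodgen}; it follows formally once the diagonal $\Delta_\A$-factor is seen to be idempotent under $\otimes_\A$ and the $\Z$-actions on both sides are matched. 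Applied to $\fM_1=\tilde\Lambda_p\otimes\Delta_\A$ and $\fM_2=\tilde\Lambda_{z.p}^{-1}\otimes \Delta_\A$, this reduces the problem to showing the quasi-isomorphism
\begin{equation}
\tilde\Lambda_p\otimes_{\Om(\Tt_R)_{cdg}}\tilde\Lambda_{z.p}^{-1}\simeq \bigl[(\scrF,\scrF')\mapsto hom^\cdot_{\Om_{\Tt_R}}(R(\scrF)_R,R(\scrF')_R\otimes \mathcal L_p\otimes \mathcal L_{z.p}^{-1})\bigr]
\end{equation}
as $\Z_\Delta$-equivariant bimodules over $\Om(\Tt_R)_{cdg}$.

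The second step is to prove this composition identity directly. Given $\scrF, \scrF''\in Ob(\Om(\Tt_R)_{cdg})$, an element of $\tilde\Lambda_{z.p}^{-1}(\scrF,\scrF')$ is a pseudo-complex map $R(\scrF)_R\to R(\scrF')_R\otimes \mathcal L_{z.p}^{-1}$, and an element of $\tilde\Lambda_p(\scrF',\scrF'')$ is a map $R(\scrF')_R\to R(\scrF'')_R\otimes \mathcal L_p$. Tensoring the second with $\mathcal L_{z.p}^{-1}$ and composing yields a map $R(\scrF)_R\to R(\scrF'')_R\otimes \mathcal L_p\otimes \mathcal L_{z.p}^{-1}$, giving a natural bimodule map from the bar construction of the tensor product to the right-hand side. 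That this map is a quasi-isomorphism follows from a length-filtration argument together with the Yoneda-type fact that $\tilde\Lambda_p$ computes $hom$-spaces in the enlarged category $\Om(\Tt_R)_{cdg}^{super}$ containing all $\Om_{C_i}(a)\otimes \mathcal L_p$, precisely analogous to the computation done for $\mathfrak M'_{\Om_\Delta^\vee}\simeq \Delta$ in the proof of Proposition~\ref{smoothness}. The $\tr\times\tr$-invariance of both $\mathcal L_p$ and $\mathcal L_{z.p}$ ensures the whole construction is $\Z_\Delta$-equivariant on the nose, so no extra homotopies are required to match equivariant structures.

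The main obstacle is the technical one of performing these manipulations rigorously in the curved setting with pseudo-complexes: tensor products and composition involve $d^2=q\cdot\mathrm{curv}$ terms, and one has to be careful that resolutions of $R(\scrF)_R\otimes \mathcal L_p\otimes \mathcal L_{z.p}^{-1}$ used to compute the right-hand side can be chosen compatibly with those implicit in the bar-construction computing the left-hand side. This is handled by the semi-continuity principle (Lemma~\ref{semicont}): the analogous statement at $q=0$ is a classical composition of Fourier--Mukai kernels on the nodal chain $\Tt_0$ (which can be reduced to $\mathbb P^1$-components via the normalization as in Proposition~\ref{smoothness}), and the chain-level map lifts deformation-theoretically over $R$. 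Once the $q=0$ quasi-isomorphism is produced equivariantly, the $R$-linear deformation follows and completes the proof.
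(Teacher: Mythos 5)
Your proof takes a genuinely different route from the paper's. The paper does \emph{not} compute the tensor composition $\tilde\Lambda_p\otimes_{\Om(\Tt_R)_{cdg}}\tilde\Lambda_{z.p}^{-1}$ directly; instead it exploits the rigidity machinery built up in Section~\ref{sec:rank2}. Its sketch has two steps: (i) at $q=0$, pass to the enlarged (uncurved) category $\Om(\Tt_0)_{dg}^{super}$ on which both $\Lambda_p\circ\Lambda_{z.p}^{-1}$ and the line-bundle bimodule act as honest $A_\infty$-functors, and observe that both are enhancements of $(\cdot)\otimes\mathcal L_p\otimes\mathcal L_{z.p}^{-1}$, hence agree at $q=0$; (ii) over $R$, both $z$-families are group-like, so by Proposition~\ref{liftingcoroot} they are pullbacks of $\rho_{uni}$ along cocharacters $\widehat{\G}\to\widehat{\G\times\G}$; since the cocharacter lattice is discrete and the two cocharacters coincide at $q=0$, they must coincide over $R$. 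This discreteness/rigidity argument sidesteps all curved technicalities, whereas your approach --- commuting the descent functor with bimodule tensor product, then proving a Fourier--Mukai-style kernel-composition identity by a bar/Yoneda computation, then invoking Lemma~\ref{semicont} --- confronts them head on.

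Both routes seem viable, and yours is more explicit, but it is carrying more risk than the paper's. The two places where you should be careful: first, when you say the ``chain-level map lifts deformation-theoretically over $R$,'' what you actually need is that the natural composition map is defined over $R$ from the start (so that Lemma~\ref{semicont} can be applied to it); any genuine deformation-theoretic ``lifting'' step is obstructed and the semicontinuity lemma does not supply one. Second, the claim that the natural map is a quasi-isomorphism at $q=0$ --- ``classical FM composition on the nodal chain'' --- needs to be verified for $\Tt_0$ (singular, non-quasi-compact), which amounts to re-running the resolution argument of Proposition~\ref{smoothness} relative to the extra $\mathcal L$-twists; this is doable but is real additional work compared to the paper's one-line appeal to uniqueness of the enhancement of a fixed geometric functor. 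The paper's route buys you a proof that is insensitive to these computations by reducing the identification to a statement in a discrete lattice; yours buys you an explicit kernel-composition statement that is independently useful elsewhere, at the cost of having to do the FM computation rigorously.
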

\begin{proof}(Sketch)
Instead of showing this leads to $\Lambda_p\circ\Lambda_{z.p}^{-1}$ for individual $z$, one can specialize to $q=0$ and compare bimodules over $M_\phi$. Extending $M_\phi$ as above, both  $\Lambda_p\circ\Lambda_{z.p}^{-1}$ and the bimodule corresponding to $\mathcal L_p\otimes\mathcal L_{z.p}^{-1}$ can be realized as actual $A_\infty $-functors and both are enhancements of $``(\cdot)\otimes \mathcal L_p\otimes\mathcal L_{z.p}^{-1}"$. Hence, the specializations to $q=0$ are the same.
As families of bimodules over deformation $M_\phi^R$, they are group-like which correspond to coroots of $\widehat{\G\times\G}$ speacialing to same coroots of $\G\times\G$. Hence, by discreteness of the coroots we conclude they are the same. 
\end{proof}
Now let us turn our attention to the line bundle \begin{equation}\mathcal L_p\otimes \mathcal L^{-1}_{z.p}=\Om_{\Tt_R}\Bigg(\sum_{i\in\Z}(p_i-z.p_i) \Bigg)\end{equation}The equivariant structure of the induced bimodule comes from the obvious isomorphism \begin{equation}\tr_*\Om_{\Tt_R}\Bigg(\sum_{i\in\Z}(p_i-z.p_i) \Bigg)\cong \Om_{\Tt_R}\Bigg(\sum_{i\in\Z}(p_i-z.p_i) \Bigg)\end{equation}
\begin{lem}
$\mathcal L_p\otimes \mathcal L^{-1}_{z.p}$ admits a trivialization $G\in \Gamma(\mathcal L_p\otimes \mathcal L^{-1}_{z.p})$ such that under the isomorphism $\Om_{\Tt_R}\xrightarrow{\cong}\mathcal L_p\otimes \mathcal L^{-1}_{z.p}$ the equivariant structure $\mathcal L_p\otimes \mathcal L^{-1}_{z.p}\rightarrow \tr_*(\mathcal L_p\otimes \mathcal L^{-1}_{z.p})$ identifies with $\Om_{\Tt_R}\xrightarrow{z^{-1}}\tr_*\Om_{\Tt_R}=\Om_{\Tt_R}$.
\end{lem}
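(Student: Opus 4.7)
The plan is to construct $G$ explicitly, first at $q = 0$ and then on $\Tt_R$ via a theta-function formula coming from the Tate uniformization.

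At $q = 0$: the line bundle $\mathcal{M} := \mathcal{L}_p \otimes \mathcal{L}_{z.p}^{-1}|_{q=0}$ has degree zero on each $C_i \cong \mathbb{P}^1$, so is trivial there; the natural trivialization is
\begin{equation*}
G_0|_{C_i} := z^{-i}\, \frac{X_i - z^{-1}}{X_i - 1}
\end{equation*}
in the coordinate $X_i$ on $V_i$. At each node $x_{i+1/2}$, the limit from the $C_i$-side (as $X_i \to 0$) and the limit from the $C_{i+1}$-side (as $Y_{i+1} \to 0$, i.e.\ $X_{i+1} \to \infty$) both equal $z^{-i-1}$, so $G_0$ glues to a global trivialization of $\mathcal{M}|_{q=0}$. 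Using $\tr^* X_{i+1} = X_i$, I compute $\tr^* G_0 |_{C_i} = G_0|_{C_{i+1}}\circ \tr = z^{-i-1}(X_i - z^{-1})/(X_i - 1) = z^{-1} G_0|_{C_i}$, so $\tr^* G_0 = z^{-1} G_0$ exactly.

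Lifting to $\Tt_R$: identify $\Tt_R$ with the standard formal model of $\mathbb{G}_{m,R}^{an}$ so that the local coordinate $X_i$ on $\tV_i$ is $t/q^i$ for $t$ the Tate parameter; under this identification $p_i$ and $z.p_i$ correspond to $t = q^i$ and $t = q^i z^{-1}$, and $\tr$ acts on points by $t \mapsto q t$. On each $\tV_i$, define
\begin{equation*}
G|_{\tV_i} := z^{-i}\, \frac{X_i - z^{-1}}{X_i - 1} \prod_{n \geq 1} \frac{(1 - q^n X_i z)(1 - q^n/(X_i z))}{(1 - q^n X_i)(1 - q^n/X_i)}.
\end{equation*}
The product converges $q$-adically in $\Om(\tV_i)$, reduces to $G_0|_{C_i}$ at $q = 0$, and (in the generic fiber) has divisor exactly $\sum_j z.p_j - \sum_j p_j$, so $G$ is a trivialization of $\mathcal{M}$. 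A direct telescoping calculation — shifting each factor's index by $1$ under $t \mapsto qt$ and collecting the leftover boundary terms $(1-qt)(1-1/(tz))/[(1-qtz)(1-1/t)]$, which simplify against the prefactor to $z^{-1}$ — yields $\tr^* G = z^{-1} G$ exactly.

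The main remaining obstacle is to verify that these local formulas patch to a global regular non-vanishing section on the formal scheme $\Tt_R$, i.e.\ extend across the nodes in the formal charts $\tU_{i+1/2}$. This is done by rewriting the product using the relation $X_i Y_{i+1} = q$: the factor $(1 - q/X_i)$ becomes $(1 - Y_{i+1})$, carrying the pole at $p_{i+1}$ not visible in the $\tV_i$-expansion, and analogous rewritings of the numerator factors contribute the zeros at $z.p_{i+1}$; after collecting, the expression is a rational function on $\tU_{i+1/2}$ with the four-point divisor $(z.p_i + z.p_{i+1}) - (p_i + p_{i+1})$ as required. Alternatively one could avoid the explicit formula by descending to the Tate curve $\T_R = \Tt_R/\tr^{\mathbb{Z}}$ and invoking Abel--Jacobi to identify $\Om_{\T_R}(\bar p - \overline{z.p})$ with the line bundle $\mathcal{L}_{z^{-1}}$ obtained by descent of the trivial bundle equipped with the equivariant structure $\tr^* \mapsto z^{-1}$; the desired section $G$ then pulls back from a trivializing section of $\mathcal{L}_{z^{-1}} \otimes \mathcal{L}_z \cong \Om_{\T_R}$.
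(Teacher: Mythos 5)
Your construction is correct and is essentially the paper's proof: the paper defines, on each chart $\tU_{i+1/2}$, the function
\[
\tilde G_{i+1/2}:=\prod_{j\geq 0}\frac{1-q^jzy_0X_i}{1-q^jy_0X_i}\,\prod_{j\geq 0}\frac{1-q^jz^{-1}y_0^{-1}Y_{i+1}}{1-q^jy_0^{-1}Y_{i+1}},
\]
where $y_0=Y_0(p)\in R^*$, then sets $G_{i+1/2}:=z^{-i}\tilde G_{i+1/2}$; substituting $Y_{i+1}=q/X_i$ and specializing $y_0=1$ recovers your product on $\tV_i$ up to the overall constant $z^{-1}$. Two small differences are worth flagging. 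First, the paper writes the product directly in the coordinates $X_i,Y_{i+1}$ of $\tU_{i+1/2}$, so each factor is manifestly a unit (or an explicit linear factor) in $\Om(\tU_{i+1/2})=\C[X_i,Y_{i+1}][[q]]/(X_iY_{i+1}-q)$ and there is nothing left to check at the nodes; your presentation on $\tV_i$ forces the extra patching step you describe, which is correct but avoidable. Second, by pinning $t=X_0$ at $p$ to $1$ you are implicitly choosing a specific smooth $R$-point rather than an arbitrary one; the paper keeps the free parameter $y_0$, and you should do the same (or at least remark that the general case follows by the change of variable $X_i\mapsto y_0X_i$), since the lemma is invoked for an arbitrary smooth $R$-point of $C_0$. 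The $q=0$ warm-up and the Abel--Jacobi remark are fine but not needed once the explicit $G$ is on the table.
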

\begin{proof}
A section of $\mathcal L_p\otimes \mathcal L^{-1}_{z.p}$ is a rational function on $\Tt_R$ with simple poles at $p_i$ and zeroes at $z.p_i$, for all $i\in\Z$. Denote the $Y_0$-coordinate of the smooth point $p=p_0\in \tU_{1/2}$ by $y_0\in R^*$. We can find a section using convergent infinite products. Namely, consider the chart $\tU_{i+1/2}=Spf(\C[X_i,Y_{i+1}][[q]]/(X_iY_{i+1}-q))$. Define the rational function $\tilde G_{i+1/2}$ on $\tU_{i+1/2}$ by the formula \begin{equation}\tilde G_{i+1/2}:=\prod_{j=0}^{\infty}\frac{1-q^jzy_0X_i}{1-q^jy_0X_i} \prod_{j=0}^{\infty}\frac{1-q^jz^{-1}y_0^{-1}Y_{i+1}}{1-q^jy_0^{-1}Y_{i+1}} \end{equation} 
Its convergence is obvious by $q$-adic completeness. On $\tU_{i+1/2}$ it is \begin{equation}\frac{1-zy_0X_i}{1-y_0X_i}\frac{1-z^{-1}y_0^{-1}Y_{i+1}}{1-y_0^{-1}Y_{i+1}}\end{equation} up to invertible functions. Using the relations $X_iY_i=1$, $X_{i-1}=qX_i$ and $Y_{i+1}=qY_i$ we can compare \begin{equation}\frac{\tilde G_{i+1/2}}{\tilde G_{i-1/2}}=\frac{(1-zy_0X_i)/(1-y_0X_i)}{(1-z^{-1}y_0^{-1}Y_i)/(1-y_0^{-1}Y_i)}=z \end{equation} on $\tU_{i-1/2}\cap \tU_{i+1/2}$. Now define $G=G(z)\in \Gamma (\mathcal L_p\otimes \mathcal L^{-1}_{z.p})$ locally by the formula \begin{equation}G_{i+1/2}:=z^{-i}\tilde G_{i+1/2}\end{equation}
This gives a trivialization of $\mathcal L_p\otimes \mathcal L^{-1}_{z.p}$ and $G\circ\tr=z^{-1}G$ as a rational function.
%Clearly, $\tilde G_{i+1/2}\circ\tr=\tr^*\tilde G_{i+1/2}=\tilde G_{i-1/2}$. Hence, $\tr^*G_{i+1/2}=z^{-i}\tr^*\tilde G_{i+1/2}=z^{-i}\tilde G_{i-1/2}=z^{-1}G_{i-1/2}$.
Hence, the equivariant structure turns into $\Om_{\Tt_R}\xrightarrow{z^{-1}}\tr_*\Om_{\Tt_R}=\Om_{\Tt_R}$ under the identification $\mathcal L_p\otimes\mathcal L_{z^{-1}.p}^{-1}\cong \Om_{\Tt_R}$.
%Add what is below in case of confusion
%$\mathcal L_p\otimes \mathcal L^{-1}_{z.p}=\Om(D)$, where $D=\sum_{i\in\Z}(p_i-z.p_i)$ is a translation invariant divisor. The equivariant structure $\Om(D)\xrightarrow{\cong}\tr_*\Om(D)=\Om(\tr (D))=\Om(D)$ is given by sending $f\in \Om(D)(U)$ to $f\circ \tr\in (\tr_*\Om(D))(U)=\Om(D)(\tr^{-1}U)$. Consider the diagram \begin{equation}\xymatrix{\Om(D)(U)\ar[r]& \tr_*\Om(D)(U)=\Om(D)(\tr^{-1}U)\ar[d]^{f\mapsto f.G^{-1}|_{\tr^{-1}U}=f.G^{-1}}\\ \Om(U) \ar[u]^{f\mapsto f.G}&\tr_*\Om (U)=\Om(\tr^{-1}U) } \end{equation}The upper left arrow is $f\mapsto f\circ\tr$. With respect to obvious equivariant structure of $\Om$, this would be the map $\Om(U)\rightarrow\Om(\tr^{-1}U)$ as well. The composition of three arrows of the diagram is $f\mapsto f.G\mapsto (f\circ\tr).(G\circ\tr)=z^{-1}.(f\circ\tr).G\mapsto z^{-1}.(f\circ\tr)$, which is $z^{-1}$ multiple of the ordinary equivariant structure.
\end{proof}
Hence, the bimodule \begin{equation}(\scrF,\scrF')\mapsto hom^\cdot_{\Om_{\Tt_R}}(R(\scrF)_R,R(\scrF')_R\otimes \mathcal L_p\otimes\mathcal L_{z.p}^{-1} )\end{equation} identifies with the diagonal bimodule of $\Om(\Tt_R)_{cdg}$. Moreover, its $\Z_\Delta$-equivariant structure is \begin{equation}\Om(\Tt_R)_{cdg}(\scrF,\scrF')\xrightarrow{z.\tr} \Om(\Tt_R)_{cdg}(\tr\scrF,\tr\scrF') \end{equation}(while $\Om(\Tt_R)_{cdg}(\scrF,\scrF')\xrightarrow{\tr} \Om(\Tt_R)_{cdg}(\tr\scrF,\tr\scrF') $ is the $\Z_\Delta$-equivariant structure descending to diagonal).
If we descent $\Om(\Tt_R)_{cdg}\otimes\A $ with respect to $\Z_\Delta$ equivariant structure $z.\tr\otimes 1_\A$, we obtain the bimodule \begin{equation}_z(M_\phi^R)_1\cong _1(M_\phi^R)_{z^{-1}}\end{equation} It is the bimodule with same underlying pseudo-complexes and right action as $M_\phi^R$ but with left action twisted by action of $z$ by extra grading (i.e. $(f\otimes h)(m\otimes g)=z^hfh(m)\otimes hg$). Hence, depending on the convention this is the bimodule $\rho_{uni,(1,z)}$ or $\rho_{uni,(1,z^{-1})}$. Everything above can be done relative to $z\in \widehat{\G}$ and we conclude 
\begin{cor}\label{somecommutation}
$\Lambda_p\circ \rho\circ\Lambda_p^{-1}\circ \rho^{-1}$ is quasi-isomorphic to $\rho_{uni}|_{\{1\}\times\widehat\G }$ or its composition with the antipode $z\mapsto z^{-1}:\{1\}\times\widehat\G\rightarrow\{1\}\times\widehat\G$.
\end{cor}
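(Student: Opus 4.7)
The plan is to upgrade the pointwise identifications already established above into an identification of group-like families parametrized by $\widehat{\G}$. I would work throughout on $\Tt_R\times_R\widehat\G = \Tt_R\times_R Spf(\C[z^\pm][[q]])$, treating $z$ as an auxiliary invertible parameter.

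First, I would formulate the commutator relatively. The family $z \mapsto \rho_z \circ \Lambda_p^{-1}\circ \rho_z^{-1}$ is intrinsically a group-like family of bimodules over $M_\phi^R$, parametrized by $\widehat{\G}$. For each $z$, conjugating $\Lambda_p^{-1}$ by $\rho_z$ has the effect of translating the divisor $\{p_i\}$ to $\{z.p_i\}$; making this a statement of families amounts to noticing that the line bundle entering the definition of the conjugated bimodule is $\mathcal L_{z.p}^{-1}$, now viewed as a line bundle on $\Tt_R\times_R \widehat\G$. Composing further with $\Lambda_p$ and invoking (the family version of) Lemma \ref{linebundlebimod}, the group-like family $z\mapsto \Lambda_p\circ\rho_z\circ\Lambda_p^{-1}\circ\rho_z^{-1}$ is represented by the $\Om(\Tt_R)_{cdg}$-bimodule
\begin{equation}
(\scrF,\scrF')\mapsto hom^\cdot_{\Om_{\Tt_R\times_R\widehat\G}}\bigl(q^*R(\scrF)_R,\, p^*R(\scrF')_R\otimes (\mathcal L_p\boxtimes 1)\otimes(\mathcal L_{z.p})^{-1}\bigr),
\end{equation}
tensored with $\A$ and descended to $M_\phi^R\otimes_R \C[z^\pm][[q]]$ in the sense of Section~\ref{sec:bimodgen}.

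Second, I would trivialize globally. The explicit formula for $G(z)$ produced above is built from convergent $q$-adic infinite products whose coefficients are Laurent polynomials in $z$; thus $G$ is a well-defined nowhere-vanishing section of $\mathcal L_p\otimes\mathcal L_{z.p}^{-1}$ on $\Tt_R\times_R\widehat\G$, not merely a fiberwise trivialization. The identity $G\circ(\tr\times 1)=z^{-1}G$ is an equality of rational functions on $\Tt_R\times_R\widehat\G$. Therefore, under the global isomorphism $\Om_{\Tt_R\times_R\widehat\G}\xrightarrow{\cdot G}\mathcal L_p\otimes \mathcal L_{z.p}^{-1}$, the bimodule above becomes the diagonal bimodule of $\Om(\Tt_R)_{cdg}$ (extended over $\C[z^\pm][[q]]$), but with its canonical $\Z_\Delta$-equivariant structure modified by multiplication by $z^{-1}$.

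Third, I would descend. Applying the smash product construction of Section~\ref{sec:bimodgen} to this twisted $\Z_\Delta$-equivariant structure yields the bimodule $_z(M_\phi^R)_1\cong{_1(M_\phi^R)_{z^{-1}}}$ over $M_\phi^R\otimes_R\C[z^\pm][[q]]$, where $z$ acts via the extra grading. By the construction of $\rho_{uni}$ in Example \ref{2dfamily}, this is precisely the pull-back of $\rho_{uni}$ along the cocharacter $\{1\}\times \widehat\G\hookrightarrow\widehat{\G\times\G}$ (with the sign of the exponent determined by the sign conventions in the definitions of the two $\widehat\G$-actions); equivalently, it is $\rho_{uni}|_{\{1\}\times\widehat\G}$ possibly composed with the antipode.

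The only real point requiring care is ensuring that the comparisons in steps two and three proceed at the level of families and not merely pointwise. Since the trivialization $G(z)$ is already given by an explicit $z$-dependent formula, and since the $\Z_\Delta$-equivariant structure and subsequent descent are functorial in the base $\C[z^\pm][[q]]$, no further argument is needed; the fiberwise identification computed above propagates automatically to an identification of group-like families.
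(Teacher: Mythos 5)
Your proposal is correct and takes essentially the same route as the paper: the paper carries out the pointwise computation (conjugation moves the divisor, Lemma \ref{linebundlebimod} replaces the line bundle, the explicit trivialization $G(z)$ identifies the descended bimodule with $_z(M_\phi^R)_1$) and then closes the gap to a statement about families with the single remark ``Everything above can be done relative to $z\in\widehat\G$''. You have simply unpacked that remark, observing that $\mathcal L_{z.p}^{-1}$ and $G(z)$ live naturally over $\Tt_R\times_R\widehat\G$ and that the smash-product descent is compatible with the base $\C[z^\pm][[q]]$, which is exactly the intended content.
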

Denote $\rho_{uni}|_{\{1\}\times\widehat \G}$ temporarily by $\rho_2$
\begin{cor}
$\Lambda_p\circ \rho\circ\Lambda_p^{-1}\simeq \rho\circ\rho_2^{\pm}$.	
\end{cor}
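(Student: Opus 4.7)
The plan is to simply rearrange Corollary \ref{somecommutation}. Starting from the quasi-isomorphism
\begin{equation}
\Lambda_p\circ \rho\circ\Lambda_p^{-1}\circ \rho^{-1}\simeq \rho_2^{\pm}
\end{equation}
of group-like families parametrized by $\widehat{\G}$, first I would tensor both sides on the right with $\rho$ over $M_\phi^R$. Since convolution of bimodules preserves quasi-isomorphisms (and $\rho^{-1}\otimes_{M_\phi^R}\rho\simeq \Delta_{M_\phi^R}$ as families), this yields
\begin{equation}
\Lambda_p\circ \rho\circ\Lambda_p^{-1}\simeq \rho_2^{\pm}\circ \rho.
\end{equation}

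To finish, it suffices to observe that $\rho$ and $\rho_2$ commute, i.e.\ $\rho_2^{\pm}\circ \rho\simeq \rho\circ\rho_2^{\pm}$ as group-like families parametrized by $\widehat{\G}$. This follows from the strict commutation of the two $\widehat{\G}(R)$-actions on $M_\phi^R$ (the ``old'' action from Remark \ref{actiononmt1} and the ``new'' one coming from the extra $\Z$-grading), which was observed at the beginning of Section \ref{sec:rank2}. At the level of group-like families, this strict commutation translates into $\rho_z\otimes_{M_\phi^R}\rho_{2,z'}\simeq \rho_{2,z'}\otimes_{M_\phi^R}\rho_z$ for all $z,z'\in\widehat{\G}(R)$, and the identification can be promoted to an isomorphism of families parametrized by $\widehat{\G}$ by working relative to $\widehat{\G}$ as in the previous discussion. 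Combining the two displays completes the proof.
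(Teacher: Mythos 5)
Your proof is correct, and since the paper states this corollary without proof (as an immediate rearrangement of Corollary \ref{somecommutation}), your argument supplies the intended reasoning. The two steps you carry out — right-tensoring the quasi-isomorphism $\Lambda_p\circ \rho\circ\Lambda_p^{-1}\circ \rho^{-1}\simeq \rho_2^{\pm}$ with $\rho$ relative to the base $\widehat{\G}$, and then commuting $\rho_2^{\pm}$ past $\rho$ using the strict commutation of the two $\widehat{\G}(R)$-actions noted at the start of Section \ref{sec:rank2} — are exactly what is needed, and you correctly flag that the commutation of $\rho_z$ and $\rho_{2,z'}$ at the level of pointwise bimodules can be promoted to an isomorphism of families over $\widehat{\G}$ by the same relativization used throughout the subsection.
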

Taking their deformation classes we find 
\begin{cor}
Under the automorphism of $HH^1(M_\phi^R)$ induced by $\Lambda_p$, $\gamma_\phi^R$ corresponds to $\gamma_\phi^R\pm \gamma_2^R$.
\end{cor}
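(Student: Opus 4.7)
The plan is to deduce the statement directly from the preceding Corollary \ref{somecommutation} together with the Morita invariance of deformation classes (Corollary \ref{defmorita}) and the additivity of deformation classes under composition of group-like families.

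First I would recall that, by Corollary \ref{postgammacor} together with the proof of Lemma \ref{lieinjective}, the class $\gamma_\phi^R$ is precisely the value at $z\partial_z\in Lie(\widehat{\G})$ of the Lie-algebra map $Lie(\widehat{\G})\to HH^1(M_\phi^R)$ induced by the deformation class of $\rho=\rho_{uni}|_{\widehat{\G}\times\{1\}}$, and similarly $\gamma_2^R$ is the image of $z\partial_z$ under the map induced by $\rho_2=\rho_{uni}|_{\{1\}\times\widehat{\G}}$. Since $\Lambda_p$ is invertible, it is a Morita self-equivalence and thus induces an automorphism $\Phi_{\Lambda_p}$ of $HH^1(M_\phi^R)$. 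By Corollary \ref{defmorita} (applied to the group-like family $\rho$ viewed as a family of bimodules), the image of the deformation class $Def(\rho)$ under $\Phi_{\Lambda_p}$ is the deformation class of the conjugated family $\Lambda_p\circ\rho\circ\Lambda_p^{-1}$. Evaluating at $z\partial_z$, this says $\Phi_{\Lambda_p}(\gamma_\phi^R)$ is the class followed by $\Lambda_p\circ\rho\circ\Lambda_p^{-1}$ at $z\partial_z$.

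Next I would use Corollary \ref{somecommutation}, which gives a quasi-isomorphism of group-like families
\begin{equation}
\Lambda_p\circ\rho\circ\Lambda_p^{-1}\simeq \rho\circ\rho_2^{\pm}.
\end{equation}
Hence the deformation class of $\Lambda_p\circ\rho\circ\Lambda_p^{-1}$ equals that of $\rho\circ\rho_2^{\pm}$. The key observation is that the map $Lie(\widehat{\G}\times\widehat{\G})\to HH^1(M_\phi^R)$ arising from any group-like family is $R$-linear, so deformation classes are additive under composition of group-like families; in particular the class followed by $\rho\circ\rho_2^{\pm}$ at the generator $z\partial_z$ of $Lie(\widehat{\G})$ is the sum of the classes followed by $\rho$ and by $\rho_2^{\pm}$, namely $\gamma_\phi^R\pm\gamma_2^R$. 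Combining, $\Phi_{\Lambda_p}(\gamma_\phi^R)=\gamma_\phi^R\pm\gamma_2^R$, which is the desired statement.

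The only step that is not completely formal is verifying additivity of deformation classes under composition of group-like families; this is essentially the statement that pre-connections on a tensor product $\fM\otimes_\B \fM'$ can be built out of pre-connections on the factors via Leibniz, so the differential of the total pre-connection is the sum of the two deformation classes (interpreted appropriately through the K\"unneth-type identification). Since both $\rho$ and $\rho_2$ are pulled back from $\rho_{uni}$ via group homomorphisms $\widehat{\G}\to\widehat{\G}\times\widehat{\G}$ by Proposition \ref{liftingcoroot}, and $\rho\circ\rho_2^{\pm}$ is the pullback along the product homomorphism $z\mapsto(z,z^{\pm 1})$, this additivity is a tautological consequence of the Lie-algebra map $Lie(\widehat{\G}\times\widehat{\G})\to HH^1(M_\phi^R)$ being linear, which is the content of Remark \ref{defact}. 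No further computation is required.
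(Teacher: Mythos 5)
Your proposal is correct and follows essentially the same route the paper takes: the paper's own ``proof'' is just the phrase ``Taking their deformation classes we find,'' and you have correctly unpacked the two ingredients hidden there — Morita naturality of deformation classes (Corollary \ref{defmorita} and Remark \ref{rkmorita}, in their bimodule form) applied to the conjugated family, and additivity of the $Lie(\widehat{\G\times\G})\to HH^1(M_\phi^R)$ map from Remark \ref{defact} together with the identification via Proposition \ref{liftingcoroot} of $\rho\circ\rho_2^{\pm}$ as the pullback of $\rho_{uni}$ along $z\mapsto(z,z^{\pm 1})$.
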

Now we want to show $\Lambda_p$ fixes $\gamma_2^R$ and $\gamma_2$. For this we will again examine $\rho_2\circ\Lambda_p\circ\rho_2^{-1}$. A systematic approach would be first proving $\Lambda_p$ is the same as the twist by  \begin{equation}\A[[q]]\rightarrow M^R_\phi\atop ``a\mapsto a\otimes\Om_p" \end{equation} as mentioned above and then showing its conjugate by $\rho_{2,z}$ is the same as the twist by the composition of the spherical functor with conjugation by $\rho_{2,z}$. For instance, in the case $\A=\C$, this is given as the twist by $``\rho_{2,z}(\Om_p)=\Om_p"$; hence, it is the same. However, we take a simpler approach. 
\begin{lem}$\rho_{2,z}\circ \Lambda_p\circ\rho_{2,z}^{-1}\simeq \Lambda_p$
\end{lem}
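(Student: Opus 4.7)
The strategy is to exhibit an explicit degree-zero bimodule isomorphism $\Phi\colon \Lambda_p\xrightarrow{\sim} \rho_{2,z}\circ\Lambda_p\circ\rho_{2,z}^{-1}$, working parametrically in $z\in\widehat{\G}$. First I would identify $\rho_{2,z}$ with the twist bimodule ${}_{\psi_z}(M_\phi^R)_1$, where $\psi_z$ is the strict auto-equivalence of $M_\phi^R$ that multiplies the degree-$g$ summand (with respect to the extra $\Z$-grading from the smash product) by $z^g$. Then, as in the identification $_z(M_\phi^R)_1\cong {}_1(M_\phi^R)_{z^{-1}}$ noted just before Corollary \ref{somecommutation}, one has $\rho_{2,z}^{-1}\cong {}_1(M_\phi^R)_{\psi_z}$, and the conjugated bimodule $\rho_{2,z}\otimes_{M_\phi^R}\Lambda_p\otimes_{M_\phi^R}\rho_{2,z}^{-1}$ has the same underlying pseudo-complex as $\Lambda_p$ but with both the left and the right $M_\phi^R$-actions precomposed with $\psi_z$.

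Next I would decompose $\Lambda_p$ along the descent recipe of Section \ref{sec:bimodgen}:
\begin{equation}
\Lambda_p(b_1,b_2)=\bigoplus_{g\in\Z}(\tilde\Lambda_p\otimes\Delta_\A)(g(b_1),b_2),
\end{equation}
where the $g$-th summand is by definition the extra-degree-$g$ piece of $\Lambda_p$. Define $\Phi$ to act as multiplication by $z^g$ on the degree-$g$ summand, for every pair $(b_1,b_2)$.

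To verify compatibility, I would use the smash-product formulas
\begin{equation}
(f\otimes g_2)(m\otimes g_1)=fg_2(m)\otimes g_2 g_1,\qquad (m\otimes g_2)(f\otimes g_1)=mg_2(f)\otimes g_2 g_1
\end{equation}
from Section \ref{sec:bimodgen}. On the left, $\Phi\bigl((f\otimes g_2)\cdot(m\otimes g_1)\bigr)=z^{g_1+g_2}(fg_2(m)\otimes g_2g_1)$, while $\psi_z(f\otimes g_2)\cdot\Phi(m\otimes g_1)=z^{g_2}(f\otimes g_2)\cdot z^{g_1}(m\otimes g_1)$ gives the same thing; the right-action check is identical. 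Since $\Phi$ acts by scalars on each pseudo-complex, it commutes with the internal differential of $\Lambda_p$ and hence is a closed degree-zero morphism; it is invertible with inverse acting by $z^{-g}$ on the degree-$g$ summand (note $z\in R^\ast$). This gives the asserted quasi-isomorphism of bimodules, and the whole construction is manifestly $R[z,z^{-1}]$-linear, so it produces a quasi-isomorphism of group-like families as $z$ varies in $\widehat\G$.

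The argument is essentially bookkeeping; the main thing to be careful about is the convention for the inverse of $\rho_{2,z}$ and for the direction of twisting on each side, and once those are pinned down the verification reduces to the identity $z^{g_1+g_2}=z^{g_1}\cdot z^{g_2}$. Conceptually, the reason this works cleanly is that $\mathcal{L}_p$ lives purely in the $\Om(\Tt_R)_{cdg}$ factor, while $\rho_{2,z}$ acts only through the extra $\Z$-grading from the smash-product direction, so the two structures are ``orthogonal'' and conjugation can have no effect beyond an inner scalar twist.
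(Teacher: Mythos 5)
Your proof is correct, and it takes a genuinely different route from the paper. The paper derives the lemma formally: it writes $\rho_{2,z}^{\pm}$ as the commutator $\Lambda_p\circ\rho_z\circ\Lambda_p^{-1}\circ\rho_z^{-1}$ via Corollary~\ref{somecommutation}, reduces to $\Lambda_{z.p}^{-1}\circ\Lambda_p\simeq\Lambda_p\circ\Lambda_{z.p}^{-1}$, and gets the latter from Lemma~\ref{linebundlebimod} (both sides are the descent of the same $\Z_\Delta$-equivariant bimodule built from $\mathcal L_p\otimes\mathcal L_{z.p}^{-1}$). Your argument bypasses both of those ingredients and instead writes down the isomorphism $\Phi$ directly, exploiting only the smash-product mechanism from Section~\ref{sec:bimodgen}: $\Lambda_p=(\tilde\Lambda_p\otimes\Delta_\A)\#\Z$ carries the extra $\Z$-grading, the left and right $M_\phi^R$-actions are additive in that grading by the formulas $(f\otimes g_2)(m\otimes g_1)=fg_2(m)\otimes g_2g_1$ and $(m\otimes g_2)(f\otimes g_1)=mg_2(f)\otimes g_2g_1$, and $\rho_{2,z}$ is precisely the ``grading'' twist ${}_{\psi_z}(M_\phi^R)_1$, so the conjugate ${}_{\psi_z}(\Lambda_p)_{\psi_z}$ is split off $\Lambda_p$ by the scalar $z^g$ on the degree-$g$ piece. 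The verification $z^{g_1+g_2}=z^{g_1}z^{g_2}$ is exactly right, and the map commutes with the differential because the differential of $\Lambda_p$ is inherited componentwise from $\tilde\Lambda_p\otimes\Delta_\A$ and therefore preserves the extra grading. What your approach buys: it is elementary, needs nothing about $\Lambda_p$ beyond it being descended from a $\Z_\Delta$-equivariant bimodule on $\Om(\Tt_R)_{cdg}\otimes\A$, and hence shows in one stroke that $\rho_2$ conjugates trivially on every bimodule of the form $(\fM)\#\Z$ (for instance one recovers $\rho_{2,z}\circ\rho_{1,z'}\circ\rho_{2,z}^{-1}\simeq\rho_{1,z'}$ the same way); this also makes clear why the argument does \emph{not} apply to $\Lambda_\Om$, which is not of that form. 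What the paper's approach buys: it stays at the level of line-bundle bimodules and avoids unwinding the smash-product formulas, reusing work already done for Corollary~\ref{somecommutation}. Both are sound; yours is arguably the more transparent.
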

\begin{proof}
By Corollary \ref{somecommutation} \begin{equation}\Lambda_p\circ\rho_z\circ\Lambda^{-1}_p\circ\rho_z^{-1}\simeq \rho_{2,z}^\pm\end{equation}
Hence, it is sufficient to show \begin{equation}\Lambda_p^{-1}\circ\Lambda_p\circ\rho_z\circ\Lambda^{-1}_p\circ\rho_z^{-1}\circ\Lambda_p\simeq \Lambda_p\circ\rho_z\circ\Lambda^{-1}_p\circ\rho_z^{-1} \end{equation}
Clearly, the former is quasi-isomorphic to $\rho_z\circ\Lambda^{-1}_p\circ\rho_z^{-1}\circ\Lambda_p$, which is simply
$\Lambda_{z.p}^{-1}\circ \Lambda_p$. By the proof of Lemma \ref{linebundlebimod}, $\Lambda_{z.p}^{-1}\circ \Lambda_p$ is given by the descent of the bimodule corresponding to the same line bundle namely $\mathcal L_p\otimes \mathcal L_{z.p}^{-1}$, with the same $\Z_\Delta$-equivariant structure. Hence \begin{equation}\Lambda_{z.p}^{-1}\circ \Lambda_p\simeq \Lambda_p\circ\Lambda_{z.p}^{-1} \end{equation}We have shown before that $\Lambda_p\circ\Lambda_{z.p}^{-1}\simeq \Lambda_p\circ\rho_z\circ\Lambda_p^{-1}\circ\rho_z^{-1}$. This completes the proof. 
\end{proof}
\begin{cor}
The induced action of $\Lambda_p$ on $HH^1(M_\phi^R)$ fixes $\gamma_2^R$.	
\end{cor}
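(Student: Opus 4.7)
The plan is to invoke the Morita naturality of deformation classes developed in Section \ref{subsec:familyreview}, specialized to the self-Morita equivalence $\Lambda_p$ and the group-like family $\rho_2 = \rho_{uni}|_{\{1\}\times\widehat{\G}}$ whose associated infinitesimal class is precisely $\gamma_2^R$.

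First I would recall how $\gamma_2^R$ is detected by $\rho_2$. By construction (see Lemma \ref{lieinjective} and the discussion of $\rho_{uni}$ in Example \ref{2dfamily}), $\gamma_2^R$ is the image of $z\partial_z \in Lie(\widehat{\G})$ under the map $Lie(\widehat{\G}) \to HH^1(M_\phi^R)$ coming from the deformation class of $\rho_2$. Equivalently, via Lemma \ref{followinfi} and Remark \ref{rkmorita}, the restriction of $\rho_2$ to any $R[\epsilon]/(\epsilon^2)$-point of $\widehat{\G}$ tangent to the identity gives a family following (up to sign) the class $1 \otimes \gamma_2^R$, which restricts on the diagonal to $\pm\gamma_2^R$.

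Next, I would apply Corollary \ref{defmorita} and Remark \ref{rkmorita} with $\B = \B' = M_\phi^R$ and the Morita equivalence $X = \Lambda_p$ (invertibility of $\Lambda_p$ was established just above via the quasi-inverse built from $\mathcal{L}_p^{-1}$). The induced automorphism $\Phi_{\Lambda_p}: HH^1(M_\phi^R) \to HH^1(M_\phi^R)$ then has the property that if $\fM$ follows $\gamma$ over $A_R$, then $\Lambda_p \otimes_{M_\phi^R} \fM \otimes_{M_\phi^R} \Lambda_p^{-1}$ follows $\Phi_{\Lambda_p}(\gamma)$. Specializing the one-parameter family $\rho_2$ at an $R[\epsilon]/(\epsilon^2)$-point and restricting to the diagonal (i.e.\ converting $1\otimes \gamma_2^R$ to $\gamma_2^R$), the image family is
\begin{equation*}
\Lambda_p \otimes_{M_\phi^R} \rho_{2,z} \otimes_{M_\phi^R} \Lambda_p^{-1},
\end{equation*}
which by the immediately preceding lemma is quasi-isomorphic to $\rho_{2,z}$ itself. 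Two quasi-isomorphic families (with pre-connections) follow the same class, so their deformation classes coincide in $HH^1(M_\phi^R)$. Consequently $\Phi_{\Lambda_p}(\gamma_2^R) = \gamma_2^R$, as desired.

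The only subtlety I anticipate is bookkeeping: one must be careful that the notion of ``follows a class'' developed in Section \ref{subsec:familyreview} really does pass through the bimodule conjugation $X \mapsto \Lambda_p \otimes_{M_\phi^R} X \otimes_{M_\phi^R} \Lambda_p^{-1}$ on families of bimodules, not merely on right modules. But this is exactly the content of the bimodule version of Corollary \ref{defmorita} (the convolution functor (\ref{eq:conv3}) is $A_R$-linear and strictly unital, and applies to bimodules by the usual tensor-product construction), so no new idea is needed. With this observation the rest is immediate.
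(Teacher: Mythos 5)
Your argument is essentially identical to the paper's one-line proof ("compare the deformation classes of $\Lambda_p\circ\rho_2\circ\Lambda_p^{-1}\simeq\rho_2$"), just spelling out explicitly the Morita-naturality of deformation classes (Corollary \ref{defmorita}, Remark \ref{rkmorita}) that the paper takes as understood. No gap; the reasoning and ingredients are the same.
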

\begin{proof}
Compare the deformation classes of $\Lambda_p\circ \rho_2\circ \Lambda_p^{-1}\simeq \rho_2$.
\end{proof}
\begin{rk}
Notice we can twist the family $\scrG_R^{sf}$ by \begin{equation}\scrG_{R}':=``\Lambda_p\circ\scrG_R^{sf}\circ\Lambda_p^{-1}" \end{equation}to obtain a family that follows $1\times(\gamma_\phi^R\pm \gamma_2^R)$. It is easy to see that the family satisfies Properties \ref{G1}-\ref{G3} with $\gamma=\gamma_\phi^R\pm \gamma_2^R$. One can attempt to use ``convolutions of the families of bimodules relative to $Spf(A_R)$'' to produce families following other classes in $L(M_\phi^R)\subset HH^1(M_\phi^R,M_\phi^R)$. However, we do not know how to show property \ref{G1} for the new family.
\end{rk}
\subsection{The twist of $M_{1_\C}$ along the ``structure sheaf''}
The second twist is more restrictive. We can still work the the curved algebra $M_{1_\C}^R$; however, we will not do this. 
We find a self Morita equivalence of $M_{1_\A}$ that fixes $\gamma_{1_\A}$ and that carries $\gamma_2$ to $\gamma_2\pm \gamma_{1_\A}$. It is sufficient to do this for $\A=\C$. In the following $\gamma_1$ will denote $\gamma_{1_\C}$.

We can work as in the previous subsection. However, we find it conceptually relieving to relate $M_{1_\C}$ to algebraic geometry. Hence, we wish to start by sketching a proof of a weaker version of the claim in Example \ref{exmpalggeo}. Namely:
\begin{lem}\label{categorycomparisonlemma}
$tw^\pi(M_{1_\C})$ is a dg enhancement of $D^b(Coh(\T_0))$, where $\T_0$ is the nodal elliptic curve. 
\end{lem}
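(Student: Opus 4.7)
The plan is to construct a pushforward dg functor along the étale $\Z$-cover $\pi\colon \Tt_0\to \T_0$ and to show it induces a quasi-equivalence $tw^\pi(M_{1_\C})\simeq \scrC oh(\T_0)$, where $\scrC oh(\T_0)$ is a dg enhancement of $D^b(Coh(\T_0))$. Since the restriction $\pi|_{C_0}\colon C_0\cong \mathbb{P}^1\to \T_0$ identifies the two nodes $x_{-1/2}$ and $x_{1/2}=\tr(x_{-1/2})$ of $C_0$ and is injective elsewhere, it coincides with the normalization map $\nu\colon \mathbb{P}^1\to \T_0$, and the essential image of the functor is generated by the two objects $\nu_*\Om_{\mathbb{P}^1}=\pi_*\Om_{C_0}$ and $\nu_*\Om_{\mathbb{P}^1}(-1)=\pi_*\Om_{C_0}(-1)$.

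The key input is that, since $\pi$ is an étale $\Z$-cover and the sheaves $R(\scrF)$ are properly supported on $\Tt_0$, the canonical map $\bigoplus_{n\in\Z}\tr^n R(\scrF)\to \pi^*\pi_*R(\scrF)$ is an isomorphism (the sum is locally finite by proper support). Combined with the adjunction $\pi^*\dashv \pi_*$, this yields
\begin{equation}\label{eq:keyadj}
RHom_{\T_0}(\pi_*R(\scrF),\pi_*R(\scrF'))\simeq \bigoplus_{n\in\Z}RHom_{\Tt_0}(\tr^n R(\scrF),R(\scrF')),
\end{equation}
whose right hand side is precisely $hom^\cdot_{M_{1_\C}}(\scrF,\scrF')$ by the definition of the smash product. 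I would construct $\scrC oh(\T_0)$ by mimicking the recipe of Subsection \ref{subsec:dgmint} on a suitable affine cover of $\T_0$, using $\tr$-equivariantly chosen resolutions on $\Tt_0$ so that $\pi_*$ upgrades \eqref{eq:keyadj} to a strict chain-level identification. This produces a dg functor $F\colon M_{1_\C}\to \scrC oh(\T_0)$ sending $\scrF\mapsto \pi_*R(\scrF)$, and \eqref{eq:keyadj} immediately gives quasi-fully-faithfulness on the generating objects $\Om_{C_0}$ and $\Om_{C_0}(-1)$; the remaining objects $\Om_{C_i}$ for $i\ne 0$ become isomorphic to $\Om_{C_0}$ in $M_{1_\C}$ via the element $1\otimes i$ of the smash product and map to the same object of $\scrC oh(\T_0)$.

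For split generation of $D^b(Coh(\T_0))$ by $\{\nu_*\Om_{\mathbb{P}^1},\nu_*\Om_{\mathbb{P}^1}(-1)\}$, applying the exact functor $\nu_*$ to the short exact sequence $0\to \Om_{\mathbb{P}^1}(-1)\to \Om_{\mathbb{P}^1}\to \Om_x\to 0$ on $\mathbb{P}^1$ exhibits $\nu_*\Om_x$ as a cone: taking $x$ to be a preimage of the node gives the skyscraper $\Om_{\text{node}}=\nu_*\Om_x$, and taking $x$ smooth gives skyscrapers $\Om_{\nu(x)}$ at smooth points of $\T_0$. The normalization short exact sequence $0\to \Om_{\T_0}\to \nu_*\Om_{\mathbb{P}^1}\to \Om_{\text{node}}\to 0$ then puts $\Om_{\T_0}$ in the triangulated envelope. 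It is a standard fact that for a projective curve $C$, $D^b(Coh(C))$ is split-generated by the structure sheaf together with all skyscraper sheaves at closed points, since any coherent sheaf is an extension of a torsion part (a direct sum of skyscrapers) by a torsion-free part, and torsion-free sheaves admit filtrations whose subquotients are line bundles or $\nu'_*L$ for $L$ a line bundle on a partial normalization $\nu'$, all accessible from $\Om_{\T_0}$, $\nu_*\Om_{\mathbb{P}^1}$ and skyscrapers via cones along effective-divisor sequences.

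The main obstacle is upgrading \eqref{eq:keyadj} from a derived-level statement to a strict chain-level identification compatible with composition. This is essentially bookkeeping, but requires care because $\pi|_{U_{1/2}}\colon U_{1/2}\to \T_0$ is not injective (it identifies $V_0\subset U_{1/2}$ with $V_1\subset U_{1/2}$ via $\tr$), so the affine cover of $\T_0$ used to build $\scrC oh(\T_0)$ must be chosen slightly finer than the naive image of $\{U_{i+1/2}\}$. Once this cover is in place, the construction proceeds exactly as in Subsection \ref{subsec:dgmint}, with the $\tr$-equivariance of the resolutions $R(\scrF)$ (already built into their construction) guaranteeing that $\pi_*R(\scrF)$ is a well-defined object of $\scrC oh(\T_0)$ with the correct hom-complexes.
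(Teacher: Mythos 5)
Your proposal is correct and shares the paper's basic strategy (push forward along $\pi\colon\Tt_0\to\T_0$ to get a functor $M_{1_\C}\to\scrC oh(\T_0)$, then check generation and full faithfulness), but it takes a genuinely different and more conceptual route for full faithfulness. The paper verifies full faithfulness by explicit local computation: it works out $RHom_{\Om_{\Tt_0}}(\Om_{C_j},\Om_{C_0})$ for $j\in\{-1,0,1\}$ and $RHom_{\T_0}(\pi_*\Om_{C_0},\pi_*\Om_{C_0})$ separately on the étale chart, checks the dimensions in each degree agree, and asserts the comparison map is an isomorphism — all for the single pair $\scrF=\scrF'=\Om_{C_0}$, leaving the other pairs ``as similar.'' Your adjunction argument
\begin{equation*}
Hom_{D(\T_0)}(\pi_*M,\pi_*N)\cong Hom_{D(\Tt_0)}(\pi^*\pi_*M,N)\cong \prod_{n}Hom_{D(\Tt_0)}(\tr^n M,N)
\end{equation*}
(where the product collapses to a direct sum by compact support, recovering exactly the smash-product hom-complex) proves full faithfulness for all pairs of generators uniformly and exposes the mechanism — that $\pi$ is a covering so $R\pi_*=\pi_*$, $L\pi^*=\pi^*$, and $\pi^*\pi_*\cong\bigoplus_n\tr^n$ on properly supported sheaves — rather than verifying it numerically. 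You also re-derive the split-generation of $D^b(Coh(\T_0))$ by $\nu_*\Om_{\mathbb P^1},\nu_*\Om_{\mathbb P^1}(-1)$ via the torsion/torsion-free dévissage, whereas the paper cites \cite{lekpol} for this; your sketch is sound, though the step that a torsion-free sheaf on $\T_0$ is filtered by rank-$1$ torsion-free sheaves (take a generic rank-$1$ quotient, saturate, induct) deserves a sentence. Both you and the paper pass over the issue of choosing strict dg models for which $\pi_*\circ\tr=\pi_*$ holds on the nose; your ``bookkeeping'' paragraph and the paper's ``choose dg-models such that \dots strictly'' are on an equal footing there, so I don't regard this as a gap.
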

\begin{proof}
$\T_0$ can be realized as $``\Tt_0/(x\sim\tr(x))"$ and we have a projection map $\pi:\Tt_0\rightarrow \T_0$ (denoted by $\pi$ only throughout this proof). Choose dg-models $\scrC oh_p(\Tt_0)$ and $\scrC oh(\T_0)$ for $D^b(Coh_p(\Tt_0) )$ and $D^b(Coh(\T_0))$ such that 
\begin{itemize}
\item There exists a dg functor $\pi_*:\scrC oh_p(\Tt_0)\rightarrow\scrC oh(\T_0)$ enhancing the push-forward by $\pi$
\item $\tr$ induces a strict action $\tr_*$ on $\scrC oh_p(\Tt_0)$
\item $\pi_*\circ\tr_*=\pi_*$ (strictly)
\end{itemize}
We can further assume $\scrC oh_p(\Tt_0)$ has the objects $\{\Om_{C_i}(-1),\Om_{C_i}:i\in\Z \}$ and denote $\tr_*$ by $\tr$ following the previous convention. We can also assume there exists a zigzag of strictly $\Z$-equivariant dg quasi-equivalences relating $\scrC oh_p(\Tt_0)$ and $\Om(\Tt_0)_{dg}$. Hence, $\scrC oh_p(\Tt_0)\#\Z\simeq \Om(\Tt_0)_{dg}\#\Z$. The relation $\pi_*\circ\tr=\pi_*$ implies $\pi_*$ descends to \begin{equation}\scrC oh_p(\Tt_0)\#\Z\rightarrow \scrC oh(\T_0)\atop \scrF\longmapsto\pi_*(\scrF) \end{equation}Let $f\in \scrC oh_p(\Tt_0)(\tr^g\scrF,\scrF')$ and consider $f$ as an element of $(\scrC oh_p(\Tt_0)\#\Z)(\scrF,\scrF')$(recall we denoted it by $f\otimes g$). It is sent to \begin{equation}\pi_*(f)\in \scrC oh(\T_0)(\pi_*(\tr^g\scrF),\pi_*(\scrF'))=\scrC oh(\T_0)(\pi_*(\scrF),\pi_*(\scrF'))\end{equation} under the new functor.
Denote the new functor by $\pi_*$ as well. 
%We see its compatibility with the composition as follows: \begin{equation}\pi_*((f\otimes g)\circ(f'\otimes g') )=\pi_*(fg(f')\otimes gg')=\pi_*(fg(f'))=\atop\pi_*(f)\circ\pi_*(g(f') )=\pi_*(f)\circ\pi_*(f' )=\pi_*(f\otimes g)\circ\pi_*(f'\otimes g') \end{equation}

The induced functor between homotopy categories of twisted envelopes is essentially surjective. This follows from the fact that the push-forward of $\Om_{\mathbb P^1}(-1)$ and $\Om_{\mathbb P^1}$ under the normalization map $\mathbb{P}^1\rightarrow \T_0$ generates $D^b(Coh(\T_0))$. See \cite{lekpol}. 

To conclude the proof, we need to check (cohomological) fully faithfulness of the functor \begin{equation}\scrC oh_p(\Tt_0)\#\Z\rightarrow \scrC oh(\T_0)\atop \scrF\longmapsto\pi_*(\scrF) \end{equation}
We do this only for $\scrF=\scrF'=\Om_{C_0}$ as the others are similar. First notice \begin{equation}(\scrC oh_p(\Tt_0)\#\Z)(\Om_{C_0},\Om_{C_0})=\atop \scrC oh_p(\Tt_0)(\Om_{C_{-1}},\Om_{C_0})\oplus\scrC oh_p(\Tt_0)(\Om_{C_{0}},\Om_{C_0})\oplus \scrC oh_p(\Tt_0)(\Om_{C_{1}},\Om_{C_0}) \end{equation}
and its cohomology is \begin{equation}RHom_{\Om_{\Tt_0}}(\Om_{C_{-1}},\Om_{C_0})\oplus RHom_{\Om_{\Tt_0}}(\Om_{C_{0}},\Om_{C_0})\oplus RHom_{\Om_{\Tt_0}}(\Om_{C_{1}},\Om_{C_0}) \end{equation}
A simple local computation (i.e. calculating local hom's and their global sections) reveals $RHom_{\Om_{\Tt_0}}(\Om_{C_{-1}},\Om_{C_0})$ is one dimensional in every positive odd degree and $0$ in other degrees. Same holds for $RHom_{\Om_{\Tt_0}}(\Om_{C_{1}},\Om_{C_0})$. On the other hand,
$RHom_{\Om_{\Tt_0}}(\Om_{C_{0}},\Om_{C_0})$ can be calculated to be one dimensional in degree $0$ and two dimensional in positive even degrees. 

We can compute local hom's of $\pi_*\scrF$ and $\pi_*\scrF'$ on the \'etale chart \begin{equation}\pi:Spec(\C[X_0,Y_1]/(X_0Y_1))\rightarrow \T_0\end{equation} and see that $RHom_{\T_0}(\pi_*\Om_{C_0},\pi_*\Om_{C_0})$ is $1$ dimensional in degree $0$ and $2$ dimensional in higher degrees. Hence, degrees match up and it is hidden in the local computation that the map \begin{equation}RHom_{\Om_{\Tt_0}}(\Om_{C_{-1}},\Om_{C_0})\oplus RHom_{\Om_{\Tt_0}}(\Om_{C_{0}},\Om_{C_0})\oplus RHom_{\Om_{\Tt_0}}(\Om_{C_{1}},\Om_{C_0})\atop \rightarrow RHom_{\T_0}(\pi_*\Om_{C_0},\pi_*\Om_{C_0})  \end{equation} is an isomorphism.
\end{proof}
Notice, $\Om_{\T_0}$ is a $1$-spherical object of $D^b(Coh(\T_0))$ in the sense of \cite[Definition 2.9]{seidelthomas}. In other words, 
\begin{itemize}
\item $RHom_{\T_0}(\Om_{\T_0},\scrF)$ and $RHom_{\T_0}(\scrF,\Om_{\T_0})$ are finite dimensional for all $\scrF\in D^b(Coh(\T_0))$
\item $RHom_{\T_0}(\Om_{\T_0},\Om_{\T_0})=\C\oplus\C[-1]$
\item $RHom^j_{\T_0}(\scrF,\Om_{\T_0})\times RHom^{1-j}_{\T_0}(\Om_{\T_0},\scrF)\rightarrow RHom_{\T_0}(\Om_{\T_0},\Om_{\T_0})\cong \C$, composition map, is a non-degenerate pairing
\end{itemize}
The first and second conditions are immediate and the third one follows from Serre duality. Note, we are ignoring the condition they call (K1)) as it can be arranged by choosing an appropriate representative of $\Om_{\T_0}$ in the enhancement.
%By \cite[Theorem 7.11]{hartshorne}, $\T_0$ has trivial dualizing sheaf. Hence, the morphism \begin{equation}RHom_{\T_0}(\Om_{\T_0},\scrF)[1]\rightarrow RHom_{\T_0}(\scrF,\Om_{\T_0})^\vee\end{equation} is an isomorphism by Serre duality(the version for complexes follow from the version for sheaves and five lemma).

Hence, by \cite[Proposition 2.10]{seidelthomas}, there exists a (quasi-)equivalence $T_{\Om_{\T_0}}$ of (an enhancement of) $D^b(Coh(\T_0))$ fitting into an exact triangle \begin{equation}RHom_{\T_0}(\Om_{\T_0},\cdot)\otimes \Om_{\T_0}\rightarrow (\cdot)\rightarrow T_{\Om_{\T_0}}(\cdot)\rightarrow RHom_{\T_0}(\Om_{\T_0},\cdot)\otimes \Om_{\T_0}[1] \end{equation}
Thus, by Lemma \ref{categorycomparisonlemma}, there exists an object of $tw^\pi(M_{1_\C})$ corresponding to $\Om_{\T_0}$ and a self Morita equivalence of $M_\phi$, which we denote by $\Lambda_\Om$. This is the second twist we are looking for. Next we examine its effect on $\gamma_1$ and $\gamma_2$.
\begin{rk}\label{nodalactionremark}
The actions $\rho_1$ and $\rho_2$ induce $\G$-actions on $D^b(Coh(\T_0))$. $\rho_1$ is already induced by the geometric action in Remark \ref{actionremark}, hence its induced action on $D^b(Coh(\T_0))$ comes from the action $\G\curvearrowright \T_0$ making $\Tt_0\rightarrow \T_0$ equivariant. In other words, it is the action of $Aut^0(\T_0)\cong\G$. To describe induced $\rho_2$-action first note $\Lambda_p\curvearrowright D^b(Coh(\T_0))$ is simply tensoring with $\Om_{\T_0}(p)$, where we use $p$ to denote the image of $p_0\in \Tt_0\rightarrow\T_0$ as well. Hence, $\Lambda_p\circ\Lambda_{z.p}^{-1}$ acts by $\Om_{\T_0}(p-z.p)$. By Section \ref{subsec:firsttwist}, this action is the induced action of $\rho_2^\pm$. In other words, $\rho_2$-action induces the action of $Pic^0(\T_0)$ on $D^b(Coh(\T_0))$. In summary, $\rho_{uni}$ induces the action of $Aut^0(\T_0)\times Pic^0(\T_0)\cong\G\times \G$ on $D^b(Coh(\T_0))$.
\end{rk}
Consider $\rho_{1,z}\circ\Lambda_\Om\circ\rho_{1,z}^{-1}$. This is the twist by the $1$-spherical object $\rho_{1,z}(\Om_{\T_0})$. By Remark \ref{nodalactionremark}, $\rho_{1,z}(\Om_{\T_0})\simeq \Om_{\T_0}$; hence, $\rho_{1,z}\circ\Lambda_\Om\circ\rho_{1,z}^{-1}\simeq \Lambda_\Om$. In other words, $\rho_1$ commutes with $\Lambda_\Om$ and by taking deformation classes, we conclude the map induced by $\Lambda_\Om$ sends $\gamma_1$ to itself.

On the other hand, consider the commutator $\rho_{2,z}\circ\Lambda_\Om\circ\rho_{2,z}^{-1}\circ \Lambda_\Om^{-1}$. As $z$ varies, this gives a group like family, determined by a cocharacter of $\rho_{uni}$, thanks to Prop \ref{liftingcoroot}. We want to determine this cocharacter. A quick calculation shows that for any two smooth points $q,q'\in\T_0$, $\Lambda_\Om(\Om_{\T_0}(q-q'))=\Om_{\T_0}(q-q')$. %To see this note $R\Gamma(\Om_{\T_0})=\C\oplus\C[-1]$ and $R\Gamma(\Om_{\T_0}(q-q'))=0$, when $q\neq q'$.
Hence, \begin{equation}\rho_{2,z}\circ\Lambda_\Om\circ\rho_{2,z}^{-1}\circ \Lambda_\Om^{-1}(\Om_{\T_0})=\rho_{2,z}\circ\Lambda_\Om\circ\rho_{2,z}^{-1}(\Om_{\T_0})=\atop\rho_{2,z}\circ\Lambda_\Om(\Om_{\T_0}(p-z^{\mp}.p) )=\rho_{2,z}(\Om_{\T_0}(p-z^{\mp}.p) )=\Om_{\T_0} \end{equation}
This implies the second component of the cocharacter of $Aut^0(\T_0)\times Pic^0(\T_0)$ vanishes (since it fixes $\Om_{\T_0}$). On the other hand, for the smooth point $p\in \T_0$, $\Lambda_\Om(\Om_p)=\Om_{\T_0}(-p)[1]$. If we apply $\rho_{2,z}\circ\Lambda_\Om\circ\rho_{2,z}^{-1}\circ \Lambda_\Om^{-1}$ to $\Lambda_\Om(\Om_p)$ we obtain $\Om_{\T_0}(-z^{\pm}.p)[1]$. This shows, $Aut^0(\T_0)$ component is the cocharacter of weight $\pm 1$. This implies \begin{equation}\rho_{2,z}\circ\Lambda_\Om\circ\rho_{2,z}^{-1}\circ \Lambda_\Om^{-1}=\rho_1^\pm \end{equation} and thus $\Lambda_\Om\circ\rho_{2}\circ \Lambda_\Om^{-1}=\rho_2\circ \rho_1^{\mp}$. By taking the deformation classes we conclude:
\begin{cor}
The map induced by $\Lambda_\Om$ on $HH^1(M_{1_\C})$ sends $\gamma_1$ to $\gamma_1$ and $\gamma_2$ to $\gamma_2\mp \gamma_1$.
\end{cor}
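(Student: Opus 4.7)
The plan is to convert the multiplicative identities $\rho_{1,z}\circ\Lambda_\Om\circ\rho_{1,z}^{-1}\simeq \Lambda_\Om$ and $\Lambda_\Om\circ\rho_{2,z}\circ\Lambda_\Om^{-1}\simeq \rho_{2,z}\circ \rho_{1,z}^{\mp}$ just established into additive identities in $HH^1(M_{1_\C})$ via the deformation-class machinery of Section \ref{sec:family}. Since $\Lambda_\Om$ is an invertible bimodule, Corollary \ref{defmorita} applies: for any family $\fM$ admitting a pre-connection, the Morita transport $\Lambda_\Om\otimes_{M_{1_\C}}\fM\otimes_{M_{1_\C}}\Lambda_\Om^{-1}$ has deformation class equal to the image of $Def(\fM)$ under the induced automorphism $(\Lambda_\Om)_*$ on $HH^1(M_{1_\C})$. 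Combining the $\C$-analogue of Corollary \ref{postgammacor} with Lemma \ref{lieinjective}, the classes $\gamma_1,\gamma_2$ are precisely the deformation classes (evaluated at the diagonal bimodule) obtained by restricting the universal group-like family along the two standard cocharacters of $\G\times\G$.

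First I would treat $\gamma_1$: the quasi-isomorphism $\rho_{1,z}\circ\Lambda_\Om\circ\rho_{1,z}^{-1}\simeq \Lambda_\Om$, valid uniformly in $z$, is equivalent to a quasi-isomorphism of group-like families over $\G$ of the form $\Lambda_\Om\otimes_{M_{1_\C}}\rho_1\otimes_{M_{1_\C}}\Lambda_\Om^{-1}\simeq \rho_1$. Applying Corollary \ref{defmorita} and restricting to the diagonal bimodule gives $(\Lambda_\Om)_*(\gamma_1)=\gamma_1$. For $\gamma_2$, the commutator identity above yields an analogous quasi-isomorphism $\Lambda_\Om\otimes_{M_{1_\C}}\rho_2\otimes_{M_{1_\C}}\Lambda_\Om^{-1}\simeq \rho_2\otimes_{M_{1_\C}}\rho_1^{\mp}$ of group-like families over $\G$. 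Taking deformation classes and using additivity of the deformation class under tensor product of group-like families (together with the fact that $\rho_1^{-1}$ has deformation class $-\gamma_1$) produces $(\Lambda_\Om)_*(\gamma_2)=\gamma_2\mp\gamma_1$.

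The main technical point that must be verified is the additivity of deformation classes under tensor product of group-like families: given families $\sigma,\sigma'$ with pre-connections $\quabla_\sigma,\quabla_{\sigma'}$, the product pre-connection $\quabla_\sigma\otimes 1+1\otimes \quabla_{\sigma'}$ on $\sigma\otimes_{M_{1_\C}}\sigma'$ has differential equal, by the Leibniz rule and $A_\infty$-bilinearity of tensor product, to $def(\quabla_\sigma)\otimes 1+1\otimes def(\quabla_{\sigma'})$, whose class after restriction to the diagonal bimodule is $\gamma_\sigma+\gamma_{\sigma'}$ under the identifications provided by Lemma \ref{lieinjective}. This fact is routine in the setting of formal group-like families but is the only non-formal step in the argument; once granted, both statements of the corollary follow directly from the two quasi-isomorphisms of group-like families stated above. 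The sign ambiguity $\mp$ is an artifact of the convention identifying $RHom^*_{(M_{1_\C})^e}((M_{1_\C})^e,(M_{1_\C})^e)$ with $HH^*(M_{1_\C})$, and tracks the sign in $\Lambda_\Om\circ\rho_{2,z}\circ\Lambda_\Om^{-1}\simeq \rho_{2,z}\circ\rho_{1,z}^{\mp}$.
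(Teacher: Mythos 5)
Your proposal is correct and takes essentially the same approach as the paper: taking deformation classes of the two conjugation identities $\rho_{1,z}\circ\Lambda_\Om\circ\rho_{1,z}^{-1}\simeq \Lambda_\Om$ and $\Lambda_\Om\circ\rho_{2}\circ \Lambda_\Om^{-1}\simeq\rho_2\circ \rho_1^{\mp}$, using the Morita-invariance of deformation classes (Corollary \ref{defmorita}/Remark \ref{rkmorita}) to identify the left sides as $(\Lambda_\Om)_*$ applied to $\gamma_1$ and $\gamma_2$. The paper leaves the additivity of deformation classes under convolution of group-like families implicit; it follows more cleanly from Prop.~\ref{liftingcoroot} together with the linear isomorphism of Lemma \ref{lieinjective} (so that convolving group-like families corresponds to adding cocharacters in $\Z^2$) than from the direct product-pre-connection computation you sketch, which in the bimodule convolution setting requires some care, but either route closes the gap.
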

To conclude the section, we have found two self-Morita equivalences $\Lambda_p$ and $\Lambda_\Om$ of $M_{1_{\A}}$ that acts on $HH^1(M_{1_\A})$ by the matrices $\left[\begin{smallmatrix}1&0\\\pm1&1\end{smallmatrix}\right]$ and $\left[\begin{smallmatrix}1&\mp1 \\0&1\end{smallmatrix}\right]$ respectively in $\{\gamma_1,\gamma_2 \}$ basis. The action of any self-Morita equivalence has to preserve the lattice $L(M_\phi):=L(M_\phi^R)|_{q=0}\subset HH^1(M_\phi)$. It is easy to show these matrices generate the group $SL(L(M_\phi))\cong SL_2(\Z)$. Indeed, it is a classical fact that $SL_2(\Z)$ is generated by $\left[\begin{smallmatrix}1&1 \\0&1\end{smallmatrix}\right]$ and $\left[\begin{smallmatrix}0&-1 \\1&0\end{smallmatrix}\right]$. See \cite{coursarithmetic} for instance. The latter matrix can easily be obtained as \begin{equation}\left[\begin{smallmatrix}0&-1 \\1&0\end{smallmatrix}\right]=\left[\begin{smallmatrix}1&0\\1&1\end{smallmatrix}\right]\left[\begin{smallmatrix}1&-1\\0&1\end{smallmatrix}\right]\left[\begin{smallmatrix}1&0\\1&1\end{smallmatrix}\right] \end{equation}
%Should i remove this, also include keith conrad references maybe.
\begin{cor}\label{sl2cor}
The group of self-Morita equivalences of $M_{1_\A}$ act transitively on primitive vectors of the lattice $L(M_{1_\A})\cong \Z^2$.
\end{cor}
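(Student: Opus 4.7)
The plan is to combine the Morita invariance of the lattice $L(M_{1_\A})$ with the explicit matrix calculations just performed for $\Lambda_p$ and $\Lambda_\Om$. First I would verify that any self-Morita equivalence $X$ of $M_{1_\A}$ induces a group automorphism of $L(M_{1_\A}) \subset HH^1(M_{1_\A})$. This is essentially built into the definition: a group-like family $\rho$ over $\widehat{\G}$ is transported to $X \otimes_{M_{1_\A}} \rho \otimes_{M_{1_\A}} X^{-1}$, which is again a group-like family, and by Corollary \ref{defmorita} the deformation classes match under the induced isomorphism on $HH^1$. So the group of self-Morita equivalences maps into $\mathrm{Aut}(L(M_{1_\A})) \cong GL_2(\Z)$.

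Second, I would observe that the preceding subsections have already shown that $\Lambda_p$ and $\Lambda_\Om$ land in $SL_2(\Z) \subset GL_2(\Z)$ and are represented, in the basis $\{\gamma_1,\gamma_2\}$, by the elementary matrices
\begin{equation}
T_1 = \begin{pmatrix} 1 & 0 \\ \pm 1 & 1 \end{pmatrix}, \qquad T_2 = \begin{pmatrix} 1 & \mp 1 \\ 0 & 1 \end{pmatrix}.
\end{equation}
The next step is to recall that $SL_2(\Z)$ is generated by the matrices $S = \left[\begin{smallmatrix}0 & -1 \\ 1 & 0 \end{smallmatrix}\right]$ and $T = \left[\begin{smallmatrix}1 & 1 \\ 0 & 1 \end{smallmatrix}\right]$ (see \cite{coursarithmetic}), and that $S$ can be written as $\left[\begin{smallmatrix}1 & 0 \\ 1 & 1 \end{smallmatrix}\right] \left[\begin{smallmatrix}1 & -1 \\ 0 & 1 \end{smallmatrix}\right] \left[\begin{smallmatrix}1 & 0 \\ 1 & 1 \end{smallmatrix}\right]$, so that $\langle T_1, T_2 \rangle = SL_2(\Z)$.

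Finally I would invoke the classical fact that $SL_2(\Z)$ acts transitively on primitive vectors of $\Z^2$: given a primitive $(a,b) \in \Z^2$, B\'ezout provides integers $c,d$ with $ad - bc = 1$, and then $\left[\begin{smallmatrix} a & c \\ b & d \end{smallmatrix}\right]$ sends $(1,0)$ to $(a,b)$. Composing the corresponding products of $\Lambda_p$ and $\Lambda_\Om$ yields a self-Morita equivalence of $M_{1_\A}$ inducing the desired lattice automorphism. The only mildly delicate point is the first paragraph, namely the genuine naturality of the construction $\rho \mapsto X \otimes_{M_{1_\A}} \rho \otimes_{M_{1_\A}} X^{-1}$ at the level of group-like families (as opposed to merely on $HH^1$); but this is automatic from the invertibility of $X$ and the corresponding statement for deformation classes proved in Section \ref{subsec:familyreview}. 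So there is no substantial obstacle beyond the previously done work.
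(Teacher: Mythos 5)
Your argument is essentially identical to the paper's: it records that $\Lambda_p$ and $\Lambda_\Om$ act by the two elementary unipotent matrices $T_1,T_2$ in the $\{\gamma_1,\gamma_2\}$ basis, notes that any self-Morita equivalence preserves the lattice $L(M_{1_\A})$, shows $\langle T_1,T_2\rangle = SL_2(\Z)$ via the same factorization of $S$, and then uses the classical transitivity of $SL_2(\Z)$ on primitive vectors of $\Z^2$. The only difference is that you spell out the B\'ezout step for transitivity and elaborate slightly on why conjugation sends group-like families to group-like families, both of which the paper takes as implicit; so this is the same proof.
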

\section{Uniqueness of family of bimodules and the proof of the main theorem}\label{sec:unique}
In this section, we will use the previous sections to conclude the proof of Theorem \ref{mainthm}. In other words we will prove:
\begingroup
\def\thethm{\ref*{mainthm}}
\begin{thm}
Let $\A$ and $\phi$ be as in Section \ref{sec:intro}, i.e. satisfying \ref{C1}-\ref{C2} and so on. Assume further that  $HH^1(\A)=HH^2(\A)=0$. If $M_\phi$ is Morita equivalent to $M_{1_\A}$, then $\phi\simeq 1_\A$.
\end{thm}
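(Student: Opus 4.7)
The plan is to follow the strategy laid out in Subsection \ref{subsec:introcatcon}: deform the Morita equivalence over $R$, match the distinguished Hochschild classes $\gamma_\phi^R$ and $\gamma_{1_\A}^R$ using the categorical $SL_2(\Z)$-symmetries of Section \ref{sec:symmetries}, and then invoke the uniqueness of families following a fixed class (Theorem \ref{uniquenesstheorem}).

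First I would deform the given Morita equivalence $M_\phi\simeq M_{1_\A}$ to a Morita equivalence $M_\phi^R\simeq M_{1_\A}^R$. By Corollary \ref{HHtoruscor2}, the hypotheses $HH^1(\A)=HH^2(\A)=0$ give $HH^2(M_\phi)\cong HH^2(M_{1_\A})\cong \C$, so up to reparametrization of $q$ there is a unique non-trivial formal deformation direction for either category. Both $M_\phi^R$ and $M_{1_\A}^R$ realize it non-trivially, by inspection of the construction of Section \ref{sec:construction} together with Proposition \ref{HHOmfinal}; hence the Morita equivalence may be assumed to lift over $R$.

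Next I would align the distinguished Hochschild classes and apply the uniqueness theorem. Section \ref{sec:rank2} produces a rank-$2$ lattice $L(M_\phi^R)\subset HH^1(M_\phi^R)\cong R^2$ generated by the infinitesimal actions of the two commuting $\widehat{\G}$-actions; being defined intrinsically via invertible group-like families of bimodules, it is Morita-invariant. Both $\gamma_\phi^R$ and $\gamma_{1_\A}^R$ are primitive basis elements of the respective lattices, and by Corollary \ref{sl2cor} the self-Morita equivalences $\Lambda_p$ and $\Lambda_\Om$ generate a full $SL_2(\Z)$-action transitive on primitive vectors of $L(M_{1_\A})$ (and the same over $R$ by deformation). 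Composing the initial Morita equivalence with an appropriate element, I may assume that $\gamma_\phi^R$ is sent to $\gamma_{1_\A}^R$. The family $\scrG_R^{sf}$ of $M_\phi^R$-bimodules constructed in Section \ref{sec:family} satisfies axioms \ref{G1}--\ref{G3} with $\gamma=\gamma_\phi^R$ by Proposition \ref{compactfamily} and Corollary \ref{gfollows}; transporting the analogous family on $M_{1_\A}^R$ back to $M_\phi^R$ along the aligned Morita equivalence yields a second family satisfying the same axioms with the same class (using Corollary \ref{defmorita}, Remark \ref{rkmorita}, and the Morita-invariance of condition \ref{G1}). Theorem \ref{uniquenesstheorem} then forces the two families to be quasi-isomorphic up to $q$-torsion. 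Restricting at $u=1$ gives, on the $M_{1_\A}^R$-side, the diagonal bimodule, and on the $M_\phi^R$-side the ``fiberwise $\phi$'' bimodule obtained by descending $1\otimes\phi$ from $\Om(\Tt_R)_{cdg}\otimes\A$ to $M_\phi^R$; hence fiberwise $\phi$ is quasi-isomorphic up to $q$-torsion to the diagonal bimodule of $M_\phi^R$.

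Finally, to descend from triviality of fiberwise $\phi$ to triviality of $\phi$ itself, I would specialize to $q=0$ and restrict to a single ``fiber'' of the smash product, concretely to the full subcategory $\Om_p\otimes \A\subset tw^\pi(M_\phi)$ over a smooth point $p\in\Tt_0$, in the spirit of the argument of Lemma \ref{uniinjective}; this subcategory is quasi-equivalent to $\A$, and under this identification fiberwise $\phi$ restricts to the graph bimodule of $\phi$ while the diagonal of $M_\phi$ restricts to the diagonal of $\A$. The resulting quasi-isomorphism of $\A$-bimodules then forces $\phi\simeq 1_\A$. The main obstacle in this strategy is Theorem \ref{uniquenesstheorem} itself, a rigidity statement for flow lines of Hochschild classes handled via the connection-along-$t\partial_t-u\partial_u$ formalism of Subsection \ref{subsec:familyreview} and the freeness-up-to-$q$-torsion results of Appendix \ref{sec:modules}.
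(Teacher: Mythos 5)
Your proposal tracks the paper's argument closely until the last paragraph, where there is a genuine gap. The conclusion of Theorem \ref{uniquenesstheorem} is not that the two families are quasi-isomorphic, but only that they are ``isomorphic up to $q$-torsion'': there exist maps $f_1,f_2$ with $f_2\circ f_1\simeq q^n 1$ and $f_1\circ f_2\simeq q^n 1$. Restricting to $u=1$ therefore gives an ``isomorphism up to $q$-torsion'' between the fiberwise-$\phi$ kernel $\Phi_f^{-1}$ and the diagonal of $M_\phi^R$. If you now \emph{specialize to $q=0$}, as you propose, you lose all information: $q^n 1$ becomes $0$, so the comparison maps tell you nothing. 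The paper resolves this by going the opposite direction and \emph{inverting} $q$: base-changing the uncurved subcategory $\{\Om_p\}\otimes\A\subset tw^\pi(M_\phi^R)$ to $K=\C((q))$ turns $q^n$ into a unit, producing a genuine quasi-isomorphism between $K[t]\otimes\Phi$ and $K[t]\otimes\A$. Your proposed step ``specialize to $q=0$'' is exactly the move that this rigidity statement does not support.

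There is a related secondary inaccuracy: the subcategory $\{\Om_p\}\otimes\A$ is \emph{not} quasi-equivalent to $\A$. The object $\Om_p$ is (a deformation of) the structure sheaf of a smooth point of the (nodal/smooth) elliptic curve, so its self-endomorphism dga is nontrivial in degree $1$; after inverting $q$ the paper identifies this subcategory with $K[t]\otimes\A$ with $|t|=1$. The final reduction to triviality of $\phi$ therefore requires an extra step comparing bimodules over $K[t]\otimes\A$ to bimodules over $\A$, which is precisely the content of Lemma \ref{lemmarestr} (derived tensor along $K[t]\to K$). Without that step, ``the resulting quasi-isomorphism of $\A$-bimodules then forces $\phi\simeq 1_\A$'' does not yet follow.

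One smaller ordering point: you deform the Morita equivalence first and then align the Hochschild classes using $SL_2(\Z)$-symmetries, claiming both $\Lambda_p$ and $\Lambda_\Om$ exist over $R$ ``by deformation.'' The paper only constructs $\Lambda_\Om$ at $q=0$ (this is stated explicitly at the start of Section \ref{sec:symmetries}), so it aligns the classes at $q=0$ \emph{first}, then deforms, and then uses discreteness of the lattice $L(M_{1_\A}^R)\cong\Z^2\subset R^2$ to conclude that the aligned class stays aligned over $R$. Your order can be made to work, but it requires verifying the $R$-lift of $\Lambda_\Om$, which is not done in the paper; the paper's order avoids this.
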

\addtocounter{thm}{-1}
\endgroup
To prove this theorem, we will give a characterization of the family of bimodules $\scrG_R^{sf}$. Let us first work in a more general setting. Let $\B_0$ be an $A_\infty$- category and $\B$ be a curved deformation over $R=\C[[q]]$. Let $\fM$ be a family of bimodules over $\B$ parametrized by $Spf(A_R)$ and let $\gamma\in HH^1(\B)$. Consider the properties:
\begin{enumerate}[label=\textbf{G.\arabic*}]
\item\label{G1} The restriction $\fM|_{q=0}$ is a coherent family. This is equivalent to its representability by an object of $tw^\pi(\B_0\otimes\B_0^{op}\otimes ``\mathcal{C}oh(A)")$. See Definition \ref{compactfamdef}.
\item\label{G2} The restriction $\fM|_{t=1}$ is isomorphic to the diagonal bimodule over $\B$.
\item\label{G3} The family follows the class $1\otimes \gamma\in HH^1(\B^e)$.
\end{enumerate}
The semi-freeness of $\fM$ is implied by the family assumption. The property \ref{G1} is a technical one. However, notice the similarity of properties \ref{G2} and \ref{G3} to an initial value problem. More precisely, \ref{G3} is analogous to flow equation for a vector field, and \ref{G2} is analogous to setting an initial condition. 
We show
\begin{thm}\label{uniquenesstheorem}
Assume $\B_0$ is smooth, proper in each degree and $HH^0(\B_0)=\C$. Let $\fM$ and $\fM'$ be two families of bimodules satisfying \ref{G1}-\ref{G3}. Then $\fM$ and $\fM'$ are isomorphic up to $q$-torsion. In other words, there are maps \begin{equation}f_1:\fM\rightarrow\fM',f_2:\fM'\rightarrow\fM\end{equation} in the category $(\B^e)_{A_R}^{mod}$ of families of bimodules such that $f_2\circ f_1\simeq q^n1_{\fM}$ and $f_1\circ f_2\simeq q^n1_{\fM'}$.
\end{thm}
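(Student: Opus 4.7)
The plan is to analyze the hom-complex $C^\cdot := (\B^e)_{A_R}^{mod}(\fM,\fM')$ in the category of families of bimodules, show that its degree-$0$ cohomology is a rank-one free $A_R$-module with connection up to $q$-torsion, and lift a generator and its counterpart (with the roles of $\fM$ and $\fM'$ reversed) to mutually quasi-inverse chain maps up to a power of $q$.

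First, by G.3 both $\fM$ and $\fM'$ follow $1\otimes\gamma$, so the bimodule version of Proposition \ref{followsame} applies: $C^\cdot$ has vanishing Atiyah class and admits a homotopy connection along $D_{A_R}$ compatible with composition. Combining G.1 with Corollary \ref{cptfamilycor}, $C^\cdot$ is cohomologically bounded below and each $H^i(C^\cdot)$ is a finitely generated $A_R$-module inheriting a connection along $D_{A_R}$. Using G.2, the restriction $C^\cdot|_{t=1}$ is quasi-isomorphic to $CC^\cdot(\B,\B)$, so by the hypothesis $HH^0(\B_0)=\C$ we have $H^0(C^\cdot|_{t=1})\cong R$; applying Lemma \ref{fgcomplex} identifies this with $H^0(C^\cdot)/(t-1)H^0(C^\cdot)$.

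The result of Appendix \ref{sec:modules} shows that a finitely generated $A_R$-module with connection along $D_{A_R}$ whose fiber at $t=1$ is free of rank one is itself free of rank one up to $q$-torsion. Applied to $H^0(C^\cdot)$, this yields a class that, possibly after multiplication by some $q^k$, lifts the canonical identification at $t=1$; pick a representing cocycle $f_1:\fM\to\fM'$, and run the symmetric argument to produce $f_2:\fM'\to\fM$. To verify the composition identity, note that the identical analysis applied to $(\B^e)_{A_R}^{mod}(\fM,\fM)$ shows its $H^0$ is free of rank one modulo $q$-torsion, generated (up to $q$-powers) by $[1_\fM]$. Both $[f_2\circ f_1]$ and $[1_\fM]$ restrict to the identity at $t=1$, so they differ by a $q$-torsion element of $H^0$; multiplying by a suitable $q^n$ gives $f_2\circ f_1\simeq q^n 1_\fM$, and the symmetric argument handles $f_1\circ f_2$.

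The principal technical obstacle is the module-theoretic statement of Appendix \ref{sec:modules} — that finitely generated $A_R$-modules with connection along $D_{A_R}$ are determined, modulo $q$-torsion, by their generic rank. A secondary but delicate point is the compatibility of the homotopy connection with composition, without which the freeness result could not be applied coherently to all three hom-complexes $(\fM,\fM')$, $(\fM',\fM)$, $(\fM,\fM)$ in a way that makes the composition of representatives tractable; this compatibility is precisely the content of the final clause of Proposition \ref{followsame}.
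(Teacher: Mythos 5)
Your overall strategy matches the paper's — both analyze the hom-complex $(\B^e)_{A_R}^{mod}(\fM,\fM')$, deduce finite generation from \ref{G1} and a connection from \ref{G3} via Proposition~\ref{followsame}, identify the fiber at $t=1$ via \ref{G2} and Lemma~\ref{fgcomplex}, and invoke the Appendix result (Proposition~\ref{freeuptoqtors}) that a finitely generated $A_R$-module with connection whose fiber at $t=1$ is $R$ is free of rank one up to $q$-torsion. But the final step, where you produce the composition identity, has a real gap.

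You assert that because $[f_2\circ f_1]$ and $[1_{\fM}]$ both restrict to the identity at $t=1$, they differ by a $q$-torsion element of $H^0((\B^e)_{A_R}^{mod}(\fM,\fM))$. This does not follow: the restriction map to $t=1$ has kernel $(t-1)H^0$, which is far larger than the $q$-torsion (already for the free module $A_R$ with its standard connection, $(t-1)A_R$ is a rank-one ideal with no $q$-torsion). Nothing in the data you have so far forces the discrepancy between $[f_2\circ f_1]$ and a $q$-power of $[1_{\fM}]$ into the torsion part; matching fibers at $t=1$ only determines elements modulo $(t-1)$. The paper closes this gap by a different argument: it considers the composition pairing
\begin{equation}
Hom(\fM',\fM)\otimes_{A_R} Hom(\fM,\fM')\longrightarrow Hom(\fM,\fM)
\end{equation}
as a map of finitely generated $A_R$-modules, and shows its cokernel $C$ is $q$-torsion by applying Proposition~\ref{qtorsmod}: the cokernel inherits a connection along $D_{A_R}$ precisely because the homotopy connection of Proposition~\ref{followsame} is compatible with composition, and $C/(t-1)C=0$ since composition is an isomorphism at $t=1$. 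With surjectivity of the pairing up to $q$-torsion in hand, one arranges for some $a(g\otimes f)$ to map to $q^m 1_{\fM}$, and the desired $f_1,f_2$ follow. You correctly identified the compatibility of the homotopy connection with composition as a delicate point, but its actual use is to endow the cokernel of the pairing with a connection, not merely to make the three hom-complexes "tractable."
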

\begin{proof}
Let $Hom(\fM,\fM')$ denote $H^0((\B^e)_{A_R}^{mod})(\fM,\fM')$ throughout the proof. First, notice that it is finitely generated over $A_R$. To see this consider the complex \begin{equation}(\B_0^e)_{A}^{mod}(\fM|_{q=0},\fM'|_{q=0})\end{equation}of $A$-modules. Here, $(\B_0^e)_{A}^{mod}$ is the category of families of $\B_0$-bimodules parametrized by $Spec(A)$, which can be defined analogously. As stated in Lemma \ref{cptfamilylem}, the condition \ref{G1} implies that this complex has cohomology that is finitely generated over $A$ in each degree. Thus, by Lemma \ref{cptfamilycor} or \ref{semicont2} the same holds for the complex \begin{equation}(\B^e)_{A_R}^{mod}(\fM,\fM')\end{equation} and $Hom(\fM,\fM')$ is finitely generated. 

Second, by Proposition \ref{followsame}, the complex $(\B^e)_{A_R}^{mod}(\fM,\fM')$ admits a homotopy connection along $\precon_{A_R}$; thus, so is its cohomology. In particular, the $A_R$-modules $Hom (\fM,\fM')$, $Hom (\fM',\fM)$ and so on carry connections along $A_R$.

Applying Lemma \ref{fgcomplex} to this complex we see that \begin{equation}Hom(\fM,\fM')/(t-1)Hom(\fM,\fM')\cong H^0((\B^e)^{mod}(\fM|_{t=1},\fM'|_{t=1})) \end{equation}Here we are also using the fact that the restriction of $(\B^e)_{A_R}^{mod}(\fM,\fM')$ to $t=1$ gives the hom-complex $(\B^e)^{mod}(\fM,\fM')$ and this follows from semi-freeness of families over $A_R$. However, by condition \ref{G2}, $H^0((\B^e)^{mod}(\fM|_{t=1},\fM'|_{t=1}))$ is simply the self-endomorphisms of the diagonal; which is computed by Hochschild cohomology. Hence, the assumption $HH^0(\B_0)=\C$ implies $H^0((\B^e)^{mod}(\fM|_{t=1},\fM'|_{t=1}))\cong R$.

In summary $Hom(\fM,\fM')$ is a finitely generated $A_R$-module with a connection whose restriction to $t=1$ is isomorphic to $R$. Hence, by Proposition \ref{freeuptoqtors}, it is free of rank $1$, up to $q$-torsion. In other words, there is a map $Hom(\fM,\fM')\to A_R$ with $q$-torsion kernel and cokernel; hence, there exists an $f\in Hom(\fM,\fM')$ and $k\in\mathbb{N}$ satisfying the following: for every $x\in Hom(\fM,\fM')$, there exists a unique $a\in A_R$ such that $q^kx=af+y$ for some $q$-torsion element $y$ (by increasing $k$, we can ensure $y$ vanishes, assume this holds). The same is true for $Hom(\fM',\fM)$, $Hom(\fM,\fM)$ and $Hom(\fM',\fM')$. Choose such an elements $f\in Hom(\fM,\fM')$, $g\in Hom(\fM',\fM)$ with the same $k\in\mathbb{N}$.

Moreover, the composition map \begin{equation}\label{eq:comp111}Hom(\fM',\fM)\otimes_{A_R} Hom(\fM,\fM')\rightarrow Hom(\fM,\fM) \end{equation} (again the tensor product is $q$-adically completed) has kernel and cokernel that are $q$-torsion. To see this consider the cokernel $C$. By the compatibility of the connection with composition in Proposition \ref{followsame}, the image and the cokernel carry connections along $D_{A_R}$. Moreover, the restriction of composition map to $t=1$ gives the composition of families restricted to $t=1$; hence, it is an isomorphism and $C/(t-1)C=0$. Using Proposition \ref{qtorsmod} we see that $C$ is $q$-torsion. 

Hence, there exists an $m$ such that $q^m1_\fM$ is in the image of (\ref{eq:comp111}). By increasing $m$, we can ensure an element of the form $a(g\otimes f)$ maps to $q^m1_\fM$. 
%More precisely, assume $\sum g_i\otimes f_i\mapsto q^m1_\fM$. One can write $q^kg_i$ and $q^kf_i$ as multiples of $g$ and $f$; thus, $q^{2k}\sum g_i\otimes f_i$ is a multiple of $g\otimes f$, which maps to $q^{m+2k}1_\fM$. Replace $m$ by $m+2k$.
Similarly, we can ensure there exists an element of the form $a'(f\otimes g)$ that maps to $q^m1_{\fM'}$ under composition. Hence, $q^mag=ag\circ f\circ a'g=q^ma'g$. Letting $f_1=f$, $f_2=q^mag$ proves the statement of the theorem.
\end{proof}
%\begin{rk}
%Actually, our proof implies slightly stronger result that there exists $\tilde{f}_1,\tilde{f_2}$ as above whose composition(in both ways) differ from the identity by a $q$-torsion element. 
%\end{rk}
\begin{rk}
A version of Theorem \ref{uniquenesstheorem} for families over smooth complex curves is proven in \cite[Prop 1.21]{flux}. We follow a similar idea.
\end{rk}
\begin{lem}\label{semicont2}
Let $C^*$ be a complex of $q$-adically complete $A_R$-modules that are free of $q$-torsion. Assume the cohomology of the complex $C^*|_{q=0}=C^*/qC^*$ of $A$-modules is finitely generated over $A$ in each degree. Then, $H^*(C^*)$ is finitely generated over $A_R$ in each degree. 
\end{lem}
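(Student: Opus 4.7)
The plan is to combine the long exact sequence for multiplication by $q$ with a topological Nakayama argument. Since $C^*$ is $q$-torsion free, the short exact sequence of complexes $0 \to C^* \xrightarrow{q} C^* \to C^*/qC^* \to 0$ yields, for each $i$, a short exact sequence
$$0 \to H^i(C^*)/qH^i(C^*) \to H^i(C^*/qC^*) \to H^{i+1}(C^*)[q] \to 0,$$
where $H^{i+1}(C^*)[q]$ denotes the $q$-torsion submodule. Since $A$ is Noetherian and $H^i(C^*/qC^*)$ is finitely generated over $A$ by hypothesis, the outer terms are finitely generated over $A$.

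Next I invoke the topological Nakayama lemma: if $M$ is a $q$-adically complete and separated $A_R$-module with $M/qM$ finitely generated over $A$, then $M$ is finitely generated over $A_R$. The proof is explicit: given lifts $m_1,\dots,m_k \in M$ of generators of $M/qM$, one iteratively writes $m^{(n)} = \sum_i a_i^{(n)} m_i + q\,m^{(n+1)}$ with $m^{(0)}=m$, then uses $q$-adic completeness of $A_R$ to sum the inner series $b_i := \sum_n q^n a_i^{(n)} \in A_R$ and separatedness of $M$ to conclude $m = \sum_i b_i m_i$. It thus suffices to prove that $M := H^i(C^*)$ is $q$-adically complete and separated.

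Establishing this completeness and separatedness is the main obstacle. The plan is to proceed inductively: using the short exact sequences $0 \to C^*/qC^* \xrightarrow{q^{n-1}} C^*/q^n C^* \to C^*/q^{n-1}C^* \to 0$ (exact thanks to $q$-torsion freeness) together with Noetherianity of $A_R/q^n A_R$, show that $H^i(C^*/q^n C^*)$ is finitely generated over $A_R/q^n A_R$. Since each $C^i$ is $q$-adically complete, $C^* = \lim_n C^*/q^n C^*$, and the Milnor exact sequence
$$0 \to \lim\nolimits^1_n H^{i-1}(C^*/q^n C^*) \to H^i(C^*) \to \lim_n H^i(C^*/q^n C^*) \to 0$$
identifies $H^i(C^*)$ with the inverse limit provided the Mittag--Leffler condition for $\{H^{i-1}(C^*/q^n C^*)\}$ holds. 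Verifying Mittag--Leffler is the hardest step, since $A_R/q^n A_R$ is Noetherian but not Artinian and descending chains of submodules need not stabilize a priori; the strategy is to combine the short exact sequences $0 \to H^{i-1}(C^*)/q^n H^{i-1}(C^*) \to H^{i-1}(C^*/q^n C^*) \to H^i(C^*)[q^n] \to 0$ (from the long exact sequence for $\times q^n$) with the $q$-adic filtration to show that the descending chains of transition images are controlled by the finitely generated $A$-module $H^{i-1}(C^*)/qH^{i-1}(C^*)$ obtained in step one, forcing stabilization. This gives completeness and separatedness of $H^i(C^*)$, and topological Nakayama then finishes the proof.
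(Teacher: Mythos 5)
Your approach differs from the paper's, which works directly at the cocycle level: choose closed elements $y_1,\dots,y_n \in C^i/qC^i$ whose classes generate $H^i(C^*/qC^*)$; inside $A\langle y_1,\dots,y_n\rangle$, the submodule of elements that lift to honest cocycles of $C^i$ is finitely generated since $A$ is Noetherian; choose lifted cocycles $\tilde x_1,\dots,\tilde x_m$ of a generating set and show that their classes generate $H^i(C^*)$ over $A_R$ by a successive-approximation argument that sums $q$-adic series inside the chain groups $C^i$, which are complete and separated by hypothesis. This is morally a topological Nakayama argument, but carried out where completeness is already given, instead of on the abstract module $H^i(C^*)$.

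That is exactly where your proposal leaves a genuine gap. Your step (1) and your statement of topological Nakayama are both fine, but applying Nakayama to $M = H^i(C^*)$ requires $M$ to be $q$-adically complete and separated, and the cohomology of a complex of complete, separated modules need not inherit either property. You acknowledge this and sketch a Milnor/Mittag--Leffler route, and that route is plausible: one can show by induction on $n$ (using $0 \to C^*/qC^* \xrightarrow{q^{n-1}} C^*/q^nC^* \to C^*/q^{n-1}C^* \to 0$, exact by $q$-torsion freeness, together with Noetherianity of $A_R/q^nA_R$) that each $H^*(C^*/q^nC^*)$ is finitely generated over $A_R/q^nA_R$, which yields Mittag--Leffler and hence $H^i(C^*)\cong \lim_n H^i(C^*/q^nC^*)$. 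But after that you still owe a proof that this inverse limit is $q$-adically complete and separated, which you never give; it is not automatic and requires unwinding how the $q$-adic filtration interacts with the tower. The paper's cocycle-level argument sidesteps all of this by never leaving the chain groups. So: correct plan in outline, genuinely different and longer route, with the hardest step left unresolved.
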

\begin{proof}(Sketch)
Pick $y_1,\dots,y_n\in C^i/qC^i$ that are closed and whose classes generate $H^i(C^*/qC^*)$ as an $A$-module. Consider the module $A<y_1,\dots,y_n>\subset C^i/qC^i$ and consider its submodule of elements $x$ such that there exists an $\tilde{x}\in C^i$ that deform $x$ and satisfying $d(\tilde x)=0$. This submodule is finitely generated over $A$ as well and we can find closed elements $\tilde{x}_1,\dots , \tilde{x}_m\in C^i$ whose restrictions to $q=0$ generate this submodule of deforming elements. Now, it is easy to see the cohomology classes of $\tilde{x}_1,\dots,\tilde{x}_m$ generate $H^i(C^*)$ over $A_R$.
%a longer proof is in informal notes, copy here if needed 
\end{proof}
\begin{prop}
The family $\scrG_R^{sf}$ satisfies the conditions \ref{G1}-\ref{G3} for $\gamma=\gamma_\phi^R$.
\end{prop}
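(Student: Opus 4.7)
The plan is to verify each of the three conditions by leveraging results already established earlier in the paper, with the only genuinely new verification being condition \ref{G2}.

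For condition \ref{G1}, this is essentially immediate from Proposition \ref{compactfamily}, which asserts that $\scrG_R^{sf}|_{q=0}\simeq \scrG^{sf}$ is coherent. So nothing additional is needed beyond citing that result.

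For condition \ref{G3}, this is precisely the content of Corollary \ref{gfollows}, together with Remark \ref{grk} ensuring the statement is about the semi-free replacement. The proof invoked there combines the pro-rational $\widehat{\G}(R)$-action on $\cG_R$ (and hence on $\scrG_R^{pre,sf}$, $\scrG_R^{sf}$) with Lemma \ref{followinfi} (infinitesimal action equals deformation class) and Corollary \ref{postgammacor} (the infinitesimal cocycle $(z\partial_z)^\#$ equals $\gamma_\phi^R$).

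The substantive step is condition \ref{G2}. Here I would first analyze the restriction $\cG_R|_{t=1}\subset \Tt_R\times \Tt_R\times Spf(A_R|_{t=1})$ at the chart level. Setting $t=1$ in the defining equations (\ref{eq:graph1}) and (\ref{eq:graph2}) and using the relation $ut=q$ (so $u=q$ at $t=1$), equation (\ref{eq:graph1}) becomes $Y_{i+1}=Y_{i+1}'$, $X_i=X_i'$, $X_iY_{i+1}=q$ on $\tU_{i+1/2}\times\tU_{i+1/2}$, and equation (\ref{eq:graph2}) becomes $Y_{i+1}=qY_i'$, $X_{i-1}'=qX_i$, $Y_i'X_i=1$ on $\tU_{i+1/2}\times \tU_{i-1/2}$. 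The first set cuts out the diagonal of $\tU_{i+1/2}$, and the second cuts out the diagonal of the overlap $\tV_i$ (after using $X_iY_i=1$). Together these show $\cG_R|_{t=1}$ is the diagonal $\Delta_{\Tt_R}\subset \Tt_R\times\Tt_R$. Consequently, the bimodule defined by (\ref{eq:bigbimodR}) restricts at $t=1$ to
\begin{equation}
(\scrF,\scrF')\mapsto hom^\cdot_{\Om_{\Tt_R\times\Tt_R}}(q^*R(\scrF)_R,p^*R(\scrF')_R\otimes\Om_{\Delta_{\Tt_R}}) \simeq hom^\cdot_{\Om_{\Tt_R}}(R(\scrF)_R,R(\scrF')_R),
\end{equation}
which is the diagonal bimodule of $\Om(\Tt_R)_{cdg}$. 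Tensoring with $\Delta_\A$ produces the diagonal of $\Om(\Tt_R)_{cdg}\otimes\A$, and by the descent construction of Section \ref{sec:bimodgen} this descends to the diagonal bimodule of $M_\phi^R=(\Om(\Tt_R)_{cdg}\otimes\A)\#\Z$, as was observed in the simplest example of that construction. Passing from $\scrG_R^{pre}\otimes \Delta_\A$ to the semi-free replacement $\scrG_R^{sf}$ preserves this quasi-isomorphism, giving $\scrG_R^{sf}|_{t=1}\simeq \Delta_{M_\phi^R}$.

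The hard part, if any, is really just bookkeeping: one has to check that the $\Z_\Delta$-equivariant structure on $\scrG_R^{pre}|_{t=1}$ (inherited from the $\tr\times\tr\times 1$-invariance of $\cG_R$) agrees with the standard $\Z_\Delta$-equivariant structure on the diagonal of $\Om(\Tt_R)_{cdg}\otimes\A$ under the identification above, so that the descended bimodules match. This compatibility is automatic from the way the equivariant structure was set up in Section \ref{sec:bimodgen}, since $\tr\times\tr$ acts on the diagonal in the obvious way and the identification $\Om_{\cG_R|_{t=1}}\cong \Om_{\Delta_{\Tt_R}}$ is $\tr\times\tr$-equivariant.
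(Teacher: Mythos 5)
Your proposal is correct and follows essentially the same route as the paper: cite Proposition \ref{compactfamily} for \ref{G1}, cite Corollary \ref{gfollows} (and Remark \ref{grk}) for \ref{G3}, and verify \ref{G2} by observing from the defining equations (\ref{eq:graph1}) and (\ref{eq:graph2}) that $\cG_R|_{t=1}$ is the diagonal, which then descends to the diagonal bimodule. Your write-up simply carries out the chart-by-chart substitution and the equivariance bookkeeping more explicitly than the paper, which states these facts without computation.
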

\begin{proof}
We have already shown \ref{G1} in Proposition \ref{compactfamily} and \ref{G3} in Corollary \ref{gfollows}. See also Remark \ref{grk}. To see \ref{G2}, notice $\cG_R|_{t=1}\subset \Tt_R\times \Tt_R$ is the diagonal by defining equations (\ref{eq:graph1}) and (\ref{eq:graph2}). Hence, it induces the diagonal bimodule of $\Om(\Tt_R)_{cdg}$, which descends to diagonal bimodule of $M_\phi^R$.
\end{proof}
\begin{rk}\label{rkforuone}
Similarly, by (\ref{eq:graph1}) and (\ref{eq:graph2}), $\cG_R|_{u=1}\subset\Tt_R\times\Tt_R$ is the graph of $\tr^{-1}$. Hence, the bimodule $\scrG_R^{pre}$ is quasi-isomorphic to %\begin{equation}(\scrF,\scrF')\mapsto hom^\cdot_{\Om_{\Tt_R}}(R(\scrF)_R,(\tr^{-1})^*R(\scrF')_R)\cong\atop hom^\cdot_{\Om_{\Tt_R}}(R(\scrF)_R,R(\tr_*\scrF')_R)\cong \Om(\Tt_R)_{cdg}(\scrF,\tr\scrF') \end{equation}
\begin{equation}(\scrF,\scrF')\mapsto\Om(\Tt_R)_{cdg}(\scrF,\tr(\scrF')) \end{equation}
Since we take the smash product with action generated by $\tr\otimes \phi$, the bimodule induced on $M_\phi^R=(\Om(\Tt_R)_{cdg}\otimes \A)\#\Z$ is given by \begin{equation}(\scrF\otimes a,\scrF'\otimes a' )\mapsto M_\phi^R(\scrF\otimes a, (1\otimes \phi^{-1})(\scrF'\otimes a'))=M_\phi^R(\scrF\otimes a, \phi^{-1}_f(\scrF'\otimes a')) \end{equation}where $\phi_f$ is the ``fiberwise $\phi$'' functor, which will be defined in Section \ref{sec:another}.
\end{rk}
Before going back to main theorem, we state some lemmas in homological algebra and abstract deformation theory:
\begin{lem}\label{lem:normalsupercategory}
Let $\B$ and $\B'$ be Morita equivalent $A_\infty$-categories over $\C$. Then, there exists an $A_\infty$-category $\tilde B$ that contains both $\B$ and $\B'$ as full, split generating $A_\infty$-subcategories with disjoint sets of objects. Moreover, this category can be chosen such that $\tilde \B$, considered as an $A_\infty$-bimodule over $\B$-$\B'$ realizes any given Morita equivalence.
\end{lem}
\begin{proof}
A fixed Morita equivalence induces a quasi-equivalence $perf(\B)\to perf(\B')$. Moreover, the restriction of $perf(\B)$-$perf(\B')$-bimodule corresponding to this quasi-equivalence along the Yoneda embeddings is quasi-isomorphic to the initial Morita equivalence. Therefore, the categories $\B$ and $\B'$ admit extensions (within their split closed twisted envelope for instance) that are quasi-equivalent and such that the bimodule induced by the quasi-equivalence restricts to the initial Morita equivalence. Hence, without loss of generality, we can assume $\B$ and $\B'$ are quasi-equivalent, and the Morita equivalence is induced by this quasi-equivalence (which we call $f:\B\to \B'$). 

We start by constructing a category $\underline{\B}$ and faithful, cohomologically fully faithful functors $i:\B\to \underline{\B}$, $i':\B'\to\underline{\B}$ with split generating images. Let $\B_0$ be another category with quasi-equivalences $j:\B_0\to \B$ and $j':\B_0\to \B'$ such that $f\circ j\simeq j'$. For instance, one can let $\B_0=\B$, $j=1_\B$, $j'=f$. One can construct a category $Groth (\B\leftarrow \B_0\rightarrow \B')$, the Grothendick construction, as given in \cite{GPS2}. The object set of this category is given by $ob(\B)\sqcup ob(\B')\sqcup ob(\B_0)$, and it contains $\B$, $\B'$ and $\B_0$ as full subcategories. Given $L_0\in ob(\B_0)$, $L\in ob(\B)$, $L'\in ob(\B')$, the hom-complexes from $L$ to $L'$, $L$ to $L_0$ and $L'$ to $L_0$ are defined to be $0$. The hom-complex $L_0$ to $L$ is given by $\B(j(L_0),L)$, and it is similar with $L_0$ to $L'$. Localization of $Groth (\B\leftarrow \B_0\rightarrow \B')$ at all $1_{j(L_0)}$, considered as a morphism in $Groth (\B\leftarrow \B_0\rightarrow \B')$ from $L_0$ to $L=j(L_0)$, and $1_{j'(L_0)}$, considered as a morphism from $L_0$ to $L'=j'(L_0)$,  gives the homotopy push-out of the same diagram. The explicit model of localization given in \cite{GPS2} (as a quotient as in \cite{lyubaquot}) contains the Grothendick construction as a non-full subcategory; therefore, it contains $\B$ and $\B'$ as non-full subcategories. It is also easy to see their inclusions are cohomologically full and faithful and split generate the localization. Denote the localization by $\underline{\B}$. If $i:\B\to \underline{\B}$, $i':\B'\to \underline{\B}$ denote the inclusion functors, one can also check using $f\circ j\simeq j'$ (or $f\simeq j'\circ j^{-1}$ for an inverse to $j$) that $i'\circ f\simeq i$ as functors. This implies the bimodule condition in the statement of the lemma.

In summary, $\underline{\B}$ satisfies everything asked in the statement of the lemma, except fullness: $\B$ and $\B'$ are non-full subcategories. Consider the inclusion of $i:\B\to \underline{\B}$, which is an $A_\infty$-functor with vanishing higher maps. The explicit construction of localization allows one to choose chain level left inverses $p$ to each chain map $i:\B(L_0,L_1)\to \underline{\B}(i(L_0),i(L_1))$ (satisfying $p\circ i=1$) and chain homotopy $h$ between $i\circ p$ and the identity. %Indeed, given null-homotopic complex $A$ and chain map $f:A\to B$, one can always find such a splitting to inclusion of $B$ into $cone(A\to B)$ and such a homotopy. If $h_A$ satisfies $d\circ h_A+h_A\circ d =1_A$, then $\begin{bmatrix}	f\circ h & 1_\B \end{bmatrix}$ is the splitting and $\begin{bmatrix}-h_A & 0\\ 0&0\end{bmatrix}$ is the homotopy. Recall the differential of the cone is given by $\begin{bmatrix}-d& 0\\ f&d \end{bmatrix}$.
Moreover, $h$ can be assumed to vanish at the image of $i$. This homotopy transfer data as in \cite{markltransfer}, normally allows one to transfer $A_\infty$-structure on $\underline{\B}$ to $\B$. On the other hand, the properties $p,i,h$ satisfy imply that the transferred structure is the same as original $A_\infty$-structure on $\B$. Moreover, the $A_\infty$-functor $\B\to\underline{\B}$ that this process creates is the same as $i$. The same applies to $\B'$ as well. In other words, one can extend $i':\B'\to \underline{\B}$ to a transfer data such that the transferred and the original $A_\infty$-structure on $\B'$ coincide. 

Now, construct a category $\tilde \B$ with objects $ob(\B)\sqcup ob(\B')$. Let the hom-complexes between objects of $\B$, resp. $\B'$ be the same as hom-complexes of $\B$ resp. $\B'$. Let $\tilde \B(L_0,L_1):=\underline{\B}(i(L_0),i'(L_1))$, when $L_0\in ob(\B)$ and $L_1\in ob(\B')$, and same in the other direction. So far, we have not specified compositions, but there is an inclusion of each complex of $\tilde \B$ into $\underline{\B}$. Choose homotopy transfer data for this inclusion (i.e. left inverses to chain maps and appropriate homotopies) that extend previously chosen transfer data on $\B$ and $\B'$. Then, one can transfer the $A_\infty$-structure along this data to $\tilde \B$. By definition, the $A_\infty$-category is quasi-equivalent to its essential image, and the transferred $A_\infty$-structures on $\B$ and $\B'$ coincide with the original structures. It is easy to check that the properties in the lemma are satisfied. 
\end{proof}
\begin{lem}\label{defolem1}
Let $\B$ and $\B'$ be Morita equivalent $A_\infty$ categories over $\C$. Let $\B_R$ be a (possibly curved) deformation of $\B$ over $R=\C[[q]]$.Then there exists a (possibly curved) deformation $\B'_R$ of $\B'$ over $R$ such that the initial Morita equivalence extends to a Morita equivalence of $\B_R$ and $\B_R'$.
\end{lem}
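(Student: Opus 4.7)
I would construct $\B'_R$ as the endomorphism $A_\infty$-algebra of a family of right $\B_R$-modules obtained by deforming the modules that, via the Morita bimodule, correspond to the objects of $\B'$. After replacing everything with semi-free cofibrant models if necessary, a Morita equivalence is given by a $\B$-$\B'$-bimodule $X$ with quasi-inverse $Y$, and the associated functor $b' \mapsto X_{b'} := X(-,b')$ embeds $\B'$ quasi-fully faithfully into right $\B$-modules, with image consisting of compact generators.

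First I would deform each right $\B$-module $X_{b'}$ to a (possibly curved) right $\B_R$-module $X_{R,b'}$, chosen semi-free in each degree. Such lifts always exist in the curved setting: given a semi-free presentation one can inductively lift the structure maps $\mu^k_M$ modulo $q^n$, and any obstruction to the $A_\infty$-module relations at the next order can be absorbed into the curvature term $\mu^0_M$ (which is permitted because $\B_R$ is itself curved). Next, define $\B'_R$ by $\mathrm{Ob}(\B'_R)=\mathrm{Ob}(\B')$ and
\begin{equation}
\B'_R(b'_1,b'_2) := hom_{\B_R\text{-mod}}(X_{R,b'_1},X_{R,b'_2}),
\end{equation}
with composition given by composition of pre-module homomorphisms and higher $A_\infty$-operations/curvature produced by commutators with the module structure maps. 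This is a curved $A_\infty$-category over $R$ by construction.

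The specialization at $q=0$ is $hom_{\B\text{-mod}}(X_{b'_1},X_{b'_2})$, which is quasi-isomorphic to $\B'(b'_1,b'_2)$ because the Morita bimodule $X$ induces a quasi-fully faithful embedding $\B'\hookrightarrow \text{mod}(\B)$. Hence $\B'_R$ is a curved deformation of $\B'$. The $\B_R$-$\B'_R$-bimodule $X_R$ (with tautological right $\B'_R$-action by definition of $\B'_R$ as endomorphisms) deforms $X$, and an inverse is produced either by applying the same procedure to $Y$ or by using that $X_R$ remains invertible after deformation—its specialization at $q=0$ is invertible, and the obstruction to invertibility lifts to zero order-by-order by a standard semi-continuity argument analogous to Lemma \ref{semicont}.

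\paragraph{Main obstacle.} The delicate step is verifying that the composition in $\B'_R$ indeed yields an $A_\infty$-category whose special fiber is quasi-equivalent to $\B'$ (not merely some equivalent model), and that the collection of $X_{R,b'}$ can be chosen coherently so that the whole package—the category $\B'_R$ together with the invertible bimodule $X_R$—assembles functorially in $R$. A cleaner but less explicit alternative would be to invoke Morita invariance of Hochschild cohomology: curved infinitesimal deformations are classified by $HH^2$, obstructions to extending them lie in $HH^{\geq 2}$, and both are Morita-invariant, so transporting the deformation class of $\B_R$ across the Morita equivalence automatically produces the desired $\B'_R$ together with the deformed equivalence.
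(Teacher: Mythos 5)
The paper states Lemma \ref{defolem1} without proof, so there is no author's argument to compare against. Your construction is a legitimate way to fill that gap, and it is essentially the standard one: view $\B'$ as the endomorphism category of the compact generators $X_{b'}=X(-,b')$ in right $\B$-modules, transport those objects into modules over the curved deformation $\B_R$, and let $\B'_R$ be their endomorphism category there.

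Two refinements would sharpen the write-up. First, the ``inductive lifting / absorb obstructions into $\mu^0_M$'' language overstates the difficulty: there is nothing to lift. Extend each $\mu^k_{X_{b'}}$ $R$-linearly to $X_{b'}\widehat\otimes R$ without modification; the package $(\delta_M+\delta_{\B_R})^2$ on $M\otimes B(\B_R)$ vanishes at $q=0$ (because $M$ is a genuine $\B$-module) and hence is $O(q)$, and it \emph{is} the curvature of that object in the curved dg category of pre-modules. So the curvature is not something you choose to absorb an obstruction into; it is forced, and the construction has no degrees of freedom and no obstruction theory. This is exactly the paper's notion of ``curved modules'' from Subsection~\ref{subsec:defoofom}. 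Second, be explicit that what you produce is a curved deformation of a category \emph{quasi-equivalent} to $\B'$ (namely the image of $\B'$ inside $\B$-modules), not of $\B'$ on the nose; this is adequate because only the Morita-equivalence class matters in Corollary~\ref{defocor}, but it is worth flagging.

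Your proposed alternative via Morita invariance of $CC^*$ is also viable and is probably closer in spirit to the framework the paper sets up (the ensuing Lemma~\ref{defolem2} and Corollary~\ref{defocor} are phrased in terms of $HH^2$ and versality following \cite{seidelK3}). But note that transporting the Maurer--Cartan element across the $L_\infty$-quasi-isomorphism $CC^*(\B)\simeq CC^*(\B')$ only produces $\B'_R$; the claim that the Morita bimodule itself deforms to an invertible $\B_R$-$\B'_R$-bimodule still needs an argument, and your explicit construction (where $X_R$ is tautologically a $\B_R$-$\B'_R$-bimodule, invertible at $q=0$, hence invertible by semi-continuity as in Lemma~\ref{semicont}) is cleaner on that point. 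So the explicit route you sketch first is arguably the more complete of your two alternatives.
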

\begin{proof}
Let $\tilde\B$ be as in Lemma \ref{lem:normalsupercategory}. Since $\B$ and $\B'$ are contained as full subcategories, there are restriction maps $CC^*(\tilde \B,\tilde \B)[1]\to CC^*(\B,\B)[1]$ and $CC^*(\tilde \B,\tilde \B)[1]\to CC^*(\B',\B')[1]$, and split generation implies they are both quasi-isomorphisms of dgla's. By \cite[Corollary V.52]{manettilectures}, quasi-isomorphism of dgla's induce isomorphism of deformation functors; thus, there exists a deformation of $\tilde \B$ that lifts the one on $\B$ (up to gauge equivalence, but this subtlety can be ignored by composing the inclusion of induced deformation of $\B$ into that of $\tilde \B$ with the gauge equivalence). Then, one can restrict the deformation to objects of $\B'$ to obtain a deformation of it. 

By Lemma \ref{lem:normalsupercategory}, one can choose $\tilde \B$ and inclusions of $\B$, $\B'$ such that, $\tilde\B$, considered as a $\B$-$\B'$ and $\B'$-$\B$-bimodule, is quasi-isomorphic to initial Morita equivalence and its inverse. Note that the $\B$-$\B$-bimodule homomorphism $\tilde \B\otimes_{\B'} \tilde \B\to \B$ is simply given by applying the $A_\infty$-product. The same is true for the convolution in other direction too; thus, these bimodule homomorphisms deform as well, and remain quasi-isomorphisms by Lemma \ref{semicont}. Hence, the deformation of $\tilde \B$ is still invertible, as a $\B$-$\B'$ (or $\B'$-$\B$-bimodule), and the deformations of $\B$ and $\B'$ are still Morita equivalent. The deformed Morita equivalence restricts to the initial Morita equivalence (since the gauge transformation among deformations of $\B$ deforms identity functor, the restriction of the bimodule is not effected it).
%One needs to check these deformations are Morita equivalent, it suffices to check this for deformations of $\B$ and $\tilde \B$. The Morita equivalence of $\B$ and $\tilde \B$ are given by $\tilde \B$ considered as a $\B$-$\tilde \B$, resp. $\tilde \B$-$\B$-bimodule. More precisely, $\tilde \B\otimes_\B \tilde \B\simeq \tilde \B$ as a $\tilde \B$-$\tilde \B$-bimodule, and $\tilde \B\otimes_{\tilde \B} \tilde \B\simeq  \B$ as a $\B$-$\B$-bimodule, where the quasi-isomorphisms are given by applying the $A_\infty$-structure. Hence, these bimodule homomorphisms deform, and they remain quasi-isomorphisms by Lemma \ref{semicont}.	
\end{proof}
Next result is a versality statement, which is a version of \cite[Lemma 3.5]{seidelK3} and indeed follows from \cite[Lemma 3.9]{seidelK3}.
\begin{lem}\label{defolem2}
Let $\B$ be an $A_\infty$-category such that $HH^2(\B)=\C$. Then any two (curved) deformations $\B_1$ and $\B_2$ of $\B$ over $R=\C[[q]]$ that are non-trivial in the first order are related by a base change by an automorphism $f_q$ of $R$ that specialize to identity at $q=0$. In other words, $\B_1=f_q^*\B_2$.
\end{lem}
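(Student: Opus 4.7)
The plan is to prove versality by an order-by-order induction on the $q$-adic filtration, using the standard Hochschild-theoretic description of (curved) $A_\infty$ deformations of $\B$.

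First I would set the framework. A curved $A_\infty$ deformation of $\B$ over $R=\C[[q]]$ is encoded, up to gauge equivalence, by a Maurer–Cartan element $\alpha \in qCC^2(\B)[[q]]$ satisfying $\mu_\B\circ\alpha + \tfrac12[\alpha,\alpha]=0$, with two such elements equivalent iff they differ by the action of an element of $qCC^1(\B)[[q]]$. Writing $\alpha_i = q\gamma_i^{(1)} + q^2\gamma_i^{(2)}+\cdots$ for $i=1,2$, the first-order term $\gamma_i^{(1)}$ is automatically closed and its cohomology class in $HH^2(\B)$ is the infinitesimal deformation class. The hypothesis that $\B_1$ and $\B_2$ are nontrivial to first order means $[\gamma_1^{(1)}], [\gamma_2^{(1)}]\in HH^2(\B)=\C$ are both nonzero.

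Next I would perform the initial base change. Because $HH^2(\B)=\C$, there exists a unique scalar $c\in\C^*$ with $[\gamma_2^{(1)}]=c\,[\gamma_1^{(1)}]$. Apply the base change $f_q^{(1)}\colon q\mapsto cq$ to $\B_2$; this rescales the first-order class by $c^{-1}$ (or $c$, depending on sign convention), and we may now assume the two deformations agree in first order up to a gauge, which we absorb into $\B_2$.

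Then I would run the inductive improvement. Suppose we have constructed $f_q = q+c_2q^2+\cdots+c_n q^n$ and a gauge transformation making $\B_1$ and $f_q^*\B_2$ agree modulo $q^{n+1}$. The discrepancy at order $n+1$ is a Hochschild $2$-cocycle $\delta^{(n+1)}\in CC^2(\B)$; its class in $HH^2(\B)=\C$ is the obstruction to matching the deformations to order $n+2$ by a gauge alone. Adjusting $f_q$ to $f_q + c_{n+1}q^{n+1}$ shifts this obstruction by $c_{n+1}\,[\gamma_1^{(1)}]$, and since $[\gamma_1^{(1)}]$ generates $HH^2(\B)$ we can solve for a unique $c_{n+1}$ killing the class. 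With the obstruction cohomologically trivial, pick a Hochschild $1$-cochain bounding it to extend the gauge equivalence to order $n+1$. Completeness of $R$ in the $q$-adic topology guarantees that the sequences $\{f_q \bmod q^{n}\}$ and $\{\text{gauge transformations mod }q^n\}$ assemble into honest elements over $R$, giving the desired $f_q\in\mathrm{Aut}(R)$ with $f_q^*\B_2\simeq \B_1$.

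The main obstacle is verifying that the inductive step is governed purely by $HH^2(\B)$ in the curved setting: one must check that the obstruction class is well-defined modulo coboundaries (independent of the choice of gauge at previous orders), and that the variation induced by perturbing $f_q$ by $c_{n+1}q^{n+1}$ is exactly $c_{n+1}[\gamma_1^{(1)}]$ rather than some twisted version. Both facts are standard consequences of the DGLA structure on $CC^*(\B)$ controlling the deformation problem, and they are exactly the input \cite[Lemma 3.9]{seidelK3} is designed to provide; so the argument reduces to quoting that lemma once the setup has been arranged as above.
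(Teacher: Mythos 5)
Your proposal is correct and takes essentially the same approach as the paper: the paper gives no argument of its own, citing only that the statement ``follows from [Lemma 3.9]{seidelK3},'' and your order-by-order Maurer--Cartan/DGLA sketch is precisely the content of that cited lemma. The one cosmetic imprecision is writing the Maurer--Cartan equation as $\mu_\B\circ\alpha + \tfrac12[\alpha,\alpha]=0$; the first term should be the Hochschild differential $[\mu_\B,\alpha]$ rather than a composition, but the intended meaning is clear and does not affect the argument.
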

\begin{cor}\label{defocor}
Assume $\B$ and $\B'$ are Morita equivalent. Let $\B_R$ and $\B_R'$ be respective curved deformations over $R=\C[[q]]$ that are non-trivial in the first order. Assume $HH^2(\B)\cong HH^2(\B')\cong \C$. Then there exists an automorphism $f_q$ of $R$ specializing to identity at $q=0$ such that initial Morita equivalence extends to a Morita equivalence of $\B_R$ and $f_q^*\B'_R$.
\end{cor}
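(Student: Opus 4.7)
The plan is to combine Lemma \ref{defolem1} and Lemma \ref{defolem2} in sequence. First I would apply Lemma \ref{defolem1} to the given Morita equivalence $\B\simeq \B'$ together with the deformation $\B_R$: this produces some curved deformation $\tilde{\B}'_R$ of $\B'$ over $R$ such that the original Morita equivalence extends to a Morita equivalence $\B_R \simeq \tilde{\B}'_R$ over $R$. At this point we have two deformations of $\B'$ in hand, namely $\tilde{\B}'_R$ and the given $\B'_R$, and the goal is to compare them up to a base change of $R$.

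Next I would verify that $\tilde{\B}'_R$ is non-trivial in the first order. The first-order deformation class of a deformation lives in $HH^2$ of the special fiber, and Morita equivalences induce isomorphisms on Hochschild cohomology that carry the first-order class of one deformation to the first-order class of the other (see Remark \ref{rkmorita} and Corollary \ref{defmorita} for the analogous principle in degree $1$). Since $\B_R$ is non-trivial at first order by hypothesis and the Morita equivalence $\B_R\simeq \tilde{\B}'_R$ matches these classes under the induced isomorphism $HH^2(\B)\xrightarrow{\simeq} HH^2(\B')$, the deformation $\tilde{\B}'_R$ is non-trivial at first order as well.

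Now, using $HH^2(\B')\cong\C$, both $\tilde{\B}'_R$ and $\B'_R$ are deformations of $\B'$ that are non-trivial at first order. Lemma \ref{defolem2} then applies and yields an automorphism $f_q$ of $R$ specializing to the identity at $q=0$ such that $\tilde{\B}'_R \simeq f_q^*\B'_R$ as curved deformations. Composing the Morita equivalence $\B_R \simeq \tilde{\B}'_R$ with this identification gives a Morita equivalence $\B_R \simeq f_q^*\B'_R$ extending the given one at $q=0$, which is precisely the claim.

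The main subtlety is the compatibility check in the second step: one must confirm that the first-order class of the deformation really is carried to that of its Morita image under the induced isomorphism on $HH^2$, and that ``non-trivial in the first order'' is preserved. Both amount to naturality of the Kodaira--Spencer class under Morita equivalences, which is standard; once that is in place, the two lemmas chain together with no further work.
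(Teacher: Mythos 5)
Your proposal is correct and follows exactly the route the paper intends: the paper's own proof is the one-line remark ``This follows from Lemma \ref{defolem1} and \ref{defolem2},'' and your argument is a careful unpacking of that chain, including the one nontrivial compatibility check (that the deformation produced by Lemma \ref{defolem1} inherits first-order non-triviality via naturality of the Kodaira--Spencer class under Morita equivalence), which is indeed the hypothesis needed to invoke Lemma \ref{defolem2}.
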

\begin{proof}
This follows from Lemma \ref{defolem1} and \ref{defolem2}. Cf. \cite[Cor 3.6]{seidelK3}.
%just compare, should i delete this?
\end{proof}
Let us go back to proof of main theorem:
\begingroup
\def\thethm{\ref{mainthm}}
\begin{thm}Let $\A$ be as in Section \ref{sec:intro} and assume further that $HH^1(\A)=HH^2(\A)=0$. Assume $M_\phi$ is Morita equivalent to $M_{1_\A}$. Then, $\phi\simeq 1_\A$.
\end{thm}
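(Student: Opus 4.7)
The plan is to lift the given Morita equivalence to the $q$-deformations, normalize it using the $SL_2(\Z)$-symmetries of Section \ref{sec:symmetries} so that the classes $\gamma_\phi^R$ and $\gamma_{1_\A}^R$ correspond, invoke Theorem \ref{uniquenesstheorem} on the two families $\scrG_R^{sf}$, and read off the triviality of $\phi$ by restricting the resulting isomorphism to $u=1$.

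First, by Corollary \ref{HHtoruscor2}, $HH^2(M_\phi) \cong HH^2(M_{1_\A}) \cong \C$. The deformations $M_\phi^R$ and $M_{1_\A}^R$ are both nontrivial at first order (they inherit the class of $\beta_{1/2}$ from Proposition \ref{HHOmfinal}), so Corollary \ref{defocor} allows us to reparametrize $R$ by an automorphism fixing $q=0$ and assume that the Morita equivalence lifts to an equivalence $\Psi: M_\phi^R \xrightarrow{\simeq} M_{1_\A}^R$. The induced map $\Psi_*: HH^1(M_\phi^R) \to HH^1(M_{1_\A}^R)$ is Morita-invariant and preserves the rank-two lattice $L$ from Section \ref{sec:rank2}, so in particular it sends $\gamma_\phi^R$ to an element of $L(M_{1_\A}^R)$. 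Since both $\gamma_\phi^R$ and $\gamma_{1_\A}^R$ are primitive in their respective lattices, Corollary \ref{sl2cor} provides a self-Morita equivalence of $M_{1_\A}$, built from compositions of the spherical twists $\Lambda_p$ and $\Lambda_\Om$, whose induced $SL_2(\Z)$-action on $L(M_{1_\A})$ carries $\Psi_*(\gamma_\phi^R)|_{q=0}$ to $\gamma_{1_\A}$. Lifting this self-equivalence to $M_{1_\A}^R$ (via Corollary \ref{defocor} again, after a possibly further reparametrization of $R$) and composing with $\Psi$, I may assume without loss of generality that $\Psi_*(\gamma_\phi^R) = \gamma_{1_\A}^R$.

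Next, I will apply Theorem \ref{uniquenesstheorem} to the families $\scrG_R^{sf}$ on $M_\phi^R$ and $\scrG_R^{sf,0}$ on $M_{1_\A}^R$. Both satisfy \ref{G1}--\ref{G3}: property \ref{G1} is Proposition \ref{compactfamily}; property \ref{G2} holds since $\cG_R|_{t=1}$ is the diagonal of $\Tt_R$; property \ref{G3} is Corollary \ref{gfollows}. Pushing $\scrG_R^{sf}$ forward along $\Psi$ yields a family $\Psi_*(\scrG_R^{sf})$ on $M_{1_\A}^R$ satisfying \ref{G1} (coherence being preserved by equivalences of categories satisfying \ref{C1}--\ref{C2}), \ref{G2} (since $\Psi$ sends the diagonal to the diagonal), and \ref{G3} with $\gamma = \gamma_{1_\A}^R$ (by Corollary \ref{defmorita} and Remark \ref{rkmorita}, together with the matching of classes arranged above). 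Theorem \ref{uniquenesstheorem} therefore produces an isomorphism $\Psi_*(\scrG_R^{sf}) \simeq \scrG_R^{sf,0}$ up to $q$-torsion in the category of families of bimodules over $M_{1_\A}^R$.

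Finally, I will restrict this isomorphism to $u=1$. By Remark \ref{rkforuone}, $\scrG_R^{sf}|_{u=1}$ is quasi-isomorphic to the bimodule $\Phi_f^{-1}$ induced by the fiberwise auto-equivalence $\phi^{-1}_f$ of $M_\phi^R$ (i.e., the descent of $1 \otimes \phi^{-1}$), while $\scrG_R^{sf,0}|_{u=1}$ is quasi-isomorphic to the diagonal bimodule of $M_{1_\A}^R$. Applying $\Psi^{-1}$ (which preserves diagonals) and then inverting $q$ yields a quasi-isomorphism between $\Phi_f^{-1}$ and $\Delta_{M_\phi^R}$ after base change to $\C((q))$. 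Restricting to a single-fiber subcategory $\Om_p \otimes \A \subset M_\phi^R$ for a smooth $R$-point $p$ of $\Tt_R$ as in Section \ref{subsec:firsttwist}, the bimodule $\Phi_f^{-1}$ restricts to the kernel of $\phi^{-1}$ acting on $\A \otimes \C((q))$, so the quasi-isomorphism descends to $\phi^{-1} \simeq 1_\A$ over $\C((q))$; since $\A$ and $\phi$ are defined over $\C$ and quasi-isomorphism of $A_\infty$-bimodules between $\C$-linear objects is preserved and reflected under flat field extension, we conclude $\phi \simeq 1_\A$. The main obstacle will be the bookkeeping involved in matching the Hochschild classes, ensuring that all the lifts and reparametrizations of $R$ used to normalize $\Psi$ compose into a single well-defined equivalence of deformations; once that is done and the uniqueness theorem is applied, the reduction from the isomorphism of families to $\phi \simeq 1_\A$ via $u = 1$ and a single fiber is essentially mechanical.
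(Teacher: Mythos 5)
Your proof is essentially the paper's argument, and your structural choices are sound variations rather than genuinely different routes. You lift the Morita equivalence to the $R$-deformations first and then normalize by the $SL_2(\Z)$-twists, invoking Corollary \ref{defocor} twice; the paper normalizes at $q=0$ first and then lifts once, which is slightly cleaner but not materially different. You also transfer $\scrG_R^{sf}$ to $M_{1_\A}^R$, whereas the paper transfers $\scrG_R^{sf,0}$ to $M_\phi^R$; this is a symmetric choice. In both variations the discreteness of the lattice $L\cong\Z^2$ is what pins down $\Psi_*(\gamma_\phi^R)=\gamma_{1_\A}^R$ exactly (not just to first order), and you correctly verify \ref{G1}--\ref{G3} for the transferred family using Corollary \ref{defmorita} and Remark \ref{rkmorita} before applying Theorem \ref{uniquenesstheorem}.

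The one step that needs sharpening is the final descent. After inverting $q$, the single-fiber subcategory $\{\Om_p\}_K\otimes\A$ is $K[t]\otimes\A$ with $|t|=1$ (the endomorphism algebra of $\Om_p$ is a rank-two exterior algebra, not $K$). Hence $\Phi_f^{-1}$ restricts to $K[t]\otimes\Phi^{-1}$, not to the kernel of $\phi^{-1}$ over $\A\otimes K$ as you write, and likewise the diagonal restricts to $K[t]\otimes\Delta_\A$. Your justification---that quasi-isomorphism of $\C$-linear $A_\infty$-bimodules is preserved and reflected under flat field extension---would handle the passage from $K=\C((q))$ back to $\C$, but does not account for stripping off the $K[t]$ factor. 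That reduction is precisely what the paper's Lemma \ref{lemmarestr} supplies, via the functor $M\mapsto K\overset{L}{\otimes}_{K[t]}M$ with the surviving bimodule structure induced by the inclusion $K\hookrightarrow K[t]$ (the ``$(p^*,i_*)$'' operation); without it, the quasi-isomorphism $K[t]\otimes\Phi^{-1}\simeq K[t]\otimes\Delta_\A$ does not immediately yield $\Phi\simeq\Delta_\A$. Citing Lemma \ref{lemmarestr} at this point closes the gap.
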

\addtocounter{thm}{-1}
\endgroup
\begin{proof}
The Morita equivalence gives an isomorphism $HH^1(M_\phi)\cong HH^1(M_{1_\A})$. Moreover, it gives a correspondence of group-like families parametrized by $\G$ and the correspondence is compatible with taking deformation classes. This implies that the isomorphism carries $L(M_\phi):=L(M_\phi^R)|_{q=0}\subset HH^1(M_\phi)$ onto $L(M_{1_\A})\subset HH^1(M_{1_\A})$. The primitive class $\gamma_\phi\in L(M_\phi)\cong\Z^2$ is carried to another primitive class in $L(M_{1_\A})$. By Corollary \ref{sl2cor}, there exists a self-Morita equivalence of $M_{1_\A}$ that carries every primitive class to every other primitive class. In particular, we can find one that carries image of $\gamma_\phi$ to $\gamma_{1_\A}$ and composing the initial Morita equivalence with the latter, we can assume the isomorphism of Hochschild cohomologies induced by the equivalence $M_\phi\simeq M_{1_\A}$ maps $\gamma_\phi$ to $\gamma_{1_\A}$. 

By Corollary \ref{defocor}, the Morita equivalence extends to a Morita equivalence of $M_\phi^R$ and $f_q^* M_{1_\A}^R$ for some automorphism $f_q$ of $R$ specializing to identity at $q=0$. For simplicity assume $f_q=1_R$.

Under this equivalence $\gamma_\phi^R$ corresponds to a deformation of $\gamma_{1_\A}$ (i.e. to an element $\gamma_{1_\A}^R+O(q)$). This element also has to be in the discrete lattice $\Z^2\cong L(M_{1_\A}^R)\subset HH^1(M_\phi^R)$; hence, it is $\gamma_{1_\A}^R$. 

Consider two families of bimodules over $M_\phi^R$ and $M_{1_\A}^R$, which we denoted by $\scrG_R^{sf}$. To avoid confusion, let us now denote them by $\scrG_\phi$ and $\scrG_1$ respectively. They both satisfy the properties \ref{G1}-\ref{G3} on their domains (\ref{G3} is satisfied for the class $\gamma_\phi^R$ and $\gamma_{1_\A}^R$ respectively). The Morita equivalence gives rise to a correspondence of bimodules and families of bimodules. See (\ref{eq:conv3}) in Section \ref{subsec:familyreview}. By Corollary \ref{defmorita} and Remark \ref{rkmorita}, the family over $M_{\phi}^R$ corresponding to $\scrG_1$ satisfies \ref{G3} for the class corresponding to $\gamma_{1_\A}^R$, i.e. for $\gamma_\phi^R$ by the paragraph above. In other words, it follows $1\otimes \gamma_\phi^R$. Denote this family over $M_\phi^R$ by $\scrG_1'$. That it satisfies \ref{G1} essentially follows from the fact that the Morita equivalence between $M_\phi$ and $M_{1_\A}$ induces a quasi-equivalence between $tw^\pi(M_\phi)$ and $tw^\pi(M_{1_\A})$. That it satisfies \ref{G2} is clear.

Hence, by Theorem \ref{uniquenesstheorem} the families $\scrG_\phi$ and $\scrG_1'$ are the same up to $q$-torsion. In particular, consider their restriction to $R$-point $u=1$ of $Spf(A_R)$. By Remark \ref{rkforuone}, $\scrG_1$ restricts to diagonal; hence, $\scrG_1'$ restricts to diagonal of $M_\phi^R$. By the same remark, $\scrG_\phi$ restricts to kernel $\Phi_f^{-1}$ of the $\phi_f^{-1}$ that will be defined more carefully in Section \ref{sec:another}. Hence, $\Phi_f^{-1}$ is quasi-isomorphic to diagonal bimodule of $M_\phi^R$ up to $q$-torsion (thus, so is $\Phi_f$ by invertibility). 

Let $p=p_0\in \Tt_R$ be a smooth $R$-point supported on $C_0$. As remarked in the proof of Lemma \ref{uniinjective}, there exist an unobstructed object of $tw^\pi(\Om(\Tt_R)_{cdg} )$ given as a deformation of a cone of $``\Om_{C_0}(-1)\rightarrow \Om_{C_0}"$. Note that it is easier to define as an unobstructed module rather than a twisted complex. Hence, we have an unobstructed object $``\Om_p\otimes a"\in tw^\pi(M_\phi^R)$ for each $a\in ob(\A)$ and a full (uncurved) subcategory $\{\Om_p \}\otimes \A\subset tw^\pi(M_\phi^R)$. 
$\phi_f$ acts on this subcategory and the restriction of the bimodule $\Phi_f$ to it is given by \begin{equation}(\Om_p\otimes a,\Om_p\otimes a')\mapsto M_\phi^R(\Om_p\otimes a, \phi_f(\Om_p\otimes a') )\end{equation}In other words, it is the bimodule corresponding to action of $\phi_f$ on $\{\Om_p \}\otimes \A$. By above, it is quasi-isomorphic to diagonal bimodule up to $q$-torsion.

As these are uncurved categories, we can invert $q$. Once $q$ is inverted, the category $\{\Om_p \}\otimes \A$ becomes $\{\Om_p \}_K\otimes \A\simeq K[t]\otimes \A$, where $K=\C((q))$ and $t$ is a variable of degree $1$. Note slight sloppiness of notation about $q$-adic completions of  $\{\Om_p \}\otimes \A$.
%$q$-adic completion is not needed on any of these tensor product categories as $\A$ is cohomologically finite dimensional in each degree and we can secretly work with minimal models for $\A$, only during this paragraph. It would not make a big difference to first complete $\{\Om_p \}\otimes \A$ and then invert $q$.
On this category the diagonal $K[t]\otimes \A$ and $K[t]\otimes\Phi$ acts the same way. Lemma \ref{lemmarestr} concludes the proof.
\end{proof}	
\begin{lem}\label{lemmarestr}Let $\Phi$ and $\Psi$ be self Morita equivalences (we can assume $\Phi$ and $\Psi$ are just bimodules over $\A$, not necessarily invertible).
Assume $K[t]\otimes \Phi$ and $K[t]\otimes \Psi$ are quasi-isomorphic as $K[t]\otimes \A$-bimodules (where $deg(t)=1$). Then $\Phi$ and $\Psi$ are quasi-isomorphic.
\end{lem}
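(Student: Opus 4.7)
The plan is to reduce the lemma to a descent statement for morphism spaces in $D(\A^e)$ and then conclude by a Zariski-openness argument. First I would use that $\Phi$ and $\Psi$, being self-Morita equivalences of a smooth $A_\infty$-category (condition \ref{C1}), are perfect as $\A$-bimodules. For perfect bimodules, $RHom$ commutes with extension of scalars along $\A^e\to (K[t]\otimes\A)^e$, so that
\begin{equation}
RHom_{(K[t]\otimes\A)^e}(K[t]\otimes\Phi,K[t]\otimes\Psi)\simeq K[t]\otimes RHom_{\A^e}(\Phi,\Psi).
\end{equation}

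Next I would specialize at $t=0$. Since $K[t]$ is free over $K=K[t]/(t)$, multiplication by $t$ is a non-zerodivisor on $K[t]\otimes\Phi$ in each degree; the resulting long exact sequence gives $H^*(K\otimes\Phi)=K\otimes H^*(\Phi)$, and the hypothesized quasi-iso descends to a quasi-iso $K\otimes\Phi\simeq K\otimes\Psi$ of $(K\otimes\A)$-bimodules. Under the identification above, this amounts to taking the constant term in $t$ of the corresponding class in degree-zero cohomology. The problem thereby reduces to the following descent statement: if $V:=H^0(RHom_{\A^e}(\Phi,\Psi))$---finite dimensional over $\C$ by properness \ref{C2}---and $U\subset V$ denotes the subset of classes representing isomorphisms in $D(\A^e)$, then $U(K)\neq\emptyset$ should imply $U(\C)\neq\emptyset$.

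For the final step, I would observe that $U$ is a Zariski-open subset of the finite-dimensional affine $\C$-space $V$: it is cut out, on the finitely many nonzero cohomological levels, by nonvanishing of the determinants of the finite-dimensional maps $H^n(\Phi)\to H^n(\Psi)$ induced by a chosen class. Any nonempty Zariski-open subset of a finite-dimensional affine space over the infinite field $\C$ has $\C$-points, so $U(\C)\neq\emptyset$, producing the desired quasi-iso $\Phi\simeq\Psi$.

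The main technical hurdle I anticipate is justifying the base-change identity for $RHom$ and the compatibility of $t=0$ specialization with quasi-isomorphism in the $A_\infty$/completed setting of the paper, including any subtleties from the $q$-adic completions implicit in the tensor products $K[t]\otimes \A$; once perfectness of $\Phi,\Psi$ and the cohomological finite-dimensionality of $RHom_{\A^e}(\Phi,\Psi)$ are in hand, the remainder of the argument is formal.
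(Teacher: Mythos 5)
Your approach captures the right ideas but takes a somewhat different route from the paper's, and your final step has a gap as written. The paper applies the derived base-change functor $K\overset{L}{\otimes}_{K[t]}(-):Bimod(K[t]\otimes\A,K[t]\otimes\A)\to Bimod(\A,\A)$ (the composite $``(p^*,i_*)"$ of restriction to $t=0$ and pushforward along the projection to $Spec(K)$) directly to the two bimodules, carrying $K[t]\otimes\Phi$ to $K\otimes\Phi$ and the given quasi-isomorphism to one between $K\otimes\Phi$ and $K\otimes\Psi$. Your first two steps --- base change of $RHom$ and extraction of the constant-in-$t$ degree-zero part --- accomplish essentially the same thing, although the justification you give is slightly muddled: the relevant observation is that $K[t]\otimes\Phi$ is free over $K[t]$ (not that ``$K[t]$ is free over $K[t]/(t)$''), and in any case the derived tensor preserves quasi-isomorphisms tautologically. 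Both arguments land at a quasi-isomorphism over $K\otimes\A$; the paper then writes that the functor ``sends $K[t]\otimes\Phi$ to $\Phi$,'' eliding the remaining descent from $K$ to $\C$, which you are right to want to address explicitly.

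That said, your Zariski-openness step has a genuine gap. You describe $U\subset V$ as cut out by nonvanishing of determinants of the maps $H^n(\Phi)\to H^n(\Psi)$, but $\A$ may have infinitely many objects and the hom-complexes $\Phi(a,a')$ are only degreewise finite-dimensional and bounded below by \ref{C2}, not globally finite-dimensional, so there is no finite collection of determinants to consider. One repair is to test against a compact generator of the perfect $\A^e$-modules, whose $RHom$ against any perfect bimodule is a perfect complex of $\C$-vector spaces by \ref{C1}--\ref{C2}; being a quasi-isomorphism is then detected by finitely many algebraically-varying determinants and your descent argument goes through. A cleaner route uses \ref{C3}: once $K\otimes\Phi\simeq K\otimes\Psi$ is known, $V\otimes_\C K\cong H^0(RHom_{(K\otimes\A)^e}(K\otimes\Phi,K\otimes\Psi))\cong HH^0(K\otimes\A)\cong K$, so $V\cong\C$; for any generator $\xi\in V$ the class $\xi\otimes 1$ is a quasi-isomorphism over $K$, and since $K$ is faithfully flat over $\C$ the functor $K\otimes_\C(-)$ reflects quasi-isomorphisms, so $\xi$ itself is already a quasi-isomorphism over $\C$.
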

\begin{proof} Consider the algebra maps $K\xrightarrow{i} K[t]\xrightarrow{p}K$. We have a functor \begin{equation}Bimod(K[t],K[t])\rightarrow Bimod(K,K)\atop M\mapsto K\overset{L}{\otimes}_{K[t]}M \end{equation}where the bimodule structure on the right is induced by the inclusion. Geometrically this map would be $``(p^*,i_*)"$. It sends the diagonal bimodule of $K[t]$ to diagonal bimodule of $K$. We can define a similar functor \begin{equation}Bimod(K[t]\otimes \A,K[t]\otimes \A)\rightarrow Bimod(\A,\A)\atop \fM\longmapsto K\overset{L}{\otimes}_{K[t]}\fM \end{equation}
sending $K[t]\otimes \Phi$ and $K[t]\otimes \Psi$ to $\Phi$ and $\Psi$ respectively. This finishes the proof.
\end{proof}
\section{Growth rates and another dynamical invariant}\label{sec:another}
In the previous section, we have exploited the uniqueness of the family $\scrG_R^{sf}$ to distinguish trivial mapping tori from the others. However, $\scrG_R^{sf}$ encodes more and we can use it to produce more invariants of the tori. As mentioned in Remark \ref{rkforuone}, we can extract ``fiberwise $\phi$'' by restricting this family to $R$-point $u=1$. Let us define it more carefully.

Let $\psi$ be an auto-equivalence of $\A$ that commutes with $\phi$. For simplicity assume $\psi$ is a strict dg autoequivalence (i.e. acts bijectively on objects and hom-sets and its higher components vanish) and it commutes with $\phi$ strictly.  
\begin{defn}
Under these assumptions, $\psi$ induces auto-equivalences of $M_\phi$ and $M_\phi^R$ (again bijective on objects and hom-sets) given by descent of $1\otimes\psi$ acting on $\Om(\Tt_0)_{dg}\otimes\A$, resp. $\Om(\Tt_R)_{cdg}\otimes \A$ to their smash product with $\Z$, namely $M_\phi$, resp. $M_\phi^R$. Denote this autoequivalence by $\psi_f$ and corresponding $M_\phi$ resp. $M_\phi^R$-bimodule by $\Psi_f$.
%We will not distinguish it for $M_\phi$ and $M_\phi^R$ in the notation.
\end{defn}
Intuitively, this autoequivalence corresponds to application $\psi$ on each fiber of ``the fibration $M_\phi\rightarrow\T_0$"; hence the name fiberwise $\psi$. This Section is about description of growth of $HH^*(M_\phi^R,\Phi^k_f)$.
\begin{rk}
Let $\Psi$ be the $\A$-bimodule corresponding to $\psi$, i.e. \begin{equation}\Psi: (a,a')\mapsto \A(a,\psi (a'))\end{equation}
%More generally, if $f:\B\rightarrow \B'$ is an $A_\infty $-map then we can produce a $\B-\B'$-bimodule by the rule \begin{equation}\fM_f:(b,b')\mapsto \B'(b,f(b') )\end{equation}The functor $(\cdot)\otimes_\B \fM_f$ gives a transformation between right module categories, whose restriction to Yoneda image of $\B$ is homotopic to $f$. Note $f\mapsto \fM_f$ is covariantly functorial in $f$.
%Or: A simple total algebra calculation gives this.
Due to the strict commutation assumption, $\Psi$ is naturally $\Z_\Delta$-equivariant with the action is generated by \begin{equation}\A(a,\psi(a') )\rightarrow \A(\phi(a),\phi(\psi(a')) )=\A(\phi(a),\psi(\phi(a')) ) \end{equation}
We can obtain $\Psi_f$ by descent of $\Om(\Tt_0)_{dg}\otimes \Psi$, resp. $\Om(\Tt_R)_{cdg}\otimes \Psi$. In particular, we can assume $\Phi_f=(1\otimes\Phi)\#\Z$ in the sense of Section \ref{sec:construction}.
\end{rk}
\begin{lem}\label{baliklemma}
Assume $\A$	is a smooth dg category. Then \begin{equation}CC^*(M_\phi,\Psi_f)\simeq cocone\big(CC^*(\Om(\Tt_0)_{dg},\Om(\Tt_0)_{dg} )\otimes CC^*(\A,\Psi)\atop\xrightarrow{\tr_*\otimes \phi_*-1} CC^*(\Om(\Tt_0)_{dg},\Om(\Tt_0)_{dg} )\otimes CC^*(\A,\Psi) \big) \end{equation}i.e. the derived invariants of \begin{equation}CC^*(\Om(\Tt_0)_{dg},\Om(\Tt_0)_{dg} )\otimes CC^*(\A,\Psi)\end{equation} 
\end{lem}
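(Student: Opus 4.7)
The plan is to mirror the proof of Proposition \ref{CCtorus}, replacing the diagonal bimodule of $\A$ with $\Psi$ throughout. Since $\psi$ commutes strictly with $\phi$, the bimodule $\Psi_f$ over $M_\phi$ is obtained by the descent procedure of Section \ref{sec:bimodgen} from the $\Z_\Delta$-equivariant $\Om(\Tt_0)_{dg}\otimes\A$-bimodule $\Om(\Tt_0)_{dg}\otimes\Psi$, where $\Om(\Tt_0)_{dg}$ carries the diagonal equivariant structure induced by $\tr$ and $\Psi$ carries the one induced by $\phi\otimes\phi$.

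First, I would apply the general equivariance/cocone quasi-isomorphism (\ref{eq:equivariancecocone}) to the $\Z$-action on $\Om(\Tt_0)_{dg}\otimes\A$ generated by $\tr\otimes\phi$ and to the bimodule $\Psi_f$. This gives
\begin{equation*}
CC^*(M_\phi,\Psi_f)\simeq \mathrm{cocone}\bigl(CC^*(\Om(\Tt_0)_{dg}\otimes\A,\Psi_f)\xrightarrow{\tr_*\otimes\phi_*-1}CC^*(\Om(\Tt_0)_{dg}\otimes\A,\Psi_f)\bigr).
\end{equation*}
Next, decompose $\Psi_f$ as an $(\Om(\Tt_0)_{dg}\otimes\A)$-bimodule as $\bigoplus_{n\in\Z}(\Om(\Tt_0)_{dg}\otimes\Psi)_{(\tr\otimes\phi)^n}$, where the subscript denotes twisting the right action by $(\tr\otimes\phi)^n$. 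The summands for $n\neq 0$ contribute trivially to the Hochschild complex: as in the proof of Proposition \ref{CCtorus}, this reduces to showing
\begin{equation*}
RHom_{\Om(\Tt_0)_{dg}^e}(\Om(\Tt_0)_{dg},(\Om(\Tt_0)_{dg})_{\tr^n})\simeq 0 \text{ for } n\neq 0,
\end{equation*}
which follows from the disjoint support of the diagonal of $\Tt_0$ and the graph of $\tr^n$, combined with the exterior-product-resolution argument indicated there. The presence of $\Psi$ on the $\A$-factor does not interact with this support argument.

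Thus only the $n=0$ summand survives, reducing the problem to computing $CC^*(\Om(\Tt_0)_{dg}\otimes\A,\Om(\Tt_0)_{dg}\otimes\Psi)$. Here I would apply the K\"unneth map
\begin{equation*}
CC^*(\Om(\Tt_0)_{dg},\Om(\Tt_0)_{dg})\otimes CC^*(\A,\Psi)\rightarrow CC^*(\Om(\Tt_0)_{dg}\otimes\A,\Om(\Tt_0)_{dg}\otimes\Psi),
\end{equation*}
together with the identification
\begin{equation*}
CC^*(\Om(\Tt_0)_{dg}\otimes\A,\Om(\Tt_0)_{dg}\otimes\Psi)\simeq RHom_{\Om(\Tt_0)_{dg}^e}(\Om(\Tt_0)_{dg},CC^*(\A,\Psi)\otimes\Om(\Tt_0)_{dg}),
\end{equation*}
using smoothness of $\A$ (which allows us to pull the internal $RHom_{\A^e}(\A,\cdot)$ through a tensor product with $\Om(\Tt_0)_{dg}$, since $\A$ becomes a compact object of $A_\infty$-bimodules). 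Conditions \ref{C1}--\ref{C2} on $\A$ ensure $CC^*(\A,\Psi)$ has bounded below, degreewise finite dimensional cohomology, and $CC^*(\Om(\Tt_0)_{dg})$ is bounded below by Proposition \ref{HHOmfinal}; this is exactly the finiteness required to make the K\"unneth map a quasi-isomorphism. Finally, the entire chain of identifications is strictly $\Z$-equivariant with respect to the $\tr_*\otimes\phi_*$ action, so it commutes with forming the cocone, yielding the claimed formula.

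The only substantive point is the vanishing of the $n\neq 0$ summands; this is imported verbatim from the diagonal case, since the $\Psi$-factor is inert with respect to the support argument on $\Tt_0\times\Tt_0$. Everything else is a smoothness-driven K\"unneth argument identical to Proposition \ref{CCtorus}.
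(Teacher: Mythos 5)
Your proposal is correct and follows the paper's own proof, which is literally a one-line pointer: ``The proof of Prop \ref{CCtorus} works in this case. Namely, we would need to replace $CC^*(\A,\A)$ by $CC^*(\A,\Psi)$ and so on.'' You have spelled out that replacement carefully — correctly identifying that the $n\neq 0$ vanishing rests only on the support argument on the $\Om(\Tt_0)_{dg}$ factor and so is unaffected by substituting $\Psi$, and that the K\"unneth step needs the finiteness of $CC^*(\A,\Psi)$ which the standing hypotheses \ref{C1}--\ref{C2} supply — so this is the same argument, just written out in full.
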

\begin{proof}
The proof of Prop \ref{CCtorus} works in this case. Namely, we would need to replace $CC^*(\A,\A)$ by $CC^*(\A,\Psi)$ and so on. 
\end{proof}
\begin{cor}\label{invhf}
Assume $\A$	is a smooth dg category. Then \begin{equation}CC^*(M^R_\phi,\Psi_f)\simeq cocone\big(CC^*(\Om(\Tt_R)_{cdg},\Om(\Tt_R)_{cdg} )\otimes CC^*(\A,\Psi)\atop\xrightarrow{\tr_*\otimes \phi_*-1} CC^*(\Om(\Tt_R)_{cdg},\Om(\Tt_R)_{cdg} )\otimes CC^*(\A,\Psi) \big) \end{equation}i.e. the derived invariants of \begin{equation}CC^*(\Om(\Tt_R)_{cdg},\Om(\Tt_R)_{cdg} )\otimes CC^*(\A,\Psi)\end{equation}
\end{cor}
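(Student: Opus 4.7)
The plan is to mirror the proof of Proposition \ref{CCtorus}, replacing the role of the diagonal bimodule by the bimodule induced from $\Psi$, and then invoke the semi-continuity principle (Lemma \ref{semicont}) to move from the proved statement over $\C$ (Lemma \ref{baliklemma}) to the $R$-relative curved setting. Concretely, I would first apply the quasi-isomorphism (\ref{eq:equivariancecocone}) to the curved $A_\infty$-algebra $\B = \Om(\Tt_R)_{cdg}\otimes \A$ with the strict $\Z$-action generated by $\tr\otimes\phi$, and to the bimodule $\Psi_f=(1\otimes\Psi)\#\Z$. This produces
\begin{equation*}
CC^*(M_\phi^R,\Psi_f)\simeq \mathrm{cocone}\!\big(CC^*(\Om(\Tt_R)_{cdg}\otimes\A,\Psi_f)\xrightarrow{\tr_*\otimes\phi_*-1}CC^*(\Om(\Tt_R)_{cdg}\otimes\A,\Psi_f)\big).
\end{equation*}

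Next, decompose the bimodule as $\Psi_f=\bigoplus_{n\in\Z}(\Om(\Tt_R)_{cdg}\otimes\Psi)_{(\tr\otimes\phi)^n}$, exactly as in the proof of Proposition \ref{CCtorus}. For $n\neq 0$, the corresponding summand of Hochschild cochains vanishes: the argument is the same as for the analogous vanishing statement (\ref{eq:zort}), based on the fact that the graph of $\tr^n$ and the diagonal are disjointly supported in $\Tt_R\times\Tt_R$, together with the exterior-tensor resolutions used in Proposition \ref{smoothness}. This step deforms from $q=0$ since the geometric statement about disjointness of graphs is a statement about support that is stable under the deformation $\Tt_0\rightsquigarrow\Tt_R$, and the resolutions in (\ref{sheafres})--(\ref{dualsheafres}) admit the obvious $R$-linear lifts. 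Hence only the $n=0$ summand survives, reducing to $CC^*(\Om(\Tt_R)_{cdg}\otimes\A,\Om(\Tt_R)_{cdg}\otimes\Psi)$ inside the cocone.

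The final step is the Künneth quasi-isomorphism
\begin{equation*}
CC^*(\Om(\Tt_R)_{cdg},\Om(\Tt_R)_{cdg})\otimes CC^*(\A,\Psi)\xrightarrow{\simeq} CC^*(\Om(\Tt_R)_{cdg}\otimes\A,\Om(\Tt_R)_{cdg}\otimes\Psi),
\end{equation*}
which respects the $\Z$-actions induced by $\tr_*$ on the first factor and $\phi_*$ on the second. As in the proof of Proposition \ref{CCtorus}, smoothness of $\A$ allows one to rewrite the right hand side as $RHom_{\Om(\Tt_R)_{cdg}^e}(\Om(\Tt_R)_{cdg},CC^*(\A,\Psi)\otimes\Om(\Tt_R)_{cdg})$, and the degree-wise finite dimensionality of $H^*(CC^*(\A,\Psi))$ (which follows from \ref{C1}--\ref{C2} together with $\Psi$ being a Morita-equivalence bimodule) plus the bounded-below cohomology of $RHom_{\Om(\Tt_R)_{cdg}^e}(\Om(\Tt_R)_{cdg},\Om(\Tt_R)_{cdg})$ (by the computation (\ref{eq:HHOmfinalR})) guarantees that the K\"unneth map is an isomorphism.

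Alternatively, and more efficiently, since all three steps (the equivariance quasi-isomorphism, the killing of the $n\neq 0$ summands, and the K\"unneth map) consist of natural chain maps over $R$ between $q$-adically complete, topologically free complexes whose specializations at $q=0$ are quasi-isomorphisms by Lemma \ref{baliklemma}, one can simply invoke Lemma \ref{semicont} to conclude. I expect the main obstacle to be the vanishing of $CC^*(\Om(\Tt_R)_{cdg}\otimes\A,(\Om(\Tt_R)_{cdg}\otimes\Psi)_{(\tr\otimes\phi)^n})$ for $n\neq 0$ in the curved setting, since this requires a careful deformation of the Fourier--Mukai-type argument sketched around (\ref{eq:fmvsshv}); however, once the $q=0$ vanishing is in hand, semi-continuity delivers the corresponding statement over $R$ without additional work.
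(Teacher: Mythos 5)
Your proposal is correct, and the paper's proof is precisely your ``alternatively, and more efficiently'' paragraph: the paper simply notes that all the chain maps underlying Lemma \ref{baliklemma} admit natural $R$-linear lifts (as already arranged in Section \ref{sec:hoch}) and then invokes Lemma \ref{semicont} once, using $q$-adically completed tensor products. Your longer first route, which replays the three steps of Proposition \ref{CCtorus} directly over $R$ and worries about deforming the vanishing of the $n\neq 0$ summands in the curved setting, is sound but redundant once semi-continuity is available, as you yourself observe.
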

\begin{proof}
All the maps leading to quasi-isomorphism in Lemma \ref{baliklemma} can be written over $R$, as remarked in Section \ref{sec:hoch}. Hence, the result follows from Lemma \ref{semicont} and \ref{baliklemma}. Note we use $q$-adically completed tensor products again.
\end{proof}
By Proposition \ref{HHOmfinal} and K\"unneth formula, the cohomology of \begin{equation}CC^*(\Om(\Tt_R)_{cdg},\Om(\Tt_R)_{cdg} )\otimes CC^*(\A,\Psi)\end{equation} is isomorphic to \begin{equation}(R\oplus R[-1]\oplus HH^{\geq 2 }(\Om(\Tt_R)_{cdg},\Om(\Tt_R)_{cdg}) )\otimes HH^*(\A,\Psi) \end{equation}
up to $q$-torsion. $HH^{\geq 2 }(\Om(\Tt_R)_{cdg},\Om(\Tt_R)_{cdg})$ is also $q$-torsion by Proposition \ref{HHOmfinal}. Hence, it is $HH^*(\A,\Psi)\oplus HH^*(\A,\Psi)[-1]$. By Corollary \ref{invhf}, its derived invariants compute $HH^*(M_\phi^R,\Psi_f)$. In other words
\begin{align*}
HH^*(M_\phi^R,\Psi_f)\cong R\otimes HH^*(\A,\Psi)^\phi\oplus\\
R\otimes (HH^*(\A,\Psi)^\phi\oplus HH^*(\A,\Psi)/(\phi-1) )[-1]\oplus\\
 R\otimes HH^*(\A,\Psi)/(\phi-1)[-2] 
\end{align*}
Letting $\psi=\phi^k$, this relates growth of $HH^*(M_\phi^R)$ to the growth of invariant part of $HH^*(\A,\Phi^k)$. We also recover 
\begin{align*}
 HH^*(\A,\Psi)^\phi\oplus\\
 (HH^*(\A,\Psi)^\phi\oplus HH^*(\A,\Psi)/(\phi-1) )[-1]\oplus\\
 HH^*(\A,\Psi)/(\phi-1)[-2] 
\end{align*}
out of $HH^*(M_\phi^R,\Psi_f)$ up to $q$-torsion (the input is up to $q$-torsion). By Section \ref{sec:unique} this data is an invariant of the pair $(M_\phi,\gamma_\phi)$, when $\psi=\phi^k$, where $k\in\Z$. Hence, we obtain:
\begin{prop}
\begin{align*}
HH^*(\A,\Phi^k)^\phi\oplus\\
(HH^*(\A,\Phi^k)^\phi\oplus HH^*(\A,\Phi^k)/(\phi-1) )[-1]\oplus\\
HH^*(\A,\Phi^k)/(\phi-1)[-2] 
\end{align*}
is an invariant of the pair $(M_\phi,\gamma_\phi)$. In other words, if $M_\phi$ is Morita equivalent to $M_{\phi'}$ such that $\gamma_\phi$ corresponds to $\gamma_{\phi'}$ under the induced isomorphism between Hochschild cohomologies, then the graded vector spaces given above are isomorphic. Note $\Phi^k$ denotes the self-convolution of the bimodule $\Phi$ $k$-times.
\end{prop}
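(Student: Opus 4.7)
The plan is to explicitly compute $HH^*(M_\phi^R, \Phi_f^k)$ modulo $q$-torsion, then to use the uniqueness machinery of Theorem \ref{uniquenesstheorem} to transport this identification through any Morita equivalence matching $\gamma_\phi$ with $\gamma_{\phi'}$, and finally to extract the displayed $\C$-linear invariant by quotienting by $q$-torsion and specializing at $q=0$.

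For the first step, Corollary \ref{invhf} with $\Psi = \Phi^k$ expresses $CC^*(M_\phi^R, \Phi_f^k)$ as the derived $\Z$-invariants of $CC^*(\Om(\Tt_R)_{cdg}) \otimes CC^*(\A, \Phi^k)$ under the action generated by $\tr_* \otimes \phi_*$. By Proposition \ref{HHOmfinal}, $HH^*(\Om(\Tt_R)_{cdg})$ equals $R \oplus R[-1]$ modulo $q$-torsion, the degree-one summand being generated by the translation-invariant class $\gamma^R_\Om$; in particular $\tr_*$ acts as the identity on both free summands. A K\"unneth identification (valid modulo $q$-torsion since the Tate-side cohomology is $R$-free there) combined with the cocone model of derived invariants from Subsection \ref{subsec:familyreview} then yields, modulo $q$-torsion,
\begin{equation*}
HH^*(M_\phi^R, \Phi_f^k) \;\cong\; R \otimes_\C \Big[\,HH^*(\A,\Phi^k)^\phi \,\oplus\, \bigl(HH^*(\A,\Phi^k)^\phi \oplus HH^*(\A,\Phi^k)/(\phi-1)\bigr)[-1] \,\oplus\, HH^*(\A,\Phi^k)/(\phi-1)[-2]\,\Big].
\end{equation*}

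For the second step, assume $F\colon M_\phi \xrightarrow{\sim} M_{\phi'}$ is a Morita equivalence sending $\gamma_\phi$ to $\gamma_{\phi'}$. As in the proof of Theorem \ref{mainthm}, Corollary \ref{defocor} extends $F$ (after a harmless base change $f_q$ that is trivial at $q=0$) to a Morita equivalence $F^R\colon M_\phi^R \xrightarrow{\sim} M_{\phi'}^R$, and discreteness of $L(M_{\phi'}^R) \subset HH^1(M_{\phi'}^R)$ forces $F^R$ to carry $\gamma_\phi^R$ to $\gamma_{\phi'}^R$. The families $\scrG_R^{sf}$ on the two sides both satisfy \ref{G1}-\ref{G3} for their respective classes, so Theorem \ref{uniquenesstheorem} yields $F^R(\scrG_R^{sf}) \simeq \scrG_R'^{sf}$ up to $q$-torsion. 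Restricting at the $R$-point $u=1$ and using Remark \ref{rkforuone} gives $F^R(\Phi_f^{-1}) \simeq \Phi_f'^{-1}$ up to $q$-torsion, hence by invertibility $F^R(\Phi_f) \simeq \Phi_f'$ up to $q$-torsion. Iterating: if $M \to N$ has $q$-torsion cone, then so does $M \otimes M \to N \otimes N$, by the octahedral axiom applied to $M \otimes M \to N \otimes M \to N \otimes N$ (each intermediate cone is a tensor product with a $q$-torsion complex); induction gives $F^R(\Phi_f^k) \simeq \Phi_f'^k$ up to $q$-torsion, and Morita invariance of Hochschild cohomology with bimodule coefficients produces an isomorphism $HH^*(M_\phi^R, \Phi_f^k) \cong HH^*(M_{\phi'}^R, \Phi_f'^k)$ modulo $q$-torsion. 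Combining with Step 1 and applying $(-) \otimes_R \C$ to the $q$-torsion-free quotient extracts and identifies the claimed $\C$-linear invariant on both sides.

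The main obstacle is the control of $q$-torsion through iterated convolution in the last argument: a priori the torsion exponent could grow with $k$. Fortunately the statement only requires the quotient by the full $q$-torsion submodule to match, so any finite $q^n$-torsion bound at each convolution step suffices, and the octahedral argument furnishes such a bound automatically. The base change $f_q$ from Corollary \ref{defocor} is trivial on the special fiber $q=0$ and therefore does not affect the extracted $\C$-linear invariant, so the conclusion follows.
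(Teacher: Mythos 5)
Your proof is correct and is essentially the argument the paper intends: the paper's written justification for this Proposition is the single sentence "By Section \ref{sec:unique} this data is an invariant of the pair $(M_\phi,\gamma_\phi)$," and your chain of reductions---extending the Morita equivalence over $R$ via Corollary \ref{defocor}, using discreteness of $L(\cdot)$ to match $\gamma_\phi^R$ with $\gamma_{\phi'}^R$, invoking Theorem \ref{uniquenesstheorem} to match $\scrG_R^{sf}$ with its counterpart up to $q$-torsion, restricting at $u=1$ via Remark \ref{rkforuone} to transport $\Phi_f$, iterating through convolution with the octahedral argument, and appealing to Morita invariance of Hochschild cohomology with bimodule coefficients---is precisely what that reference is meant to convey. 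Your Step 1 computation via Corollary \ref{invhf}, Proposition \ref{HHOmfinal}, and the K\"unneth/cocone formula, modulo $q$-torsion, also coincides with the paper's.
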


\appendix
\section{Modules over $A_R$}\label{sec:modules}
Recall $A_R=\C[u,t][[q]]/(ut-q)$ and $A=A_R/(q)=\C[u,t]/(ut)$.
\begin{defn}
	Let $D_A$, resp. $D_{A_R}$ denote the derivation $t\partial_t-u\partial_u$ on $A$, resp. $A_R$. 
\end{defn}
Note that $D_{A_R}$ is $R$-linear on $A_R$ and it can be seen as the infinitesimal action of the $\G$-action given by $z:t\mapsto zt, u\mapsto z^{-1}u $.
\begin{defn}\label{connectionalong}Let $M$ be a topological module over $A_R$. A connection on $M$ along $D_{A_R}$ is a linear map $D_M:M\rightarrow M$ satisfying $D_M(fm)=D_{A_R}(f)m+fD_M(m)$. We will often refer to it simply as a connection on $M$. 
\end{defn}
A connection can be seen as an infinitesimal version of an equivariant structure with respect to the action above. 
\begin{rk}$A_R$ is a Noetherian ring. Moreover, finitely generated modules over $A_R$ are automatically complete with respect to $q$-adic topology. 
\end{rk}
Let us first prove:
\begin{prop}\label{qtorsmod}
	Let $M$ be a finitely generated module over $A_R$, which can be endowed with a connection $D_M$.	Assume $M/(t-1)M$ is a $q$-torsion module over $A_R/(t-1)\cong R$. Then, $M$ is $q$-torsion.
\end{prop}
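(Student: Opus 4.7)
The plan is to reduce to the case that $M$ is $q$-torsion-free, pass to the generic fibre $M_K := M \otimes_R K$ where $K = \operatorname{Frac}(R) = \C((q))$, and then derive a contradiction from a commutator identity $[D, L_t] = L_t$ together with finite-dimensionality.

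First I would handle the $q$-torsion submodule $M_{\mathrm{tors}} \subset M$. Since $A_R$ is Noetherian and $M$ is finitely generated, $M_{\mathrm{tors}}$ is finitely generated, hence killed by some power of $q$. The identity $D_{A_R}(q) = D_{A_R}(ut) = 0$ means that $D_M(M_{\mathrm{tors}}) \subseteq M_{\mathrm{tors}}$, so the connection descends to a connection on $M' := M/M_{\mathrm{tors}}$, which is $q$-torsion-free. The surjection $M/(t-1)M \twoheadrightarrow M'/(t-1)M'$ preserves the hypothesis, and since $M$ is $q$-torsion iff $M' = 0$, one reduces to proving the statement under the additional assumption that $M$ is $q$-torsion-free. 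In that case $M$ embeds into $M_K$, so it suffices to show $M_K = 0$.

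Now $A_K := A_R \otimes_R K \cong K[t, t^{-1}]$ (using $u = q/t$), and $M_K$ becomes a finitely generated $K[t,t^{-1}]$-module. The hypothesis gives $q^n M \subseteq (t-1)M$, whence $M_K = (t-1)M_K$ after inverting $q$. The connection extends to a $K$-linear operator $D$ on $M_K$ satisfying the Leibniz rule with respect to the $K$-linear derivation $D_{A_R}|_{A_K} = t\partial_t$. Choosing generators $m_1, \dots, m_r$ of $M_K$ and writing $m_i = (t-1)\sum_j a_{ij} m_j$ for a matrix $A = (a_{ij})$ over $K[t,t^{-1}]$, the relation $(I - (t-1)A)\vec m = 0$ multiplied by the classical adjugate yields $f(t) \cdot m_i = 0$ for $f(t) := \det(I - (t-1)A)$. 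Specialization at $t=1$ gives $f(1) = 1$, so $f \neq 0$, and $M_K$ is a module over the finite-dimensional $K$-algebra $K[t,t^{-1}]/(f)$; thus $\dim_K M_K < \infty$.

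On this finite-dimensional vector space, let $L_t$ denote multiplication by $t$, which is invertible since $t \in K[t,t^{-1}]^\times$. The Leibniz rule $D(tm) = tm + tD(m)$ rewrites as $[D, L_t] = L_t$, or equivalently $L_t^{-1} D L_t = D + 1$. Hence $D$ and $D + 1$ are conjugate endomorphisms of the finite-dimensional space $M_K$, and therefore share the same spectrum. This forces $\operatorname{spec}(D) = \operatorname{spec}(D) + 1$, impossible for a finite subset of $K$ unless $M_K = 0$. Combined with the first step this finishes the proof.

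The main conceptual step is the last one: the observation that surjectivity $M_K = (t-1)M_K$ combined with the shift relation $L_t^{-1}DL_t = D+1$ is inconsistent with $\dim_K M_K < \infty$ supplies the contradiction. The remaining steps (preservation of $M_{\mathrm{tors}}$ by the connection, base change to $K$, and the adjugate trick producing finite-dimensionality) are routine; the adjugate computation is the only piece requiring minor care but is standard.
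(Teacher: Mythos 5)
Your overall strategy---reduce to $q$-torsion-free, pass to the generic fibre over $K=\C((q))$, and derive a contradiction from the shift relation $L_t^{-1}DL_t = D+1$ together with finite-dimensionality---is attractive and is genuinely different from the paper's proof, which works with the $(u,t)$-adic completion $\C[[u,t]]$, classifies $D$-invariant ideals there (Lemma~\ref{curvesatorigin}), and finishes with Krull's intersection theorem. The reduction to the $q$-torsion-free case, the extension of the connection to $M_K$, the adjugate trick, and the spectral argument are all correct as written.

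However, there is a real gap: the identification $A_K := A_R\otimes_R K \cong K[t,t^{-1}]$ is false. Inverting $q$ in $A_R=\C[u,t][[q]]/(ut-q)$ does not throw away the $q$-adic completion. For instance $\sum_{n\geq 0} q^n u^n$ lies in $A_R$ (take $c_n(u,t)=u^n$), and in $A_K$ this becomes $\sum_{n\geq 0} q^{2n}t^{-n} = t/(t-q^2)$, which is a unit of $A_K$ but is not a Laurent polynomial. The correct description is
$A_K \cong K\langle t,u\rangle/(ut-q)$,
the affinoid algebra of the closed annulus $\{\,|q|\leq |t|\leq 1\,\}$ over the complete field $K$; this ring strictly contains $K[t,t^{-1}]$ and is not a localization of it. Consequently, the matrix $A$ and the determinant $f=\det(I-(t-1)A)$ live in $A_K$, not in $K[t,t^{-1}]$, and the step ``$K[t,t^{-1}]/(f)$ is finite-dimensional over $K$, hence so is $M_K$'' is not justified as stated. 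The intended conclusion $\dim_K A_K/(f)<\infty$ for $f\neq 0$ is in fact true, but proving it requires rigid-analytic input: for example, Noether normalization for affinoid algebras ($A_K$ is finite free of rank $2$ over $K\langle s\rangle$ with $s=t+u$, via $t^2-st+q=0$) together with Weierstrass preparation for $K\langle s\rangle$, or more generally the fact that quotients of a one-dimensional affinoid domain by a nonzero element are finite over $K$. None of this is elementary in the way the Laurent-polynomial version is, and you do not supply it because you believed $A_K$ to be $K[t,t^{-1}]$. If you repair the identification and cite the affinoid Noether normalization / Weierstrass preparation, the remainder of your argument (invertibility of $L_t$ since $t^{-1}=u/q\in A_K$, the conjugacy $L_t^{-1}DL_t=D+1$, and the impossibility of $\operatorname{spec}(D)=\operatorname{spec}(D)+1$ on a finite-dimensional space) goes through and yields a proof genuinely shorter than the paper's, at the cost of invoking rigid-analytic machinery that the paper's commutative-algebra route avoids.
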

\begin{proof}
	We can assume $M/(t-1)M=0$, by replacing $M$ by $q^iM$ for $i\gg 0$. Consider $M/uM$. It is a finitely generated module over $\C[t]$ and carries a connection along the derivation $t\partial_t$ on $\C[t]$. As it vanishes at $t=1$, it has to be torsion over $\C[t]$ and as it carries a connection its annihilator is invariant under the action $z\in\G :t\mapsto zt$. Hence, $ann_{\C[t]}(M/uM)=(t^{n-1})$ for some $n$, implying $t^nM\subset utM=qM\subset tM$. This shows $t$-adic and $q$-adic topologies on $M$ coincide and $M$ is $t$-adically complete as well. Hence, we can see $M$ as a module over $\C[u][[t]]$ that is finitely generated over $A_R=\C[u,t][[ut]]\subset \C[u][[t]]$.
	
	Given $s\in M$, consider $\C[u][[t]].s\subset M$. It is an $A_R$ submodule and $A_R$ is Noetherian; hence, $\C[u][[t]].s\cong \C[u][[t]]/ann(s)$ is finitely generated over $A_R$ as well, where $ann(s):=ann_{\C[u][[t]]}(s)$. Now dividing by $u$ again, this implies \begin{equation}\C[u][[t]]/(ann(s)+u\C[u][[t]])\end{equation} is a module over $\C[[t]]$ that is finitely generated over $\C[t]=A_R/(u)$. By the classification of finitely generated modules over PIDs, we see that this module is indeed $t$-torsion, i.e. there exists $N'$ such that \begin{equation}t^{N'}\in ann(s)+u\C[u][[t]]\end{equation} In other words $ann(s)$ contains an element that is of the form $t^{N'}+O(u).$
	
	Now, pick a set of generators $s_1,\dots ,s_r$ and $N_i$ such that there exists an element in $ann(s_i)$ that is of the form $t^{N_i}+O(u).$ The product of these annihilating elements is of the form $t^N+O(u)\in \C[u][[t]]$ and annihilates $M.$ 
	
	Let $\hat{M}$ denote the completion of $M$ with respect to the ideal $(u)\subset \C[u][[t]].$ It is a finitely generated module over $\C[[u,t]]$ and it also carries a connection along $t\partial_t-u\partial_u$. Hence, its annihilator $J$ over $\C[[u,t]]$ satisfy the conditions of Lemma \ref{curvesatorigin} below. Moreover, by the paragraph above, an element of the form $t^N+O(u)$ is in $J$. Hence, $J$ cannot be contained in $(0)$ or $(u)$ and the prime ideals belonging to $J$ contain $t$. This implies there exists $N_1$ such that $t^{N_1}\in J.$ In other words, $t^{N_1}$ annihilates $\hat{M}$. 
	
	We want to use this to show $t^{N_1}M=0$, implying $M$ is q-torsion.
	
	Our approach is using \cite[Theorem 10.17]{comalg}, namely the kernel of the (u-adic) completion map $M\rightarrow \hat{M}$ is the set of elements of $M$ annihilated by some element of $1+(u)$. By above, $t^{N_1}M$ is in the kernel and we see that $M$ is annihilated by an element of the form $t^{N_1}(1+O(u))$. Consider $t^{N_1}M/t^{N_1+1}M$. It is annihilated by $t$ and an element of the form $1+O(u)$. Hence, we can get rid of multiples of $t$ in $1+O(u)$ and see that there exists a polynomial $f(u)$ such that $1+uf(u)$ annihilates $t^{N_1}M/t^{N_1+1}M$. $t^{N_1}M/t^{N_1+1}M$ is a finitely generated module over $\C[u]$ with a connection along $-u\partial_u$. Hence, by the classification of modules over PIDs, it has to be finite direct sum of copies of $\C[u]$ and of $\C[u]/(u^l)$, for various $l\geq 1$. Thus, that it is annihilated by $1+uf(u)$ implies $t^{N_1}M/t^{N_1+1}M=0$. In other words, \begin{equation}t^{N_1}M=t^{N_1+1}M=t^{N_1+2}M=\dots\end{equation} But recall $M$ is complete in $t$-adic topology i.e. the completion map \begin{equation}M\rightarrow \lim\limits_{\leftarrow} M/t^nM \end{equation} is an isomorphism. Thus, $M\simeq M/t^{N_1}M$ and $t^{N_1}M=0$. This implies $q^{N_1}M=0$, finishing the proof. 
\end{proof}
\begin{lem}\label{curvesatorigin} Consider the action of the group $\C^*$ on $\C[[u,t]]$, where $z\in\C$ acts by $t\mapsto zt, u\mapsto z^{-1}u$. Consider an ideal $J$ that is invariant under the action, or equivalently $(t\partial_t-u\partial_u)(J)\subset J$. If $J$ is a prime ideal, then it is one of $(0),(u),(t),(u,t)$. If $J$ is an arbitrary invariant ideal, then the prime ideals belonging to $J$ (its prime components) are among $(0),(u),(t),(u,t)$ (thus, $\sqrt{J}$ is the intersection of some of $(0),(u),(t),(u,t)$).
\end{lem}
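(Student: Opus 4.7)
The plan is to first classify the $D$-invariant prime ideals, and then reduce the statement about arbitrary invariant ideals to this classification via Seidenberg's theorem. Recall $\C[[u,t]]$ is a $2$-dimensional regular local Noetherian ring, in particular a UFD, so its primes are exactly $(0)$, the maximal ideal $\mathfrak m=(u,t)$, and height-one primes, each of which is principal and generated by an irreducible element of $\mathfrak m$. The cases $J=(0)$ and $J=\mathfrak m$ are automatically $D$-invariant, so the only real content is the height-one case.

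For the height-one case, write $J=(f)$ with $f$ irreducible and $D(f)\in(f)$, say $D(f)=gf$ for some $g\in\C[[u,t]]$. If $u\mid f$ or $t\mid f$, irreducibility forces $f$ to be a unit multiple of $u$ or $t$ and we are done. So assume neither divides $f$; since $f$ is a non-unit prime we still have $f(0,0)=0$, and consequently both restrictions $f|_{u=0}\in\C[[t]]$ and $f|_{t=0}\in\C[[u]]$ are nonzero series vanishing at the origin, of some orders $N,N'\geq 1$. The key computation is to restrict the equation $D(f)=gf$ to each axis and read off $g(0,0)$ two different ways. Using $D|_{u=0}=t\partial_t$ and writing $f|_{u=0}=t^N h(t)$ with $h(0)\neq 0$, one gets $g|_{u=0}=N+th'/h$, hence $g(0,0)=N\geq 1$. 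Symmetrically, using $D|_{t=0}=-u\partial_u$ and writing $f|_{t=0}=u^{N'}\tilde h(u)$ with $\tilde h(0)\neq 0$, one obtains $g(0,0)=-N'\leq -1$. These contradict each other, so this subcase is impossible, and the four listed primes exhaust the $D$-invariant primes.

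For an arbitrary $D$-invariant ideal $J$, I would invoke Seidenberg's theorem (valid in any $\Q$-algebra): every minimal prime over a $D$-ideal is itself a $D$-ideal. The short proof goes as follows. Let $P$ be minimal over $J$ and set $\tilde P:=\{x\in R:D^kx\in P\text{ for all }k\geq 0\}$. Then $\tilde P$ is a $D$-ideal contained in $P$, and it is prime: if $x,y\notin\tilde P$, take minimal $n,m$ with $D^n x,D^m y\notin P$ and expand $D^{n+m}(xy)$ by Leibniz; modulo $P$ all cross terms vanish, leaving $\binom{n+m}{n}D^nx\cdot D^my\notin P$ since the binomial coefficient is a unit in $\Q$, whence $xy\notin\tilde P$. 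Because $J\subset\tilde P\subset P$ and $P$ is minimal over $J$, we get $\tilde P=P$. Combined with the classification of the first part, every minimal prime over $J$ belongs to $\{(0),(u),(t),(u,t)\}$, and since $\sqrt{J}$ is the intersection of the minimal primes over $J$, the second assertion follows immediately.

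I expect the main obstacle to be the height-one computation in the second paragraph: the restriction-to-axis trick is crucial because without it one is trying to classify infinitely many irreducible $f$, and the two incompatible values of $g(0,0)$ are the clean mechanism that rules out every candidate other than $u$ and $t$. Everything else is either standard commutative algebra or a short bookkeeping argument with the Leibniz rule.
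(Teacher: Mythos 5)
Your proof is correct but departs from the paper's in both halves, and both departures are improvements in transparency. For the height-one prime case, the paper argues combinatorially: expand $f$ in monomials $t^iu^j$, observe that $\C^*$-invariance of $(f)$ forces all monomials of $f$ that are minimal for the divisibility order to have the same weight $i-j$, conclude that there is a unique such minimal monomial, and hence $f$ is that monomial up to a unit. Your restriction-to-axes computation, reading off $g(0,0)$ as $N\geq 1$ from $u=0$ and as $-N'\leq -1$ from $t=0$, accomplishes the same thing with less bookkeeping and works directly from the derivation condition $D(f)\in(f)$, without ever needing to justify that group-invariance and $D$-invariance coincide on the infinite-dimensional ring $\C[[u,t]]$. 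For the second half, the paper invokes uniqueness of minimal primary decomposition together with connectedness of $\C^*$ (the action permutes the finitely many prime components, so fixes each), whereas you run the standard Leibniz/Seidenberg argument; again this stays at the Lie algebra level, which is cleaner.

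One small caveat to record: as written, you only establish that the \emph{minimal} primes over $J$ lie in $\{(0),(u),(t),(u,t)\}$, hence the statement about $\sqrt{J}$; but the lemma also asserts this for all prime components (associated primes), including embedded ones. The gap closes immediately in this ring: an embedded prime must strictly contain a minimal prime; if $(0)$ is minimal then $J=(0)$ and nothing is embedded; the only prime strictly containing $(u)$ or $(t)$ in $\C[[u,t]]$ is $(u,t)$; and $(u,t)$ is maximal. So all associated primes land in the list. You should add that sentence, though the application in Proposition~\ref{qtorsmod} only ever uses the $\sqrt{J}$ consequence.
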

\begin{proof}
	The second statement is an easy corollary of the first: namely assume $J=\bigcap \mathfrak{q_i}$ is a a minimal primary decomposition, which exist by \cite[Theorem 7.13]{comalg}. Then prime radicals $\mathfrak{p_i}=\sqrt{\mathfrak{q_i}}$ are invariants of $J$, by \cite[Theorem 4.5]{comalg}.
	$J$ is fixed under the action of $\C^*$ on $\C[[u,t]]$. Hence, so are its prime components.
	
	Now assume $J$ is prime. As the Krull dimension of $\C[[u,t]]$ is $2$, the height of $J$ can be 0,1 or 2. If it is 0, $J=(0)$. If it is 2, $J=(u,t)$ as $\C[[u,t]]$ is a local ring. Hence, assume $J$ has height 1. As $\C[[u,t]]$ is a UFD, $J$ is principal (see \cite[Prop 1.12A]{hartshorne}). Take a prime $f\in J$ such that $J=(f)$. The invariance of $J$ under $\C^*$-action implies that for all $z\in\C^*$, $z.f$ is a generator as well; hence, it differs from $f$ by a unit of $\C[[u,t]].$
	
	Partially order the monomials as \begin{equation}t^au^b\leq t^c u^d\text{ iff } a\leq c \text{ and } b\leq d \end{equation} As the units of $\C[[u,t]]$ are of the form $\alpha+O(u,t)$, where $\alpha\in \C^*$, the set of non-zero monomials of $f$ that are minimal with respect to this order does not change when we multiply it with a unit. 
	Moreover, the coefficients of the minimal monomials are multiplied by the same constant $\alpha\in\C^*$. On the other hand, the $\C^*$-action acts on the monomial $t^iu^j$ by $z^{i-j}.$ Thus, the difference $i-j$ has to be the same for all minimal monomials. But if $i-j=i'-j'$, then either $t^iu^j\leq t^{i'}u^{j'}$ or $t^{i'}u^{j'}\leq t^{i}u^{j}$. Hence, there can be only one minimal non-zero monomial of $f$. Call it $t^iu^j$. As all the other monomials are divisible by it, $t^iu^j$ differs from $f$ by a unit; hence, $J=(t^iu^j)$. As $J$ is prime, it is either $(t)$ or $(u)$. This finishes the proof of the lemma.
\end{proof}
We wish to use Proposition \ref{qtorsmod} to prove some properties of modules of higher rank. For that, we need another lemma:
\begin{lem}\label{freevee} Let $M$ be a finitely generated module over $A_R$, which can be endowed with a connection $D_M$. Then $M^{\vee}=Hom_{A_R}(M,A_R)$ is free over $A_R$. 
\end{lem}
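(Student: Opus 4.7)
The plan is to show that $N := M^\vee$ is free in four steps; the connection on $M$ is not actually used in what follows, as the conclusion holds for any finitely generated $M$, relying instead on the regularity of $A_R$ and the fact that duals are automatically reflexive. First, $N$ is reflexive and torsion-free over the integral domain $A_R$: reflexivity is the formal property of any double dual (the canonical maps $M^\vee \to M^{\vee\vee\vee}$ and $M^{\vee\vee\vee} \to M^\vee$, obtained functorially from $M \to M^{\vee\vee}$, are mutually inverse), and torsion-freeness follows since any $\phi \in N$ annihilated by a nonzero $a \in A_R$ satisfies $a\phi(m) = 0$ for every $m$, forcing $\phi = 0$ because $A_R$ is a domain.

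Next I would establish that $A_R$ is a regular Noetherian ring of Krull dimension $2$. The defining equation $ut - q$ has unit partial derivative in $q$, so $\C[u, t, q]/(ut - q) \cong \C[u, t]$ via $q \mapsto ut$, making $A_R$ the $(ut)$-adic completion of $\C[u, t]$, which is Noetherian. At every closed point of the special fiber $\{ut = 0\}$ -- either a smooth point of one of the axes or the nodal origin -- the completed local ring of $A_R$ is isomorphic to $\C[[x, y]]$ for suitable local parameters, giving regularity of dimension $2$. The Auslander--Buchsbaum formula then implies that every finitely generated reflexive $A_R$-module is locally free: at each localization, reflexivity forces depth $\geq 2$, hence depth $= 2$ and projective dimension $0$. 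Consequently $N$ is locally free of some rank $r$, and its reduction $N/qN$ is locally free of rank $r$ over $A := A_R/qA_R = \C[u, t]/(ut)$.

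To upgrade locally free to free over $A$, I would invoke the Milnor (Mayer--Vietoris) square $A = \C[u] \times_\C \C[t]$ coming from the decomposition of the nodal curve into its two components: a projective $A$-module of rank $r$ is classified by a gluing element of $GL_r(\C)$ between trivial bundles on the two axes, and any such element can be absorbed into one of the trivializations (since $GL_r(\C) \subset GL_r(\C[u])$). So every projective $A$-module is free, and in particular $N/qN \cong A^r$.

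Finally, a $q$-adic Nakayama argument lifts the basis to $A_R$. I would pick $n_1, \dots, n_r \in N$ whose images form an $A$-basis of $N/qN$ and form the map $\phi : A_R^r \to N$: surjectivity follows from Nakayama applied to the finitely generated cokernel, valid because $q$ lies in the Jacobson radical of the $q$-adically complete Noetherian ring $A_R$; flatness of $N$ gives the exact sequence $0 \to \ker(\phi)/q\ker(\phi) \to A^r \to N/qN \to 0$ whose right-hand arrow is an isomorphism, so $\ker(\phi)/q\ker(\phi) = 0$, and a second application of Nakayama yields $\ker(\phi) = 0$. Hence $N \cong A_R^r$. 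The main obstacle I expect is the Milnor-square step at the node together with the verification of regularity of $A_R$ there; both are standard but benefit from careful checking.
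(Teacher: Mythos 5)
Your argument is correct and takes a genuinely different route from the paper's. Both proofs share the Auslander--Buchsbaum step; the divergence is in what happens away from the node. The paper localizes $M^\vee$ only at $(u,t)$, gets freeness there, and then invokes the connection to handle the two punctured axes: a finitely generated $\C[t^{\pm 1}]$-module with a connection along $t\partial_t$ is locally free, hence free, and similarly over $\C[u^{\pm 1}]$; the three local pieces are then glued. You instead observe that $A_R$ is globally regular (not just at the node), so $M^\vee$ is reflexive hence locally free everywhere, and the gluing step is replaced by Milnor patching over the square $A=\C[u]\times_\C \C[t]$. The net effect is that your proof drops the connection hypothesis entirely, which is a slightly stronger statement than the lemma as stated. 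What your route buys: a cleaner conceptual picture and a visible generalization. What it costs: you must verify that $A_R$ is regular at every closed point, not only at $(u,t,q)$. This does hold --- at a maximal ideal $\mathfrak m$ of $\C[u,t][[q]]$ (all of which contain $q$ and hence correspond to maximal ideals $(u-a,t-b)$ of $\C[u,t]$), the class of $ut-q$ in $\mathfrak m/\mathfrak m^2$ has a nonzero $q$-component, so $ut-q$ is a regular parameter and the quotient is regular of dimension $2$ --- but this verification is a genuine extra step that the paper avoids by confining attention to the origin. Two small imprecisions worth tightening: (i) "reflexivity forces depth $\geq 2$" is not quite the reason; the operative fact is that $\operatorname{Hom}$ into a local ring of depth $\geq 2$ has depth $\geq 2$ (or is zero), with reflexivity of $M^\vee$ a consequence of the resulting freeness, not a hypothesis; (ii) at height-$1$ primes you should say torsion-free over a DVR rather than invoke depth $\geq 2$. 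Finally, note that while the connection can indeed be dropped from this particular lemma, it remains essential to Proposition \ref{qtorsmod} and hence to Proposition \ref{freeuptoqtors}, so the hypothesis is not vacuous in the appendix as a whole.
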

\begin{proof}
	$M^\vee$ is finitely generated and admits a connection as well. Assume $M^\vee\neq 0$. Consider the local ring $(A_R)_{(u,t)}\subset \C[[u,t]]$. Note, we do not take its $q$-completion.  This is a Noetherian local ring whose $(u,t)$-adic completion is $\C[[u,t]]$. Hence, by \cite[Cor 11.19]{comalg}, they have the same Krull dimension, which is $2$. Thus, \begin{equation}depth((A_R)_{(u,t)})\leq dim((A_R)_{(u,t)})=2\end{equation} by \cite[Prop 18.2]{comalgeis}. As $u,t$ is a regular sequence (i.e. $u$ is not a zero divisor on $A_R$ and $t$ is not a zero divisor on $A_R/(u)$), $depth((A_R)_{(u,t)})=2$. 
	
	Moreover, $(A_R)_{(u,t)}$ is a regular local ring, 
	hence it has finite global dimension (see \cite[Cor 19.6]{comalgeis}). Thus, we can apply the Auslander-–Buchsbaum formula (\cite[Thm 19.9]{comalgeis}) to every finitely generated module and obtain \begin{equation}depth(M')+pd(M')=depth((A_R)_{(u,t)})=2 \end{equation} where $pd,depth$ are over $(A_R)_{(u,t)}$. Let $M':=(M^\vee)_{(u,t)}$. Clearly, $u,t$ is a regular sequence for $M'\cong Hom_{A_R}(M,(A_R)_{(u,t)})$; hence, if $M'\neq 0$, it has depth 2 and projective dimension 0. Thus, it is projective. As $(A_R)_{(u,t)}$ is local, this implies $M'=(M^\vee)_{(u,t)}$ is free.
	
	In particular, this implies that the $A=\C[u,t]/(ut)$-module $(M^\vee)_0=M^\vee/qM^\vee$ is free around $(0,0)$, i.e. $(M^\vee/qM^\vee)_{(u,t)}$ is free over $A_{(u,t)}$. Using the connection on $M^\vee$, one can show the freeness of $((M^\vee)_0)_t$, resp. $((M^\vee)_0)_u$ over $\C[t,t^{-1}]$, resp. $\C[u,u^{-1}]$. This is sufficient to conclude $(M^\vee)_0$ is free. 
	
	This implies the freeness of $M^\vee$ as well: choose a basis $A^n\xrightarrow{\cong} (M^\vee)_0$ and lift it to a linear map $A^n_R\rightarrow M^\vee$. A simple semi-continuity argument would show this is an isomorphism as well, finishing the proof.
\end{proof}
\begin{rk}The proof implies the freeness of any finitely generated module with connection for which $u,t$ is a regular sequence. However, we do not need this. 
\end{rk}
We can use Proposition \ref{qtorsmod} and Lemma \ref{freevee} to prove:
\begin{prop}\label{freeuptoqtors}Let $M$ be a finitely generated module over $A_R$ that can be endowed with a connection. Then, $M$ is free up to $q$-torsion.
\end{prop}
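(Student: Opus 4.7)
The plan is to study $M$ via the evaluation map $\phi : M \to M^{\vee\vee}$, where $M^\vee = \mathrm{Hom}_{A_R}(M, A_R)$. The connection $D_M$ on $M$ induces a connection on $M^\vee$ by the standard formula $(D_{M^\vee} f)(m) = D_{A_R}(f(m)) - f(D_M m)$, so Lemma \ref{freevee} applies to $M^\vee$ as well and gives that $M^{\vee\vee}$ is free of some finite rank $n$ over $A_R$. The target will therefore be $A_R^n$, and the entire proof reduces to showing that $\ker(\phi)$ and $\mathrm{coker}(\phi)$ are $q$-torsion; since both are finitely generated over the Noetherian ring $A_R$, it suffices to check that $\phi \otimes_R K$ is an isomorphism, where $K = \C((q))$.

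Inverting $q$ makes $ut = q$ a unit and gives $A_R \otimes_R K \cong K[t, t^{-1}]$, a PID. The crucial step is to show that $M \otimes_R K$ is torsion-free over $K[t, t^{-1}]$. For this, one observes that the torsion submodule $N_{\mathrm{tors}} \subset M \otimes_R K$ is preserved by the induced connection along $t\partial_t$: if $fn = 0$ with $f \neq 0$, then $f \cdot D(n) = -D(f)n$ is annihilated by $f$, hence $D(n) \in N_{\mathrm{tors}}$. It follows that the annihilator $(f_0) \subset K[t, t^{-1}]$ of $N_{\mathrm{tors}}$ is a $t\partial_t$-invariant ideal. A short calculation shows that $t\partial_t$-invariance forces every nonzero root of $f_0$ in an algebraic closure of $K$ to be a root of infinite multiplicity, so $f_0$ is a unit and $N_{\mathrm{tors}} = 0$. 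Hence $M \otimes_R K$ is free of some rank $m$, and since $M$ is finitely presented one identifies $M^\vee \otimes_R K$ with $\mathrm{Hom}_{K[t, t^{-1}]}(M \otimes_R K, K[t, t^{-1}])$; this forces $m = n$, and the evaluation map on a free module is an isomorphism.

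Combining these, $\ker(\phi)$ and $\mathrm{coker}(\phi)$ are finitely generated $A_R$-modules that vanish after inverting $q$, hence each is killed by a single sufficiently large power of $q$. The map $\phi : M \to A_R^n$ then realizes $M$ as free of rank $n$ up to $q$-torsion. The main obstacle is really the torsion-freeness of $M \otimes_R K$ in the previous paragraph: this is where the connection is used essentially, and where the argument would collapse for an arbitrary finitely generated $A_R$-module. Morally it is an easier, $K$-coefficient analogue of Proposition \ref{qtorsmod} that sidesteps the depth and $(u,t)$-completion subtleties of Lemma \ref{freevee} because $K[t, t^{-1}]$ is already a PID.
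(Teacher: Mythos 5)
There is a genuine gap, and it is in the identification $A_R \otimes_R K \cong K[t,t^{-1}]$, which is false. Since $A_R = \C[u,t][[q]]/(ut-q)$ is the $(ut)$-adic completion of $\C[u,t]$ (equivalently, the formal completion of $\{ut=0\}\subset\mathbb{A}^2$), inverting $q$ does not undo the completion. Concretely, $1/(1-uq) = \sum_{n\geq 0} u^n q^n$ is a perfectly good element of $A_R$, and after inverting $q$ it becomes $t/(t-q^2)$, which is not a Laurent polynomial over $K=\C((q))$. What $A_R \otimes_R K$ actually is is the ring of rigid analytic functions on the closed annulus $\{|q|\leq |t|\leq 1\}$ over $K$: Laurent series $\sum_d c_d t^d$ with $v(c_d)\to\infty$ as $d\to+\infty$ and $v(c_d)+d\to\infty$ as $d\to-\infty$. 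This is strictly bigger than $K[t,t^{-1}]$ and has many more units (e.g.\ $t-q^2$ is a unit), so your ``short calculation'' that a $t\partial_t$-invariant principal ideal must be generated by a monomial — which relies on nonzero roots of a Laurent \emph{polynomial} producing poles in $tf'/f$ — does not transfer as stated.

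The error is not merely notational. To run your argument over the correct ring you would need that the annulus ring is a PID (true, but a nontrivial theorem of Lazard/Weierstrass preparation for affinoid algebras of dimension one over a complete discretely valued field), and you would need a Weierstrass-type statement that zeros of analytic functions on the annulus have finite order, in order to rerun the invariant-ideal argument. So in effect you are invoking a chunk of rigid analytic geometry without acknowledging it. By contrast the paper's proof stays entirely inside $A_R$: it uses the Auslander--Buchsbaum formula at the regular local ring $(A_R)_{(u,t)}$ to establish freeness of $M^\vee$ (Lemma \ref{freevee}), and then Prop.\ \ref{qtorsmod} plus the elementary classification of $(t\partial_t - u\partial_u)$-invariant ideals in $\C[[u,t]]$ (Lemma \ref{curvesatorigin}) to handle the kernel and cokernel of $M\to M^{\vee\vee}$. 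Your approach is, morally, the ``generic fiber'' version of the same idea and is attractive conceptually, but as written it collapses at the misidentification of $A_R[q^{-1}]$, and making it rigorous costs at least as much as the paper's argument.
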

\begin{proof}
	Consider the natural map $M\rightarrow M^{\vee\vee}$. This map is compatible with the connections; thus, both its kernel and cokernel are $q$-torsion by Proposition \ref{qtorsmod}. $M^{\vee\vee}$ is free by Lemma \ref{freevee}, implying $M$ is free up to $q$-torsion.
\end{proof}
\begin{rk}One can apply proof of Proposition \ref{freeuptoqtors} to produce a map $M^{\vee\vee}\hookrightarrow M$ with $q$-torsion cokernel.
\end{rk}

\bibliographystyle{alpha}
\bibliography{biblioforspefibre}
\end{document}